\newtheorem{theorem}{Theorem}[section]
\newtheorem{proposition}[theorem]{Proposition}
\newtheorem{lemma}[theorem]{Lemma}
\newtheorem{corollary}[theorem]{Corollary}
\theoremstyle{definition}
\newtheorem{definition}[theorem]{Definition}
\newtheorem{example}[theorem]{Example}
\newtheorem{remark}[theorem]{Remark}
\newtheorem{question}[theorem]{Question}
\newcommand{\ir}{{\mathsf{Irr}}}
\newcommand{\mn}{\mathbb N}
\newcommand{\cl}{{\rm cl}}
\newcommand{\ii}{{\rm int}}
\newcommand{\ua}{\mathord{\uparrow}}
\newcommand{\da}{\mathord{\downarrow}}
\newcommand{\mk}{\mathord{\mathsf{K}}}
\newcommand{\wdd}{\mathord{\mathsf{WD}}}
\journal{}
\begin{document}

\begin{frontmatter}

%% Title, authors and addresses

%% use the tnoteref command within \title for footnotes;
%% use the tnotetext command for theassociated footnote;
%% use the fnref command within \author or \address for footnotes;
%% use the fntext command for theassociated footnote;
%% use the corref command within \author for corresponding author footnotes;
%% use the cortext command for theassociated footnote;
%% use the ead command for the email address,
%% and the form \ead[url] for the home page:
%% \title{Title\tnoteref{label1}}
%% \tnotetext[label1]{}
%% \author{Name\corref{cor1}\fnref{label2}}
%% \ead{email address}
%% \ead[url]{home page}
%% \fntext[label2]{}
%% \cortext[cor1]{}
%% \address{Address\fnref{label3}}
%% \fntext[label3]{}

\title{On Scott power spaces\tnoteref{t1}}
\tnotetext[t1]{This research was supported by the National Natural Science Foundation of China (Nos. 12071199, 12071188, 11661057)}.

%% use optional labels to link authors explicitly to addresses:
%% \author[label1,label2]{}
%% \address[label1]{}
%% \address[label2]{}
\author[X. Xu]{Xiaoquan Xu\corref{mycorrespondingauthor}}
\cortext[mycorrespondingauthor]{Corresponding author}
\ead{xiqxu2002@163.com}
\address[X. Xu]{Fujian Key Laboratory of Granular Computing and Applications, Minnan Normal University, Zhangzhou 363000, China}
\author[X. Wen]{Xinpeng Wen}
\address[X. Wen]{College of Mathematics and Information, Nanchang Hangkong University, Nanchang 330063, China}
\ead{wenxinpeng2009@163.com}
\author[X. Xi]{Xiaoyong Xi}
\ead{littlebrook@jsnu.edu.cn}
\address[X. Xi]{School of Mathematics and Statistics, Yancheng Teachers University, Yancheng 224002, China}

\begin{abstract}
In this paper, we mainly discuss some basic properties of Scott power spaces. For a $T_0$ space $X$, let $\mathsf{K}(X)$ be the poset of all nonempty compact saturated subsets of $X$ endowed with the Smyth order. It is proved that the Scott power space $\Sigma \mathsf{K}(X)$ of a well-filtered space $X$ is still well-filtered, and a $T_0$ space $Y$ is well-filtered iff $\Sigma \mathsf{K}(Y)$ is well-filtered and the upper Vietoris topology is coarser than the Scott topology on $\mathsf{K}(Y)$. A sober space is constructed for which its Scott power space is not sober. A few sufficient conditions are given under which a Scott power space is sober. Some other properties, such as local compactness, first-countability, Rudin property and well-filtered determinedness, of Smyth power spaces and Scott power spaces are also investigated.
\end{abstract}

\begin{keyword}
Scott power space; Smyth power space; Sobriety; Well-filteredness; Local compactness; First-countability
\MSC 54B20; 54D99; 06B35; 06F30

\end{keyword}

%%Graphical abstract
%\begin{graphicalabstract}
%\includegraphics{grabs}
%\end{graphicalabstract}

%%Research highlights
%\begin{highlights}
%\item Research highlight 1
%\item Research highlight 2
%\end{highlights}

%% MSC codes here, in the form:
%% or \MSC[2008] code \sep code (2000 is the default)

\end{frontmatter}

%% \linenumbers

%% main text
\section{Introduction}

An important problem in domain theory is the modelling of non-deterministic features of programming languages and of parallel features treated in a non-deterministic way. If a non-deterministic program runs several times with the same input, it may produce different outputs. To describe this behaviour, powerdomains were introduced by Plotkin \cite{Plotkin1976, Plotkin1982} and Smyth \cite{Smyth1978} to give denotational semantics to non-deterministic choice in higher-order programming languages. The three main such powerdomains are the Smyth powerdomain for demonic non-determinism, the Hoare powerdomain for angelic non-determinism, and the Plotkin powerdomain for erratic non-determinism. This viewpoint traditionally stays with the category of dcpos, but is easily and profitably extended to general topological spaces (see, for example, \cite[Sections 6.2.3 and 6.2.4]{AJ94} and \cite{Schalk}).

A subset $A$ of a $T_0$ space $X$ is called \emph{saturated} if $A$ equals the intersection of all open sets containing it (or equivalently, $A$ is an upper set in the specialization order). We shall use $\mathsf{K}(X)$ to
denote the set of all nonempty compact saturated subsets of $X$ and endow it with the \emph{Smyth preorder}, that is, for $K_1,K_2\in \mathord{\mathsf{K}}(X)$, $K_1\sqsubseteq K_2$ iff $K_2\subseteq K_1$. The \emph{upper Vietoris topology} on $\mathsf{K}(X)$ is the topology that has $\{\Box U : U\in \mathcal O(X)\}$ as a base, where $\Box U=\{K\in \mathsf{K}(X) : K\subseteq  U\}$, and the resulting space is called the \emph{Smyth power space} or \emph{upper space} of $X$ and is denoted by $P_S(X)$.

There is another prominent topology one can put on $\mathsf{K}(X)$, namely, the Scott topology. We call the space $\Sigma \mathsf{K}(X)=(\mathsf{K}(X), \sigma (\mathsf{K}(X)))$ the Scott power space of $X$. It is well-known that when $X$ is well-filtered, $\mathsf{K}(X)$ is a dcpo, with least upper bounds
of directed families computed as filtered intersections, and the Scott topology is finer than the upper Vietoris topology; when $X$
is locally compact and well-filtered (equivalently, locally compact and well-filtered), the two topologies coincide.

In this paper, we mainly discuss some basic properties of Scott power spaces. The paper is organized as follows:

In Section 2, some standard definitions and notations are introduced which will be used in the whole paper. A few basic properties of irreducible sets and compact saturated sets are listed.

In Section 3, we briefly recall the concepts of Scott topology and continuous domains and some fundamental results about them.  A countable algebraic lattice $L$ is given for which the poset $\mathbf{Fin}L$ of all upper finitely generated sets of $L$ (with the reverse inclusion order) is not a dcpo.

In Section 4, we list a few important results of $d$-spaces, well-filtered spaces and sober spaces that will be used in other sections.

In Section 5, we recall some concepts and results about the topological Rudin Lemma, Rudin spaces and
well-filtered determined spaces that will be used in the next four sections.

In Section 6, we mainly investigate the well-filteredness of Scott power spaces. It is proved that the Scott power space of a well-filtered space is well-filtered, and a $T_0$ space $X$ is well-filtered iff the upper Vietoris topology is coarser than the Scott topology on $\mathsf{K}(X)$ and $\Sigma \mathsf{K}(X)$ is well-filtered.

In Section 7, a sober space is constructed for which its Scott power space is not sober.

In Section 8, we study the question under what conditions the Scott power space $\Sigma \mathsf{K}(X)$ of a sober space $X$ is sober. This question is related to the investigation of conditions under which the upper Vietoris topology coincides with the Scott topology on $\mathsf{K}(X)$, and further it is closely related to the local compactness and first-countability of $X$.

In section 9, the Rudin property and well-filtered determinedness of Smyth power spaces and Scott power spaces are discussed.

\section{Preliminaries}

In this section, we briefly recall some standard definitions and notations that will be used in the paper. Some basic properties of irreducible sets and compact saturated sets are presented.

For a set $X$, $|X|$ will denote the cardinality of $X$. Let $\mathbb{N}$ denote the set of all natural numbers with the usual order if no other explanation and $\omega=|\mn|$. The set of all subsets of $X$ is denoted by $2^X$. Let $X^{(<\omega)}=\{F\subseteq X : F \mbox{~is a finite set}\}$ and $X^{(\leqslant\omega)}=\{F\subseteq X : F \mbox{~is a countable set}\}$.

For a poset $P$ and $A\subseteq P$, let
$\mathord{\downarrow}A=\{x\in P: x\leq  a \mbox{ for some }
a\in A\}$ and $\mathord{\uparrow}A=\{x\in P: x\geq  a \mbox{
	for some } a\in A\}$. For  $x\in P$, we write
$\mathord{\downarrow}x$ for $\mathord{\downarrow}\{x\}$ and
$\mathord{\uparrow}x$ for $\mathord{\uparrow}\{x\}$. A subset $A$
is called a \emph{lower set} (resp., an \emph{upper set}) if
$A=\mathord{\downarrow}A$ (resp., $A=\mathord{\uparrow}A$). Let $\mathbf{Fin} P=\{\uparrow F : F\in P^{(<\omega)}\}$.  For a nonempty subset $A$ of $P$, define $\mathrm{min} (A)=\{u\in A : u$ is a minimal element of  $A\}$ and $\mathrm{max} (A)=\{v\in A : v$ is a maximal element of  $A\}$.

A nonempty subset $D$ of a poset $P$ is \emph{directed} if every two
elements in $D$ have an upper bound in $D$. The set of all directed sets of $P$ is denoted by $\mathcal D(P)$. $I\subseteq P$ is called an \emph{ideal} of $P$ if $I$ is a directed lower subset of $P$. Let $\mathrm{Id} (P)$ be the poset (with the order of set inclusion) of all ideals of $P$. Dually, we define the notion of \emph{filters} and denote the poset of all filters of $P$ by $\mathrm{Filt}(P)$. The poset $P$ is called a
\emph{directed complete poset}, or \emph{dcpo} for short, if for any
$D\in \mathcal D(P)$, $\vee D$ exists in $P$.

The poset $P$ is said to be \emph{Noetherian} if it satisfies the \emph{ascending chain condition} ($\mathrm{ACC}$ for short): every ascending chain has a greatest member. Clearly, $P$ is Noetherian if{}f every directed set of $P$ has a largest element (or equivalently, every ideal of $P$ is principal). A topological space $X$ is said to be a \emph{Noetherian space} if every open subset is compact (see \cite[Definition 9.7.1]{Jean-2013}).

As in \cite{E_20182}, a topological space $X$ is \emph{locally hypercompact} if for each $x\in X$ and each open neighborhood $U$ of $x$, there is  $\ua F\in \mathbf{Fin}X$ such that $x\in\ii\,\ua F\subseteq\ua F\subseteq U$. The space $X$ is called a $c$-\emph{space} if for each $x\in X$ and each open neighborhood $U$ of $x$, there is $u\in X$ such that $x\in\ii\,\ua u\subseteq\ua u\subseteq U$). A set $K$ of $X$ is called \emph{supercompact} if for
any family $\{U_i : i\in I\}\subseteq \mathcal O(X)$, $K\subseteq \bigcup_{i\in I} U_i$  implies $K\subseteq U$ for some $i\in I$. It is easy to verify that the supercompact saturated sets of $X$ are exactly the sets $\ua x$ with $x \in X$ (see \cite[Fact 2.2]{Klause-Heckmann}). It is well-known that $X$ is a $c$-space if{}f $\mathcal O(X)$ is a \emph{completely distributive} lattice (cf. \cite{E_2009}).

The category of all $T_0$ spaces is denoted by $\mathbf{Top}_0$. For $X\in ob(\mathbf{Top}_0)$, we use $\leq_X$ to denote the \emph{specialization order} of $X$: $x\leq_X y$ if{}f $x\in \overline{\{y\}}$). In the following, when a $T_0$ space $X$ is considered as a poset, the order always refers to the specialization order if no other explanation. Let $\mathcal O(X)$ (resp., $\mathcal C(X)$) be the set of all open subsets (resp., closed subsets) of $X$, and let $\mathcal S^u(X)=\{\ua x : x\in X\}$. Define $\mathcal S_c(X)=\{\overline{{\{x\}}} : x\in X\}$ and $\mathcal D_c(X)=\{\overline{D} : D\in \mathcal D(X)\}$.

It is straightforward to verify the following.

\begin{remark}\label{closure A = closre B}   Let $X$ be a topological space and $A, B\subseteq X$. Then
\begin{enumerate}[\rm (1)]
\item $\overline{A}=\overline{B}$ if and only if for any $U\in \mathcal O(X)$, $A\cap U\neq \emptyset$ iff $B\cap U\neq\emptyset$.
\item If $\tau_1, \tau_2$ are two topologies on the set $X$ and $\tau_1\subseteq \tau_2$, then $\cl_{\tau_2}A=\cl_{\tau_2}B$ implies  $\cl_{\tau_1}A=\cl_{\tau_1}B$.
\end{enumerate}
\end{remark}

For a $T_0$ space $X$ and a nonempty subset $A$ of $X$, $A$ is \emph{irreducible} if for any $\{F_1, F_2\}\subseteq \mathcal C(X)$, $A \subseteq F_1\cup F_2$ implies $A \subseteq F_1$ or $A \subseteq  F_2$.  Denote by $\ir(X)$ (resp., $\ir_c(X)$) the set of all irreducible (resp., irreducible closed) subsets of $X$. Clearly, every subset of $X$ that is directed under $\leq_X$ is irreducible.

The following lemma is well-known and can be easily verified.

\begin{lemma}\label{irrimage}
	If $f : X \longrightarrow Y$ is continuous and $A\in\ir (X)$, then $f(A)\in \ir (Y)$.
\end{lemma}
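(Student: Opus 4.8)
The plan is to verify the defining condition of irreducibility directly for $f(A)$ by transporting closed decompositions back through $f$. First I would dispose of the standing nonemptiness requirement: since $A$ is nonempty and $f$ is a genuine map, $f(A)$ is nonempty as well.

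The main computation is then to suppose $f(A)\subseteq F_1\cup F_2$ for some $F_1,F_2\in\mathcal C(Y)$ and to pull this covering back to $X$. Because $A\subseteq f^{-1}(f(A))$, we obtain $A\subseteq f^{-1}(F_1\cup F_2)=f^{-1}(F_1)\cup f^{-1}(F_2)$. Here continuity of $f$ is used in exactly one place, namely to guarantee that $f^{-1}(F_1)$ and $f^{-1}(F_2)$ are closed in $X$, so that this really is a decomposition of $A$ into two closed subsets of $X$.

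At this point I would invoke the irreducibility of $A$, which forces $A\subseteq f^{-1}(F_1)$ or $A\subseteq f^{-1}(F_2)$. In the first case, pushing forward yields $f(A)\subseteq f(f^{-1}(F_1))\subseteq F_1$, and symmetrically $f(A)\subseteq F_2$ in the second case. Either way $f(A)$ is contained in one of the two closed sets, which is precisely the condition for $f(A)\in\ir(Y)$.

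Every step is a one-line set-theoretic manipulation, so there is no substantial obstacle; the only points deserving care are remembering that continuity is what keeps the preimages closed, and using the inclusion $f(f^{-1}(F_i))\subseteq F_i$ (rather than an equality) when pushing forward. As an alternative one could run the same argument through the dual open-set characterization of irreducible sets, but the closed-set formulation matches the definition given above and keeps the proof shortest.
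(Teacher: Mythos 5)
Your proof is correct and complete: the paper gives no proof of this lemma (it merely remarks that it is well-known and easily verified), and your argument --- pulling the closed cover back through $f^{-1}$, using continuity only to keep the preimages closed, applying irreducibility of $A$, and pushing forward via $f(f^{-1}(F_i))\subseteq F_i$ --- is exactly the standard verification the authors had in mind, including the check that $f(A)\neq\emptyset$.
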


For any $T_0$ space $X$, the \emph{lower Vietoris topology} on $\ir_c(X)$ is the topology $\{\Diamond U : U\in \mathcal O(X)\}$, where $\Diamond  U=\{A\in \ir_c(X): A\cap U\neq\emptyset\}$. The resulting space, denoted by $X^s$, with the canonical mapping $\eta_{X}: X\longrightarrow X^s, x\mapsto\overline {\{x\}}$, is the \emph{sobrification} of $X$ (cf. \cite{redbook, Jean-2013}). Clearly, $\eta_{X}: X\longrightarrow X^s$ is an order and topological embedding (cf. \cite{redbook, Jean-2013, Schalk}).

\begin{remark} \label{eta continuous} For a $T_0$ space $X$, $\eta_{X}: X\longrightarrow X^s$ is a dense topological embedding (cf. \cite{redbook, Jean-2013, Schalk}).
\end{remark}

 A subset $A$ of a space $X$ is called \emph{saturated} if $A$ equals the intersection of all open sets containing it (or equivalently, $A$ is an upper set in the specialization order). We shall use $\mathsf{K}(X)$ to
denote the set of all nonempty compact saturated subsets of $X$ and endow it with the \emph{Smyth preorder}, that is, for $K_1,K_2\in \mathord{\mathsf{K}}(X)$, $K_1\sqsubseteq K_2$ if{}f $K_2\subseteq K_1$. The \emph{upper Vietoris topology} on $\mathsf{K}(X)$ is the topology that has $\{\Box U : U\in \mathcal O(X)\}$ as a base, where $\Box U=\{K\in \mathsf{K}(X) : K\subseteq  U\}$, and the resulting space is called the \emph{Smyth power space} or \emph{upper space} of $X$ and is denoted by $P_S(X)$ (cf. \cite{Heckmann-1992, Schalk}).

\begin{remark}\label{xi embdding} Let $X$ be a $T_0$ space.
\begin{enumerate}[\rm (1)]
\item The specialization order on $P_S(X)$ is the Smyth order (that is, $\leq_{P_S(X)}=\sqsubseteq$).
\item The \emph{canonical mapping} $\xi_X: X\longrightarrow P_S(X)$, $x\mapsto\ua x$, is an order and topological embedding
(cf. \cite{Heckmann-1992, Klause-Heckmann, Schalk}).
\item $X$ is homeomorphic to the subspace $S^u(X)$ of $P_S(X)$ by means of $\xi_X$.

\end{enumerate}
\end{remark}

\begin{lemma}\label{closure in Smyth power space} For a $T_0$ space $X$ and $A\subseteq X$, $\cl_{\mathcal O(P_S(X))}\xi_X(A)=\Diamond \overline{A}$.
\end{lemma}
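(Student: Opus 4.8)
The plan is to establish the two inclusions separately, relying on the observation that $\Diamond\overline{A}$ is closed in $P_S(X)$ and that the family $\{\Box U : U\in\mathcal O(X)\}$ is a base for the upper Vietoris topology. Here $\Diamond\overline{A}=\{K\in\mk(X):K\cap\overline{A}\neq\emptyset\}$, and the first thing I would record is that this set is indeed closed in $P_S(X)$: its complement is exactly $\{K\in\mk(X):K\subseteq X\setminus\overline{A}\}=\Box(X\setminus\overline{A})$, a basic open set. Consequently, for the inclusion $\cl_{\mathcal O(P_S(X))}\xi_X(A)\subseteq\Diamond\overline{A}$ it suffices to check that $\xi_X(A)\subseteq\Diamond\overline{A}$, and this is immediate: for each $a\in A$ the point $a$ itself lies in $\ua a\cap\overline{A}$, so $\xi_X(a)=\ua a\in\Diamond\overline{A}$.

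For the reverse inclusion I would use the neighbourhood characterisation of closure. Fix $K\in\Diamond\overline{A}$ and let $\Box U$ be an arbitrary basic open set with $K\in\Box U$, i.e. $K\subseteq U$. Choosing a point $x\in K\cap\overline{A}$, we get $x\in U$; since $U$ is then an open neighbourhood of a point of $\overline{A}$, it meets $A$, say $a\in U\cap A$. As $U$ is an upper set in the specialization order, this yields $\ua a\subseteq U$, that is, $\xi_X(a)\in\Box U\cap\xi_X(A)$. Thus every basic open neighbourhood of $K$ meets $\xi_X(A)$, whence $K\in\cl_{\mathcal O(P_S(X))}\xi_X(A)$, and the two inclusions together give the claimed equality.

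The argument is largely routine, and I do not expect a serious obstacle; the only points requiring care are the equivalence $\ua a\subseteq U\iff a\in U$ (valid precisely because open sets are upper sets in $\leq_X$) and the correct reading of $\Diamond\overline{A}$ as the complement of the basic open set $\Box(X\setminus\overline{A})$. This reading is what simultaneously makes $\Diamond\overline{A}$ closed, handling the easy inclusion, and supplies the witness $a\in A\cap U$ in the reverse inclusion from the chosen point $x\in K\cap\overline{A}$.
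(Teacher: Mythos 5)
Your proof is correct and takes essentially the same route as the paper: both handle the easy inclusion by noting that $\Diamond \overline{A}=\mathsf{K}(X)\setminus \Box (X\setminus \overline{A})$ is closed in $P_S(X)$ and contains $\xi_X(A)$. The only difference lies in the reverse inclusion, where the paper writes $\cl_{\mathcal O(P_S(X))}\xi_X(A)$ as an intersection $\bigcap_{i\in I}\Diamond C_i$ of basic closed sets and uses that each closed $C_i$ is a lower set to conclude $\overline{A}\subseteq C_i$, while you argue pointwise that every basic open neighborhood $\Box U$ of a point $K\in \Diamond\overline{A}$ meets $\xi_X(A)$, using that open sets are upper sets in the specialization order; these are dual formulations of the same argument, so there is no gap.
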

\begin{proof} Clearly, $\Diamond \overline{A}=\mathsf{K}(X)\setminus \Box (X\setminus \overline{A})$ is closed in $P_S(X)$ and hence $\cl_{\mathcal O(P_S(X))}\xi_X(A)\subseteq\Diamond \overline{A}$. Since $\{\Diamond C : C\in \mathcal C(X)\}$ is a (closed) base of $P_S(X)$, there is a family $\{C_i : i\in I\}\subseteq \mathcal C(X)$ such that $\cl_{\mathcal O(P_S(X))}\xi_X(A)=\bigcap_{i\in I} \Diamond C_i$. Then for each $i\in I$, $\xi_X(A)\subseteq \Diamond C_i$, and consequently, $\ua a\cap C_i\neq \emptyset$ for each $a\in A$; whence, for each $a\in A$, $a\in C_i$ as $C_i=\da C_i$. It follow that $\overline{A}\subseteq C_i$ for each $i\in I$ and hence $\Diamond \overline{A}\subseteq \bigcap_{i\in I}\Diamond C_i=\cl_{\mathcal O(P_S(X))}\xi_X(A)$. Thus $\cl_{\mathcal O(P_S(X))}\xi_X(A)=\Diamond \overline{A}$.

\end{proof}

\begin{proposition}\label{PS functor} (\cite[Lemma 2.19]{Xu2021}) $P_S : \mathbf{Top}_0 \longrightarrow \mathbf{Top}_0$ is a covariant functor, where for any $f : X \longrightarrow Y$ in $\mathbf{Top}_0$, $P_S(f) : P_S(X) \longrightarrow P_S(Y)$ is defined by $P_S(f)(K)=\uparrow f(K)$ for all $ K\in\mathsf{K}(X)$.
\end{proposition}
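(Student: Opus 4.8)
The plan is to check the three defining properties of a covariant functor: that $P_S$ sends $T_0$ spaces to $T_0$ spaces, that $P_S(f)$ is a well-defined continuous map whenever $f$ is continuous, and that $P_S$ preserves identities and composition. The first point is already built into the construction of the Smyth power space, so the work concentrates entirely on the action of $P_S$ on morphisms and on the verification of functoriality.

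First I would confirm that $P_S(f)$ is well-defined, i.e. that $\uparrow f(K)\in\mathsf{K}(Y)$ for every $K\in\mathsf{K}(X)$. Since $K$ is nonempty and compact and $f$ is continuous, $f(K)$ is a nonempty compact subset of $Y$, so $\uparrow f(K)$ is nonempty and saturated by construction. To see that it is compact, I would use that every open set is an upper set in the specialization order: any open cover of $\uparrow f(K)$ already covers the compact set $f(K)$, a finite subcover of $f(K)$ then consists of upper sets and hence covers all of $\uparrow f(K)$. For continuity I would compute the preimage of a basic open set $\Box V$ with $V\in\mathcal O(Y)$. Because $V$ is saturated, $\uparrow f(K)\subseteq V$ holds iff $f(K)\subseteq V$ iff $K\subseteq f^{-1}(V)$, so that $P_S(f)^{-1}(\Box V)=\Box f^{-1}(V)$, which is basic open in $P_S(X)$ since $f^{-1}(V)\in\mathcal O(X)$. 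Hence $P_S(f)$ is continuous.

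Finally, for functoriality: $P_S(\mathrm{id}_X)(K)=\uparrow K=K$ since each $K\in\mathsf{K}(X)$ is an upper set, so $P_S(\mathrm{id}_X)=\mathrm{id}_{P_S(X)}$. For composition with $g:Y\longrightarrow Z$ the identity to establish is $\uparrow g(\uparrow f(K))=\uparrow g(f(K))$, which yields $P_S(g)\circ P_S(f)=P_S(g\circ f)$. The inclusion $\supseteq$ is immediate from $f(K)\subseteq\uparrow f(K)$; for $\subseteq$ I would use that $g$, being continuous, is monotone for the specialization orders, so that whenever $y\geq_Y z$ with $z\in f(K)$ one has $g(y)\geq_Z g(z)\in g(f(K))$. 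This monotonicity step is the only place where anything beyond unwinding the definitions is required, and it is where I expect the sole (minor) subtlety to lie; the remainder of the argument is routine bookkeeping.
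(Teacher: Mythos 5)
Your proof is correct and complete: the paper itself offers no proof of this proposition (it simply cites \cite[Lemma 2.19]{Xu2021}), and your argument is the standard one that reference carries out. All three ingredients are sound — well-definedness of $\uparrow f(K)$ via compactness of continuous images plus the fact that open sets are upper sets, continuity via the identity $P_S(f)^{-1}(\Box V)=\Box f^{-1}(V)$, and functoriality via saturation of members of $\mathsf{K}(X)$ together with monotonicity of continuous maps with respect to the specialization orders — so nothing is missing.
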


\begin{corollary}\label{PS retract}  Let $X$ and $Y$ be two $T_0$ spaces. If $Y$ is a retract of $X$, then $P_S(Y)$ is a retract of $P_S(X)$.
\end{corollary}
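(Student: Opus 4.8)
The plan is to invoke functoriality and the fact that retracts are preserved by every covariant functor. Recall that saying $Y$ is a retract of $X$ means there exist continuous maps $s : Y \longrightarrow X$ and $r : X \longrightarrow Y$ in $\mathbf{Top}_0$ with $r \circ s = \mathrm{id}_Y$. My first step would be simply to apply the functor $P_S$ of Proposition \ref{PS functor} to these two maps, obtaining continuous maps $P_S(s) : P_S(Y) \longrightarrow P_S(X)$ and $P_S(r) : P_S(X) \longrightarrow P_S(Y)$, defined by $P_S(s)(K) = \ua s(K)$ and $P_S(r)(K) = \ua r(K)$.

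Next I would exploit that $P_S$ preserves composition and identities. Concretely, since $r \circ s = \mathrm{id}_Y$, functoriality gives
\[
P_S(r) \circ P_S(s) = P_S(r \circ s) = P_S(\mathrm{id}_Y).
\]
It then remains to check that $P_S(\mathrm{id}_Y) = \mathrm{id}_{P_S(Y)}$, which is immediate: for any $K \in \mathsf{K}(Y)$ one has $P_S(\mathrm{id}_Y)(K) = \ua \mathrm{id}_Y(K) = \ua K = K$, because every $K \in \mathsf{K}(Y)$ is saturated, i.e.\ an upper set in the specialization order. Hence $P_S(r) \circ P_S(s) = \mathrm{id}_{P_S(Y)}$, which by definition says that $P_S(Y)$ is a retract of $P_S(X)$.

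There is essentially no hard step here: the whole content is the standard categorical observation that a covariant functor sends a retraction–section pair to a retraction–section pair. The only point that requires a line of verification, rather than pure abstract nonsense, is the identity $P_S(\mathrm{id}_Y) = \mathrm{id}_{P_S(Y)}$, and that reduces to the triviality that the $\ua$ of an already-saturated set is itself. So I expect the proof to be only a few lines, consisting of the application of $P_S$ followed by the functoriality computation above.
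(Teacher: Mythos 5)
Your proof is correct and is exactly the argument the paper intends: the corollary is stated as an immediate consequence of Proposition~\ref{PS functor}, since any covariant functor carries a section--retraction pair to a section--retraction pair. Your explicit verification that $P_S(\mathrm{id}_Y)=\mathrm{id}_{P_S(Y)}$ (using that each $K\in\mathsf{K}(Y)$ is saturated, hence $\ua K=K$) is precisely the small check subsumed in the functoriality claim, so nothing is missing.
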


For a nonempty subset $C$ of a $T_0$ $X$, it is easy to see that $C$ is compact if{}f $\ua C\in \mk (X)$. Furthermore, we have the following useful result (see, e.g., \cite[pp.2068]{E_2009}).

\begin{lemma}\label{COMPminimalset} Let $X$ be a $T_0$ space and $C\in \mk (X)$. Then $C=\ua \mathrm{min}(C)$ and  $\mathrm{min}(C)$ is compact.
\end{lemma}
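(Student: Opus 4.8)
The plan is to establish the two assertions in turn, the set equality $C=\ua\mathrm{min}(C)$ first and the compactness of $\mathrm{min}(C)$ afterwards, with the compactness of $C$ driving both.

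For the inclusion $\ua\mathrm{min}(C)\subseteq C$ I would simply observe that $C$ is saturated, hence an upper set in $\leq_X$, and that $\mathrm{min}(C)\subseteq C$; so $\ua\mathrm{min}(C)\subseteq\ua C=C$. The substance is the reverse inclusion $C\subseteq\ua\mathrm{min}(C)$, i.e.\ that every point of $C$ dominates a minimal element. Fix $x\in C$ and set $D_x=\da x\cap C$. I would apply Zorn's Lemma to $D_x$ equipped with the reverse order $\geq_X$, so that a maximal element of $(D_x,\geq_X)$ is exactly a $\leq_X$-minimal element of $D_x$ lying below $x$; such an element is automatically minimal in $C$, since anything of $C$ below it already lies in $D_x$. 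It then suffices to produce, for each $\leq_X$-chain $L\subseteq D_x$, a lower bound of $L$ inside $D_x$.

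The lower bound is where compactness enters, and this is the main obstacle. For a chain $L$ I would consider the family $\{\overline{\{y\}}\cap C : y\in L\}$ of closed subsets of the compact subspace $C$; since $\overline{\{y\}}=\da y$ in the specialization order, each member is nonempty (it contains $y$) and the family is filtered by inclusion because $L$ is a chain, whence it has the finite intersection property. Compactness of $C$ then forces $\bigcap_{y\in L}(\overline{\{y\}}\cap C)\neq\emptyset$, and any $z$ in this intersection satisfies $z\in C$ and $z\leq_X y$ for all $y\in L$, so also $z\leq_X x$; thus $z\in D_x$ is the required lower bound (for $L=\emptyset$ one takes $z=x$). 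Zorn's Lemma now yields a minimal $m\in C$ with $m\leq_X x$, giving $x\in\ua\mathrm{min}(C)$.

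Finally, for the compactness of $\mathrm{min}(C)$, I would take any open cover $\{U_i : i\in I\}\subseteq\mathcal O(X)$ of $\mathrm{min}(C)$ and use that open sets are upper sets in $\leq_X$: from $\mathrm{min}(C)\subseteq\bigcup_{i\in I}U_i$ it follows that $C=\ua\mathrm{min}(C)\subseteq\bigcup_{i\in I}U_i$, so by compactness of $C$ some finite subfamily already covers $C$ and hence $\mathrm{min}(C)$. This shows $\mathrm{min}(C)$ is compact and completes the argument.
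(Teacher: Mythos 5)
Your proof is correct. The paper itself does not prove this lemma but cites it as a well-known fact (to Ern\'e's paper, \cite[pp.~2068]{E_2009}), and your argument---Zorn's Lemma applied to $\da x\cap C$ with the reverse order, where lower bounds of chains come from the finite intersection property of the closed sets $\overline{\{y\}}\cap C$ in the compact set $C$, followed by the observation that a cover of $\mathrm{min}(C)$ by open (hence upper) sets covers all of $C=\ua\mathrm{min}(C)$---is exactly the standard proof of this fact, with all steps (including the $T_0$ hypothesis guaranteeing antisymmetry and the treatment of the empty chain) handled properly.
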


\begin{lemma}\label{Kmeet} Let $X$ be a $T_0$ space. For any nonempty family $\{K_i : i\in I\}\subseteq \mk (X)$, $\bigvee_{i\in I} K_i$ exists in $\mk (X)$ if{}f~$\bigcap_{i\in I} K_i\in \mk (X)$. In this case $\bigvee_{i\in I} K_i=\bigcap_{i\in I} K_i$.
\end{lemma}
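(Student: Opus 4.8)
The plan is to translate everything into set-theoretic language via the Smyth order and then treat the two implications separately. Recall that $K_1\sqsubseteq K_2$ means $K_2\subseteq K_1$, so an element $M\in\mk(X)$ is an upper bound of $\{K_i : i\in I\}$ precisely when $K_i\sqsubseteq M$ for all $i$, i.e.\ $M\subseteq K_i$ for all $i$, i.e.\ $M\subseteq\bigcap_{i\in I}K_i$. Thus $\bigvee_{i\in I}K_i$, if it exists, is the \emph{largest} (as a set) member of $\mk(X)$ contained in $\bigcap_{i\in I}K_i$. I would also record at the outset the free observation that $\bigcap_{i\in I}K_i$ is always saturated, being an intersection of upper sets; the only thing that can fail is nonemptiness or compactness.

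For the direction ``$\bigcap_{i\in I}K_i\in\mk(X)\Rightarrow\bigvee_{i\in I}K_i$ exists and equals it'', write $K=\bigcap_{i\in I}K_i$. Since $K\subseteq K_i$ for every $i$, we have $K_i\sqsubseteq K$, so $K$ is an upper bound. If $M\in\mk(X)$ is any upper bound, then by the translation above $M\subseteq\bigcap_{i\in I}K_i=K$, whence $K\sqsubseteq M$. Therefore $K$ is the least upper bound and $\bigvee_{i\in I}K_i=K=\bigcap_{i\in I}K_i$. This direction is essentially immediate.

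The substantive direction is ``$\bigvee_{i\in I}K_i$ exists $\Rightarrow\bigcap_{i\in I}K_i\in\mk(X)$''. Let $K=\bigvee_{i\in I}K_i\in\mk(X)$. As an upper bound, $K\subseteq K_i$ for all $i$, so $K\subseteq\bigcap_{i\in I}K_i$, and it suffices to prove the reverse inclusion $\bigcap_{i\in I}K_i\subseteq K$: once equality holds, $\bigcap_{i\in I}K_i=K\in\mk(X)$. I would argue by contradiction. Suppose there were $x\in\bigcap_{i\in I}K_i$ with $x\notin K$. Consider $K\cup\ua x$. This set lies in $\mk(X)$: it is saturated as a union of upper sets, and it is compact as a finite union of compact sets (here $\ua x$ is the saturation of the compact singleton $\{x\}$, in fact supercompact). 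Moreover $K\cup\ua x$ is again an upper bound of $\{K_i : i\in I\}$, since $K\subseteq K_i$ and, because each $K_i$ is saturated and $x\in K_i$, also $\ua x\subseteq K_i$; hence $K\cup\ua x\subseteq K_i$ for every $i$. Minimality of $K$ then forces $K\sqsubseteq K\cup\ua x$, i.e.\ $K\cup\ua x\subseteq K$, contradicting $x\notin K$.

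The main obstacle is exactly this last step: extracting the set equality $\bigvee_{i\in I}K_i=\bigcap_{i\in I}K_i$ from the mere existence of the supremum. The crux is recognizing that one may enlarge $K$ by a single principal upper set $\ua x$ while staying inside $\mk(X)$ (closure of $\mk(X)$ under binary unions) and staying below every $K_i$ (saturation of the $K_i$), so that any putative extra point would violate the leastness of $K$. The two closure facts used—$\ua x\in\mk(X)$ and the union of two compact saturated sets being compact saturated—are the only external ingredients, and both are elementary.
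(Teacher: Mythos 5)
Your proof is correct and takes essentially the same route as the paper's: both directions rest on translating the Smyth order into reverse inclusion, and the substantive direction rests on the observation that for $x\in\bigcap_{i\in I}K_i$ the set $\ua x$ is compact saturated and contained in every $K_i$, so the leastness of the supremum comes into play. The paper's version is marginally leaner at that point: it uses $\ua x$ itself as an upper bound of $\{K_i : i\in I\}$, so that $K\sqsubseteq \ua x$ gives $x\in K$ directly, dispensing with the union $K\cup\ua x$, the closure of $\mk(X)$ under binary unions, and the framing by contradiction.
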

\begin{proof} Suppose that $\{K_i : i\in I\}\subseteq \mk (X)$ is a nonempty family and $\bigvee_{i\in I} K_i$ exists in $\mk (X)$. Let $K=\bigvee_{i\in I} K_i$. Then $K\subseteq K_i$ for all $i\in I$, and hence $K\subseteq \bigcap_{i\in I} K_i$. For any $x\in \bigcap_{i\in I} K_i$, $\ua x$ is a upper bound of $\{K_i : i\in I\}\subseteq \mk (X)$, whence $K\sqsubseteq \ua x$ or, equivalently, $\ua x \subseteq K$. Therefore, $\bigcap_{i\in I} K_i\subseteq K$. Thus $\bigcap_{i\in I} K_i=K\in \mk (X)$.

Conversely, if $\bigcap_{i\in I} K_i\in \mk (X)$, then $\bigcap_{i\in I} K_i$ is an upper bound of $\{K_i : i\in I\}$ in $\mk (X)$. Let $G\in \mk (X)$ be another upper bound of $\{K_i : i\in I\}$, then $G\subseteq K_i$ for all $i\in I$, and hence $G\subseteq \bigcap_{i\in I} K_i$, that is, $\bigcap_{i\in I} K_i\sqsubseteq G$, proving that $\bigvee_{i\in I} K_i=\bigcap_{i\in I} K_i$.
\end{proof}

Similarly, we have the following.

\begin{lemma}\label{Fin P sup} Let $P$ be a poset. For any nonempty family $\{\ua F_i : i\in I\}\subseteq \mathbf{Fin}~\!P$, $\bigvee_{i\in I} \ua F_i$ exists in $\mathbf{Fin}~\!P$  if{}f~$\bigcap_{i\in I} \ua F_i\in \mathbf{Fin}~\!P$. In this case $\bigvee_{i\in I} \ua F_i=\bigcap_{i\in I} \ua F_i$.
\end{lemma}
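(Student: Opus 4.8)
The statement is the exact order-theoretic analogue of Lemma \ref{Kmeet}, with $\mk(X)$ (ordered by the Smyth order) replaced by $\mathbf{Fin}\,P$ (ordered by reverse inclusion, so $\ua F_1 \leq \ua F_2$ iff $\ua F_2 \subseteq \ua F_1$) and the supercompact saturated sets $\ua x$ replaced by the principal up-sets $\ua x = \ua\{x\}$. My plan is therefore to transcribe that proof almost verbatim. The only structural inputs are the two facts that every $\ua x$ lies in $\mathbf{Fin}\,P$ (it is $\ua F$ for the singleton $F = \{x\}$), and that for an upper set $U$, $x \in U$ forces $\ua x \subseteq U$. These let the sets $\ua x$ serve as a universal family of test upper bounds, exactly as they did in the Smyth-power setting.

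First I would treat the forward direction. Assuming $\bigvee_{i\in I}\ua F_i$ exists, write it as $\ua F \in \mathbf{Fin}\,P$. Since $\ua F_i \leq \ua F$ unravels to $\ua F \subseteq \ua F_i$, one inclusion $\ua F \subseteq \bigcap_{i\in I}\ua F_i$ is immediate. For the reverse inclusion, take any $x \in \bigcap_{i\in I}\ua F_i$; then $x \in \ua F_i$ and hence $\ua x \subseteq \ua F_i$ for each $i$, so $\ua x$ is an upper bound of the family in $\mathbf{Fin}\,P$. By definition of the supremum, $\ua F \leq \ua x$, i.e. $\ua x \subseteq \ua F$, so $x \in \ua F$. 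This yields $\bigcap_{i\in I}\ua F_i = \ua F \in \mathbf{Fin}\,P$.

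For the converse I would assume $\bigcap_{i\in I}\ua F_i \in \mathbf{Fin}\,P$ and check directly that it is the least upper bound. It is an upper bound, since $\bigcap_{i\in I}\ua F_i \subseteq \ua F_i$ gives $\ua F_i \leq \bigcap_{i\in I}\ua F_i$ for each $i$. If $\ua G \in \mathbf{Fin}\,P$ is any other upper bound, then $\ua G \subseteq \ua F_i$ for every $i$, so $\ua G \subseteq \bigcap_{i\in I}\ua F_i$, that is $\bigcap_{i\in I}\ua F_i \leq \ua G$; hence the intersection is the supremum, and the ``in this case'' clause follows.

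Because the argument is a direct translation of Lemma \ref{Kmeet}, I do not anticipate a genuine obstacle. The one point worth flagging is conceptual rather than technical: the hypothesis $\bigcap_{i\in I}\ua F_i \in \mathbf{Fin}\,P$ is a real restriction — an intersection of finitely generated up-sets, even over a countable family, need not be finitely generated, which is precisely the phenomenon to be exhibited by the example announced in Section 3 — so the biconditional (rather than an unconditional existence claim) is the natural form of the statement. No assumption on $P$ beyond being a poset is used, and the index set $I$ may be arbitrary.
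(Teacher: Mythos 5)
Your proof is correct and is exactly the argument the paper intends: the paper gives no separate proof of this lemma, introducing it with ``Similarly'' after Lemma~\ref{Kmeet}, and your transcription of that proof (with principal up-sets $\ua x \in \mathbf{Fin}~\!P$ serving as the test upper bounds, just as the supercompact sets $\ua x$ do there) is precisely the intended verification.
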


\begin{lemma}\label{K union} (\cite[Proposition 7.21]{Schalk})  Let $X$ be a $T_0$ space.
\begin{enumerate}[\rm (1)]
\item If $\mathcal K\in\mk(P_S(X))$, then $\bigcup \mathcal K\in\mk(X)$.
\item The mapping $\bigcup : P_S(P_S(X)) \longrightarrow P_S(X)$, $\mathcal K\mapsto \bigcup \mathcal K$, is continuous.
\end{enumerate}
\end{lemma}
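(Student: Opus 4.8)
The plan is to establish the two parts in order, since part (1) is exactly what makes the map in part (2) well defined. Throughout I write $\mathcal K$ for an element of $\mk(P_S(X))$, i.e. a nonempty compact saturated subset of the Smyth power space; by Remark \ref{xi embdding}(1) its specialization order is the Smyth order $\sqsubseteq$, so being saturated means $\mathcal K$ is closed under passing to smaller (under inclusion) members of $\mk(X)$. Only compactness of $\mathcal K$ and of its individual members will actually be needed below.

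For part (1), nonemptiness of $\bigcup\mathcal K$ is immediate since $\mathcal K\neq\emptyset$ and each of its members is a nonempty subset of $X$, and $\bigcup\mathcal K$ is saturated because it is a union of saturated (hence upper) sets. The substance is compactness, which I would obtain by a directed-cover argument. Suppose $\bigcup\mathcal K\subseteq\bigcup_{i\in I}U_i$ with each $U_i\in\mathcal O(X)$, and form the directed family $\mathcal V=\{\bigcup_{i\in F}U_i : F\in I^{(<\omega)}\}$ of finite unions, so that $\bigcup\mathcal V=\bigcup_{i\in I}U_i$. For each $K\in\mathcal K$ we have $K\subseteq\bigcup\mathcal V$, and since $K$ is compact in $X$ and $\mathcal V$ is directed, $K\subseteq V$ for some $V\in\mathcal V$; hence $\mathcal K\subseteq\bigcup_{V\in\mathcal V}\Box V$. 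As $\Box$ preserves inclusion, $\{\Box V : V\in\mathcal V\}$ is again directed, so the compactness of $\mathcal K$ in $P_S(X)$ forces $\mathcal K\subseteq\Box V$ for a single $V=\bigcup_{i\in F}U_i\in\mathcal V$. This says every $K\in\mathcal K$ is contained in $\bigcup_{i\in F}U_i$, whence $\bigcup\mathcal K\subseteq\bigcup_{i\in F}U_i$, a finite subcover. Therefore $\bigcup\mathcal K\in\mk(X)$.

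For part (2), part (1) guarantees that $\bigcup$ maps $P_S(P_S(X))$ into $P_S(X)$, and to check continuity it suffices to examine preimages of the basic open sets $\Box U$ with $U\in\mathcal O(X)$. The key observation is that $\bigcup\mathcal K\subseteq U$ if and only if $\mathcal K\subseteq\Box U$, since a union is contained in $U$ exactly when every member is. Consequently $(\bigcup)^{-1}(\Box U)=\{\mathcal K\in\mk(P_S(X)) : \mathcal K\subseteq\Box U\}=\Box(\Box U)$, where the outer $\Box$ is formed in $P_S(P_S(X))$ from the open set $\Box U$ of $P_S(X)$; this is a basic open set of $P_S(P_S(X))$, so $\bigcup$ is continuous. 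I expect the only genuine obstacle to be the compactness step in part (1); the remainder is bookkeeping with the upper Vietoris topology. The delicate point there is the passage from ``each member of $\mathcal K$ is swallowed by some finite union $V\in\mathcal V$'' to ``all of $\mathcal K$ is swallowed by a single $\Box V$'', which uses precisely the directedness of $\mathcal V$ together with the compactness of $\mathcal K$ in $P_S(X)$, rather than any purely set-theoretic fact.
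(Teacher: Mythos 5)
Your proof is correct. The paper gives no proof of this lemma at all — it is quoted directly from Schalk's thesis (\cite[Proposition 7.21]{Schalk}) — and your argument (saturation as a union of upper sets, compactness via directification of the cover by finite unions combined with compactness of $\mathcal K$ in $P_S(X)$, and continuity via the identity $(\bigcup)^{-1}(\Box U)=\Box(\Box U)$) is exactly the standard proof of that cited result, with the one genuinely non-set-theoretic step (passing from a directed cover $\{\Box V : V\in\mathcal V\}$ of $\mathcal K$ to a single $\Box V$) correctly identified and justified.
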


For a metric space $(X, d)$, $x\in X$ and a positive number $r$, let $B(x, r)=\{y\in Y : d(x, y)<r\}$ be the $r$-\emph{ball} about $x$. For a set $A\subseteq X$ and a positive number $r$, by the $r$-\emph{ball} about $A$ we mean the set $B(A, r)=\bigcup_{a\in A}B(a, r)$.

The following result is well-known (cf. \cite{Engelking}).

\begin{lemma}\label{metric space compact sets}  Let $(X, d)$ be a metric space and $K$ a compact set of $X$. Then for any open set $U$ containing $K$, there is an $r>0$ such that $K\subseteq B(K, r)\subseteq U$.
\end{lemma}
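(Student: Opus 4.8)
The plan is to exploit compactness to upgrade the pointwise separation of $K$ from the complement of $U$ into a uniform one, via a standard doubling-of-radius argument. First I would fix, for each point $x\in K$, a positive radius witnessing that $x$ lies well inside $U$: since $U$ is open and $x\in K\subseteq U$, there is some $r_x>0$ with $B(x,2r_x)\subseteq U$. The extra factor of $2$ is the key bookkeeping device; it leaves enough room for a triangle-inequality estimate later.

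Next I would observe that the family $\{B(x,r_x):x\in K\}$ is an open cover of $K$, since trivially $x\in B(x,r_x)$ for each $x$. Because $K$ is compact, I can extract a finite subcover $\{B(x_1,r_{x_1}),\dots,B(x_n,r_{x_n})\}$, and then set
\[
r=\min\{r_{x_1},\dots,r_{x_n}\}>0,
\]
which is strictly positive as the minimum of finitely many positive numbers. This $r$ will be the uniform radius claimed in the statement.

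It remains to verify the two inclusions $K\subseteq B(K,r)\subseteq U$. The left inclusion is immediate: for any $a\in K$ we have $a\in B(a,r)\subseteq B(K,r)$ since $d(a,a)=0<r$. For the right inclusion I would take an arbitrary $y\in B(K,r)$, so $d(y,a)<r$ for some $a\in K$. Since the finite balls cover $K$, there is an index $i$ with $a\in B(x_i,r_{x_i})$, i.e. $d(a,x_i)<r_{x_i}$. The triangle inequality then gives
\[
d(y,x_i)\leq d(y,a)+d(a,x_i)<r+r_{x_i}\leq 2r_{x_i},
\]
using $r\leq r_{x_i}$ by the choice of $r$. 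Hence $y\in B(x_i,2r_{x_i})\subseteq U$, proving $B(K,r)\subseteq U$.

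I do not expect any serious obstacle here; the only point requiring care is the deliberate choice of the doubled radius $B(x,2r_x)\subseteq U$ at the start, since without this slack the final triangle-inequality estimate would land on the boundary $d(y,x_i)<r+r_{x_i}$ rather than inside a ball already known to sit in $U$. Everything else is routine, and compactness is used exactly once, to pass from the infinite cover to a finite one so that the infimum of radii is a genuine positive minimum.
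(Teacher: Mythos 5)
Your proof is correct. The paper itself gives no argument for this lemma---it is stated as well-known with a citation to Engelking---so there is no in-paper proof to compare against; your doubling-of-radius argument is the standard one, and every step checks out: the choice of $r_x$ with $B(x,2r_x)\subseteq U$, the extraction of a finite subcover by compactness, the strictly positive minimum $r$, and the triangle-inequality estimate $d(y,x_i)\leq d(y,a)+d(a,x_i)<r+r_{x_x}\leq 2r_{x_i}$ (with $r_{x_x}$ read as $r$, i.e. $d(y,x_i)<r+r_{x_i}\leq 2r_{x_i}$) land $y$ inside a ball already known to lie in $U$. For comparison, a slightly shorter textbook route avoids the doubling device: if $U\neq X$, the function $x\mapsto d(x,X\setminus U)$ is continuous and strictly positive on $K$, hence attains a positive minimum $r$ on the compact set $K$, and this $r$ gives $B(K,r)\subseteq U$ at once (if $U=X$, any $r>0$ works). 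Both arguments invoke compactness exactly once and are essentially equivalent in content; yours trades the distance-to-a-set function for an explicit cover-and-minimum bookkeeping, which is just as rigorous.
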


\section{Scott topology and continuous domains}

For a poset $P$, a subset $U$ of $P$ is \emph{Scott open} if
(i) $U=\mathord{\uparrow}U$ and (ii) for any directed subset $D$ with
$\vee D$ existing, $\vee D\in U$ implies $D\cap
U\neq\emptyset$. All Scott open subsets of $P$ form a topology,
called the \emph{Scott topology} on $P$ and
denoted by $\sigma(P)$. The space $\Sigma~\!\! P=(P,\sigma(P))$ is called the
\emph{Scott space} of $P$. For the chain $2=\{0, 1\}$ (with the order $1<2$), we have $\sigma(2)=\{\emptyset, \{1\}, \{0,1\}\}$. The space $\Sigma~\!\!2$
is well-known under the name of \emph{Sierpi\'nski space}. The \emph{upper topology} on $P$, generated
by the complements of the principal ideals of $P$, is denoted by $\upsilon (P)$.
The upper sets of $P$ form the (\emph{upper}) \emph{Alexandroff topology} $\alpha (P)$.

\begin{lemma}\label{Scott continuous equiv} (\cite[Proposition II-2.1]{redbook}) For posets $P, Q$ and $f : P \longrightarrow Q$, the following two conditions are equivalent:
\begin{enumerate}[\rm (1)]
\item $f : \Sigma~\!\! P \longrightarrow \Sigma~\!\! Q$ is continuous.
\item For each $D\in \mathcal D(P)$ for which $\vee D$ exists in $P$, $\vee f(D)$ exists in $Q$ and $f(\vee D)=\vee f(D)$.
\end{enumerate}
\end{lemma}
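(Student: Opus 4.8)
The plan is to establish the two implications separately, in both cases routing through the monotonicity of $f$ and the fact that the specialization order of a Scott space $\Sigma~\!\! P$ coincides with the original order of $P$ (since $\overline{\{x\}}=\da x$ in $\Sigma~\!\! P$). Throughout I will use the observation that for any $q\in Q$ the principal ideal $\da q$ is Scott closed: it is a lower set, and if $E$ is directed with $\vee E$ existing and $E\subseteq\da q$, then $q$ is an upper bound of $E$ and so $\vee E\leq q$, whence $\vee E\in\da q$. Dually, $Q\setminus\da q\in\sigma(Q)$.

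For $(1)\Rightarrow(2)$, I would first note that any continuous map is monotone for the specialization orders, so continuity of $f:\Sigma~\!\! P\to\Sigma~\!\! Q$ forces $f$ to preserve $\leq$. Given $D\in\mathcal D(P)$ with $\vee D$ existing, monotonicity makes $f(\vee D)$ an upper bound of $f(D)$; it remains to show it is the least one. Suppose $q$ is any upper bound of $f(D)$ with $f(\vee D)\not\leq q$. Then $f(\vee D)\in Q\setminus\da q$, a Scott-open set, so by continuity $\vee D\in f^{-1}(Q\setminus\da q)\in\sigma(P)$. Since $D$ is directed with sup $\vee D$ lying in this Scott-open set, some $d\in D$ satisfies $f(d)\notin\da q$, i.e. $f(d)\not\leq q$, contradicting that $q$ bounds $f(D)$. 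Hence $f(\vee D)=\vee f(D)$.

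For $(2)\Rightarrow(1)$, I would first extract monotonicity from $(2)$ by applying it to the directed set $\{x,y\}$ whenever $x\leq y$: then $\vee\{x,y\}=y$ and $(2)$ gives $f(y)=\vee\{f(x),f(y)\}$, whence $f(x)\leq f(y)$. To prove continuity it suffices to check that $f^{-1}(V)\in\sigma(P)$ for each $V\in\sigma(Q)$. It is an upper set by monotonicity together with $V=\ua V$. For the directed-sup condition, take $D\in\mathcal D(P)$ with $\vee D$ existing and $\vee D\in f^{-1}(V)$; by $(2)$, $f(\vee D)=\vee f(D)$, and monotonicity ensures $f(D)$ is directed, so $\vee f(D)\in V$ with $V$ Scott open yields some $d\in D$ with $f(d)\in V$, i.e. $D\cap f^{-1}(V)\neq\emptyset$.

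I do not expect a genuine obstacle here, as this is a classical characterization; the only points requiring care are the two auxiliary facts that drive each direction --- that $\da q$ is Scott closed (used to manufacture the separating open set in $(1)\Rightarrow(2)$) and that a monotone image of a directed set is directed (used so that $\vee f(D)$ triggers the Scott-openness of $V$ in $(2)\Rightarrow(1)$). Both are immediate once monotonicity is in hand, which is why securing monotonicity first in each implication is the organizing idea.
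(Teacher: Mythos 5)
The paper offers no proof of this lemma at all—it is quoted with a citation to \cite[Proposition II-2.1]{redbook}—and your argument is correct and is essentially the classical proof from that reference: establish monotonicity first (via the specialization order in one direction, via the two-element directed set $\{x,y\}$ in the other), use Scott-closedness of the principal ideals $\da q$ to separate $f(\vee D)$ from any competing upper bound, and use directedness of the monotone image $f(D)$ to trigger Scott-openness of $V$. Both auxiliary facts are verified correctly, so there is no gap.
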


For a dcpo $P$ and $A, B\subseteq P$, we say $A$ is \emph{way below} $B$, written $A\ll B$, if for each $D\in \mathcal D(P)$, $\vee D\in \ua B$ implies $D\cap \ua A\neq \emptyset$. For $B=\{x\}$, a singleton, $A\ll B$ is
written $A\ll x$ for short. For $x\in P$, let $w(x)=\{F\in P^{(<\omega)} : F\ll x\}$, $\Downarrow x = \{u\in P : u\ll x\}$ and $K(P)=\{k\in P : k\ll k\}$. Points in $K(P)$ are called \emph{compact elements} of $P$.

For the following definition and related conceptions, please refer to \cite{redbook}.

\begin{definition}\label{continuous domain etc} Let $P$ be a dcpo and $X$ a $T_0$ space.
\begin{enumerate}[\rm (1)]
\item $P$ is called a \emph{continuous domain}, if for each $x\in P$, $\Downarrow x$ is directed
and $x=\vee\Downarrow x$. When a complete
lattice $L$ is continuous, we call $L$ a \emph{continuous lattice}.
\item  $P$ is called an \emph{algebraic domain}, if for each $x\in P$, $\{k\in K(P) : k\leq x\}$ is directed
and $x=\vee \{k\in K(P) : k\leq x\}$. When a complete
lattice $L$ is algebraic, we call $L$ an \emph{algebraic lattice}.
\item $P$ is called a \emph{quasicontinuous domain}, if for each $x\in P$, $\{\ua F : F\in w(x)\}$ is filtered and $\ua x=\bigcap
\{\ua F : F\in w(x)\}$.
\item $X$ is called \emph{core-compact} if $\mathcal O(X)$ is a \emph{continuous lattice}.
\end{enumerate}
\end{definition}

\begin{remark}\label{core-compact is not locally compact}  It is well-known that if a topological space $X$ is locally compact, then it is core-compact (see, e.g., \cite[Examples I-1.7]{redbook}). In \cite[Section 7]{Hofmann-Lawson-1978} (see also \cite[Exercise V-5.25]{redbook}) Hofmann and Lawson gave a second-countable core-compact $T_0$ space $X$ in which every compact subset of $X$ has empty interior and hence it is not locally compact.
\end{remark}

The following result is well-known (see \cite{redbook}).

\begin{theorem}\label{algebraic is continuous} Let $P$ be a dcpo.
\begin{enumerate}[\rm (1)]
\item If $P$ is algebraic, then it is continuous.
\item If $P$ is continuous, then it is quasicontinuous.
\item $P$ is continuous if{}f $\Sigma ~\!\! P$ is a $c$-space.
\item $P$ is quasicontinuous if{}f $\Sigma ~\!\! P$ is locally hypercompact.
\end{enumerate}
\end{theorem}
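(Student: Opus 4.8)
The four assertions are classical; I would prove (1) and (2) directly and obtain (3) and (4) as a matched pair of ``soft'' (backward) and ``interpolation-driven'' (forward) arguments, all phrased in terms of Definition \ref{continuous domain etc}.

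For (1), the one observation to isolate is that a compact element below $x$ is way below $x$: if $k\ll k$ and $k\le x$, then any directed $D$ with $\vee D\ge x$ also has $\vee D\ge k$, so $D\cap\ua k\neq\emptyset$ and $k\ll x$. Hence $\{k\in K(P):k\le x\}\subseteq\Downarrow x\subseteq\da x$. Algebraicity makes the left set directed with supremum $x$, so $\vee\Downarrow x=x$ by squeezing, and $\Downarrow x$ is directed because any $u,v\ll x$ can be lifted (using $u\ll x=\vee\{k\in K(P):k\le x\}$) to compact elements below $x$, which have a common upper bound of the same kind. For (2), note $u\ll x$ iff $\{u\}\in w(x)$; feeding these singletons into the definition of quasicontinuity reduces both the equality $\ua x=\bigcap\{\ua F:F\in w(x)\}$ and the filteredness of the family to the directedness of $\Downarrow x$ and $x=\vee\Downarrow x$. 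Alternatively, (2) drops out of (3) and (4), since a $c$-space is trivially locally hypercompact by taking singleton $F$.

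The backward directions of (3) and (4) run in parallel and use no interpolation. The key general fact is that $x\in\ii\,\ua u$ forces $u\ll x$ (and $x\in\ii\,\ua F$ forces $F\ll x$): for directed $D$ with $\vee D\ge x$, the Scott-open upper set $\ii\,\ua u$ contains $\vee D$, hence meets $D$, hence $D\cap\ua u\neq\emptyset$. So if $\Sigma P$ is a $c$-space I would collect $B_x=\{u:x\in\ii\,\ua u\}\subseteq\Downarrow x$; this set is directed because applying the $c$-space property to the open set $\ii\,\ua u_1\cap\ii\,\ua u_2$ produces a common refinement, and $\vee B_x=x$ because otherwise $P\setminus\da(\vee B_x)$ would be a Scott-open neighbourhood of $x$ disjoint from $B_x$. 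Directedness of all of $\Downarrow x$ then follows by lifting pairs into the directed $B_x$, giving continuity. The locally hypercompact case is identical, with finite sets $F$ in place of $u$ and the filtered family $\{\ua F:x\in\ii\,\ua F\}$ in place of $B_x$.

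The forward directions are where the real work sits, and the single non-routine ingredient is the interpolation property of (quasi)continuous domains: $u\ll x$ implies $u\ll w\ll x$ for some $w$, and its finitary form $F\ll x$ implies there is a finite $H$ with $\ua H\subseteq\{y:F\ll y\}$ and $H\ll x$. I would either prove this or cite it from \cite{redbook}. Granting interpolation, for (3) it shows that $\{y:u\ll y\}$ is Scott open; since this set always satisfies $\ii\,\ua u\subseteq\{y:u\ll y\}\subseteq\ua u$, it equals $\ii\,\ua u$. Then, given $x\in U\in\sigma(P)$, continuity supplies $u\ll x$ with $u\in U$, and $x\in\{y:u\ll y\}=\ii\,\ua u\subseteq\ua u\subseteq U$ exhibits the $c$-space property. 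For (4), interpolation likewise makes every $\{y:F\ll y\}$ Scott open and equal to $\ii\,\ua F$, so these sets form a base of $\sigma(P)$; given $x\in U$, I pick a basic $\{y:G\ll y\}\subseteq U$ with $x\in\{y:G\ll y\}$, interpolate to get a finite $H$ with $\ua H\subseteq\{y:G\ll y\}$ and $H\ll x$, and read off $x\in\ii\,\ua H\subseteq\ua H\subseteq\{y:G\ll y\}\subseteq U$, which is local hypercompactness. I expect interpolation to be the main obstacle; a tempting shortcut for the step ``$\ua F\subseteq U$'' via a finite-intersection argument inside a single compact $\ua F_0$ fails because $\ua F$ is compact-saturated but not Scott-closed, which is exactly why interpolation (well-filteredness being unavailable) is required.
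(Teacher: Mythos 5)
The paper itself offers no proof of this theorem --- it is quoted as well known from \cite{redbook} --- so your sketch has to stand on its own. Items (1), (2) and all of (3) do stand: compact elements below $x$ are way below $x$; the squeeze $\{k\in K(P):k\le x\}\subseteq\Downarrow x\subseteq\da x$ works; interpolation for \emph{continuous} domains has a soft proof (via $\bigcup\{\Downarrow w:w\ll x\}$); and in every lifting step you perform there ($\Downarrow x$ into $\{k\in K(P):k\le x\}$, or $\Downarrow x$ into your $B_x$) you are applying the definition of $\ll$ to a directed set of \emph{points} whose supremum is $x$, which is exactly why those arguments are soft and complete.

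Part (4) is not parallel to part (3), and this is where your proposal has a genuine gap: both directions of (4) require Rudin's Lemma (Corollary~\ref{rudin} of the paper), which you never invoke. Backward direction: Definition~\ref{continuous domain etc}(3) demands that the \emph{whole} family $\{\ua F:F\in w(x)\}$ be filtered, whereas your construction only yields the filtered subfamily $\mathcal G_x=\{\ua G: x\in\ii\,\ua G\}$ with $\bigcap\mathcal G_x=\ua x$. The case is not ``identical'' to (3): to lift an arbitrary $F\in w(x)$ into $\mathcal G_x$ (i.e.\ to find $G$ with $x\in\ii\,\ua G$ and $\ua G\subseteq\ua F$) you cannot just apply the definition of $F\ll x$, because $\mathcal G_x$ is a filtered family of \emph{sets}, not a directed set of points. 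The standard repair: if every $\ua G\in\mathcal G_x$ met the lower set $C=P\setminus\ua F$, Rudin's Lemma would give a directed $D\subseteq C$ such that $\da D$ meets every $\ua G$; since each $\ua G$ is an upper set, $D$ itself meets every $\ua G$, hence $\vee D\in\bigcap\mathcal G_x=\ua x$, contradicting $F\ll x$ because $D\subseteq C$. Forward direction: your claim that the sets $\{y:F\ll y\}$ ``form a base of $\sigma(P)$'' does not follow from interpolation; it is precisely the Rudin-based lemma of \cite[Section~III-3]{redbook} --- a filtered family of finitely generated upper sets whose intersection lies in a Scott-open $U$ has a member contained in $U$ --- applied to $\{\ua F:F\in w(x)\}$ and $\ua x\subseteq U$; indeed finitary interpolation for quasicontinuous domains is itself usually derived from that lemma. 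Your closing remark correctly diagnoses why the compactness shortcut fails, but misnames the cure: the missing ingredient is not interpolation but Rudin's Lemma, which is exactly the tool the paper recalls in Section 5 because quasicontinuity theory cannot be developed without it.
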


The following example show that there is even a countable algebraic lattice $L$ such that $\mathbf{Fin}~\!L$ (note that the order on $\mathbf{Fin}~\!L$ is the reverse inclusion
order $\supseteq$) is not a dcpo.

\begin{example}\label{algebraic lattice with non-dcpo Fin L}
Let $L=\mathbb{N}\cup \{\omega_{0},\omega_{1},\cdot \cdot \cdot,\omega_{n},\cdot \cdot \cdot \}\cup \{\infty\}$. Define an order on $L$ as follows (see Figure 1):
\begin{enumerate}[\rm (i)]
\item $1<2<3<\cdot\cdot\cdot <n<n+1<\cdot\cdot\cdot <\omega_{0}$,

\item $n<\omega_{n}$ for each $n\in \mathbb{N}$, and

\item $\infty$ is a largest element of $L$.
\end{enumerate}
\noindent It is easy to see that $L$ is a complete lattice. Moreover, for each $x\in L\setminus \{\omega_{0}\}$, $x\ll x$, whence $L$ is a countable  algebraic lattice.

Now we show that $\mathbf{Fin}~\!L$ is not a dcpo. Let
\begin{eqnarray}
\ua F_{n}&=&\ua n\cup \ua\{\omega_{1},\cdot\cdot\cdot ,\omega_{n-1}\}\nonumber\\
&=& \ua n\cup\{\omega_{1},\cdot\cdot\cdot ,\omega_{n-1}\}\cup\{\infty\}\nonumber\\
&=&\{n,n+1,\cdot\cdot\cdot\}\cup \{\omega_{0},\omega_{1},\cdot\cdot\cdot ,\omega_{n},\cdot\cdot\cdot\}\cup \{\infty\}.\nonumber
\end{eqnarray}

\begin{figure}[ht]
	\centering
	\includegraphics[height=5cm,width=6cm]{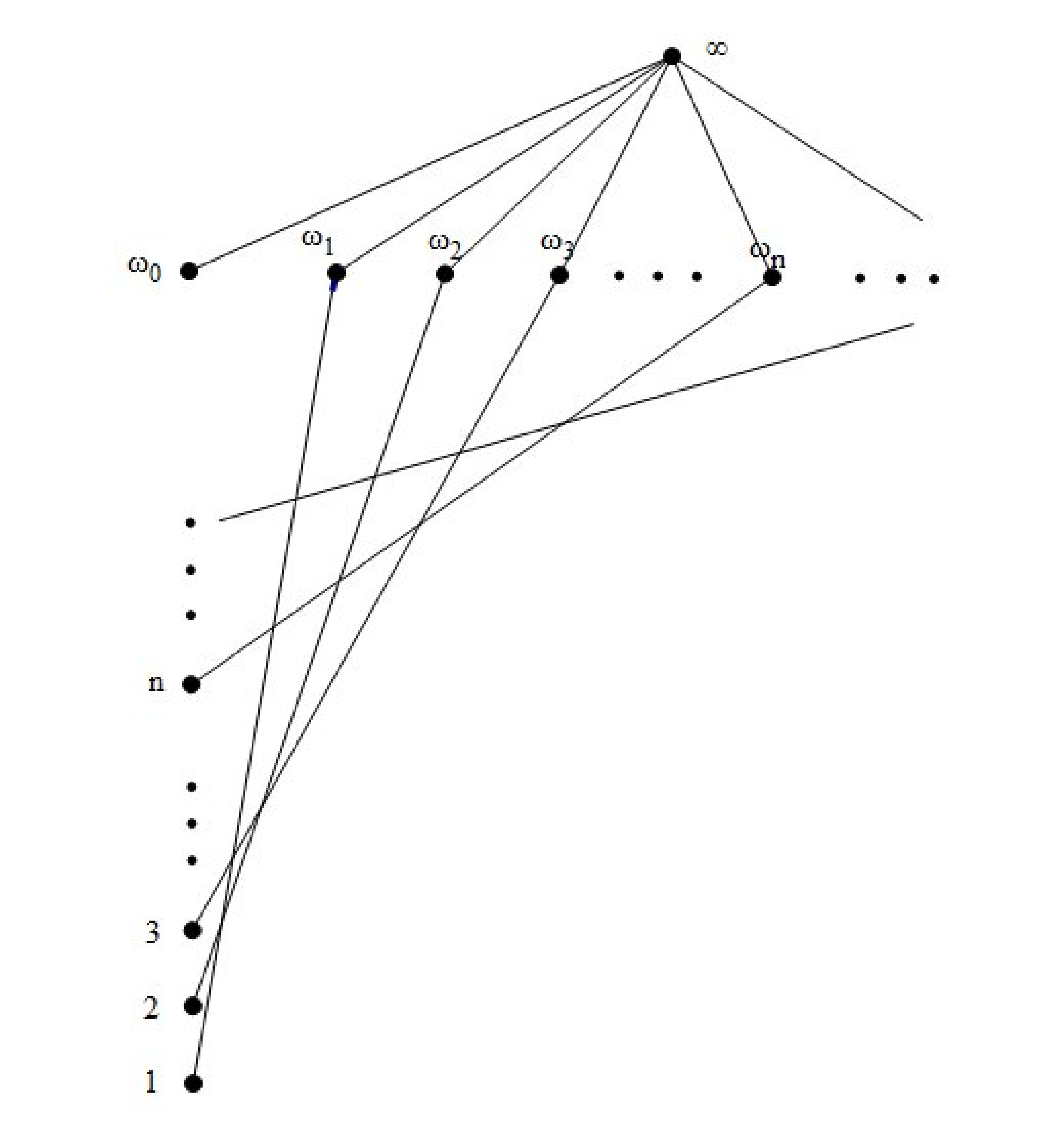}
	\caption{An algebraic lattice $L$ with non-dcpo $\mathbf{Fin}~\!L$}
\end{figure}

It is clear that $\{\ua F_{n}:n\in \mathbb{N}\}\subseteq \mathcal D(\mathbf{Fin}~\!L)$ and $$\bigcap_{n\in \mathbb{N}}\ua F_{n}=\{\omega_{0},\omega_{1},\cdot\cdot\cdot,\omega_{n},\cdot\cdot\cdot\}\cup \{\infty\}\not\in \mathbf{Fin}~\!L.$$

\noindent By Lemma \ref{Fin P sup}, $\{\ua F_{n}:n\in \mathbb{N}\}$ has no join in $\mathbf{Fin}~\!L$. Thus $\mathbf{Fin}~\!L$ is not a dcpo.
\end{example}
As $\mathbf{Fin}~\!L$ is not a dcpo, it can not be used as a mathematical model for denotational semantics of non-deterministic programs. So one should look for other novel mathematical structures, such as the poset of all nonempty compact saturated sets of a suitable $T_0$ space $X$ and the poset of all strongly compact saturated sets of $X$ ($ S\subseteq X$ is strongly compact if for all open sets $U$ with $S \subseteq U$,
there is a finite set $F$  with $S\subseteq \uparrow  F \subseteq U$) (cf. \cite{Heckmann-1992}).
\section{$d$-spaces, well-filtered spaces and sober spaces}

A $T_0$ space $X$ is called a $d$-\emph{space} (or \emph{monotone convergence space}) if $X$ (with the specialization order) is a dcpo
 and $\mathcal O(X) \subseteq \sigma(X)$ (cf. \cite{redbook, Wyler}).

It is easy to verify the following result (cf. \cite{redbook, XSXZ-2020}).

\begin{proposition}\label{d-spacecharac1} For a $T_0$ space $X$, the following conditions are equivalent:
\begin{enumerate}[\rm (1) ]
	        \item $X$ is a $d$-space.
            \item $\mathcal D_c(X)=\mathcal S_c(X)$.
            \item  $X$ is a dcpo, and $\overline{D}=\overline{\{\vee D\}}$ for any $D\in \mathcal D(X)$.
            \item  For any $D\in \mathcal D(X)$ and $U\in \mathcal O(X)$, $\bigcap\limits\limits_{d\in D}\ua d\subseteq U$ implies $\ua d \subseteq U$ (i.e., $d\in U$) for some $d\in D$.

\end{enumerate}
\end{proposition}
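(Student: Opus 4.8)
The plan is to establish the cyclic chain $(1)\Rightarrow(2)\Rightarrow(3)\Rightarrow(4)\Rightarrow(1)$. Throughout I would lean on three elementary facts about the specialization order of a $T_0$ space: (a) $\overline{\{x\}}=\da x$ for every point $x$; (b) an open set $U$ meets $\overline{A}$ if and only if it meets $A$; and (c) whenever $\vee D$ exists it lies in $\bigcap_{d\in D}\ua d$, being an upper bound of $D$. These reduce everything to short manipulations with closures and upper bounds.

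For $(1)\Rightarrow(2)$, the inclusion $\mathcal S_c(X)\subseteq\mathcal D_c(X)$ is immediate since singletons are directed. For the reverse, given $D\in\mathcal D(X)$ I would use that $X$ is a dcpo to form $x=\vee D$; then $D\subseteq\da x=\overline{\{x\}}$ gives $\overline{D}\subseteq\overline{\{x\}}$, and $\mathcal O(X)\subseteq\sigma(X)$ forces $x\in\overline{D}$ (otherwise $X\setminus\overline{D}$ would be a Scott-open neighbourhood of $\vee D$ disjoint from $D$), so $\overline{D}=\overline{\{x\}}\in\mathcal S_c(X)$. For $(2)\Rightarrow(3)$, given directed $D$ I write $\overline{D}=\overline{\{x\}}$ using $\mathcal D_c(X)=\mathcal S_c(X)$ and check that $x$ must be $\vee D$: it is an upper bound because $D\subseteq\overline{\{x\}}=\da x$, and it is least because any upper bound $c$ satisfies $\overline{\{x\}}=\overline{D}\subseteq\overline{\{c\}}$. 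This yields simultaneously that $X$ is a dcpo and that $\overline{D}=\overline{\{\vee D\}}$. For $(3)\Rightarrow(4)$, given $D$ directed and $U$ open with $\bigcap_{d\in D}\ua d\subseteq U$, fact (c) gives $\vee D\in U$, so $U$ meets $\overline{\{\vee D\}}=\overline{D}$; by (b) it already meets $D$, producing $d\in D\cap U$, i.e.\ $\ua d\subseteq U$.

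The hard part will be $(4)\Rightarrow(1)$, since there one must manufacture suprema out of a purely topological covering condition. The trick I would use is to apply $(4)$ to the open set $U=X\setminus\overline{D}$: if every upper bound of $D$ avoided $\overline{D}$ we would have $\bigcap_{d\in D}\ua d\subseteq U$, so $(4)$ would supply some $d\in D\cap U$, contradicting $D\subseteq\overline{D}$. Hence $D$ has an upper bound $b\in\overline{D}$. From $b\in\overline{D}$ we get $\overline{\{b\}}\subseteq\overline{D}$, while $D\subseteq\da b=\overline{\{b\}}$ (as $b$ is an upper bound) gives $\overline{D}\subseteq\overline{\{b\}}$; thus $\overline{D}=\overline{\{b\}}$, and $b$ is a maximum of $\overline{D}$, whence the same least-element computation as in $(2)\Rightarrow(3)$ shows $b=\vee D$. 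So $X$ is a dcpo with $\overline{D}=\overline{\{\vee D\}}$. Finally, to obtain $\mathcal O(X)\subseteq\sigma(X)$, I take $U$ open (automatically an upper set) and $D$ directed with $\vee D\in U$; then $U$ meets $\overline{\{\vee D\}}=\overline{D}$, hence meets $D$ by (b), so $D\cap U\neq\emptyset$ and $U$ is Scott open. This closes the cycle and proves the equivalence.
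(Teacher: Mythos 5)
Your proof is correct. Note that the paper itself offers no argument for this proposition --- it is stated as ``easy to verify'' with a pointer to \cite{redbook, XSXZ-2020} --- so there is no internal proof to compare against; your cyclic chain $(1)\Rightarrow(2)\Rightarrow(3)\Rightarrow(4)\Rightarrow(1)$ is the standard argument those references suggest, and each step checks out: the use of Scott-openness of $X\setminus\overline{D}$ in $(1)\Rightarrow(2)$, the recovery of $\vee D$ from $\overline{D}=\overline{\{x\}}$ in $(2)\Rightarrow(3)$, and in particular the key step of $(4)\Rightarrow(1)$, where applying $(4)$ to $U=X\setminus\overline{D}$ correctly produces an upper bound of $D$ lying in $\overline{D}$ (the argument is valid even when $D$ a priori has no upper bounds, since then $\bigcap_{d\in D}\ua d=\emptyset\subseteq U$ and the same contradiction appears), which is then shown to be the supremum.
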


\begin{lemma}\label{d-space max point} (\cite[Lemma 2.1]{xuzhao-20202} )
Let $X$ be a $d$-space. Then for any nonempty closed subset $A$ of $X$,  $A=\da \mathrm{max}(A)$, and hence $\mathrm{max}(A)\neq\emptyset$.
\end{lemma}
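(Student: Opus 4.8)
The plan is to obtain maximal elements by a Zorn's Lemma argument, with the $d$-space hypothesis entering precisely to guarantee that chains have upper bounds lying inside $A$. First I would record the crucial consequence of the defining condition $\mathcal O(X)\subseteq\sigma(X)$: since every open set is Scott open, every closed set $A$ is Scott closed, meaning that $A$ is a lower set in $\leq_X$ and is closed under the suprema of its directed subsets. Because $X$ is a dcpo, these suprema always exist, so for every directed subset $D\subseteq A$ (in particular every nonempty chain) one has $\vee D\in A$.

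The heart of the argument is a localized application of Zorn's Lemma. Fix $x\in A$ and consider the set $\ua x\cap A=\{y\in A: x\leq_X y\}$, which is nonempty since it contains $x$. For any nonempty chain $C\subseteq\ua x\cap A$, its supremum $\vee C$ exists in the dcpo $X$, lies in $A$ by Scott closedness, and satisfies $x\leq_X\vee C$ (as $\vee C$ dominates any element of $C$, each of which is above $x$); hence $\vee C\in\ua x\cap A$ is an upper bound of $C$ within $\ua x\cap A$. Thus every chain in $\ua x\cap A$ has an upper bound there, and Zorn's Lemma yields a maximal element $m$ of $\ua x\cap A$.

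Next I would verify that such an $m$ is in fact maximal in the whole of $A$. If $z\in A$ with $m\leq_X z$, then $x\leq_X m\leq_X z$ forces $z\in\ua x\cap A$, so maximality of $m$ in $\ua x\cap A$ gives $z=m$; hence $m\in\mathrm{max}(A)$. Since $x\leq_X m$, this shows $x\in\da\mathrm{max}(A)$. As $x\in A$ was arbitrary, $A\subseteq\da\mathrm{max}(A)$; the reverse inclusion is immediate because $\mathrm{max}(A)\subseteq A$ and $A$, being closed, is a lower set. Therefore $A=\da\mathrm{max}(A)$, and since $A\neq\emptyset$ this also forces $\mathrm{max}(A)\neq\emptyset$.

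I expect the only genuinely delicate point to be the step showing that the element $m$ produced by Zorn inside $\ua x\cap A$ is maximal in all of $A$ rather than merely in the restricted set; this is exactly where intersecting with the upper set $\ua x$ pays off, since any element of $A$ dominating $m$ automatically re-enters $\ua x\cap A$. Everything else is routine bookkeeping, with the $d$-space hypothesis used solely to close $A$ under directed suprema so that the chain-condition needed for Zorn's Lemma is satisfied.
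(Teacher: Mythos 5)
Your proof is correct, and it is exactly the standard argument for this fact (the paper itself does not reprove the lemma but quotes it from \cite[Lemma 2.1]{xuzhao-20202}, where the proof runs along the same lines): the $d$-space hypothesis makes every closed set Scott closed, hence a lower set closed under directed suprema, and then Zorn's Lemma applied to $\ua x\cap A$ produces a maximal element of $A$ above any given $x\in A$. All the delicate points — using nonemptiness of the chains, and transferring maximality from $\ua x\cap A$ to $A$ via the upper-set trick — are handled properly.
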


A topological space $X$ is called \emph{sober}, if for any  $F\in\ir_c(X)$, there is a unique point $a\in X$ such that $F=\overline{\{a\}}$. Hausdorff spaces are always sober (see, e.g., \cite[Proposition 8.2.12]{Jean-2013}) and sober spaces are always $T_0$ since $\overline{\{x\}}=\overline{\{y\}}$ always implies $x=y$. The Sierpinski space $\Sigma~\!\!2$ is sober but not $T_1$ and
an infinite set with the co-finite topology is $T_1$ but not sober (see Example \ref{X d-space not imply Scott power space d-space}).

The following conclusion is well-known (see, e.g., \cite{redbook, quasicont, Heckmann-1992}).

\begin{proposition}\label{quasicontinuous domain is sober} For a quasicontinuous domain $P$, $\Sigma ~\!\! P$ is sober.
\end{proposition}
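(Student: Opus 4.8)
The goal is to show that for a quasicontinuous domain $P$, the Scott space $\Sigma\,P$ is sober. The plan is to take an arbitrary irreducible closed subset $A \in \ir_c(\Sigma\,P)$ and produce a (necessarily unique) point $a \in P$ with $A = \overline{\{a\}} = \da a$. Since any $\overline{\{x\}} = \da x$ is already irreducible closed and uniqueness is automatic in a $T_0$ space (and $\Sigma\,P$ is $T_0$ because the specialization order of $\Sigma\,P$ is the original order of $P$), the entire content is the existence of the point $a$, and the natural candidate is $a = \vee A$. So the first task is to verify that $A$ is directed; once $A$ is directed, $P$ being a dcpo (every quasicontinuous domain is directed complete) gives $\vee A$, and then I must check $A = \da(\vee A)$.

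First I would establish directedness of $A$. The key tool is the way-below/neighborhood structure of a quasicontinuous domain: for each $x \in P$, the family $\{\ua F : F \in w(x)\}$ is filtered with $\ua x = \bigcap\{\ua F : F \in w(x)\}$, and crucially each $\ii\,\ua F$ is Scott open with $x \in \ii\,\ua F$ whenever $F \ll x$ (this is the standard interpolation/openness property underlying Theorem~\ref{algebraic is continuous}(4), which says $\Sigma\,P$ is locally hypercompact). Given $x, y \in A$, I want an upper bound of $x$ and $y$ inside $A$. Suppose for contradiction no such upper bound exists, i.e. $A \cap \ua x \cap \ua y = \emptyset$. Using quasicontinuity, I would approximate: $\ua x = \bigcap\{\ua F : F \in w(x)\}$ and similarly for $y$, and exploit that $A$ meets every Scott-open set that it meets in an irreducible way. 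The cleaner route is to use irreducibility directly against the closed set $A$: if $x, y \in A$ have no common upper bound in $A$, I construct two Scott-open sets (built from interiors of $\ua F$-type neighborhoods of $x$ and of $y$) whose complements are closed sets covering $A$ but neither containing $A$, contradicting irreducibility.

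More precisely, the main step I expect to be delicate is converting ``$x,y \in A$'' into a common upper bound in $A$. I would argue: for $x \in A$, since $A$ is closed (hence a lower set) and irreducible, $A$ meets $\ii\,\ua F$ for suitable $F$; the plan is to show that the up-sets $\ua F$ with $F \ll x$ and $G \ll y$ interact so that some element of $A$ lies above both $x$ and $y$. Concretely, irreducibility says that for any two Scott-open sets $U, V$ meeting $A$, $U \cap V$ meets $A$. Taking $U = \ii\,\ua F \ni x$ and $V = \ii\,\ua G \ni y$, I get a point $z \in A \cap \ua F \cap \ua G$, so $z \ge F$-element and $\ge G$-element; then letting $F, G$ range over $w(x), w(y)$ and passing to the filtered intersections, together with the dcpo property and Proposition~\ref{d-spacecharac1}(4) (or Lemma~\ref{d-space max point}), I extract a genuine common upper bound of $x$ and $y$ lying in $A$. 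This is the heart of the argument and is where quasicontinuity is essential.

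Finally, with $A$ directed and $P$ a dcpo, set $a = \vee A$. Since $A$ is Scott-closed it is a lower set, so $\da A \subseteq A$; because $a = \vee A$ and $A$ is closed, the definition of Scott-closedness (the complement is Scott-open, hence inaccessible by directed joins lying outside it) forces $a \in A$, giving $\da a \subseteq A$. Conversely every element of $A$ is below $a$, so $A \subseteq \da a$, whence $A = \da a = \overline{\{a\}}$. Uniqueness follows from $T_0$-ness. I would remark that the only nontrivial input beyond directedness is the closedness argument $a \in A$, which is immediate from the fact that a Scott-closed set is closed under directed suprema of its own directed subsets. The principal obstacle throughout remains the directedness verification, for which the filtered family $\{\ua F : F \in w(x)\}$ and the local hypercompactness of $\Sigma\,P$ do the real work.
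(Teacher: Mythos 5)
Your reduction of sobriety to ``every irreducible Scott-closed set $A$ is directed'' is where the whole proof lives, and you never actually establish it; the mechanisms you point to cannot. From irreducibility you do get, for each $F\in w(x)$ and $G\in w(y)$, a witness $z_{F,G}\in A\cap\ii\,\ua F\cap\ii\,\ua G\subseteq A\cap\ua F\cap\ua G$; but these witnesses come from separate applications of irreducibility and need not form a directed (or even pairwise bounded) family, so the dcpo property and Proposition \ref{d-spacecharac1}(4) (which needs a directed family of points as input) produce no limit point, and Lemma \ref{d-space max point} only yields $A=\da\mathrm{max}(A)$, not that $\mathrm{max}(A)$ is a singleton. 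A compactness/finite-intersection argument is also unavailable: the sets $A\cap\ua F\cap\ua G$ need not be compact, because quasicontinuous domains need not be coherent. Concretely, in the algebraic dcpo $P=\{u,v\}\cup\{c_n : n\in\mn\}\cup\{a,b\}$ with $u,v<c_n<a,b$ for all $n$ and no other strict comparabilities, the set $\ua u\cap\ua v=\{c_n : n\in\mn\}\cup\{a,b\}$ is not compact in $\Sigma~\!\!P$. So nothing in your sketch rules out that the filtered family $\{A\cap\ua F\cap\ua G\}$ of nonempty sets has empty intersection --- and its nonemptiness is exactly the statement you need.

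The missing idea is Rudin's Lemma (Corollary \ref{rudin}); note that the paper states Proposition \ref{quasicontinuous domain is sober} without proof, citing it as classical, and the classical Gierz--Lawson--Stralka argument runs precisely through Rudin's Lemma. Moreover, that argument does not prove $A$ directed; it manufactures a directed \emph{subset} of $A$ whose supremum dominates $A$. Put $\mathcal F=\{H\in P^{(<\omega)} : \ii\,\ua H\cap A\neq\emptyset\}$. In a quasicontinuous domain $\ii\,\ua H=\{p : H\ll p\}$, so if $H_1,H_2\in\mathcal F$, irreducibility gives $z\in A\cap\ii\,\ua H_1\cap\ii\,\ua H_2$, and filteredness of $\{\ua H : H\in w(z)\}$ gives $H_3\in w(z)\cap\mathcal F$ with $\ua H_3\subseteq\ua H_1\cap\ua H_2$; hence $\{\ua H : H\in\mathcal F\}$ is filtered, and each member meets the lower set $A$. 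Rudin's Lemma then yields a directed $D\subseteq A$ such that $\da D$ meets every $\ua H$ with $H\in\mathcal F$. Let $a^*=\vee D$; then $a^*\in A$ since $A$ is Scott closed, and $a^*\in\ua H$ for every $H\in\mathcal F$. Now for any $w\in A$ and any $H\in w(w)$ we have $w\in\ii\,\ua H\cap A$, so $H\in\mathcal F$ and thus $a^*\in\ua H$; therefore $a^*\in\bigcap_{H\in w(w)}\ua H=\ua w$. This gives $A\subseteq\da a^*\subseteq A$, i.e. $A=\overline{\{a^*\}}$, proving sobriety; directedness of $A$ falls out at the end for free. Your proposal, as written, uses that directedness as an unproved input, and the Rudin-type selection needed to justify it is absent.
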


For the sobriety of the Smyth power spaces, we have the following well-known result.

\begin{theorem}\label{Schalk-Heckman-Keimel theorem}(Heckmann-Keimel-Schalk Theorem) (\cite[Theorem 3.13]{Klause-Heckmann}, \cite[Lemma 7.20]{Schalk}) For a $T_0$ space $X$, the following conditions are equivalent:
\begin{enumerate}[\rm (1)]
\item $X$ is sober.
 \item  For any $\mathcal A\in \ir(P_S(X))$ and $U\in \mathcal O(X)$, $\bigcap\mathcal K\subseteq U$ implies $K \subseteq U$ for some $K\in \mathcal A$.
 \item $P_S(X)$ is sober.
\end{enumerate}
\end{theorem}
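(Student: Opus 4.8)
The plan is to prove the cycle $(1)\Rightarrow(3)\Rightarrow(2)\Rightarrow(1)$, which is the most economical route given the tools already assembled in the excerpt. The implication $(3)\Rightarrow(2)$ is essentially a translation exercise: if $P_S(X)$ is sober, I would unpack what sobriety gives me about irreducible subsets $\mathcal{A}$ of $P_S(X)$. Using Lemma~\ref{closure in Smyth power space} and the description of the closed base $\{\Diamond C : C\in\mathcal C(X)\}$, I would relate the closure $\cl\mathcal A$ to a principal closure $\cl\{K_0\}=\Diamond K_0$ for a suitable $K_0\in\mathsf K(X)$, and then observe that the condition ``$\bigcap\mathcal K\subseteq U$ implies $K\subseteq U$ for some $K\in\mathcal A$'' is exactly the statement that $U$ (viewed via $\Box U$) meets $\mathcal A$ whenever it meets its closure point. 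The $(2)\Rightarrow(1)$ direction is where I would verify sobriety of $X$ directly: given $F\in\ir_c(X)$, I want to produce a point $a$ with $\overline{\{a\}}=F$. The natural candidate family is $\mathcal A=\{\ua x : x\in F\}=\xi_X(F)$, which by Lemma~\ref{irrimage} (applied to the embedding $\xi_X$) is irreducible in $P_S(X)$; condition (2) applied to this $\mathcal A$, with $\bigcap\mathcal K=\bigcap_{x\in F}\ua x$, should force $F$ to have the required least/generic point.

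The genuinely substantial implication is $(1)\Rightarrow(3)$: assuming $X$ is sober, I must show $P_S(X)$ is sober. First I would check $P_S(X)$ is $T_0$, which is immediate since its specialization order is the Smyth order (Remark~\ref{xi embdding}(1)), a genuine partial order on $\mathsf K(X)$. The heart of the matter is, given $\mathcal F\in\ir_c(P_S(X))$, to find a unique $\mathcal K_0\in\mathsf K(X)$ with $\mathcal F=\cl_{P_S(X)}\{\mathcal K_0\}=\Diamond\mathcal K_0$ (uniqueness coming from $T_0$). The plan is to set $\mathcal K_0=\bigcap\{K : K\in\mathcal F\}$, or more precisely to extract the candidate compact saturated set as an intersection governed by the irreducibility of $\mathcal F$. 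The key technical point is showing this intersection is nonempty and compact, so that $\mathcal K_0\in\mathsf K(X)$. Here I expect to lean on the sobriety of $X$ in the form provided by condition (2) of the theorem as already available machinery, or—if proving $(1)\Rightarrow(3)$ first—on a Rudin-type argument: the family $\mathcal F$ of compact saturated sets, being irreducible (hence ``filtered-like'' with respect to the order), should admit a filtered subfamily whose intersection is compact by the topological Rudin Lemma, and sobriety of $X$ is precisely what guarantees this intersection is again in $\mathsf K(X)$.

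The main obstacle, as I see it, is exactly establishing that the candidate $\mathcal K_0$ lies in $\mathsf K(X)$ and that $\Diamond\mathcal K_0$ coincides with $\mathcal F$ rather than merely containing or being contained in it. Nonemptiness and compactness of a filtered intersection of compact saturated sets is a classical consequence of well-filteredness, and sober spaces are well-filtered; so I would first invoke (or establish) that sobriety yields the needed closure property of $\mathsf K(X)$ under the relevant filtered intersections. Then, to show $\mathcal F=\Diamond\mathcal K_0$, I would argue both inclusions using Remark~\ref{closure A = closre B}(1): for any basic open $\Box U$, I must show $\Box U$ meets $\mathcal F$ iff $\mathcal K_0\in\Box U$, i.e. iff $\mathcal K_0\subseteq U$. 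The forward direction uses that $\mathcal K_0$ is below (in the Smyth order) every member of $\mathcal F$; the reverse direction—that $\mathcal K_0\subseteq U$ forces some $K\in\mathcal F$ into $\Box U$—is where compactness of $\mathcal K_0$ together with irreducibility of $\mathcal F$ must be combined, and this is the step I anticipate requiring the most care.
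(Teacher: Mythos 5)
The paper itself does not prove this theorem --- it is quoted from \cite[Theorem 3.13]{Klause-Heckmann} and \cite[Lemma 7.20]{Schalk} --- so your proposal has to stand on its own merits. Your two easy implications do stand. For $(3)\Rightarrow(2)$: if $\cl\,\mathcal A=\cl\,\{K_0\}$ in $P_S(X)$, then $K_0\subseteq K$ for every $K\in\mathcal A$, hence $K_0\subseteq\bigcap\mathcal A\subseteq U$, so the open set $\Box U$ contains $K_0$, meets $\cl\,\mathcal A$, and therefore meets $\mathcal A$; this is exactly your sketch. For $(2)\Rightarrow(1)$ your sketch is right once the one missing choice is made explicit: apply (2) to $\mathcal A=\xi_X(F)$ (irreducible by Lemma \ref{irrimage}) with $U=X\setminus F$; since no $\ua x$ with $x\in F$ can be contained in $X\setminus F$, condition (2) forces $\bigl(\bigcap_{x\in F}\ua x\bigr)\cap F\neq\emptyset$, and any point $a$ in that set gives $F=\overline{\{a\}}$.

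The genuine gap is in $(1)\Rightarrow(3)$, which is where all the content of the theorem lives, and neither of your two proposed mechanisms for it works. First, in your cycle $(1)\Rightarrow(3)\Rightarrow(2)\Rightarrow(1)$, condition (2) is derived \emph{from} (3), so ``leaning on condition (2) as already available machinery'' when proving (3) is circular. Second, your fallback ``Rudin-type argument'' misreads Lemma \ref{t Rudin}: the topological Rudin Lemma does not extract from an irreducible family $\mathcal F\subseteq\mk(X)$ a filtered subfamily (cofinal, or with the same intersection); it produces a minimal irreducible closed subset of $X$ itself (downstairs, not upstairs in $P_S(X)$) meeting all members of $\mathcal F$. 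Indeed no such filtered subfamily need exist: in Johnstone's space, $\{\ua x : x\in\mathbb J\}$ is irreducible in $P_S(\Sigma\,\mathbb J)$, its filtered subfamilies are exactly the families $\{\ua d : d\in D\}$ with $D\subseteq\mathbb J$ directed, and each of these has nonempty intersection (it contains $\vee D$), whereas $\bigcap_{x\in\mathbb J}\ua x=\emptyset$. Moreover, the reduction to well-filteredness cannot be repaired by any rearrangement: by your own (correct) argument, (2) implies sobriety of $X$, so any derivation of (2) or (3) whose inputs are only irreducibility, the Rudin Lemma (valid in every $T_0$ space), and well-filteredness would prove ``well-filtered $\Rightarrow$ sober'', which is false (Isbell's lattice, or the space $\Sigma\,\mathcal L$ of Proposition \ref{Jia space WF non-sober}). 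Sobriety must be used at full strength, and the standard way is the Heckmann--Keimel argument, all of whose ingredients are in the paper: suppose $\bigcap\mathcal F\subseteq U$ but no $K\in\mathcal F$ satisfies $K\subseteq U$; apply Lemma \ref{t Rudin} to the closed set $C=X\setminus U$, which meets every member of $\mathcal F$, to obtain a minimal irreducible closed $A\subseteq C$ still meeting every $K\in\mathcal F$; by sobriety $A=\overline{\{a\}}$; since each $K$ is saturated and meets $\da a$, we get $a\in K$ for every $K\in\mathcal F$, hence $a\in\bigcap\mathcal F\subseteq U$, contradicting $a\in C$. This proves $(1)\Rightarrow(2)$; after that, your own intersection computation ($K_0=\bigcap\mathcal F$ is nonempty and compact by (2) applied to the empty set and to open covers, and $K_0\in\mathcal F$ because otherwise some $\Box U$ would contain $K_0$ and miss $\mathcal F$, contradicting (2)) correctly yields (3), and your $(3)\Rightarrow(2)\Rightarrow(1)$ closes the equivalences. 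In short: the workable route applies the Rudin Lemma in $X$, not in $P_S(X)$, and that is precisely the step your sketch is missing.
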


A $T_0$ space $X$ is called \emph{well-filtered} if for any filtered family $\mathcal{K}\subseteq \mathord{\mathsf{K}}(X)$ and open set $U$, $\bigcap\mathcal{K}{\subseteq} U$ implies $K{\subseteq} U$ for some $K{\in}\mathcal{K}$.

\begin{remark}\label{sober implies WF implies d-space} The following implications are well-known (which are irreversible) (cf. \cite{redbook}):

$$\mbox{sobriety $\Rightarrow$ well-filteredness $\Rightarrow$ $d$-space.}$$
\end{remark}

In \cite{Xi-Lawson-2017} and \cite{Hofmann-Lawson-1978, Kou}, the following two useful results were given.

\begin{proposition}\label{Xi-Lawson result 1} (\cite[Corollary 3.2]{Xi-Lawson-2017})
If a dcpo $P$ endowed with the Lawson topology is compact (in particular, $P$ is a complete lattice), then $\Sigma P$ is well-filtered.
\end{proposition}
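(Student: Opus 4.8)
The plan is to argue by contradiction, using the topological Rudin Lemma (the basic tool recalled in Section~5) together with the $d$-space structure of $\Sigma P$, and to bring in the hypothesis only through the single fact that Scott-closed subsets of $P$ are compact in the Lawson topology. First I would record the easy structural facts. Since $P$ is a dcpo, $\Sigma P$ is a $d$-space, so by Lemma~\ref{d-space max point} every nonempty Scott-closed set $A$ satisfies $A=\da\,\mathrm{max}(A)$ with $\mathrm{max}(A)\neq\emptyset$. Moreover the Lawson topology $\lambda(P)$ refines the Scott topology, so every Scott-closed set is Lawson-closed and hence, as $(P,\lambda(P))$ is compact, Lawson-compact. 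Now suppose $\Sigma P$ is not well-filtered: there are a filtered family $\mathcal K\subseteq\mk(\Sigma P)$ and a Scott-open $U$ with $\bigcap\mathcal K\subseteq U$ but $K\not\subseteq U$ for every $K\in\mathcal K$. Put $C=P\setminus U$; then $C$ is Scott-closed and $C\cap K\neq\emptyset$ for all $K\in\mathcal K$.

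Next I would apply the topological Rudin Lemma to the filtered family $\mathcal K$ and the closed set $C$: there is a minimal Scott-closed set $A\subseteq C$ still meeting every member of $\mathcal K$, and this minimal set is irreducible. The decisive point is to show that $A$ has a greatest element $a^{\ast}$. Granting this, the proof finishes in one line: $A=\da a^{\ast}=\overline{\{a^{\ast}\}}$, and for each $K\in\mathcal K$ we may pick $x\in A\cap K$; since $x\le a^{\ast}$ and $K$ is an upper set (being saturated), $a^{\ast}\in K$. Hence $a^{\ast}\in\bigcap\mathcal K\subseteq U$, contradicting $a^{\ast}\in A\subseteq C=P\setminus U$. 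Therefore some $K\subseteq U$, and $\Sigma P$ is well-filtered.

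The main obstacle is exactly producing the greatest element of $A$, and this is where compactness of the Lawson topology must be used essentially. Note that $A$ is Scott-closed, hence Lawson-closed, hence Lawson-compact, and $A=\da\,\mathrm{max}(A)$. Irreducibility alone cannot force a single maximal element: Isbell's complete lattice gives a Lawson-compact dcpo whose $\Sigma P$ is not sober, so general irreducible Scott-closed sets may have no top; equally, Johnstone's dcpo is a $d$-space in which such Rudin-minimal sets fail to have tops. Thus the argument must genuinely combine Lawson-compactness with the \emph{minimality} of $A$. The natural mechanism I would pursue is to prove that $A$ is directed: the family $\{A\cap\ua a:a\in A\}$ consists of Lawson-compact sets (as $\ua a$ is Lawson-closed and $A$ is Lawson-compact), so if one can rule out, via minimality, the existence of $a,b\in A$ with $A\cap\ua a\cap\ua b=\emptyset$, then this family has the finite intersection property and Lawson-compactness yields a point above all of $A$, i.e.\ a greatest element, which as a Scott-closed dcpo is then $\vee A$. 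Establishing this interaction between minimality and Lawson-compactness is the technical heart of the statement, and is precisely what is isolated in~\cite[Corollary 3.2]{Xi-Lawson-2017}.
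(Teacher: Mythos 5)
You should first note that this proposition is not proved in the paper at all: it is quoted from \cite[Corollary 3.2]{Xi-Lawson-2017}, so your argument has to stand entirely on its own. Its outer shell does stand: the contradiction setup, the invocation of the topological Rudin Lemma (legitimate, since a filtered family in $\mk(\Sigma P)$ is directed in the Smyth order, hence an irreducible subset of $P_S(\Sigma P)$), the observation that a greatest element $a^{\ast}$ of the minimal closed set $A$ forces $a^{\ast}\in\bigcap\mathcal K\subseteq U$ against $a^{\ast}\in A\subseteq P\setminus U$, and the compactness argument that if $A$ is directed then the family $\{A\cap\ua a : a\in A\}$ of Lawson-closed subsets of the Lawson-compact set $A$ has the finite intersection property (pairwise directedness gives finite directedness by induction), so its intersection yields a greatest element of $A$. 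All of these reductions are correct.

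The genuine gap is the step you yourself flag: that minimality of $A$ rules out $a,b\in A$ with $A\cap\ua a\cap\ua b=\emptyset$. This is never established, and it cannot be discharged by pointing to \cite[Corollary 3.2]{Xi-Lawson-2017} --- that corollary \emph{is} the statement under proof, so the appeal is circular. Nor is the step a routine verification: the minimality of $A$ furnished by the Rudin Lemma is minimality among \emph{Scott-closed} sets meeting all members of $\mathcal K$, whereas $A\cap\ua a$ and $A\cap\ua a\cap\ua b$ are only Lawson-closed ($\ua a$ is not Scott-closed), so these sets cannot be fed to the minimality hypothesis directly. To exploit minimality one must manufacture, from the assumed failure of directedness, a \emph{proper Scott-closed} subset of $A$ that still meets every $K\in\mathcal K$, and doing that requires using the Scott-compactness of the members of $\mathcal K$ (which your argument so far uses only through their saturation) in genuine combination with Lawson compactness. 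That interplay is the entire mathematical content of the theorem; as you correctly observe, irreducibility alone cannot substitute for it (Isbell's lattice), and minimality in a mere $d$-space cannot either (Johnstone's dcpo). So what you have is a correct reduction of the proposition to its hardest step, together with a circular citation in place of that step --- not a proof.
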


\begin{theorem}\label{SoberLC=CoreC}(\cite[Corollary 4.6]{Hofmann-Lawson-1978}, \cite[Theorem 2.3]{Kou})  For a $T_0$ space $X$, the following conditions are equivalent:
\begin{enumerate}[\rm (1)]
	\item $X$ locally compact and sober.
	\item $X$ is locally compact and well-filtered.
	\item $X$ is core-compact and sober.
\end{enumerate}
\end{theorem}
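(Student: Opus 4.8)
The three statements are linked by a short cycle of implications together with two substantial ones, so the plan is to dispose of the trivial directions first. The implication $(1)\Rightarrow(2)$ is immediate from Remark~\ref{sober implies WF implies d-space}, since sobriety entails well-filteredness; and $(1)\Rightarrow(3)$ is immediate from Remark~\ref{core-compact is not locally compact}, since every locally compact space is core-compact. Thus it remains to prove $(2)\Rightarrow(1)$, that a locally compact well-filtered space is sober, and $(3)\Rightarrow(1)$, that a core-compact sober space is locally compact; these two facts close both loops $(1)\Leftrightarrow(2)$ and $(1)\Leftrightarrow(3)$ and hence yield the full equivalence.

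For $(2)\Rightarrow(1)$, let $X$ be locally compact and well-filtered and let $C\in\ir_c(X)$ be nonempty. The plan is to manufacture a generic point of $C$. Consider the family $\mathcal{K}=\{K\in\mk(X): C\cap\ii\,K\neq\emptyset\}$. Using that $C$ is irreducible (so any two open sets meeting $C$ meet $C$ simultaneously) together with local compactness (to shrink an intersection of interiors to a compact saturated neighborhood of a chosen point of $C$), I would check that $\mathcal{K}$ is filtered in the Smyth order. Since $X$ is well-filtered, $Q:=\bigcap\mathcal{K}$ is then a nonempty compact saturated set. If $Q\cap C=\emptyset$, then $Q\subseteq X\setminus C$, and well-filteredness would supply some $K\in\mathcal{K}$ with $K\subseteq X\setminus C$, contradicting $\emptyset\neq C\cap\ii\,K\subseteq C\cap K$; hence $Q\cap C\neq\emptyset$. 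Finally, for any $a\in Q\cap C$ and any open $U$ meeting $C$, local compactness produces $K\in\mathcal{K}$ with $K\subseteq U$, whence $a\in Q\subseteq K\subseteq U$; this shows $C\subseteq\overline{\{a\}}$, and $a\in C$ gives $C=\overline{\{a\}}$. Uniqueness of the generic point follows from $T_0$-ness, so $X$ is sober.

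For $(3)\Rightarrow(1)$ I would follow the Hofmann--Lawson route. Core-compactness means $\mathcal{O}(X)$ is a continuous lattice (Definition~\ref{continuous domain etc}), so the way-below relation $\ll$ on $\mathcal{O}(X)$ interpolates and every open $U$ is the directed union $\bigcup\{V: V\ll U\}$. Fix $x\in U$; then $x\in V\ll U$ for some open $V$. Interpolating repeatedly yields a descending chain $V\ll\cdots\ll V_2\ll V_1\ll U$ with $V\subseteq V_i$ for all $i$, and I would set $F=\bigcup_i\{W\in\mathcal{O}(X): V_i\subseteq W\}$. A direct check shows $F$ is a Scott-open filter of $\mathcal{O}(X)$ with $U\in F$ and $V\subseteq\bigcap F$ (filteredness comes from the chain being descending, Scott-openness from $V_{i+1}\ll V_i$). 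Now invoke the Hofmann--Mislove theorem, valid because $X$ is sober: $Q:=\bigcap F$ is compact saturated and $F=\{W: Q\subseteq W\}$. Then $V\subseteq Q\subseteq U$ (the latter since $U\in F$), so $x\in V\subseteq\ii\,Q\subseteq Q\subseteq U$ with $Q\in\mk(X)$, proving local compactness.

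The main obstacle is the appeal to the Hofmann--Mislove theorem in $(3)\Rightarrow(1)$---specifically the compactness of $Q=\bigcap F$. This is exactly the point where sobriety is indispensable: the standard argument assumes a directed open cover of $Q$ with no single member containing $Q$, uses Zorn's lemma to enlarge its union to an open set $O^{*}\notin F$ that is maximal with this property, and then shows $X\setminus O^{*}$ is irreducible and closed, so by sobriety $X\setminus O^{*}=\overline{\{p\}}$ for some $p\in Q$, contradicting $p\in O^{*}$. If one does not wish to cite Hofmann--Mislove directly, this Zorn-and-irreducibility argument is the single nontrivial step that must be carried out in full; everything else is routine verification.
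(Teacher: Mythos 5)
Your proposal is correct, but there is nothing in the paper to compare it against: Theorem \ref{SoberLC=CoreC} is stated there without proof, as a known classical result cited from Hofmann--Lawson \cite{Hofmann-Lawson-1978} and Kou \cite{Kou}. What you have done is reconstruct those classical arguments. Your proof of (2) $\Rightarrow$ (1) --- forming the family $\mathcal K=\{K\in\mk(X): C\cap \ii\, K\neq\emptyset\}$, checking it is filtered via irreducibility plus local compactness, and then playing well-filteredness against the open set $X\setminus C$ to find a point of $\bigcap\mathcal K\cap C$ that is forced to be a generic point of $C$ --- is essentially Kou's proof, and it is correct as written; one pedantic remark is that you never need $\bigcap\mathcal K$ to be compact (that fact does hold in well-filtered spaces, by Corollary \ref{wf space Vietoris less Scott}, but your argument only uses the defining implication of well-filteredness, so the parenthetical claim can simply be dropped). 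Your proof of (3) $\Rightarrow$ (1) is the Hofmann--Lawson argument: interpolation in the continuous lattice $\mathcal O(X)$ produces a Scott-open filter $F$ with $V\subseteq\bigcap F$ and $U\in F$, and the Hofmann--Mislove theorem (the one place sobriety enters, via the Zorn-plus-irreducibility argument you sketch, which is also the standard and correct one) makes $\bigcap F$ a compact saturated neighborhood of $x$ inside $U$. It is worth noting what the paper's own later machinery buys that your route does not: Proposition \ref{LCrudin and core-compact is WD} together with Theorem \ref{soberequiv} gives Corollary \ref{WFcorcomp-sober} (every core-compact well-filtered space is sober), which upgrades Theorem \ref{SoberLC=CoreC} to Theorem \ref{SoberLC=CoreCNew} with the fourth equivalent condition ``core-compact and well-filtered''. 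Your argument for (2) $\Rightarrow$ (1) genuinely uses local compactness to manufacture compact neighborhoods and cannot be run under core-compactness alone, so it proves exactly the classical statement, whereas the Rudin-set approach in the paper yields the strictly stronger one.
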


For the well-filteredness of topological spaces, a similar result to Theorem \ref{Schalk-Heckman-Keimel theorem} was proved in \cite{xuxizhao} (see also \cite{XSXZ-2020}).

\begin{theorem}\label{Smythwf} (\cite[Theorem 5.3]{XSXZ-2020}, \cite[Theorem 4]{xuxizhao})
	For a $T_0$ space, the following conditions are equivalent:
\begin{enumerate}[\rm (1)]
		\item $X$ is well-filtered.
        \item $P_S(X)$ is a $d$-space.
        \item $P_S(X)$ is well-filtered.
\end{enumerate}
\end{theorem}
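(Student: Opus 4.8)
The plan is to establish the cycle $(1)\Rightarrow(3)\Rightarrow(2)\Rightarrow(1)$. The implication $(3)\Rightarrow(2)$ is free: by Remark~\ref{sober implies WF implies d-space} every well-filtered space is a $d$-space, so there is nothing to prove there.

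For $(2)\Rightarrow(1)$, let $\mathcal K\subseteq\mk(X)$ be a filtered family and $U\in\mathcal O(X)$ with $\bigcap\mathcal K\subseteq U$. By Remark~\ref{xi embdding}(1) the specialization order of $P_S(X)$ is the Smyth order $\sqsubseteq$, so a family that is filtered under $\subseteq$ is exactly a directed subset of the specialization poset of $P_S(X)$. Since $P_S(X)$ is assumed to be a $d$-space, this directed set has a join, and by Lemma~\ref{Kmeet} that join is $\bigcap\mathcal K$, which therefore lies in $\mk(X)$. Again because $P_S(X)$ is a $d$-space we have $\mathcal O(P_S(X))\subseteq\sigma(\mk(X))$, so the basic open set $\Box U$ is Scott open. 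As $\bigvee\mathcal K=\bigcap\mathcal K\subseteq U$ means $\bigvee\mathcal K\in\Box U$, Scott openness forces $K\in\Box U$ for some $K\in\mathcal K$, i.e.\ $K\subseteq U$. Hence $X$ is well-filtered.

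The real content is $(1)\Rightarrow(3)$. Let $\mathcal Q\subseteq\mk(P_S(X))$ be filtered and $\mathcal U\in\mathcal O(P_S(X))$ with $\bigcap\mathcal Q\subseteq\mathcal U$, and put $\mathcal C=\mk(X)\setminus\mathcal U$, a closed subset of $P_S(X)$. Arguing by contradiction, I would assume $Q\not\subseteq\mathcal U$, i.e.\ $Q\cap\mathcal C\neq\emptyset$, for every $Q\in\mathcal Q$. For each $Q$ the set $Q\cap\mathcal C$ is a closed subset of the compact set $Q$, hence compact in $P_S(X)$; its upward closure in the Smyth order belongs to $\mk(P_S(X))$, so by Lemma~\ref{K union}(1) the union $L_Q:=\bigcup(Q\cap\mathcal C)$ is a nonempty member of $\mk(X)$. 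Since $\mathcal Q$ is down-directed under inclusion, so is $\{L_Q:Q\in\mathcal Q\}$, i.e.\ it is a filtered family in $\mk(X)$, to which the well-filteredness of $X$ can be applied (in particular $\bigcap_Q L_Q\neq\emptyset$, since otherwise taking the open set $\emptyset$ would force some $L_Q=\emptyset$).

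The step I expect to be the main obstacle is converting the output of well-filteredness of $X$ into a contradiction with $\bigcap\mathcal Q\subseteq\mathcal U$. What well-filteredness of $X$ yields is a witness living in $X$, namely a point $x\in\bigcap_Q L_Q$, so that for every $Q$ there is some $K\in Q\cap\mathcal C$ with $x\in K$; what is actually needed is a single compact saturated set $K^{\ast}\in\bigcap\mathcal Q$ that also lies in $\mathcal C$, which would give $K^{\ast}\in\bigcap\mathcal Q\setminus\mathcal U$. Passing from a witness in $X$ to a common member of all the $Q$'s inside $\mathcal C$ is delicate, because the naive route, intersecting the filtered family $\{Q\cap\mathcal C\}$ directly inside $P_S(X)$, would already require the well-filteredness of $P_S(X)$ that we are trying to prove. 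I would resolve this not by a bootstrap but by a selection argument of Rudin type: using Lemma~\ref{COMPminimalset} to reduce compact saturated sets to their minimal points and the continuity of the union map $\bigcup\colon P_S(P_S(X))\to P_S(X)$ from Lemma~\ref{K union}(2), one extracts a minimal compact saturated witness inside $\mathcal C$ compatible with the filtered structure of $\mathcal Q$, producing the desired $K^{\ast}$. This Rudin-style descent is the technically hardest and most delicate part of the argument.
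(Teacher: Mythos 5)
Your implications $(3)\Rightarrow(2)$ and $(2)\Rightarrow(1)$ are correct and complete: the first is immediate from Remark \ref{sober implies WF implies d-space}, and the second correctly combines Remark \ref{xi embdding}(1), Lemma \ref{Kmeet}, and both defining properties of a $d$-space to pass from $\bigvee \mathcal K=\bigcap\mathcal K\in\Box U$ to $K\in \Box U$ for some $K\in\mathcal K$.

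The gap is in $(1)\Rightarrow(3)$, exactly where you flag it, and your proposed repair is not a proof. Note first that the witness produced by your construction is genuinely useless: given $x\in\bigcap_Q L_Q$, for each $Q\in\mathcal Q$ there is $K\in Q\cap\mathcal C$ with $x\in K$, i.e. $K\sqsubseteq \ua x$; since each $Q$ is saturated (an upper set in the Smyth order, by Remark \ref{xi embdding}(1)), this gives $\ua x\in Q$ for every $Q$, hence $\ua x\in\bigcap\mathcal Q\subseteq\mathcal U$. But $\mathcal C$ is a \emph{lower} set, so $K\in\mathcal C$ with $K\sqsubseteq\ua x$ says nothing about $\ua x$ lying in $\mathcal C$, and no contradiction can be extracted; the point $x$ only reproduces elements of $\mathcal U$. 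The missing idea is the topological Rudin Lemma (Lemma \ref{t Rudin}): since $\mathcal Q$ is filtered it is directed, hence irreducible in $P_S(P_S(X))$, so the closed set $\mathcal C$ contains a \emph{minimal} irreducible closed subset $\mathcal A$ that still meets every $Q\in\mathcal Q$, and the whole construction must be run inside $\mathcal A$ rather than $\mathcal C$. Setting $K_Q=\bigcup\ua_{\mk(X)}(Q\cap\mathcal A)\in\mk(X)$ (Lemma \ref{K union}), each $K_Q$ lies in $\mathcal A$ because $\mathcal A$ is a lower set; the family $\{K_Q : Q\in\mathcal Q\}$ is filtered, so $K=\bigcap_Q K_Q\in\mk(X)$ by well-filteredness of $X$, and $K=\bigvee_{\sqsubseteq}K_Q\in\mathcal A$ because closed subsets of $P_S(X)$ are closed under directed suprema (this uses $(1)\Rightarrow(2)$, i.e. Corollary \ref{wf space Vietoris less Scott}). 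Minimality of $\mathcal A$ is then precisely what forces $\mathcal A\subseteq \Diamond\overline{\{k\}}$ for every $k\in K$, whence $\mathcal A=\da_{\sqsubseteq}K$; since each $Q$ is upper and meets $\da_{\sqsubseteq}K$, we get $K\in\bigcap\mathcal Q\subseteq\mathcal U$, contradicting $K\in\mathcal A\subseteq\mk(X)\setminus\mathcal U$. Your ``Rudin-style descent'' via Lemma \ref{COMPminimalset} and the continuity of $\bigcup$ does not substitute for this: the minimal object needed is a minimal irreducible \emph{closed subset of} $P_S(X)$, not a minimal compact set or its minimal points, and producing it is exactly what Lemma \ref{t Rudin} does. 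The scheme you are missing is written out in full in the paper's proof of Theorem \ref{wf imply Scott wf}, which is the Scott-topology analogue of this implication.
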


\begin{corollary}\label{wf space Vietoris less Scott}
	For a well-filtered space (especially, a sober space) $X$, $\mathsf{K}(X)$ (with the Smyth order) is a dcpo and the upper Vietoris topology is coarser than the Scott topology on $\mathsf{K}(X)$.
\end{corollary}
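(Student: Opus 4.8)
The plan is to derive both conclusions directly from Theorem~\ref{Smythwf} together with the mere definition of a $d$-space, so that essentially no hands-on computation with compact saturated sets is required. First I would record that by Remark~\ref{sober implies WF implies d-space} every sober space is well-filtered, so it suffices to treat the well-filtered case; and by Theorem~\ref{Smythwf} a well-filtered space $X$ has the property that its Smyth power space $P_S(X)$ is a $d$-space. These two observations already dispose of the parenthetical ``especially a sober space'' and provide the single structural input that drives the rest of the proof.

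For the directed-completeness claim I would simply unwind the definition of a $d$-space: $P_S(X)$ being a $d$-space means in particular that its specialization poset is a dcpo. By Remark~\ref{xi embdding}(1) the specialization order of $P_S(X)$ is exactly the Smyth order $\sqsubseteq$ on $\mathsf{K}(X)$, so $(\mathsf{K}(X),\sqsubseteq)$ is a dcpo, which is the first assertion. If one prefers an intrinsic verification, Lemma~\ref{Kmeet} reduces the claim to showing that the filtered intersection $\bigcap\mathcal{K}$ of a directed family in $(\mathsf{K}(X),\sqsubseteq)$ again lies in $\mathsf{K}(X)$: saturation is immediate, nonemptiness follows by applying the well-filtered condition to the open set $\emptyset$ (no single $K$ can be contained in $\emptyset$), and compactness follows by applying the well-filtered condition to an arbitrary open set containing the intersection.

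For the comparison of topologies I would again appeal to the $d$-space property: by definition a $d$-space satisfies $\mathcal{O}(P_S(X))\subseteq\sigma(P_S(X))$, that is, every open set of $P_S(X)$ is Scott open for the specialization order. Since that specialization order is $\sqsubseteq$, the topology $\sigma(P_S(X))$ is precisely the Scott topology $\sigma(\mathsf{K}(X))$ of the poset $(\mathsf{K}(X),\sqsubseteq)$, whereas $\mathcal{O}(P_S(X))$ is by definition the upper Vietoris topology. Hence the upper Vietoris topology is contained in the Scott topology on $\mathsf{K}(X)$, as required.

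The whole argument is a bookkeeping exercise once Theorem~\ref{Smythwf} is available; the only point that deserves a little care is the identification $\sigma(P_S(X))=\sigma(\mathsf{K}(X))$, which rests entirely on the observation in Remark~\ref{xi embdding}(1) that the specialization order of the Smyth power space coincides with the Smyth order. The genuine mathematical content---that well-filteredness forces $P_S(X)$ to be a $d$-space---has already been isolated in Theorem~\ref{Smythwf}, so no real obstacle remains.
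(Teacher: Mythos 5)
Your proposal is correct and follows essentially the same route as the paper: the corollary is stated there without proof precisely because it is the immediate unwinding of Theorem~\ref{Smythwf} ($X$ well-filtered iff $P_S(X)$ is a $d$-space) together with Remark~\ref{xi embdding}(1) identifying the specialization order of $P_S(X)$ with the Smyth order, which is exactly your main argument. Your optional ``intrinsic'' verification via Lemma~\ref{Kmeet} is also sound (though the compactness step deserves the extra line: apply well-filteredness to the union of an open cover to capture some $K\in\mathcal K$ inside it, then use compactness of $K$ and directedness of the cover), but it is not needed once the $d$-space reading is in place.
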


By Theorem \ref{Smythwf} and Corollary \ref{wf space Vietoris less Scott} we know that for a $T_0$ space $X$, if $P_S(X)$ is a $d$-space (equivalently, $X$ is a well-filtered space), then $\Sigma~\!\!\mathsf{K}(X)$ is a $d$-space. Example \ref{Scott sober not implies X is wf} below shows that $\Sigma~\!\!\mathsf{K}(X)$ is a sober space does not imply that $X$ is well-filtered (i.e., $P_S(X)$ is a $d$-space) in general.

The following example shows that there is a $T_0$ space $X$ such that $\mathsf{K}(X)$ (with the Smyth order) is a dcpo but $X$ is not well-filtered.

\begin{example}\label{K(X) is dcpo not implies X is wf} (Johnstone's dcpo adding a top element)  Let $\mathbb{J}=\mathbb{N}\times (\mathbb{N}\cup \{\infty\})$ with ordering defined by $(j, k)\leq (m, n)$ if{}f $j = m$ and $k \leq n$, or $n =\infty$ and $k\leq m$. $\mathbb{J}$ is a well-known dcpo constructed by Johnstone in \cite{johnstone-81} (see Figure 2).

\begin{figure}[ht]
	\centering
	\includegraphics[height=4.5cm,width=4.5cm]{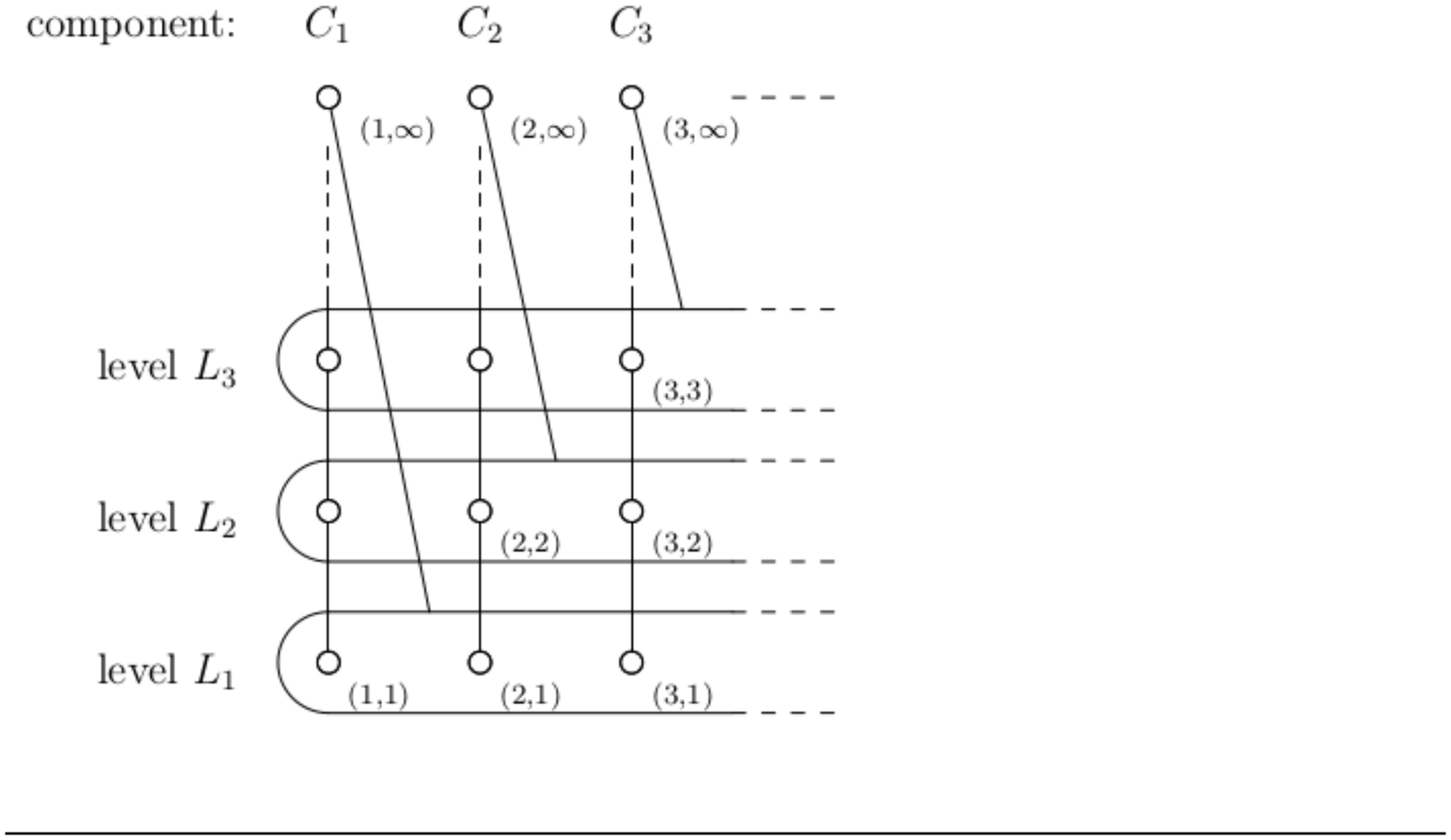}
	\caption{Johnstone's dcpo $\mathbb{J}$}
\end{figure}

\noindent The set $\mathbb{J}_{max}=\{(n, \infty) : n\in\mn \}$ is the set of all maximal elements of $\mathbb{J}$. Adding top $\top$ to $\mathbb{J}$  yields a dcpo $\mathbb{J}_{\top}=\mathbb{J}\cup \{\top\}$ ($x\leq \top$ for any $x\in \mathbb{J}$). Then $\top$ is the largest element of $\mathbb{J}_{\top}$ and $\{\top\}\in \sigma (\mathbb{J}_{\top})$. The following three conclusions about $\Sigma~\!\!\mathbb{J}$ are known (see, for example, \cite[Example 3.1]{LL-2017} and \cite[Lemma 3.1]{MLZ-2021}):
\begin{enumerate}[\rm (i)]
\item $\ir_c (\Sigma~\!\!\mathbb{J})=\{\overline{\{x\}}=\da_{\mathbb{J}} x : x\in \mathbb{J}\}\cup \{\mathbb{J}\}$.
\item $\mathsf{K}(\Sigma~\!\!\mathbb{J})=(2^{\mathbb{J}_{max}} \setminus \{\emptyset\})\bigcup \mathbf{Fin}~\!\mathbb{J}$.
\item $\Sigma~\!\!\mathbb{J}$ is not well-filtered.
\end{enumerate}
\noindent Hence we have
\begin{enumerate}[\rm (a)]
\item $\ir_c (\Sigma~\!\!\mathbb{J}_\top)=\{\overline{\{x\}}=\da_{\mathbb{J}_{\top}} x : x\in \mathbb{J}_\top\}\cup \{\mathbb{J}\}$ by (i).
\item $\mathsf{K}(\Sigma~\!\!\mathbb{J}_\top)=\{\ua_{\mathbb{J}_{\top}} G : G \mbox{~is nonempty and~} G\subseteq \mathbb{J}_{max}\cup\{\top\}\}\bigcup \mathbf{Fin}~\!\mathbb{J}_\top$ by (ii).
\item $\mathsf{K}(\Sigma~\!\!\mathbb{J})$ is not a dcpo.

$\mathcal G=\{\mathbb{J}\setminus F : F\in (\mathbb{J}_{max})^{(<\omega)}\}$. Then by (ii), $\mathcal G\subseteq \mathsf{K}(\Sigma~\!\!\mathbb{J}_\top)$ is a filtered family and $\bigcap\mathcal{G}=\bigcap_{F\in (\mathbb{J}_{max})^{(<\omega)}} (\mathbb{J}\setminus F)=\mathbb{J}_{max}\setminus \bigcup (\mathbb{J}_{max})^{(<\omega)}=\emptyset$, whence by Lemma \ref{Kmeet} $\mathcal G$ has no least upper bound in $\mathsf{K}(X)$. Thus $\mathsf{K}(\Sigma~\!\!\mathbb{J})$ is not a dcpo.

\item $\mathsf{K}(\Sigma~\!\!\mathbb{J}_\top)$ is a dcpo.

Suppose that $\{K_d : d\in D\}$ is directed in $\mathsf{K}(\Sigma~\!\!\mathbb{J}_\top)$ (with the Smyth order). Then $\top \in\bigcap_{d\in D}K_d$ and hence $\bigcap_{d\in D}K_d\neq\emptyset$. Now we show that $\bigcap_{d\in D}K_d\in \mathsf{K}(\Sigma~\!\!\mathbb{J}_\top)$. If $\bigcap_{d\in D}K_d=\{\top\}$, then obviously $\bigcap_{d\in D}K_d\in \mathsf{K}(\Sigma~\!\!\mathbb{J}_\top)$. Now we assume $\bigcap_{d\in D}K_d\in \mathsf{K}(\Sigma~\!\!\mathbb{J}_\top)\neq \{\top\}$ and $\{V_i : i\in I\}\subseteq \mathcal \sigma(\mathbb{J}_\top))$ is an open cover of $\bigcap_{d\in D}K_d$. For each $d\in D$ and $i\in I$, let $H_d=K_d\setminus \{\top\}$ and $U_i=V_i\setminus \{\top\}$. Then $H_d\in \mathsf{K}(\Sigma~\!\!\mathbb{J}_\top)$ ($d\in D$), $U_i\in \sigma(\mathbb{J}_\top)$ ($i\in I$) and $\emptyset\neq\bigcap_{d\in D}H_d=\bigcap_{d\in D}K_{d_0}\setminus \{\top\}\subseteq \bigcup_{i\in I}V_i\setminus \{\top\}=\bigcup_{i\in I}U_i$. By \cite[Example 3.1]{LL-2017}, there is $d_0\in D$ such that $H_{d_0}\in \bigcup_{i\in I}U_i$, and consequently, there is $J\in I^{(<\omega)}$ such that $H_d\in \bigcup_{i\in J}U_i$. It follows that $\bigcap_{d\in D}K_d\subseteq K_{d_0}\subseteq \bigcup_{i\in J}V_i$. Thus  $\bigcap_{d\in D}K_d\in \mathsf{K}(\Sigma~\!\!\mathbb{J}_\top)$. By Lemma \ref{Kmeet}, $\mathsf{K}(\Sigma~\!\!\mathbb{J}_\top)$ is a dcpo.

\item $\Sigma~\!\!\mathbb{J}_\top$ is not well-filtered.

Indeed, let $\mathcal K=\{\ua_{\mathbb{J}_\top} (\mathbb{J}_{max}\setminus F) : F\in (\mathbb{J}_{max})^{(<\omega)}\}$. Then by (b), $\mathcal K\subseteq \mathsf{K}(\Sigma~\!\!\mathbb{J}_\top)$ is a filtered family and $\bigcap\mathcal{K}=\bigcap_{F\in (\mathbb{J}_{max})^{(<\omega)}} \ua_{\mathbb{J}_\top} (\mathbb{J}_{max}\setminus F)=\bigcap_{F\in (\mathbb{J}_{max})^{(<\omega)}} ((\mathbb{J}_{max}\setminus F)\cup\{\top\})=\{\top\}\cup(\mathbb{J}_{max}\setminus \bigcup (\mathbb{J}_{max})^{(<\omega)})=\{\top\}\in \sigma (\mathbb{J}_\top)$, but there is no $F\in (\mathbb{J}_{max})^{(<\omega)}$ with $\ua_{\mathbb{J}_\top} (\mathbb{J}_{max}\setminus F)\subseteq \{\top\}$. Therefore, $\Sigma~\!\!\mathbb{J}_\top$ is not well-filtered.

\end{enumerate}
\end{example}

\section{Topological Rudin Lemma, Rudin spaces and well-filtered determined spaces}

In Section 5, we recall some concepts and results about the topological Rudin Lemma, Rudin spaces, $\omega$-Rudin spaces, well-filtered determined spaces and $\omega$-well-filtered determined spaces in \cite{Klause-Heckmann, Shenchon, xu-shen-xi-zhao2, XSXZ-2020, XSXZ-2021} that will be used in
the next four sections.

Rudin's Lemma is a useful tool in non-Hausdorff topology and plays a crucial role in domain theory (see \cite{redbook, quasicont, Heckmann-1992}). Rudin \cite{Rudin} proved her lemma by transfinite methods, using the Axiom of Choice.
Heckmann and Keimel \cite{Klause-Heckmann} presented the following topological variant of Rudin's Lemma.

\begin{lemma}\label{t Rudin} (Topological Rudin Lemma) Let $X$ be a topological space and $\mathcal{A}$ an
irreducible subset of the Smyth power space $P_S(X)$. Then every closed set $C {\subseteq} X$  that
meets all members of $\mathcal{A}$ contains a minimal irreducible closed subset $A$ that still meets all
members of $\mathcal{A}$.
\end{lemma}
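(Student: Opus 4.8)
The plan is to produce $A$ as a minimal element, with respect to reverse inclusion, of the family
$$\Gamma = \{D\in\mathcal C(X) : D\subseteq C \text{ and } D\cap K\neq\emptyset \text{ for all } K\in\mathcal A\}$$
of closed subsets of $C$ that meet every member of $\mathcal A$, and then to verify that any such minimal element is automatically irreducible. Since $\mathcal A$ is irreducible it is in particular nonempty, and $C\in\Gamma$ by hypothesis, so $\Gamma\neq\emptyset$. Note that $\Gamma$ contains every irreducible closed subset of $C$ that meets all members of $\mathcal A$, so a minimal element of $\Gamma$ that happens to be irreducible will indeed be a minimal such set, as required by the statement.

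First I would apply Zorn's Lemma to $(\Gamma,\supseteq)$. Given a chain $\mathcal D\subseteq\Gamma$, I must check that $D_0:=\bigcap\mathcal D$ again lies in $\Gamma$, i.e. meets every $K\in\mathcal A$. Fixing $K\in\mathcal A$, the sets $D\cap K$ with $D\in\mathcal D$ are nonempty closed subsets of $K$, and they have the finite intersection property because $\mathcal D$ is a chain (a finite subfamily has a smallest member under inclusion, which already lies in $\Gamma$ and hence meets $K$). Since $K$ is compact, $D_0\cap K=\bigcap_{D\in\mathcal D}(D\cap K)\neq\emptyset$. Thus $D_0\in\Gamma$ is an upper bound of $\mathcal D$ in $(\Gamma,\supseteq)$, and Zorn's Lemma yields a minimal element $A$ of $\Gamma$. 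This compactness argument is exactly where the hypothesis that the members of $\mathcal A$ are compact is used, and it is the first place care is needed.

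It remains to show that $A$ is irreducible, and this is where the irreducibility of $\mathcal A$ in $P_S(X)$ enters. Suppose $A$ were not irreducible: then there are $F_1,F_2\in\mathcal C(X)$ with $A\subseteq F_1\cup F_2$ while $A\not\subseteq F_1$ and $A\not\subseteq F_2$. Put $A_i=A\cap F_i$, so each $A_i$ is a closed proper subset of $A$; by minimality of $A$ in $\Gamma$, neither $A_i$ meets all members of $\mathcal A$, so there are $K_1,K_2\in\mathcal A$ with $K_i\cap A_i=\emptyset$. Writing $U_i=X\setminus A_i\in\mathcal O(X)$, the condition $K_i\cap A_i=\emptyset$ reads $K_i\in\Box U_i$, so $\mathcal A$ meets both basic open sets $\Box U_1$ and $\Box U_2$ of $P_S(X)$. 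Irreducibility of $\mathcal A$ (in the equivalent open-set formulation) then gives some $K\in\mathcal A\cap\Box U_1\cap\Box U_2$, that is, $K\cap A_1=K\cap A_2=\emptyset$. Since $A_1\cup A_2=A\cap(F_1\cup F_2)=A$, this forces $K\cap A=\emptyset$, contradicting $A\in\Gamma$. Hence $A$ is irreducible.

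I expect the decisive step to be the last one: recognizing that the two separate ``witnesses of failure'' $K_1$ and $K_2$ can be merged into a single member $K$ of $\mathcal A$ via the irreducibility of $\mathcal A$, which is precisely what prevents $A$ from splitting as a union of two smaller closed sets. The Zorn step is otherwise routine, its only subtlety being the compactness input used to pass to intersections of chains.
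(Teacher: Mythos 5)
Your proof is correct. The paper itself does not prove this lemma but quotes it from Heckmann and Keimel \cite{Klause-Heckmann}, and your argument --- Zorn's Lemma applied to the family of closed subsets of $C$ meeting all members of $\mathcal A$, with compactness of each $K\in\mathcal A$ securing upper bounds for chains, and the irreducibility of $\mathcal A$ in $P_S(X)$ used to merge the two witnesses $K_1,K_2$ into a single $K\in\Box U_1\cap\Box U_2$ --- is essentially the standard proof from that reference, so there is nothing to correct or compare beyond that.
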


Applying Lemma \ref{t Rudin} to the Alexandroff topology on a poset $P$, one obtains the original Rudin's Lemma.

\begin{corollary}\label{rudin} (Rudin's Lemma) Let $P$ be a poset, $C$ a nonempty lower subset of $P$ and $\mathcal F\in \mathbf{Fin} P$ a filtered family with $\mathcal F\subseteq\Diamond C$. Then there exists a directed subset $D$ of $C$ such that $\mathcal F\subseteq \Diamond\da D$.
\end{corollary}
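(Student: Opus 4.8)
The plan is to obtain the order-theoretic statement as a direct instance of the Topological Rudin Lemma (Lemma~\ref{t Rudin}) applied to the Alexandroff space on $P$. First I would set $X=(P,\alpha(P))$, the space whose open sets are exactly the upper sets of $P$. In this space the closed sets are precisely the lower sets, so the nonempty lower set $C$ is closed in $X$. Moreover every $\ua F\in \mathbf{Fin}\,P$ is compact saturated in $X$: it is saturated because it is an upper set, and it is compact because any open cover of $\ua F=\ua\{x_1,\dots,x_n\}$ must, for each $x_i$, contain some member (an upper set) containing $x_i$ and hence all of $\ua x_i$, which yields a finite subcover. Thus $\mathbf{Fin}\,P\subseteq \mk(X)$, and the filtered family $\mathcal F\subseteq \mathbf{Fin}\,P$ is a directed subset of $P_S(X)$ in the specialization (Smyth) order. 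Since directed sets are irreducible, $\mathcal F\in \ir(P_S(X))$.

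Next I would translate the Vietoris-type hypothesis into a meeting condition. Using the identity $\Diamond C=\mk(X)\setminus \Box(X\setminus C)$ from the proof of Lemma~\ref{closure in Smyth power space}, for a closed set $C$ one has $\Diamond C=\{K\in\mk(X):K\cap C\neq\emptyset\}$, so $\mathcal F\subseteq \Diamond C$ says exactly that the closed set $C$ meets every member of $\mathcal F$. I am now in position to invoke Lemma~\ref{t Rudin} with the irreducible set $\mathcal A=\mathcal F$ and the closed set $C$: it furnishes a minimal irreducible closed subset $A\subseteq C$ that still meets every member of $\mathcal F$, that is, $\mathcal F\subseteq \Diamond A$.

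It then remains to read off the directed set $D$ from $A$. The key structural fact is that the irreducible closed subsets of $(P,\alpha(P))$ are exactly the lower sets that are directed: a lower set $A$ is irreducible iff any two open sets meeting $A$ meet $A$ in a common point, and applying this to $\ua a,\ua b$ for $a,b\in A$ gives a common upper bound of $a,b$ in $A$; conversely every directed set is irreducible. Hence the set $A$ produced above is itself a directed lower subset of $C$, and I would simply take $D=A$. Then $D\subseteq C$ is directed, $\da D=A$ since $A$ is a lower set, and $\mathcal F\subseteq \Diamond A=\Diamond\da D$, which is the desired conclusion.

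The verifications that closed sets are lower sets, that $\mathbf{Fin}\,P\subseteq\mk(X)$, and that $\Diamond C$ reformulates as a meeting condition are all immediate in the Alexandroff setting, so the only point requiring genuine care --- and the step I expect to be the crux --- is the identification of irreducible closed sets with directed lower sets, since this is precisely what converts the purely topological minimal-closed-set output of Lemma~\ref{t Rudin} back into the directedness demanded by the corollary.
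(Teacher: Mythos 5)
Your proposal is correct and is exactly the paper's route: the paper obtains this corollary precisely by applying the Topological Rudin Lemma (Lemma~\ref{t Rudin}) to the Alexandroff topology on $P$, which is what you do, with the details (in particular the identification of irreducible closed sets of $(P,\alpha(P))$ with directed lower sets) correctly filled in. No gaps.
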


For a $T_0$ space $X$ and $\mathcal{K}\subseteq \mathord{\mathsf{K}}(X)$, let $M(\mathcal{K})=\{A\in \mathcal C(X) : K\bigcap A\neq\emptyset \mbox{~for all~} K\in \mathcal{K}\}$ (that is, $\mathcal K\subseteq \Diamond A$) and $m(\mathcal{K})=\{A\in \mathcal C(X) : A \mbox{~is a minimal member of~} M(\mathcal{K})\}$.

In \cite{Shenchon, XSXZ-2020}, based on topological Rudin's Lemma, Rudin spaces and well-filtered spaces ($\wdd$ spaces for short) were introduced and investigated. These two spaces are closely related to sober spaces and well-filtered spaces (see \cite{Shenchon, XSXZ-2020}).

\begin{definition}\label{rudinset} (\cite{Shenchon, XSXZ-2020})
		Let $X$ be a $T_0$ space.
\begin{enumerate}[\rm (1)]
\item A nonempty subset  $A$  of $X$  is said to have the \emph{Rudin property}, if there exists a filtered family $\mathcal K\subseteq \mathord{\mathsf{K}}(X)$ such that $\overline{A}\in m(\mathcal K)$ (that is,  $\overline{A}$ is a minimal closed set that intersects all members of $\mathcal K$). Let $\mathsf{RD}(X)=\{A\in \mathcal C(X) : A\mbox{~has Rudin property}\}$. The sets in $\mathsf{RD}(X)$ will also be called \emph{Rudin sets}.
    \item $X$ is called a \emph{Rudin space}, $\mathsf{RD}$ \emph{space} for short, if $\ir_c(X)=\mathsf{RD}(X)$, that is, all irreducible closed sets of $X$ are Rudin sets.
        \end{enumerate}
\end{definition}

The Rudin property is called the \emph{compactly filtered property} in \cite{Shenchon}. In order to emphasize its origin from (topological) Rudin's Lemma, such a property was called the Rudin property in \cite{XSXZ-2020}. Clearly, $A$ has Rudin property if{}f $\overline{A}$ has Rudin property (that is, $\overline{A}$ is a Rudin set).

\begin{proposition}\label{rudinwf} (\cite{Shenchon, XSXZ-2020})
	Let $X$ be a $T_0$ space and  $Y$ a well-filtered space. If $f : X\longrightarrow Y$ is continuous and $A\subseteq X$ has Rudin property, then there exists a unique $y_A\in X$ such that $\overline{f(A)}=\overline{\{y_A\}}$.
\end{proposition}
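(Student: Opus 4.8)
The plan is to transport the witnessing filtered family along $f$, choosing it so that the minimality of $\overline A$ is preserved, and then to read off $y_A$ from a filtered intersection via the well-filteredness of $Y$. Throughout I use $M(\mathcal K)$ and $m(\mathcal K)$ as in the paragraph preceding Definition \ref{rudinset}, and I would first record the elementary characterisation of minimality on which everything hinges: a closed set $F\in M(\mathcal K)$ lies in $m(\mathcal K)$ if and only if for every open $U$ with $F\cap U\neq\emptyset$ there is some $K\in\mathcal K$ with $F\cap K\subseteq U$. One direction follows by passing to the proper closed subset $F\setminus U$ (which contains the witness of $F\cap U\neq\emptyset$) and noting it cannot lie in $M(\mathcal K)$; the other produces a proper closed subset violating minimality.

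Since $A$ has the Rudin property there is a filtered family $\mathcal K\subseteq\mk(X)$ with $\overline A\in m(\mathcal K)$. For each $K\in\mathcal K$ the set $\overline A\cap K$ is nonempty (as $\overline A\in M(\mathcal K)$) and is a closed subset of the compact set $K$, hence compact; so $f(\overline A\cap K)$ is compact by continuity and $K^{\ast}:=\ua f(\overline A\cap K)\in\mk(Y)$. The family $\mathcal K^{\ast}=\{K^{\ast}:K\in\mathcal K\}$ is again filtered, since $\mathcal K$ is filtered and $K\mapsto\ua f(\overline A\cap K)$ preserves inclusions. The crucial point is to prove $\overline{f(A)}\in m(\mathcal K^{\ast})$. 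Membership in $M(\mathcal K^{\ast})$ is immediate: any $x\in\overline A\cap K$ gives $f(x)\in f(\overline A)\subseteq\overline{f(A)}$ with $f(x)\in K^{\ast}$. For minimality I invoke the characterisation above. Given an open $V$ in $Y$ meeting $\overline{f(A)}$, the open set $V$ meets $f(A)$ itself, so some $a\in A$ has $f(a)\in V$ and thus $\overline A\cap f^{-1}(V)\neq\emptyset$; minimality of $\overline A$ then yields $K\in\mathcal K$ with $\overline A\cap K\subseteq f^{-1}(V)$. Now any $z\in\overline{f(A)}\cap K^{\ast}$ satisfies $z\geq f(x)$ for some $x\in\overline A\cap K$, whence $f(x)\in V$ and, $V$ being an upper set, $z\in V$; so $\overline{f(A)}\cap K^{\ast}\subseteq V$, as required.

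This is exactly where the choice of $\ua f(\overline A\cap K)$ rather than the naive $\ua f(K)$ is essential, and I expect it to be the main obstacle: with $\ua f(K)$ the witness $x$ need only satisfy $x\in K$, not $x\in\overline A$, and the inclusion $\overline{f(A)}\cap\ua f(K)\subseteq V$ breaks down. Having established $\overline{f(A)}\in m(\mathcal K^{\ast})$, it remains to extract the point. Because $Y$ is well-filtered, $\mk(Y)$ is a dcpo (Corollary \ref{wf space Vietoris less Scott}) in which directed suprema of members are computed as their intersections (Lemma \ref{Kmeet}), so $\bigcap\mathcal K^{\ast}\in\mk(Y)$; in particular it is nonempty, for otherwise $\bigcap\mathcal K^{\ast}\subseteq\emptyset$ would force some $K^{\ast}\subseteq\emptyset$ by well-filteredness. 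Moreover $\overline{f(A)}\cap\bigcap\mathcal K^{\ast}\neq\emptyset$: were it empty, $\bigcap\mathcal K^{\ast}\subseteq Y\setminus\overline{f(A)}$ and well-filteredness would give some $K^{\ast}$ disjoint from $\overline{f(A)}$, contradicting $\overline{f(A)}\in M(\mathcal K^{\ast})$.

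Finally I would pick $y_A\in\overline{f(A)}\cap\bigcap\mathcal K^{\ast}$. Then $\overline{\{y_A\}}\subseteq\overline{f(A)}$ since $\overline{f(A)}$ is closed, while $y_A\in K^{\ast}$ for every $K$ shows $\overline{\{y_A\}}$ meets every member of $\mathcal K^{\ast}$, i.e.\ $\overline{\{y_A\}}\in M(\mathcal K^{\ast})$; minimality of $\overline{f(A)}$ then forces $\overline{f(A)}=\overline{\{y_A\}}$. Uniqueness is automatic because $Y$, being well-filtered, is a $d$-space and in particular $T_0$, so the closure $\overline{\{y_A\}}$ determines $y_A$. (I note that the target point should be $y_A\in Y$ rather than $y_A\in X$ as printed.)
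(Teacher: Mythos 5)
Your proof is correct. Note that the paper itself gives no proof of this proposition (it is quoted from \cite{Shenchon, XSXZ-2020}), but your argument is essentially the standard one from those sources: the witnessing family is transported by $K\mapsto\ua f(\overline A\cap K)$ (your first half in effect re-proves that continuous images of Rudin sets are Rudin sets, cf.\ the role of $\kf(X)$ in Proposition \ref{SDRWIsetrelation}), and the second half re-proves that in a well-filtered space every Rudin set is a point closure (cf.\ Proposition \ref{WFwdc}, $\kf(Y)=\mathcal S_c(Y)$). Your minimality characterisation, the choice of $\ua f(\overline A\cap K)$ instead of $\ua f(K)$, and the use of well-filteredness to get $\overline{f(A)}\cap\bigcap\mathcal K^{\ast}\neq\emptyset$ are all sound, and you are right that the statement's ``$y_A\in X$'' is a misprint for $y_A\in Y$.
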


Motivated by Proposition \ref{rudinwf}, the following concept was introduced in \cite{XSXZ-2020}.

\begin{definition}\label{WDspace}(\cite{XSXZ-2020}) Let $X$ be a $T_0$ space.
	\begin{enumerate}[\rm (1)]
\item A subset $A$ of $X$ is called a \emph{well-filtered determined set}, $\wdd$ \emph{set} for short, if for any continuous mapping $ f:X\longrightarrow Y$
to a well-filtered space $Y$, there exists a unique $y_A\in Y$ such that $\overline{f(A)}=\overline{\{y_A\}}$.
Denote by $\mathsf{WD}(X)$ the set of all closed well-filtered determined subsets of $X$.
\item $X$ is called a \emph{well-filtered determined} space, $\mathsf{WD}$ \emph{space} for short, if all irreducible closed subsets of $X$ are well-filtered determined, that is, $\ir_c(X)=\wdd (X)$.
\end{enumerate}
\end{definition}

Obviously, a subset $A$ of a space $X$ is well-filtered determined if{}f $\overline{A}$ is well-filtered determined.

\begin{proposition}\label{SDRWIsetrelation} (\cite{Shenchon, XSXZ-2020})
	Let $X$ be a $T_0$ space. Then $\mathcal S_c(X)\subseteq\mathcal{D}_c(X)\subseteq \mathsf{RD}(X)\subseteq\mathsf{WD}(X)\subseteq\ir_c(X)$.
\end{proposition}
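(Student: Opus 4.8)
The statement is a chain of four inclusions, and I would check each one in turn, from left to right. The leftmost, $\mathcal S_c(X)\subseteq\mathcal D_c(X)$, needs no work: a singleton $\{x\}$ is trivially directed, so $\overline{\{x\}}$ is the closure of a directed set and belongs to $\mathcal D_c(X)$.

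For $\mathcal D_c(X)\subseteq\mathsf{RD}(X)$ I take a directed $D$ and propose the filtered family $\mathcal K=\{\ua d : d\in D\}\subseteq\mathsf{K}(X)$ as the witness to the Rudin property of $\overline D$; each $\ua d$ is supercompact saturated, hence in $\mathsf{K}(X)$, and $\mathcal K$ is filtered exactly because $D$ is directed (given $d_1,d_2\in D$ pick $d_3\in D$ above both, so $\ua d_3\subseteq\ua d_1\cap\ua d_2$). Since $d\in\overline D\cap\ua d$, we get $\overline D\in M(\mathcal K)$, and the one step that needs an argument is minimality: if $C\in\mathcal C(X)$ with $C\subseteq\overline D$ meets every $\ua d$, then for each $d$ some $c\in C$ has $d\le c$, and as $C$ is closed (a lower set) this forces $d\in C$; thus $D\subseteq C$ and $C=\overline D$, so $\overline D\in m(\mathcal K)$. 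The next inclusion $\mathsf{RD}(X)\subseteq\mathsf{WD}(X)$ is then immediate from Proposition~\ref{rudinwf}, since a Rudin set satisfies, against every continuous map into a well-filtered space, precisely the condition defining a $\mathsf{WD}$ set.

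The last inclusion $\mathsf{WD}(X)\subseteq\ir_c(X)$ is where the real content lies and is, I expect, the main obstacle; the natural device is the sobrification. Given $A\in\mathsf{WD}(X)$, I would apply the $\mathsf{WD}$ condition to the canonical map $\eta_X:X\to X^s$, whose target is sober and hence well-filtered by Remark~\ref{sober implies WF implies d-space}; this produces a unique point $\xi\in X^s$, namely an irreducible closed set $B\in\ir_c(X)$, with $\overline{\eta_X(A)}=\overline{\{\xi\}}$. It remains to identify $A$ with $B$. Since $\eta_X$ is a topological embedding (Remark~\ref{eta continuous}), taking preimages gives $\eta_X^{-1}(\overline{\eta_X(A)})=\overline A=A$; on the other hand, as the specialization order of $X^s$ is inclusion, $\overline{\{\xi\}}=\{F\in\ir_c(X):F\subseteq B\}$, so $\eta_X^{-1}(\overline{\{\xi\}})=\{x\in X:\overline{\{x\}}\subseteq B\}=B$. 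Hence $A=B\in\ir_c(X)$. The points to treat carefully are these two preimage computations and the nonemptiness of $A$, which the $\mathsf{WD}$ condition forces because $\overline{\{y_A\}}$ is never empty.
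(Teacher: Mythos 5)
Your proof is correct. The paper itself states this proposition without proof, citing \cite{Shenchon, XSXZ-2020}, and your argument is exactly the standard route taken there: the filtered family $\{\ua d : d\in D\}$ with the down-set/minimality argument gives $\mathcal{D}_c(X)\subseteq\mathsf{RD}(X)$, Proposition~\ref{rudinwf} gives $\mathsf{RD}(X)\subseteq\mathsf{WD}(X)$ verbatim, and applying the $\mathsf{WD}$ condition to $\eta_X : X\longrightarrow X^s$ (sober, hence well-filtered) together with the two preimage computations $\eta_X^{-1}(\overline{\eta_X(A)})=A$ and $\eta_X^{-1}(\overline{\{\xi\}})=B$ gives $\mathsf{WD}(X)\subseteq\ir_c(X)$, with nonemptiness handled as you note.
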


\begin{definition}\label{DCspace} (\cite{XSXZ-2020})
	A $T_0$ space $X$ is called a \emph{directed closure space}, $\mathsf{DC}$ \emph{space} for short, if $\ir_c(X)=\mathcal{D}_c(X)$, that is, for each $A\in \ir_c(X)$, there exists a directed subset of $X$ such that $A=\overline{D}$.
\end{definition}

\begin{corollary}\label{SDRWspacerelation}(\cite[Corollary 6.3]{XSXZ-2020})
	Sober $\Rightarrow$ $\mathsf{DC}$ $\Rightarrow$ $\mathsf{RD}$ $\Rightarrow$ $\mathsf{WD}$.
\end{corollary}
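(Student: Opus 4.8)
The plan is to derive the entire chain of four implications as a direct consequence of the nested inclusions recorded in Proposition \ref{SDRWIsetrelation}, namely
$$\mathcal S_c(X)\subseteq\mathcal{D}_c(X)\subseteq \mathsf{RD}(X)\subseteq\mathsf{WD}(X)\subseteq\ir_c(X).$$
The key observation is that each of the four properties in question can be restated as an equality between $\ir_c(X)$ and one of the intermediate members of this chain: sobriety of $X$ amounts to $\ir_c(X)=\mathcal S_c(X)$ (the uniqueness clause being automatic since sober spaces are $T_0$), being a $\mathsf{DC}$ space is by definition $\ir_c(X)=\mathcal D_c(X)$, being an $\mathsf{RD}$ space is $\ir_c(X)=\mathsf{RD}(X)$, and being a $\mathsf{WD}$ space is $\ir_c(X)=\mathsf{WD}(X)$. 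Once this dictionary is in place, every implication becomes a squeeze against the chain above.

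First I would handle Sober $\Rightarrow$ $\mathsf{DC}$. Assuming $X$ sober, every $A\in\ir_c(X)$ equals $\overline{\{a\}}$ for some $a\in X$, so $\ir_c(X)\subseteq\mathcal S_c(X)$; since each singleton is trivially directed we have $\mathcal S_c(X)\subseteq\mathcal D_c(X)$, and $\mathcal D_c(X)\subseteq\ir_c(X)$ by the chain above. These inclusions link up into $\ir_c(X)\subseteq\mathcal S_c(X)\subseteq\mathcal D_c(X)\subseteq\ir_c(X)$, forcing all of the displayed sets to coincide; in particular $\ir_c(X)=\mathcal D_c(X)$, i.e. $X$ is a $\mathsf{DC}$ space.

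The remaining two steps are identical squeezes. For $\mathsf{DC}\Rightarrow\mathsf{RD}$, assume $\ir_c(X)=\mathcal D_c(X)$; combining with $\mathcal D_c(X)\subseteq\mathsf{RD}(X)\subseteq\ir_c(X)$ gives $\ir_c(X)=\mathcal D_c(X)\subseteq\mathsf{RD}(X)\subseteq\ir_c(X)$, so $\mathsf{RD}(X)=\ir_c(X)$ and $X$ is an $\mathsf{RD}$ space. For $\mathsf{RD}\Rightarrow\mathsf{WD}$, assume $\ir_c(X)=\mathsf{RD}(X)$ and use $\mathsf{RD}(X)\subseteq\mathsf{WD}(X)\subseteq\ir_c(X)$ to conclude $\mathsf{WD}(X)=\ir_c(X)$. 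There is no genuine obstacle here: all of the substance lives in Proposition \ref{SDRWIsetrelation}, and the only point requiring a moment's care is the base case, where one must notice that sobriety supplies the reverse inclusion $\ir_c(X)\subseteq\mathcal S_c(X)$ and that singletons count as directed sets, so that $\mathcal S_c(X)$ genuinely sits inside $\mathcal D_c(X)$.
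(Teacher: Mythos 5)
Your proof is correct and is essentially the argument the paper intends: the corollary is stated immediately after Proposition \ref{SDRWIsetrelation} (both cited from \cite{XSXZ-2020}) precisely because it follows from that inclusion chain by unwinding the definitions of sober, $\mathsf{DC}$, $\mathsf{RD}$ and $\mathsf{WD}$ spaces as equalities with $\ir_c(X)$ and squeezing, exactly as you do.
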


\begin{proposition}\label{WFwdc} (\cite[Corollary 7.11]{XSXZ-2020})
	For a $T_0$ space $X$, the following conditions are equivalent:
	\begin{enumerate}[\rm (1)]
		\item $X$ is well-filtered.
		\item $\mathsf{RD}(X)=\mathcal S_c(X)$.
        \item $\wdd (X)=\mathcal S_c(X)$.
   \end{enumerate}
\end{proposition}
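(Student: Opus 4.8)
The plan is to prove the three conditions equivalent by establishing the cycle $(1)\Rightarrow(3)\Rightarrow(2)\Rightarrow(1)$, using as a backbone the chain of inclusions $\mathcal S_c(X)\subseteq\mathsf{RD}(X)\subseteq\wdd(X)\subseteq\ir_c(X)$ from Proposition \ref{SDRWIsetrelation}, which is valid for every $T_0$ space. Since $\mathcal S_c(X)\subseteq\mathsf{RD}(X)$ and $\mathcal S_c(X)\subseteq\wdd(X)$ hold unconditionally, the equalities asserted in (2) and (3) each amount only to the reverse inclusion, and this observation drives the whole argument.

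For $(1)\Rightarrow(3)$ I would argue straight from the definition of a well-filtered determined set (Definition \ref{WDspace}). Assuming $X$ is well-filtered, the identity map $\mathrm{id}_X\colon X\longrightarrow X$ is a continuous map into a well-filtered space, so for any $A\in\wdd(X)$ there is a unique $y_A\in X$ with $\overline{\mathrm{id}_X(A)}=\overline{A}=\overline{\{y_A\}}$; as $A$ is closed this yields $A=\overline{\{y_A\}}\in\mathcal S_c(X)$, whence $\wdd(X)=\mathcal S_c(X)$. The implication $(3)\Rightarrow(2)$ is then immediate by squeezing: from $\wdd(X)=\mathcal S_c(X)$ and the inclusions above, $\mathcal S_c(X)\subseteq\mathsf{RD}(X)\subseteq\wdd(X)=\mathcal S_c(X)$ forces $\mathsf{RD}(X)=\mathcal S_c(X)$.

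The substantive implication is $(2)\Rightarrow(1)$, where the topological Rudin Lemma (Lemma \ref{t Rudin}) does the work, and I would argue contrapositively. Suppose $X$ is not well-filtered: there exist a filtered family $\mathcal K\subseteq\mathsf{K}(X)$ and $U\in\mathcal O(X)$ with $\bigcap\mathcal K\subseteq U$ but $K\not\subseteq U$ for every $K\in\mathcal K$. Then $C=X\setminus U$ is a closed set meeting every member of $\mathcal K$, and $\mathcal K$, being filtered, is directed and hence irreducible in $P_S(X)$. By Lemma \ref{t Rudin}, $C$ contains a minimal irreducible closed subset $A$ that still meets all members of $\mathcal K$; a filteredness argument shows such a minimal set actually lies in $m(\mathcal K)$, so $A\in\mathsf{RD}(X)$ by Definition \ref{rudinset}. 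By hypothesis (2) we then get $A=\overline{\{x\}}$ for some $x\in X$. Since $\overline{\{x\}}$ meets each $K\in\mathcal K$, each $K$ contains some $y\leq_X x$, and saturation of $K$ gives $x\in K$; thus $x\in\bigcap\mathcal K\subseteq U$, contradicting $A=\overline{\{x\}}\subseteq C=X\setminus U$. Hence some $K\subseteq U$, and $X$ is well-filtered.

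I expect the main obstacle to be the verification inside $(2)\Rightarrow(1)$ that the minimal irreducible closed set produced by the topological Rudin Lemma is genuinely a \emph{Rudin set}, i.e.\ that $A\in m(\mathcal K)$ (a minimal member of $M(\mathcal K)$) and not merely minimal among irreducible closed sets meeting $\mathcal K$. The key lemma to check is that any minimal closed set meeting all members of a filtered family is automatically irreducible: if $A\subseteq F_1\cup F_2$ with $A\not\subseteq F_i$, then minimality yields $K_i\in\mathcal K$ with $A\cap F_i\cap K_i=\emptyset$, and a member $K_3\in\mathcal K$ with $K_3\subseteq K_1\cap K_2$ (from filteredness) forces $A\cap K_3=\emptyset$, contradicting that $A$ meets every member. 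This identifies the two notions of minimality and legitimizes the appeal to hypothesis (2).
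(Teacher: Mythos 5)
The paper itself gives no proof of this proposition: it is quoted from \cite[Corollary 7.11]{XSXZ-2020}, so there is no in-paper argument to compare yours against. Judged on its own merits, your proof is correct, and it is essentially the standard argument for this result. The cycle $(1)\Rightarrow(3)\Rightarrow(2)\Rightarrow(1)$ is sound: the identity-map trick for $(1)\Rightarrow(3)$ is exactly right (with uniqueness of $y_A$ coming from $T_0$ and closedness of $A$ giving $A=\overline{\{y_A\}}$); the squeeze through $\mathcal S_c(X)\subseteq\mathsf{RD}(X)\subseteq\wdd(X)$ of Proposition \ref{SDRWIsetrelation} gives $(3)\Rightarrow(2)$; and your $(2)\Rightarrow(1)$ argument is the canonical one: a filtered $\mathcal K\subseteq\mk(X)$ is directed in the specialization (Smyth) order of $P_S(X)$, hence irreducible, Lemma \ref{t Rudin} produces $A\in m(\mathcal K)$ inside $X\setminus U$, hypothesis (2) turns $A$ into some $\overline{\{x\}}$, and saturation of each $K\in\mathcal K$ forces $x\in\bigcap\mathcal K\subseteq U$, contradicting $x\in A\subseteq X\setminus U$.

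One remark on the subtlety you flag at the end. In this paper (as in Heckmann--Keimel), Lemma \ref{t Rudin} is read as producing a set that is minimal among \emph{all} closed sets meeting every member of $\mathcal K$, such a minimal set then being irreducible; this reading is visible in the paper's own use of the lemma in the proof of Theorem \ref{wf imply Scott wf}, where minimality of $\mathcal A$ is invoked against the closed set $\mathcal A\cap\Diamond\overline{\{k\}}$, which is not known in advance to be irreducible. Under that reading, $A\in m(\mathcal K)$ is immediate and no bridging argument is needed. Your key lemma is correct as stated, but note that by itself it only shows that minimal members of $M(\mathcal K)$ are irreducible; to pass in the opposite direction (from minimality among irreducible closed members of $M(\mathcal K)$ to membership in $m(\mathcal K)$) one would additionally need a Zorn-plus-compactness argument producing a minimal closed member of $M(\mathcal K)$ below any given one. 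Since the intended reading of the lemma makes that direction unnecessary, this is a cosmetic point rather than a gap in your proof.
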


\begin{theorem}\label{soberequiv} (\cite[Theorem 6.6]{XSXZ-2020}) For a $T_0$ space $X$, the following conditions are equivalent:
	\begin{enumerate}[\rm (1)]
		\item $X$ is sober.
		\item $X$ is a $\mathsf{DC}$ $d$-space.
        \item $X$ is a well-filtered $\mathsf{DC}$ space.
		\item $X$ is a well-filtered Rudin space.
		\item $X$ is a well-filtered $\mathsf{WD}$ space.
	\end{enumerate}
\end{theorem}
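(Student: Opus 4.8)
The plan is to prove the five conditions equivalent through a single cyclic chain $(1)\Rightarrow(3)\Rightarrow(4)\Rightarrow(5)\Rightarrow(1)$ handling the four well-filtered formulations, and then to splice in condition $(2)$ by a short detour $(1)\Rightarrow(2)\Rightarrow(1)$. The one observation that organizes the whole argument is that, for a $T_0$ space, sobriety is precisely the set equality $\ir_c(X)=\mathcal{S}_c(X)$: by definition every irreducible closed set is the closure of a point, and the uniqueness of that point is automatic from the $T_0$ axiom (as noted after the definition of soberness). Stating this at the outset turns each implication into a one-line comparison among the five families $\mathcal{S}_c(X)\subseteq\mathcal{D}_c(X)\subseteq\mathsf{RD}(X)\subseteq\mathsf{WD}(X)\subseteq\ir_c(X)$ supplied by Proposition \ref{SDRWIsetrelation}.

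For the main cycle, $(1)\Rightarrow(3)$ is immediate: sobriety gives well-filteredness by Remark \ref{sober implies WF implies d-space} and gives the $\mathsf{DC}$ property by Corollary \ref{SDRWspacerelation}. The steps $(3)\Rightarrow(4)$ and $(4)\Rightarrow(5)$ only weaken the second hypothesis along the space-level chain $\mathsf{DC}\Rightarrow\mathsf{RD}\Rightarrow\mathsf{WD}$ of Corollary \ref{SDRWspacerelation}, while keeping well-filteredness fixed. The decisive return step is $(5)\Rightarrow(1)$: here $X$ is well-filtered and a $\mathsf{WD}$ space, so $\ir_c(X)=\mathsf{WD}(X)$ by the definition of $\mathsf{WD}$ space, whereas well-filteredness forces $\mathsf{WD}(X)=\mathcal{S}_c(X)$ by Proposition \ref{WFwdc}; chaining these yields $\ir_c(X)=\mathcal{S}_c(X)$, that is, sobriety.

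To fold in $(2)$, I would again use Remark \ref{sober implies WF implies d-space} (sober $\Rightarrow$ well-filtered $\Rightarrow$ $d$-space) together with Corollary \ref{SDRWspacerelation} (sober $\Rightarrow\mathsf{DC}$) to get $(1)\Rightarrow(2)$. For $(2)\Rightarrow(1)$, the $d$-space hypothesis gives $\mathcal{D}_c(X)=\mathcal{S}_c(X)$ by Proposition \ref{d-spacecharac1}, and the $\mathsf{DC}$ hypothesis is exactly $\ir_c(X)=\mathcal{D}_c(X)$ by definition; combining the two produces $\ir_c(X)=\mathcal{S}_c(X)$, so $X$ is sober.

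Since every step reduces to one of the quoted results, I do not expect a genuine obstacle. The only point worth flagging is that all the real content of the theorem has already been deposited earlier: the step $(5)\Rightarrow(1)$ rests entirely on Proposition \ref{WFwdc}, which is where the nontrivial equivalence of well-filteredness with $\mathsf{WD}(X)=\mathcal{S}_c(X)$ lives. Everything else is bookkeeping, and the proof is kept clean by first recording the characterization of sobriety as $\ir_c(X)=\mathcal{S}_c(X)$ so that each implication collapses to a single set equality.
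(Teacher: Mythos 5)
Your proposal is correct, but there is nothing in the paper to compare it against: the paper states Theorem \ref{soberequiv} as a quoted result from \cite[Theorem 6.6]{XSXZ-2020} and gives no proof of its own. Your reconstruction from the surrounding quoted facts is sound: the opening observation that sobriety of a $T_0$ space is exactly $\ir_c(X)=\mathcal{S}_c(X)$ is right (uniqueness of the generic point is automatic from $T_0$), and every implication then does reduce to a comparison of the families in Proposition \ref{SDRWIsetrelation} --- the cycle $(1)\Rightarrow(3)\Rightarrow(4)\Rightarrow(5)\Rightarrow(1)$ via Remark \ref{sober implies WF implies d-space}, Corollary \ref{SDRWspacerelation} and Proposition \ref{WFwdc}, and the detour $(2)\Leftrightarrow(1)$ via Proposition \ref{d-spacecharac1}. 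This is essentially the argument of the original source, so you have not found a new route; you have made explicit the bookkeeping that the paper leaves to a citation.

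One caveat worth recording. Your decisive step $(5)\Rightarrow(1)$ leans on Proposition \ref{WFwdc}, which the paper quotes as \cite[Corollary 7.11]{XSXZ-2020} --- a result that, in the original source, appears \emph{after} Theorem 6.6, so as a reconstruction of the literature your argument would risk circularity if Corollary 7.11 were itself derived from Theorem 6.6 there. The risk is harmless here because you only need the easy direction of Proposition \ref{WFwdc}: if $X$ is well-filtered and $A\in\wdd(X)$, then applying the definition of a well-filtered determined set to the identity map $\mathrm{id}_X\colon X\longrightarrow X$ (a continuous map into a well-filtered space) yields a unique $y_A$ with $A=\overline{\{y_A\}}$, so $\wdd(X)\subseteq\mathcal{S}_c(X)$, and the reverse inclusion is Proposition \ref{SDRWIsetrelation}. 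If you add that one sentence in place of the citation, your proof is self-contained modulo Propositions \ref{SDRWIsetrelation} and \ref{d-spacecharac1} and is free of any circularity concern.
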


\begin{proposition}\label{erne1} (\cite[Proposition 3.2]{E_20182}))
	Let $X$ be a locally hypercompact $T_0$ space and $A\in\ir(X)$. Then there exists a directed subset $D\subseteq\da A$ such that $\overline{A}=\overline{D}$. Therefore, $X$ is a $\mathsf{DC}$ space.
\end{proposition}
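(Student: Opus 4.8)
The plan is to produce the required directed set through the (topological) Rudin Lemma, feeding it a filtered family of finitely generated saturated sets manufactured from $A$ by local hypercompactness. First I note that one inclusion is automatic: if $D\subseteq\da A$, then since each open set is an upper set in $\leq_X$, every point of $D$ lies below a point of $A$, so any open set meeting $D$ also meets $A$; hence $\overline{D}\subseteq\overline{A}$ for \emph{every} $D\subseteq\da A$. Thus, by Remark~\ref{closure A = closre B}(1), it suffices to build a \emph{directed} $D\subseteq\da A$ that meets every open set meeting $A$, which will force $\overline{A}\subseteq\overline{D}$ and hence $\overline{A}=\overline{D}$.

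The central construction is the family
\[
\mathcal N=\{\ua F\in\mathbf{Fin}\,X : A\cap\ii\,\ua F\neq\emptyset\}.
\]
I would check that $\mathcal N$ is a nonempty filtered subfamily of $\mathbf{Fin}\,X$ (ordered by reverse inclusion). Nonemptiness follows by applying local hypercompactness at any $x\in A$ with the neighborhood $U=X$. For the filtered condition, take $\ua F_1,\ua F_2\in\mathcal N$; irreducibility of $A$ means the collection of open sets meeting $A$ is closed under finite intersections, so $A$ meets the open set $\ii\,\ua F_1\cap\ii\,\ua F_2$. Choosing $x$ in this intersection and applying local hypercompactness to it yields $\ua F\in\mathbf{Fin}\,X$ with $x\in\ii\,\ua F\subseteq\ua F\subseteq\ii\,\ua F_1\cap\ii\,\ua F_2$, so that $\ua F\in\mathcal N$ and $\ua F$ refines both $\ua F_1$ and $\ua F_2$. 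This step, where irreducibility and local hypercompactness are genuinely combined, is the main obstacle of the proof.

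Since each $\ua F\in\mathcal N$ meets $A$ and hence the nonempty lower set $C=\da A$, we have $\mathcal N\subseteq\Diamond\,C$. Applying the Rudin Lemma (Corollary~\ref{rudin}) with $P=X$ under $\leq_X$, $C=\da A$ and the filtered family $\mathcal N$, I obtain a directed set $D\subseteq\da A$ with $\mathcal N\subseteq\Diamond\,\da D$, i.e.\ $\ua F\cap\da D\neq\emptyset$ for every $\ua F\in\mathcal N$.

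To finish the closure equality I verify that $D$ meets every open $U$ with $A\cap U\neq\emptyset$. Pick $x\in A\cap U$ and use local hypercompactness to get $\ua F$ with $x\in\ii\,\ua F\subseteq\ua F\subseteq U$; then $\ua F\in\mathcal N$, so some $p$ lies in $\ua F\cap\da D$, giving $f\in F$ and $d\in D$ with $f\leq p\leq d$. Hence $d\in\ua F\subseteq U$, so $D\cap U\neq\emptyset$, as required; this yields $\overline{A}=\overline{D}$ with $D\subseteq\da A$ directed. For the final assertion, given $A\in\ir_c(X)$ we get $A=\overline{A}=\overline{D}\in\mathcal D_c(X)$, so $\ir_c(X)\subseteq\mathcal D_c(X)$; since the reverse inclusion $\mathcal D_c(X)\subseteq\ir_c(X)$ is contained in Proposition~\ref{SDRWIsetrelation}, we conclude $\ir_c(X)=\mathcal D_c(X)$, i.e.\ $X$ is a $\dc$ space.
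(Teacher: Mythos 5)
Your proof is correct. The paper gives no proof of this proposition---it simply cites Ern\'e---but your argument (forming the filtered family $\mathcal N$ of finitely generated upper sets whose interiors meet $A$, using irreducibility together with local hypercompactness to get filteredness, then invoking Rudin's Lemma, Corollary~\ref{rudin}, to extract a directed $D\subseteq\da A$ meeting every member of $\mathcal N$, and finally deducing $\overline{A}=\overline{D}$ via Remark~\ref{closure A = closre B}(1) and the $\mathsf{DC}$ property via Proposition~\ref{SDRWIsetrelation}) is essentially the standard argument of the cited source, and every step is correctly assembled from tools available in the paper.
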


\begin{proposition}\label{LCrudin and core-compact is WD} (\cite[Theorem 6.10 and Theorem 6.15]{XSXZ-2020}) Let $X$ be a $T_0$ space.
\begin{enumerate}[\rm (1)]
\item If $X$ is locally compact, then $X$ is a Rudin space.
\item If $X$ is core-compact, then $X$ is a $\mathsf{WD}$ space.
\end{enumerate}
\end{proposition}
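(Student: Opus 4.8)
My plan is to prove each statement by establishing the one missing inclusion, namely $\ir_c(X)\subseteq \mathsf{RD}(X)$ for (1) and $\ir_c(X)\subseteq \mathsf{WD}(X)$ for (2); the reverse inclusions $\mathsf{RD}(X)\subseteq\mathsf{WD}(X)\subseteq\ir_c(X)$ hold for every $T_0$ space by Proposition \ref{SDRWIsetrelation}.

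For (1), fix $A\in\ir_c(X)$ and consider the family
$$\mathcal K=\{K\in\mk(X) : A\cap\ii\,K\neq\emptyset\}.$$
Each $K\in\mathcal K$ meets $A$ since $\ii\,K\subseteq K$, so $A\in M(\mathcal K)$, and $\mathcal K\neq\emptyset$ by local compactness at any point of $A$. The key point is that $\mathcal K$ is filtered: given $K_1,K_2\in\mathcal K$, irreducibility of $A$ forces $A\cap\ii\,K_1\cap\ii\,K_2\neq\emptyset$ (two open sets meeting an irreducible set meet inside it); choosing $x$ in this intersection, local compactness supplies $K_3\in\mk(X)$ with $x\in\ii\,K_3\subseteq K_3\subseteq \ii\,K_1\cap\ii\,K_2\subseteq K_1\cap K_2$, so $K_3\in\mathcal K$ and $K_3\subseteq K_1\cap K_2$. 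For minimality I would show that for every $V\in\mathcal O(X)$ with $V\cap A\neq\emptyset$ there is $K\in\mathcal K$ with $K\subseteq V$: pick $x\in V\cap A$ and use local compactness to get $K$ with $x\in\ii\,K\subseteq K\subseteq V$. Then for a proper closed $B\subsetneq A$, put $V=X\setminus B$; since $V\cap A\neq\emptyset$, the corresponding $K\subseteq V$ satisfies $K\cap B=\emptyset$, so $B\notin M(\mathcal K)$. Hence $A=\overline A\in m(\mathcal K)$ and $A\in\mathsf{RD}(X)$.

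For (2), fix $A\in\ir_c(X)$ and a continuous $f:X\to Y$ with $Y$ well-filtered; since $Y$ is $T_0$, the required point $y_A$ is automatically unique, so the whole content is the existence of $y_A$ with $\overline{f(A)}=\overline{\{y_A\}}$ (recall $\overline{f(A)}\in\ir_c(Y)$ by Lemma \ref{irrimage}). Here I cannot argue as in (1): a core-compact space need not be locally compact (Remark \ref{core-compact is not locally compact}) and so may carry too few compact saturated sets, and $f(A)$ may fail to have the Rudin property in $Y$ because $Y$ itself need not be locally compact. Instead I would exploit that $\mathcal O(X)$ is a continuous lattice: the completely prime filter $\mathcal F_A=\{U\in\mathcal O(X):U\cap A\neq\emptyset\}$ is Scott open, so by interpolation each $U\in\mathcal F_A$ contains some $W\ll U$ with $W\in\mathcal F_A$. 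The plan is then to route through the sobrification: since $\mathcal O(X^s)\cong\mathcal O(X)$ is continuous and $X^s$ is sober, Theorem \ref{SoberLC=CoreC} makes $X^s$ locally compact and sober, so the point $\alpha_A\in X^s$ determined by $A$ has a filtered base of compact saturated neighborhoods. Pushing these along the induced map $f^s:X^s\to Y^s$ and transporting the resulting filtered family back along the embedding $\eta_Y:Y\to Y^s$ should produce a filtered family in $\mk(Y)$ to which the well-filteredness of $Y$ (equivalently, of $P_S(Y)$, Theorem \ref{Smythwf}) applies and yields $y_A$.

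The main obstacle is exactly this last transfer. A usable filtered family of compact saturated subsets of $Y$ is hard to produce directly, because the compactness cannot be imported from $X$ (which may possess no compact neighborhoods at all) and must instead be manufactured on the $Y$-side out of the way-below data; and one must guarantee that the generic point of $\overline{f(A)}$ furnished by the sober space $Y^s$ actually lands in $Y$ and not merely in its sobrification. Controlling the relationship between $\mk(Y)$ and $\mk(Y^s)$ and pinning the limit point down inside $Y$ through well-filteredness is where the genuine difficulty lies, and it is the step I would expect to demand the most care.
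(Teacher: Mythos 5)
Your part (1) is correct and complete, and it is essentially the argument of the cited result \cite[Theorem 6.10]{XSXZ-2020}: the family $\mathcal K=\{K\in\mk(X) : A\cap \ii\,K\neq\emptyset\}$ is filtered because two open sets meeting the irreducible set $A$ must meet $A$ simultaneously and local compactness then interpolates a third member inside both, it witnesses $A\in M(\mathcal K)$, and $A$ is a \emph{minimal} member of $M(\mathcal K)$ because every open $V$ meeting $A$ contains a member of $\mathcal K$, so no proper closed subset of $A$ meets all of $\mathcal K$. Nothing needs to be added there.

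Part (2), by contrast, is a plan rather than a proof, and the step you yourself flag as the main obstacle is exactly where the content of the theorem lies, so this is a genuine gap. The detour through sobrifications costs nothing and gains nothing: since $Y^s$ is sober, the point $f^s(\alpha_A)=\overline{f(A)}\in Y^s$ and the filtered family $\{\ua_{Y^s} f^s(Q) : Q \mbox{ a compact saturated neighborhood of } \alpha_A \mbox{ in } X^s\}\subseteq\mk(Y^s)$ exist trivially; the whole problem is to convert this into data inside $Y$, to which the well-filteredness of $Y$ can be applied, and no such conversion is given. The natural candidate, taking preimages under $\eta_Y$, fails outright: the preimage of a compact saturated subset of $Y^s$ need not be compact and can even be empty. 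The paper itself supplies the example: $Y=\Sigma\,\mathcal L$ is well-filtered and non-sober (Proposition \ref{Jia space WF non-sober}), its sobrification is $(\Sigma\,\mathcal L)_{\top}$ (Corollary \ref{sobrification of Jia space}), and by Corollary \ref{compact sets in Jia space plus a top} the set $\{\top\}=\ua\top$ lies in $\mk((\Sigma\,\mathcal L)_{\top})$, yet its preimage under the embedding $\zeta_{\mathcal L}$ is empty.

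Worse, any descent mechanism that used only the well-filteredness of $Y$ together with the existence of some filtered family in $\mk(Y^s)$ clustering at the point corresponding to $\overline{f(A)}$ cannot exist: applied to the one-member family $\{\ua_{Y^s}\beta_B\}$, where $\beta_B\in Y^s$ is the point corresponding to an arbitrary $B\in\ir_c(Y)$, it would produce a generic point of $B$ inside $Y$ and hence prove that every well-filtered space is sober, contradicting $\Sigma\,\mathcal L$ (or Isbell's lattice). So a correct argument must exploit, on the $Y$-side, the specific way your family arises from the continuous lattice $\mathcal O(X)$: compactness in $Y$ has to be manufactured directly from the way-below data (pairs $V\ll U$ of opens meeting $A$ and covers of $f(A\cap V)$), not imported from $Y^s$. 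That construction is the substance of \cite[Theorem 6.15]{XSXZ-2020}, and it is absent from your proposal; your own observation that one cannot imitate part (1) (core-compact spaces may lack compact neighborhoods entirely, and whether core-compact implies Rudin is still open, cf. \cite[Question 5.14]{xuzhao-20202}) is accurate, but it is a diagnosis of the difficulty, not a resolution of it. As it stands, only part (1) is proved.
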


It is still not known whether every core-compact $T_0$ space is a Rudin space (see \cite[Question 5.14]{xuzhao-20202}).

\begin{question}\label{core-compact imply Smyth power space is WD} For a core-compact $T_0$ space $X$, is the Smyth power space $P_S(X)$ a $\mathsf{WD}$ space? Is the Scott power space $\Sigma~\!\!\mk (X)$ a $\mathsf{WD}$ space?
\end{question}

From Theorem \ref{soberequiv} and Proposition \ref{LCrudin and core-compact is WD} one can immediately get the following result, which was first proved by Lawson, Wu and Xi \cite{Lawson-Xi} using a different method.

\begin{corollary}\label{WFcorcomp-sober} (\cite{Lawson-Xi, XSXZ-2020}) Every core-compact well-filtered space is sober.
\end{corollary}

By Corollary \ref{WFcorcomp-sober}, Theorem \ref{SoberLC=CoreC} can be strengthened into the following one.

\begin{theorem}\label{SoberLC=CoreCNew}  For a $T_0$ space $X$, the following conditions are equivalent:
\begin{enumerate}[\rm (1)]
	\item $X$ locally compact and sober.
	\item $X$ is locally compact and well-filtered.
	\item $X$ is core-compact and sober.
    \item $X$ is core-compact and well-filtered.
\end{enumerate}
\end{theorem}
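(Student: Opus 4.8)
The plan is to recognize that conditions (1), (2), and (3) are already established as equivalent by Theorem \ref{SoberLC=CoreC}, so the entire content of this strengthening is to fold the new condition (4) into that equivalence class. Since local compactness implies core-compactness (Remark \ref{core-compact is not locally compact}), the cleanest route is to prove the single equivalence (3) $\Leftrightarrow$ (4); once this is in hand, (4) joins the chain (1) $\Leftrightarrow$ (2) $\Leftrightarrow$ (3) automatically, and all four conditions are equivalent.

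For the direction (3) $\Rightarrow$ (4), I would argue as follows. Assume $X$ is core-compact and sober. By the implications recorded in Remark \ref{sober implies WF implies d-space}, sobriety implies well-filteredness, so $X$ is in particular well-filtered. Hence $X$ is core-compact and well-filtered, which is exactly (4). This direction is immediate and uses no machinery beyond the standard implication chain.

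For the converse (4) $\Rightarrow$ (3), the essential input is Corollary \ref{WFcorcomp-sober}, which states that every core-compact well-filtered space is sober. Assuming $X$ is core-compact and well-filtered, I would apply this corollary directly to conclude that $X$ is sober; combined with core-compactness, this yields (3).

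The only step carrying any genuine mathematical weight is the converse, and it rests entirely on Corollary \ref{WFcorcomp-sober}. As the excerpt notes, that corollary itself follows from Theorem \ref{soberequiv} (a well-filtered $\mathsf{WD}$ space is sober) together with Proposition \ref{LCrudin and core-compact is WD}(2) (core-compact spaces are $\mathsf{WD}$ spaces). Consequently I expect no real obstacle in the present proof: the theorem is essentially a repackaging of earlier results, and the substantive work was already carried out in establishing Corollary \ref{WFcorcomp-sober}. The proof is therefore a short bookkeeping argument threading (4) into the previously known equivalences.
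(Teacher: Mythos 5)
Your proof is correct and matches the paper's own route: the paper likewise obtains this theorem by noting that Corollary \ref{WFcorcomp-sober} (core-compact well-filtered $\Rightarrow$ sober) lets one fold condition (4) into the already-established equivalences of Theorem \ref{SoberLC=CoreC}. Your explicit treatment of (3) $\Leftrightarrow$ (4), with the easy direction via sobriety $\Rightarrow$ well-filteredness, is exactly the bookkeeping the paper leaves implicit.
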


Figure 3 shows certain relations among some kinds of spaces.

\begin{figure}[ht]
	\centering
	\includegraphics[height=1.6in,width=4.0in]{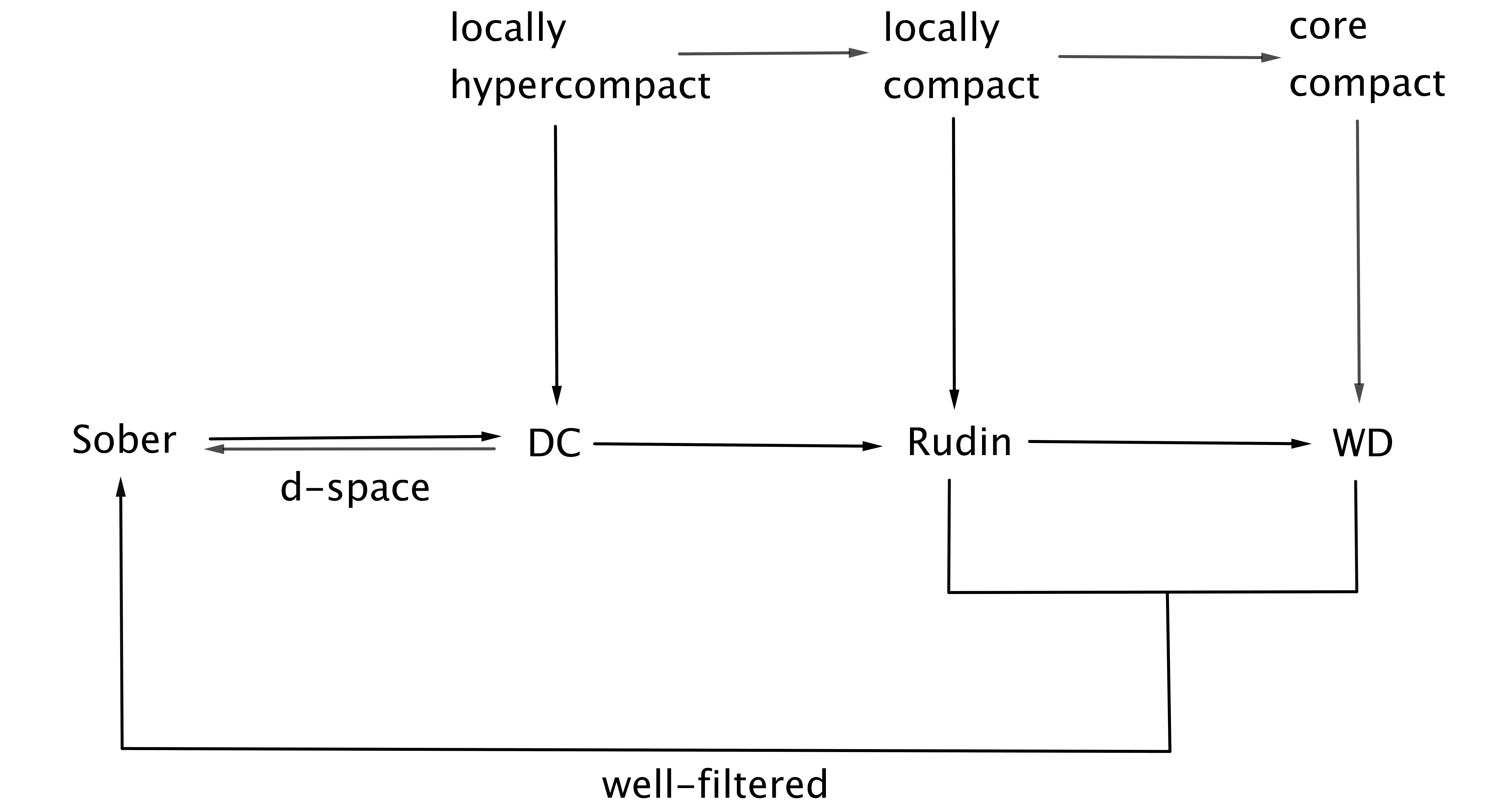}
	\caption{Certain relations among some kinds of spaces}
\end{figure}

In order to emphasize the Scott topology, we introduce the following notions.

\begin{definition}\label{sober dcpo} A poset $P$ is called a \emph{sober dcpo} (resp., a \emph{well-filtered dcpo}) if $\Sigma~\!\! P$ is a sober space (resp., well-filtered space).
\end{definition}

Clearly, a sober dcpo is a well-filtered dcpo. For Isblle's lattice $L$ constructed in \cite{isbell}, $\Sigma~\!\! L$ is non-sober, namely, $L$ is not a  sober dcpo, and by Corollary \ref{Xi-Lawson result 1}, $L$ is well-filtered. The Johnstone's dcpo $\mathbb{J}$ (see Example \ref{K(X) is dcpo not implies X is wf}) is not well-filtered.

\begin{definition}\label{Rudin dcpo} Let $P$ be a poset.
\begin{enumerate}[\rm (1)]
\item $P$ is said to be a $\mathsf{DC}$ \emph{poset} if $\Sigma~\!\!P$ is a $\mathsf{DC}$ space.
\item $P$ is said to be a \emph{Rudin poset} if $\Sigma~\!\! P$ is a Rudin space.
    \item  $P$ is said to be a \emph{well-filtered determined poset}, a $\wdd$-\emph{poset} for short, if $\Sigma~\!\! P$ is a well-filtered determined space.
        \item When a dcpo $P$ is a Rudin poset (resp., a well-filtered determined poset), we will call $P$ a \emph{Rudin dcpo} (resp., a \emph{well-filtered determined dcpo}).
    \end{enumerate}
\end{definition}

The following corollary follows directly from Theorem \ref{soberequiv}.

\begin{corollary}\label{sober dcpo equivalent} For a poset $P$, the following conditions are equivalent:
\begin{enumerate}[\rm (1)]
\item $P$ is a sober dcpo.
\item $P$ is a $\mathsf{DC}$ dcpo.
\item $P$ is a $\mathsf{DC}$ well-filtered dcpo.
\item $P$ is a Rudin well-filtered dcpo.
\item $P$ is a $\wdd$ well-filtered dcpo.
\end{enumerate}
\end{corollary}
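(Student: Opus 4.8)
The plan is to derive the corollary by applying Theorem~\ref{soberequiv} directly to the Scott space $X=\Sigma P$, so that each of the five conditions on the poset $P$ translates, term for term, into the correspondingly numbered condition of that theorem. The only point needing care is the bookkeeping of the implicit ``dcpo'' hypotheses against the ``$d$-space'' and ``well-filtered'' hypotheses.

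First I would record the elementary bridge: for any poset $P$ the specialization order of $\Sigma P$ is exactly $\leq_P$, and $\mathcal O(\Sigma P)=\sigma(P)=\sigma(\Sigma P)$, so the inclusion $\mathcal O(\Sigma P)\subseteq\sigma(\Sigma P)$ holds trivially. By the definition of a $d$-space (and Proposition~\ref{d-spacecharac1}) it follows that $\Sigma P$ is a $d$-space if and only if $(P,\leq_P)$ is a dcpo. This is precisely the dictionary converting the ``dcpo'' language of Definitions~\ref{sober dcpo} and \ref{Rudin dcpo} into the ``$d$-space'' language of Theorem~\ref{soberequiv}.

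Next I would unwind the definitions. By Definitions~\ref{sober dcpo} and \ref{Rudin dcpo}, saying that $P$ is a sober (resp.\ well-filtered, $\mathsf{DC}$, Rudin, $\mathsf{WD}$) dcpo or poset means exactly that $\Sigma P$ is a sober (resp.\ well-filtered, $\mathsf{DC}$, Rudin, $\mathsf{WD}$) space, with $P$ a dcpo wherever that word appears. Using the bridge above, condition~(2), that $P$ is a $\mathsf{DC}$ dcpo, becomes ``$\Sigma P$ is a $\mathsf{DC}$ $d$-space'', i.e.\ condition~(2) of Theorem~\ref{soberequiv}; condition~(3) becomes ``$\Sigma P$ is a well-filtered $\mathsf{DC}$ space'', condition~(4) becomes ``$\Sigma P$ is a well-filtered Rudin space'', and condition~(5) becomes ``$\Sigma P$ is a well-filtered $\mathsf{WD}$ space''; condition~(1) is by definition ``$\Sigma P$ is sober''. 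Thus the five conditions are identical to conditions (1)--(5) of Theorem~\ref{soberequiv} for the space $X=\Sigma P$, and their equivalence is immediate.

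I do not expect a genuine obstacle here, but I would flag one mild subtlety: why the explicit ``dcpo'' may be dropped in (3)--(5) yet must be kept in (2). In (3)--(5) the space $\Sigma P$ is assumed well-filtered, and since well-filteredness forces the $d$-space property (Remark~\ref{sober implies WF implies d-space}), $P$ is automatically a dcpo there; in (2), however, the $\mathsf{DC}$ condition alone carries no such implication, so the dcpo hypothesis must be stated by hand in order to recover condition~(2) of Theorem~\ref{soberequiv}.
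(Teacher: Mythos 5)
Your proposal is correct and matches the paper's proof, which simply derives the corollary directly from Theorem~\ref{soberequiv} applied to $X=\Sigma~\!\!P$ via Definitions~\ref{sober dcpo} and \ref{Rudin dcpo}. Your explicit bridge ($\Sigma~\!\!P$ is a $d$-space iff $P$ is a dcpo) and the remark on where the dcpo hypothesis is automatic are exactly the unstated bookkeeping behind the paper's one-line justification.
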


In \cite{xu-shen-xi-zhao2}, the following countable versions of Rudin spaces and $\mathsf{WD}$ spaces were introduced and studied.

\begin{definition}\label{omega rudinset}  (\cite[Definition 5.1]{xu-shen-xi-zhao2})  Let $X$ be a $T_0$ space and $A$ a nonempty subset of $X$.
\begin{enumerate}[\rm (a)]
\item The set $A$ is said to be an $\omega$-\emph {Rudin set}, if there exists a countable filtered family $\mathcal K\subseteq \mathord{\mathsf{K}}(X)$ such that $\overline{A}\in m(\mathcal K)$. Let $\mathsf{RD}_\omega(X)$ denote the set of all closed $\omega$-Rudin sets of $X$.
    \item The space $X$ is called $\omega$-\emph{Rudin space}, if $\ir_c(X)=\mathsf{RD}_\omega(X)$ or, equivalently, all irreducible (closed) subsets of $X$ are $\omega$-Rudin sets.
        \end{enumerate}
\end{definition}

\begin{definition}\label{omega-WF space} (\cite[Definition 3.9]{xu-shen-xi-zhao2})
	A $T_0$ space $X$ is called \emph{$\omega$-well-filtered}, if for any countable filtered family $\{K_n : n<\omega\}\subseteq \mk (X)$ and  $U\in\mathcal O(X)$, it holds that
		$$\bigcap_{n<\omega}K_n\subseteq U \ \Rightarrow \ \exists n_0<\omega, K_{n_0}\subseteq U.$$
\end{definition}

\begin{definition}\label{omega WDspace} (\cite[Definition 5.4]{xu-shen-xi-zhao2})   Let $X$ be a $T_0$ space and $A$ a nonempty subset of $X$.
\begin{enumerate}[\rm (a)]
\item The set $A$ is called an $\omega$-\emph{well}-\emph{filtered determined set}, $\omega$-$\wdd$ \emph{set} for short, if for any continuous mapping $ f:X\longrightarrow Y$
to an $\omega$-well-filtered space $Y$, there exists a (unique) $y_A\in Y$ such that $\overline{f(A)}=\overline{\{y_A\}}$.
Denote by $\mathsf{WD}_\omega(X)$ the set of all closed $\omega$-well-filtered determined subsets of $X$.
\item The space $X$ is called $\omega$-\emph{well}-\emph{filtered determined}, $\omega$-$\mathsf{WD}$ \emph{space} for short, if $\ir_c(X)=\mathsf{WD}_\omega(X)$ or, equivalently, all irreducible (closed) subsets of $X$ are $\omega$-well-filtered determined.
 \end{enumerate}
\end{definition}

For a $T_0$ space $X$, it was proved in \cite[Proposition 5.5]{xu-shen-xi-zhao2} that $\mathcal{S}_c(X)\subseteq \mathsf{RD}_\omega(X)\subseteq\mathsf{WD}_\omega(X)\subseteq\ir_c(X)$. Therefore, every $\omega$-Rudin space is $\omega$-well-filtered determined.

By \cite[Theorem 5.11]{xu-shen-xi-zhao2}, we have the following similar result to Theorem \ref{soberequiv}.

\begin{proposition}\label{sober equiv using omega RD and omega WD}  For a $T_0$ space $X$, the following conditions are equivalent:
	\begin{enumerate}[\rm (1)]
		\item $X$ is sober.
		\item $X$ is an $\omega$-Rudin and $\omega$-well-filtered space.
		\item $X$ is an $\omega$-well-filtered determined and $\omega$-well-filtered space.
	\end{enumerate}
\end{proposition}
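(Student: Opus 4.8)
The plan is to prove the equivalence by the cycle $(1)\Rightarrow(2)\Rightarrow(3)\Rightarrow(1)$, mirroring the structure of Theorem \ref{soberequiv} but replacing each notion by its countable counterpart. Throughout I will lean on the inclusion chain $\mathcal{S}_c(X)\subseteq \mathsf{RD}_\omega(X)\subseteq\mathsf{WD}_\omega(X)\subseteq\ir_c(X)$ (from \cite[Proposition 5.5]{xu-shen-xi-zhao2}), on the observation that well-filteredness implies $\omega$-well-filteredness (a countable filtered family is in particular a filtered family, so Definition \ref{omega-WF space} is a special case of the defining condition of a well-filtered space), and on the remark, already recorded above, that every $\omega$-Rudin space is $\omega$-well-filtered determined.

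For $(1)\Rightarrow(2)$, I would argue as follows. If $X$ is sober, then by Remark \ref{sober implies WF implies d-space} it is well-filtered, hence $\omega$-well-filtered. For the $\omega$-Rudin property, sobriety means exactly $\ir_c(X)=\mathcal{S}_c(X)$; combining this with the inclusion chain gives $\ir_c(X)=\mathcal{S}_c(X)\subseteq \mathsf{RD}_\omega(X)\subseteq \ir_c(X)$, so $\ir_c(X)=\mathsf{RD}_\omega(X)$ and $X$ is $\omega$-Rudin. The implication $(2)\Rightarrow(3)$ is then immediate: an $\omega$-Rudin space is $\omega$-well-filtered determined, so the hypotheses of $(2)$ already contain those of $(3)$.

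The substantive step is $(3)\Rightarrow(1)$, and here the key device is to feed the identity map into the definition of an $\omega$-$\mathsf{WD}$ set. Let $A\in\ir_c(X)$. Since $X$ is an $\omega$-$\mathsf{WD}$ space, $A\in\mathsf{WD}_\omega(X)$, i.e. $A$ is an $\omega$-well-filtered determined set. Because $X$ is itself $\omega$-well-filtered, the space $X$ qualifies as a target, so I apply Definition \ref{omega WDspace} to the continuous map $\mathrm{id}_X:X\longrightarrow X$: there is a (unique) point $y_A\in X$ with $\overline{\mathrm{id}_X(A)}=\overline{\{y_A\}}$, that is, $A=\overline{A}=\overline{\{y_A\}}$. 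Thus every irreducible closed set of $X$ is the closure of a point, and the uniqueness clause (together with the $T_0$ separation that $X$ inherits) yields that this point is unique, so $X$ is sober.

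I expect $(3)\Rightarrow(1)$ to be the only place requiring a genuine idea, namely recognizing that $X$ may be used as its own test space in the definition of $\omega$-well-filtered determinedness; once that is noticed the argument is short. The routine care will lie in checking that the uniqueness of $y_A$ supplied by Definition \ref{omega WDspace} matches the uniqueness demanded by sobriety, which is automatic in a $T_0$ space since $\overline{\{y\}}=\overline{\{y'\}}$ forces $y=y'$.
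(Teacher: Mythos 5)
Your proof is correct, but there is nothing in this paper to compare it against: the proposition is stated without proof, being quoted directly as Theorem 5.11 of \cite{xu-shen-xi-zhao2} (just as its uncountable analogue, Theorem \ref{soberequiv}, is quoted from \cite{XSXZ-2020}). What you have written is the self-contained argument that the citation suppresses, and every step checks out. The implication $(1)\Rightarrow(2)$ correctly combines sober $\Rightarrow$ well-filtered $\Rightarrow$ $\omega$-well-filtered (a countable filtered family is in particular filtered) with the characterization of sobriety as $\ir_c(X)=\mathcal S_c(X)$, squeezed through the inclusion chain $\mathcal S_c(X)\subseteq \mathsf{RD}_\omega(X)\subseteq\mathsf{WD}_\omega(X)\subseteq\ir_c(X)$; the implication $(2)\Rightarrow(3)$ follows from the same chain; and $(3)\Rightarrow(1)$ is the identity-map trick, which is legitimate exactly because the $\omega$-well-filteredness hypothesis in $(3)$ makes $X$ itself an admissible target space in Definition \ref{omega WDspace}, while the uniqueness demanded by sobriety is automatic in a $T_0$ space. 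Note that your argument is not fully self-contained either: it imports the inclusion chain, which this paper in turn cites as Proposition 5.5 of \cite{xu-shen-xi-zhao2}. If you wanted to eliminate that dependency, the only nontrivial inclusion you use is $\mathcal S_c(X)\subseteq \mathsf{RD}_\omega(X)$, which is quickly verified by taking $\mathcal K=\{\ua x\}$ for $A=\overline{\{x\}}$ (a closed set meeting $\ua x$ contains $x$, since closed sets are lower sets); the inclusion $\mathsf{RD}_\omega(X)\subseteq\mathsf{WD}_\omega(X)$ needed for $(2)\Rightarrow(3)$ is the countable analogue of Proposition \ref{rudinwf} and would require reproducing that argument. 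As it stands, your proof rests on the same cited foundations as the paper's own exposition and is the natural argument one would expect the cited source to contain.
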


\begin{theorem} \label{sobrifcaltion first-countable is Rudin} (\cite[Theorem 5.6 and Theorem 6.12]{XSXZ-2021}) Let $X$ be a $T_0$ space.
\begin{enumerate}[\rm (1)]
\item If the sobrification $X^s$ of $X$ is first-countable, then $X$ is an $\omega$-Rudin space.
\item If $X$ is first-countable, then $X$ is a $\mathsf{WD}$ space.
\end{enumerate}
\end{theorem}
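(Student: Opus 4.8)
The plan is to handle the two items through a common device: for each closed irreducible set $A\in\ir_c(X)$, exhibit a \emph{countable} filtered family of compact saturated sets that singles out $A$ minimally. For item (1), I would fix $A\in\ir_c(X)$ and regard it as a point of the sober space $X^s$. Because irreducibility gives $\Diamond U\cap\Diamond V=\Diamond(U\cap V)$ on $\ir_c(X)$, the basic open sets of $X^s$ are closed under finite intersection, so first-countability of $X^s$ supplies a descending countable neighbourhood base $\{\Diamond U_n:n<\omega\}$ at $A$ with $U_{n+1}\subseteq U_n$. Since $X^s$ is sober it is an $\omega$-Rudin space by Proposition~\ref{sober equiv using omega RD and omega WD}, so the obstruction is not whether $A$ is $\omega$-Rudin \emph{in $X^s$} (it is), but to realise a witnessing family \emph{inside $X$} itself.

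For the candidate sets I would, for each $n$, choose $a_n\in A\cap U_n$ and consider $\ua a_n$. Since open sets are upper sets in the specialization order, $a_n\in U_n$ already forces $\ua a_n\subseteq U_n$, and $\ua a_n\in\mk(X)$ is supercompact saturated and meets $A$. The minimality test is then clean: if $C\subsetneq A$ is closed, irreducibility gives $A\cap(X\setminus C)\neq\emptyset$, i.e. $A\in\Diamond(X\setminus C)$, so $\Diamond U_n\subseteq\Diamond(X\setminus C)$ for some $n$; testing this containment on $\cl\{x\}$ for $x\in U_n$ yields $U_n\cap C=\emptyset$, whence $\ua a_n\cap C=\emptyset$. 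Thus any family drawn from the $\ua a_n$ that still meets $A$ and contains, for each proper closed $C\subsetneq A$, a member inside the relevant $U_n$, witnesses $A\in m(\cdot)$. The genuine obstacle is that $\{\ua a_n\}$ need not be \emph{filtered}, and it cannot simply be nested, since descending into the shrinking $U_n$ (needed for minimality) conflicts with moving upward in the order (needed for nesting). To overcome this I would use that $\xi_X(A)=\{\ua a:a\in A\}$ is irreducible in $P_S(X)$ (Remark~\ref{xi embdding} and Lemma~\ref{irrimage}) and feed it, together with the traces of the base, into the topological Rudin Lemma~\ref{t Rudin}; it is here that the sobriety of $X^s$ is essential to extract a \emph{countable filtered} refinement $\mathcal K\subseteq\mk(X)$ with $A\in m(\mathcal K)$, giving $A\in\mathsf{RD}_\omega(X)$ and hence that $X$ is $\omega$-Rudin.

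For item (2) I would show directly that every $A\in\ir_c(X)$ is a $\mathsf{WD}$ set. Given continuous $f:X\to Y$ with $Y$ well-filtered, $\overline{f(A)}\in\ir_c(Y)$ by Lemma~\ref{irrimage}, and by Proposition~\ref{WFwdc} it suffices to show $\overline{f(A)}$ has the Rudin property in $Y$: then $\mathsf{RD}(Y)=\mathcal S_c(Y)$ delivers the unique $y_A$ with $\overline{f(A)}=\overline{\{y_A\}}$. Here first-countability of $X$ (rather than of $X^s$) is used to select countably many points of $A$ and push them forward, noting again that $\ua f(a)\in\mk(Y)$ and that open sets are upper, to build the required filtered family in $\mk(Y)$, after which the well-filteredness of $Y$ performs the collapse; equivalently, since every well-filtered space is $\omega$-well-filtered, one shows $A\in\mathsf{WD}_\omega(X)$ and concludes $A\in\mathsf{WD}(X)$. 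I expect the main obstacle throughout to be exactly the manufacture of a countable \emph{filtered} family of compact saturated sets pinning down $A$ minimally: the pointwise data from first-countability is easy, but converting it into a filtered family forces one to invoke sobriety of $X^s$ in (1) and well-filteredness of the target $Y$ in (2) — which is also the structural reason (1) needs $X^s$, not merely $X$, to be first-countable, and why (2) yields only well-filtered determinedness rather than the full Rudin property.
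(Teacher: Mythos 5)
Your proposal has genuine gaps in both parts; since the paper only quotes this theorem from \cite{XSXZ-2021} without proof, the comparison below is with what a complete argument must actually supply.

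For (1), your setup and minimality computation are correct: on $\ir_c(X)$ one has $\Diamond U\cap\Diamond V=\Diamond (U\cap V)$, so there is a decreasing base $\{\Diamond U_n : n<\omega\}$ at the point $A$ of $X^s$ with $U_{n+1}\subseteq U_n$, and testing $\Diamond U_n\subseteq \Diamond W$ on point closures gives $U_n\subseteq W$; hence every closed $C\subsetneq A$ satisfies $U_{n_0}\cap C=\emptyset$ for some $n_0$. But your repair of the filteredness problem is not a proof: the topological Rudin Lemma \ref{t Rudin} takes an irreducible subset of $P_S(X)$ and a closed set meeting all its members and returns a \emph{minimal irreducible closed set}; it has no mechanism for producing the countable \emph{filtered family in $\mk(X)$} that the definition of $\mathsf{RD}_\omega(X)$ demands, and ``sobriety of $X^s$'' is vacuous (every sobrification is sober) and does no work. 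The missing idea is that the tails themselves give the family: put $K_n=\ua\{a_m : m\geq n\}$. Each $K_n$ is compact, because any open $V$ containing a single $a_{m_0}$ meets $A$, hence $U_{n_0}\subseteq V$ for some $n_0$, hence $V$ swallows the whole tail $\{a_m : m\geq n_0\}$, leaving only finitely many points to be covered by finitely many further members of a cover. Thus $\{K_n : n<\omega\}\subseteq \mk(X)$ is a countable descending chain with $a_n\in A\cap K_n$, and if a closed $C\subsetneq A$ met every $K_n$, then ($C$ being a lower set) some $a_m$ with $m\geq n_0$ would lie in $C\cap U_{n_0}$, a contradiction; so $A\in m(\{K_n : n<\omega\})$ and $A\in\mathsf{RD}_\omega(X)$. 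Without this tail-compactness observation your proof of (1) does not close.

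For (2) the gap is fatal rather than repairable along your lines. The reduction via Proposition \ref{WFwdc} (it suffices that $\overline{f(A)}\in\mathsf{RD}(Y)$) is legitimate, but you give no construction, and the natural one cannot exist: if each member of your countable filtered family $\mathcal K\subseteq\mk(Y)$ contains some pushed-forward point $f(a_n)$, then $B=\overline{\{f(a_n) : n<\omega\}}$ meets every member of $\mathcal K$, so minimality of $\overline{f(A)}$ in $M(\mathcal K)$ would force $\overline{f(A)}=B$, i.e.\ $f(A)$ separable --- which already fails for $X=Y=\Sigma P$ with $P=[0,\omega_1)$, $f=\mathrm{id}$, $A=P$, a first-countable space by Example \ref{first-countable omega WF is not sober}. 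Your proposed ``equivalent'' route is outright false, not merely incomplete: it would show that every first-countable $T_0$ space is an $\omega$-$\wdd$ space, but $\Sigma P$ is first-countable and $\omega$-well-filtered yet not sober, contradicting Proposition \ref{sober equiv using omega RD and omega WD}; concretely, the irreducible closed set $P$ itself is \emph{not} in $\mathsf{WD}_\omega(\Sigma P)$ (the inclusion $\mathsf{WD}_\omega(X)\subseteq\mathsf{WD}(X)$ you invoke is fine; it is membership in $\mathsf{WD}_\omega(X)$ that is unprovable). This is exactly why part (1) requires first-countability of $X^s$ rather than of $X$, and why (2) is a separate and substantially harder theorem: any correct proof must use first-countability at \emph{all} points of $A$ together with the well-filteredness of the target $Y$ in an essential way, and no argument that funnels $A$ through countably many chosen points can succeed.
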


From Theorem \ref{soberequiv} and Theorem \ref{sobrifcaltion first-countable is Rudin} we immediately deduce the following result.

\begin{corollary}\label{first-countable WF is sober} (\cite[Theorem 4.2]{xu-shen-xi-zhao2}) Every first-countable well-filtered $T_0$ space is sober.
\end{corollary}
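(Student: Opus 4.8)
The plan is to combine the two structural results already established: the statement is an immediate consequence of Theorem \ref{soberequiv} together with Theorem \ref{sobrifcaltion first-countable is Rudin}, so the proof is a matter of assembling the right pieces rather than carrying out new arguments. The route is to upgrade first-countability to the $\mathsf{WD}$ property, and then to use that the $\mathsf{WD}$ property plus well-filteredness is precisely sobriety.

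First I would invoke first-countability alone. By part (2) of Theorem \ref{sobrifcaltion first-countable is Rudin}, any first-countable $T_0$ space $X$ is a $\mathsf{WD}$ space, i.e.\ $\ir_c(X)=\mathsf{WD}(X)$: every irreducible closed subset is well-filtered determined. This step uses no hypothesis beyond first-countability and does not touch the well-filteredness assumption at all.

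Next I would feed this into the sobriety characterization. Theorem \ref{soberequiv} asserts, among its equivalences, that a $T_0$ space is sober if and only if it is a well-filtered $\mathsf{WD}$ space (condition (5)). Since $X$ is assumed well-filtered and has just been shown to be a $\mathsf{WD}$ space, the implication (5)$\Rightarrow$(1) yields that $X$ is sober, which is the desired conclusion.

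The main point, and the only place where genuine work resides, is therefore hidden inside the two cited theorems: the nontrivial fact that first-countability forces the $\mathsf{WD}$ property (Theorem \ref{sobrifcaltion first-countable is Rudin}), and the equivalence of sobriety with well-filteredness together with the $\mathsf{WD}$ property (Theorem \ref{soberequiv}). Given these, the corollary presents no further obstacle; the argument is a one-line chaining of implications. I would close by remarking that this recovers, by a route through well-filtered determinedness, the result of \cite{xu-shen-xi-zhao2} attributed in the statement.
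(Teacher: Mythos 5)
Your proposal is correct and follows exactly the paper's route: the paper deduces this corollary "from Theorem \ref{soberequiv} and Theorem \ref{sobrifcaltion first-countable is Rudin}", i.e.\ first-countability gives the $\mathsf{WD}$ property by Theorem \ref{sobrifcaltion first-countable is Rudin}(2), and then the equivalence (1)$\Leftrightarrow$(5) of Theorem \ref{soberequiv} (sober iff well-filtered $\mathsf{WD}$ space) finishes the argument. Nothing is missing; your chaining of the two cited results is precisely the intended proof.
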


It is still not known whether a first-countable $T_0$ space is a Rudin space (see \cite[Problem 6.15]{XSXZ-2021}). Since the first-countability is a hereditary property, from Remark \ref{xi embdding} and Theorem \ref{sobrifcaltion first-countable is Rudin} we know that if the Smyth power space $P_S(X)$ of a $T_0$ space $X$ is first-countable, then $X$ is a $\mathsf{WD}$ space.

So naturally we ask the following question.

\begin{question}\label{Smith power space CI X is RD} Is a $T_0$ space with a first-countable Smyth power space a Rudin space?
\end{question}

In Example \ref{Scott sober not implies X is sober} a $T_0$ space $X$ is given for which the Scott power space $\Sigma~\!\!\mathsf{K}(X)$ is a first-countable sober $c$-space but $X$ is not a $\mathsf{WD}$ space (and hence not a Rudin space).

By Proposition \ref{sober equiv using omega RD and omega WD} and Theorem \ref{sobrifcaltion first-countable is Rudin}, we have the following result.

\begin{corollary}\label{sobrification first-countable and omega-WF is sober}(\cite[Theorem 5.9]{XSXZ-2021}) Every $\omega$-well-filtered space with a first-countable sobrification is sober.
\end{corollary}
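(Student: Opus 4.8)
The plan is to combine the two structural results that immediately precede this corollary, namely Theorem~\ref{sobrifcaltion first-countable is Rudin}(1) together with the equivalence of conditions (1) and (2) in Proposition~\ref{sober equiv using omega RD and omega WD}. Both hypotheses of the corollary --- that $X$ is $\omega$-well-filtered and that its sobrification $X^s$ is first-countable --- feed directly into these two results, so I expect the argument to be a short chain of implications rather than any kind of computation.

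First I would use the hypothesis on the sobrification. By Theorem~\ref{sobrifcaltion first-countable is Rudin}(1), the first-countability of $X^s$ forces $X$ to be an $\omega$-Rudin space; that is, every irreducible closed subset of $X$ is an $\omega$-Rudin set. This is the only place where the topological assumption on $X^s$ enters. Next I would simply record that $X$ is, by hypothesis, $\omega$-well-filtered. Together these two facts say precisely that $X$ satisfies condition (2) of Proposition~\ref{sober equiv using omega RD and omega WD}: it is at once an $\omega$-Rudin space and an $\omega$-well-filtered space. Invoking the implication (2) $\Rightarrow$ (1) of that proposition then yields that $X$ is sober, which is exactly the desired conclusion.

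I do not expect a genuine obstacle at this level, since all of the substantive work has already been discharged in the cited references: the passage from a first-countable sobrification to the $\omega$-Rudin property rests on the topological Rudin Lemma (Lemma~\ref{t Rudin}), and the delicate countable filtered-intersection arguments are packaged inside Proposition~\ref{sober equiv using omega RD and omega WD}. The one point that must be checked carefully is that the hypotheses of the corollary line up \emph{exactly} with the hypotheses of the two invoked results --- in particular that it is the \emph{$\omega$}-versions (and not the full Rudin or well-filtered-determined versions) that are in play, so that first-countability of $X^s$ rather than of $X$ itself suffices. Once this matching is confirmed, the corollary follows at once.
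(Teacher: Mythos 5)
Your proposal is correct and coincides exactly with the paper's own derivation: the paper obtains this corollary precisely by applying Theorem~\ref{sobrifcaltion first-countable is Rudin}(1) to conclude that $X$ is an $\omega$-Rudin space and then invoking the implication (2) $\Rightarrow$ (1) of Proposition~\ref{sober equiv using omega RD and omega WD}. Your attention to using the $\omega$-versions of the Rudin and well-filteredness conditions is exactly the right matching of hypotheses.
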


 In Theorem \ref{sobrifcaltion first-countable is Rudin} and Corollary \ref{sobrification first-countable and omega-WF is sober}, the first-countability of $X^s$ can not be weakened to that of $X$ as shown in the following example. It is also shows that the first-countability of a $T_0$ space $X$ does not imply the first-countability of $X^s$ in general.

\begin{example}\label{first-countable omega WF is not sober}  Let $\omega_1$ be the first uncountable ordinal number and $P=[0, \omega_1)$. Then
\begin{enumerate}[\rm (a)]
\item $\mathcal C(\Sigma P)=\{\da t : t\in P\}\cup\{\emptyset, P\}$.
\item $\Sigma P$ is first-countable and compact (since $P$ has a least element $0$).

\item $(\Sigma P)^s$ is not first-countable.

In fact, it is easy to verify that $(\Sigma P)^s$ is homeomorphic to $\Sigma [0, \omega_1]$. Since sup of a countable family of countable ordinal numbers is still a countable ordinal number, $\Sigma [0, \omega_1]$ has no countable base at the point $\omega_1$.

\item $\mathsf{K}(\Sigma P)=\{\ua x : x\in P\}$ and $\Sigma P$ is not an $\omega$-Rudin space.

    For $K\in \mathsf{K}(\Sigma P)$, we have $\mathrm{inf}~K\in K$, and hence $K=\ua \mathrm{inf}~K$. So $\mathsf{K}(\Sigma P)=\{\ua x : x\in P\}$. Now we show that the irreducible closed set $P$ is not an $\omega$-Rudin set. For any countable filtered family $\{\ua \alpha_n : n\in\mn \}\subseteq \mathsf{K}(\Sigma P)$, let $\beta=\mathrm{sup}\{\alpha_n : n\in\mn\}$. Then $\beta$ is still a countable ordinal number. Clearly, $\da \beta \in M(\{\ua \alpha_n : n\in \mn\})$ and $P\neq \da \beta$. Therefore, $P\notin m(\{\ua \alpha_n : n\in \mn\})$. Thus $P$ is not an $\omega$-Rudin set, and hence $\Sigma P$ is not an $\omega$-Rudin space.

    \item $\Sigma P$ is a Rudin space.

    It is easy to check that $\ir_c(\Sigma P)=\{\downarrow x : x\in P\}\cup \{P\}$. Clearly, $\downarrow x$ is a Rudin set for each $x\in P$. Now we show that $P$ is a Rudin set. First, $\{\ua s : s\in P\}$ is filtered. Second, $P\in M(\{\ua s : s\in P\})$. For a closed subset $B$ of $\Sigma P$, if $B\neq P$, then $B=\da t$ for some $t\in P$, and hence $\ua (t+1)\cap\da t=\emptyset$. Thus $B\notin M(\{\ua s : s\in P\})$, proving that $P$ is a Rudin set.

     \item $P$ is not a dcpo (note that $P$ is directed and $\vee P$ does not exist). So $\Sigma P$ is not a $d$-space, and hence $\Sigma P$ is neither well-filtered nor sober.

    \item $\Sigma P$ is $\omega$-well-filtered.

    If $\{\ua x_n : n\in \mn\}\subseteq \mathsf{K}(\Sigma P)$ is countable filtered family and $U\in \sigma (P)$ with $\bigcap_{n\in \mn}\ua x_n\subseteq U$, then $\{x_n : i\in\mn\}$ is a countable subset of $P=[0, \omega_1)$. Since sup of a countable family of countable ordinal numbers is still a countable ordinal number, we have $\beta=\mathrm{sup}\{x_n : n\in \mn\}\in P$, and hence $\ua \beta=\bigcap_{n\in \mn}\ua x_n\subseteq U$. Therefore, $\beta\in U$, and consequently, $x_n\in U$ for some $n\in \mn$ or, equivalently, $\ua x_n\subseteq U$, proving that $\Sigma P$ is $\omega$-well-filtered.
\end{enumerate}
\end{example}

\section{Well-filteredness of Scott power spaces}

 In this section, we mainly discuss the following two questions:
\vskip 0.1cm
 \textbf{Question 1.} Is the Scott power space $\Sigma~\!\! \mk(X)$ of a $d$-space $X$ a $d$-space?
\vskip 0.1cm
 \textbf{Question 2.} Is the Scott power space $\Sigma~\!\! \mk(X)$ of a well-filtered space $X$ well-filtered?
\vskip 0.1cm

First, Example \ref{X d-space not imply Scott power space d-space} below shows that there is a second-countable Noetherian $d$-space $X$ for which $\mk (X)$ is not a dcpo and hence neither the Smyth power space $P_S(X)$ nor the Scott power space $\Sigma~\!\!\mk (X)$ is a $d$-space, which gives a negative answer to Question 1.

In order to present the example, we need the following lemma.

\begin{lemma}\label{local comp imply V topol finer than Scott topol} (\cite[Lemma 7.26]{Schalk}) For a locally compact $T_0$ space $X$, the Scott topology is coarser than the upper Vietoris topology on $\mathsf{K}(X)$, that is, $\sigma(\mathsf{K}(X))\subseteq \mathcal O(P_S(X))$.
\end{lemma}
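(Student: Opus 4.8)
The plan is to fix an arbitrary Scott-open set $\mathcal U\in\sigma(\mk(X))$ and show that it is open in $P_S(X)$. Since $\{\Box U : U\in\mathcal O(X)\}$ is a base of the upper Vietoris topology, it suffices to produce, for each $K\in\mathcal U$, an open set $U\in\mathcal O(X)$ with $K\in\Box U\subseteq\mathcal U$. The engine driving everything is the \emph{interpolation property} supplied by local compactness: for every $K\in\mk(X)$ and every open $U\supseteq K$ there is some $Q\in\mk(X)$ with $K\subseteq\ii\,Q\subseteq Q\subseteq U$. First I would record this property, obtaining $Q$ as a finite union $\bigcup_{i=1}^{n}Q_{x_i}$ of compact saturated neighborhoods $Q_{x_i}$ of points $x_i\in K$ whose interiors cover $K$ (finitely many suffice since $K$ is compact); such a finite union is again compact, saturated, contained in $U$, and has $K$ in its interior.

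Next I would consider the family $\mathcal D=\{Q\in\mk(X): K\subseteq\ii\,Q\}$ of compact saturated neighborhoods of $K$ and argue that it is directed in the Smyth order (in particular nonempty, via $U=X$). Given $Q_1,Q_2\in\mathcal D$, the set $\ii\,Q_1\cap\ii\,Q_2$ is open and contains $K$, so interpolation yields $Q_3\in\mathcal D$ with $Q_3\subseteq\ii\,Q_1\cap\ii\,Q_2\subseteq Q_1\cap Q_2$; in the Smyth order this says $Q_1\sqsubseteq Q_3$ and $Q_2\sqsubseteq Q_3$. I would then show $\bigcap\mathcal D=K$: the inclusion $K\subseteq\bigcap\mathcal D$ is clear, while for $y\notin K$ the saturatedness of $K$ gives an open $U\supseteq K$ with $y\notin U$, and interpolation produces $Q\in\mathcal D$ with $Q\subseteq U$, so $y\notin\bigcap\mathcal D$. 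Since $K\in\mk(X)$, Lemma \ref{Kmeet} tells us that $\bigvee\mathcal D$ exists in $\mk(X)$ and equals $\bigcap\mathcal D=K$.

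Finally, because $\mathcal U$ is Scott-open and $\bigvee\mathcal D=K\in\mathcal U$, directedness of $\mathcal D$ forces some $Q_0\in\mathcal D\cap\mathcal U$. Setting $U_0=\ii\,Q_0\in\mathcal O(X)$, we have $K\subseteq U_0$, and for any $K'\in\Box U_0$ we get $K'\subseteq U_0\subseteq Q_0$, i.e.\ $Q_0\sqsubseteq K'$; as $\mathcal U$ is an upper set (every Scott-open set is an upper set) and $Q_0\in\mathcal U$, it follows that $K'\in\mathcal U$. Hence $K\in\Box U_0\subseteq\mathcal U$, which exhibits $\mathcal U$ as upper Vietoris open and completes the argument.

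The main obstacle is the middle step. Since local compactness alone does not guarantee that $\mk(X)$ is a dcpo, one cannot appeal to a general directed-completeness result; instead one must verify \emph{by hand} both that the specific family $\mathcal D$ is directed and that its intersection is again compact saturated (here literally $K$), so that Lemma \ref{Kmeet} applies and the Scott-openness hypothesis can be legitimately invoked. All of this rests on the interpolation property, which is precisely where local compactness enters.
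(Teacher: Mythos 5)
Your proposal is correct and follows essentially the same route as the paper's own proof: both build the family of compact saturated neighborhoods $\{Q\in\mk(X):K\subseteq\ii\,Q\}$, prove the interpolation property from local compactness via finite unions, verify filteredness and that the intersection is exactly $K$ (so Lemma \ref{Kmeet} yields $\bigvee=\bigcap=K$ without assuming $\mk(X)$ is a dcpo), and then use Scott-openness plus the upper-set property to extract $K\in\Box\,\ii\,Q_0\subseteq\mathcal U$. No substantive difference.
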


\begin{proof} It was proved by Schalk in \cite{Schalk} (see \cite[the proof of Lemma 7.26]{Schalk}). We present a more direct proof here.

Suppose that $\mathcal U\in \sigma(\mathsf{K}(X))$ and $K\in \mathcal U$. Let $\mathcal K=\{G\in \mathsf{K}(X) : K\subseteq \ii ~\!G\}$. Now we show that $\mathcal K$ is filtered and $G=\bigcap \mathcal K$.

{$1^{\circ}$} For each $U\in \mathcal O(X)$ with $K\subseteq U$, there is $G_U\in \mathcal K$ with $G_U\subseteq U$.

If $U\in \mathcal O(X)$ for which $K\subseteq U$, then for each $x\in K$, there is $K_x\in \mathsf{K}(X)$ such that $x\in \ii~\!K_x\subseteq K_x\subseteq U$  since $X$ is locally compact. By the compactness of $K$, there is $\{x_1, x_2, ..., x_n\}\subseteq K$ such that $K\subseteq \bigcup\limits_{i=1}^{n} \ii~\!K_{x_i}$. Let $G_U=\bigcup\limits_{i=1}^{n} K_{x_i}$. Then $K\subseteq \ii~\!G_U\subseteq G_U\subseteq U$, whence $G_U\in \mathcal K$ and $G_U\subseteq U$.

{$2^{\circ}$} $\mathcal K$ is filtered.

Suppose that $G_1, G_2\in \mathcal K$. Then $K\subseteq \ii~\!G_1\cap \ii~\!G_2$. Hence, by what was shown above, there is $G_3\in \mathcal K$ with $G_3\subseteq \ii~\!G_1\cap \ii~\!G_2\subseteq G_1\cap G_2$, proving the filteredness of $\mathcal K$.

By $1^{\circ}$ and $2^{\circ}$, $K\subseteq \bigcap \mathcal K\subseteq \{U\in \mathcal O(X) : K\subseteq U\}=K$, whence $K=\bigcap \mathcal K=\bigvee_{\mathsf{K}(X)}\mathcal K$ by Lemma \ref{Kmeet}. Since $K\in \mathcal U\in \sigma(\mathsf{K}(X))$, $G\in \mathcal U$ for some $G\in \mathcal K$. Hence $K\in \Box_{\mathsf{K}(X)}\ii~\!G\subseteq \mathcal U$. Thus $\mathcal U\in \mathcal O(P_S(X))$.

\end{proof}

By Corollary \ref{wf space Vietoris less Scott} and Lemma \ref{local comp imply V topol finer than Scott topol}, we get the following corollary.

\begin{corollary}\label{LC sober domain V=S} (\cite[Lemma 7.26]{Schalk}) If $X$ is a locally compact sober space (equivalently, a locally compact well-filtered space or a core-compact well-filtered space), then the upper Vietoris topology and the Scott topology on $\mk (X)$ coincide.
\end{corollary}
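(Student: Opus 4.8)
The plan is to combine two one-sided topology comparisons already established in the excerpt, each supplied by one of the two standing hypotheses on $X$. Before doing so, I would first dispose of the parenthetical equivalence in the statement: the three descriptions ``locally compact sober,'' ``locally compact well-filtered,'' and ``core-compact well-filtered'' are precisely conditions (1), (2) and (4) of Theorem \ref{SoberLC=CoreCNew}, hence mutually equivalent. This lets me assume without loss of generality that $X$ is simultaneously locally compact and well-filtered, and reduces the task to proving the single equality $\sigma(\mathsf{K}(X))=\mathcal O(P_S(X))$, i.e.\ that the upper Vietoris topology and the Scott topology on $\mk(X)$ agree.

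For the first inclusion I would invoke the well-filteredness of $X$. By Corollary \ref{wf space Vietoris less Scott}, when $X$ is well-filtered the poset $\mk(X)$ (with the Smyth order) is a dcpo and the upper Vietoris topology is coarser than the Scott topology, that is, $\mathcal O(P_S(X))\subseteq \sigma(\mathsf{K}(X))$. For the reverse inclusion I would invoke the local compactness of $X$. By Lemma \ref{local comp imply V topol finer than Scott topol}, for a locally compact $T_0$ space the Scott topology is coarser than the upper Vietoris topology, that is, $\sigma(\mathsf{K}(X))\subseteq \mathcal O(P_S(X))$.

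Putting the two inclusions together immediately yields $\sigma(\mathsf{K}(X))=\mathcal O(P_S(X))$, so the two topologies coincide, as claimed. I do not expect any genuine obstacle at this stage: all the substantive work resides in the cited lemma and corollary, and the present statement is a purely formal consequence of running those two opposite comparisons in tandem under the combined hypothesis. The only step that calls for a moment's care is the reduction in the first paragraph, namely verifying that each of the hypotheses listed in the statement delivers \emph{both} local compactness \emph{and} well-filteredness at once so that Corollary \ref{wf space Vietoris less Scott} and Lemma \ref{local comp imply V topol finer than Scott topol} are simultaneously applicable; this is exactly what the equivalences of Theorem \ref{SoberLC=CoreCNew} guarantee.
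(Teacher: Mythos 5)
Your proposal is correct and matches the paper's own derivation exactly: the paper obtains this corollary precisely by combining Corollary \ref{wf space Vietoris less Scott} (well-filteredness gives $\mathcal O(P_S(X))\subseteq\sigma(\mathsf{K}(X))$) with Lemma \ref{local comp imply V topol finer than Scott topol} (local compactness gives the reverse inclusion), with the equivalence of the three hypotheses supplied by Theorem \ref{SoberLC=CoreCNew}. Nothing further is needed.
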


Considering Remark \ref{core-compact is not locally compact} and Lemma \ref{local comp imply V topol finer than Scott topol}, we have the following question.

\begin{question}\label{core-compact imply V topol finer than Scott topol} For a core-compact $T_0$ space $X$, is the Scott topology coarser than the upper Vietoris topology on $\mathsf{K}(X)$?
\end{question}

\begin{example}\label{X d-space not imply Scott power space d-space}
	Let $X$ be a countably infinite set (for example, $X=\mathbb{N}$) and $X_{cof}$ the space equipped with the \emph{co-finite topology} (the empty set and the complements of finite subsets of $X$ are open). Then
\begin{enumerate}[\rm (a)]
    \item $\mathcal C(X_{cof})=\{\emptyset, X\}\cup X^{(<\omega)}$, $X_{cof}$ is $T_1$ and hence a $d$-space.
    \item $\ir_c (X_{cof})=\{\{x\} : x\in X\}\cup \{X\}$.
    \item $\mk (X_{cof})=2^X\setminus \{\emptyset\}$.
    \item $X_{cof}$ is second-countable.

    Clearly, $\mathcal O(X_{cof})$ is countable, and hence $X_{cof}$ is second-countable.
    \item $X_{cof}$ is Noetherian and hence locally compact.

    Since every subset of $X$ is compact in $X_{cof}$, the space $X_{cof}$ is a Noetherian space and hence a locally compact space.
    \item $X_{cof}$ is a Rudin space.

    By (e) and Proposition \ref{LCrudin and core-compact is WD} (or by (d) and Corollary \ref{second-countable is omega Rudin} below), $X_{cof}$ is a Rudin space.

 \item $\mk (X_{cof})$ is not a dcpo and hence $X_{cof}$ is neither well-filtered nor sober.

    $\mathcal{K}=\{X\setminus F : F\in X^{(<\omega)}\}\subseteq\mk (X_{cof})$ is countable filtered and $\bigcap \mathcal{K}_X=X\setminus \bigcup X^{(<\omega)}=X\setminus X=\emptyset$, whence $\bigvee \mathcal K$ does not exist in $\mk (X_{cof})$ by Lemma \ref{Kmeet}. Thus $\mk (X_{cof})$ is not a dcpo, whence by Remark \ref{sober implies WF implies d-space} and Theorem \ref{Smythwf}, $X_{cof}$ is neither well-filtered nor sober.

    \item The upper Vietoris topology and the Scott topology on $\mk (X_{cof})$ agree.

    By the local compactness of $X_{cof}$ and Lemma \ref{local comp imply V topol finer than Scott topol}, we have $\sigma (\mk (X_{cof})\subseteq \mathcal O(P_S(X_{cof}))$. Now we show that $\Box U\in \sigma (\mathsf{K}(X_{cof})$
    for each $U\in \mathcal O(X_{cof})\setminus \{\emptyset\}$. Clearly, $\Box U=\ua_{\mathsf{K}(X_{cof})}\Box U$. Suppose that $\mathcal K_D=\{K_d : d\in D\}\in \mathcal D(\mk (X_{cof})$ and $\bigvee_{\mk (X_{cof})} \mathcal K_D\in \Box U$. Then by Lemma \ref{Kmeet} $\bigcap_{d\in D}K_d=\bigvee_{\mk (X_{cof})} \mathcal K_D\subseteq U$ or, equivalently, $X\setminus U\subseteq \bigcup_{d\in D}(X\setminus K_d)$. Since $X\setminus U$ is finite and $ \{X\setminus K_d : d\in D\}$ is directed, there is $d_0\in D$ with $X\setminus U\subseteq X\setminus K_{d_0}$, whence $K_{d_0}\setminus U$. Thus $\Box U\in \sigma (\mathsf{K}(X_{cof})$. Therefore, $\mathcal O(P_S(X_{cof}))\subseteq\sigma (\mk (X_{cof})$ and hence $\sigma (\mk (X_{cof})=\mathcal O(P_S(X_{cof}))$.

 \item $\Sigma~\!\!\mk (X_{cof})$ is not a $d$-space and hence it is neither a well-filtered space nor a sober space.

    Since $\mk (X_{cof})$ is not a dcpo, $\Sigma~\!\!\mk (X_{cof})$ is not a $d$-space. By Remark \ref{sober implies WF implies d-space}, $\Sigma~\!\!\mk (X_{cof})$ is neither a well-filtered space nor a sober space.
 \end{enumerate}
\end{example}

Now we investigate Question 2. First, as one of the main results of this paper, we have the following conclusion.

\begin{theorem}\label{wf imply Scott wf}
	For a well-filtered space $X$, $\Sigma~\!\! \mathsf{K}(X)$ is well-filtered.
\end{theorem}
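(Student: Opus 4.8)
The plan is to verify well-filteredness of $\Sigma\mk(X)$ directly by contradiction, bridging the gap between the Scott topology (where the test open set lives) and the coarser upper Vietoris topology (where well-filteredness is already available). Write $Q=\mk(X)$. Since $X$ is well-filtered, $Q$ is a dcpo, $\mathcal O(P_S(X))\subseteq\sigma(Q)$ by Corollary \ref{wf space Vietoris less Scott}, and $P_S(X)$ is well-filtered by Theorem \ref{Smythwf}. Crucially, $P_S(X)$ and $\Sigma Q$ carry the same specialization order (the Smyth order), so every $\mathcal K\in\mk(\Sigma Q)$ is compact and saturated in $P_S(X)$; thus $\mk(\Sigma Q)\subseteq\mk(P_S(X))$, and each $\Box V$ ($V\in\mathcal O(X)$) is Scott open.

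So let $\{\mathcal K_i:i\in I\}\subseteq\mk(\Sigma Q)$ be filtered and $\mathcal U\in\sigma(Q)$ with $\bigcap_{i\in I}\mathcal K_i\subseteq\mathcal U$, and suppose toward a contradiction that $\mathcal K_i\not\subseteq\mathcal U$ for every $i$. Put $C=Q\setminus\mathcal U$, a Scott-closed (hence lower) set meeting every $\mathcal K_i$. First I would apply the Topological Rudin Lemma (Lemma \ref{t Rudin}) in $\Sigma Q$ to the directed---hence irreducible---family $\{\mathcal K_i\}\subseteq P_S(\Sigma Q)$ and the closed set $C$, obtaining a minimal member $\mathcal A_0\in m(\{\mathcal K_i\})$ with $\mathcal A_0\subseteq C$ and $\mathcal A_0\cap\mathcal K_i\neq\emptyset$ for all $i$; in particular $\mathcal A_0\in\kf(\Sigma Q)$ is a Rudin set. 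Next, since $\mathrm{id}:\Sigma Q\to P_S(X)$ is continuous and $P_S(X)$ is well-filtered, Proposition \ref{rudinwf} gives a point $\mathcal K_0\in Q$ with $\cl_{\mathcal O(P_S(X))}\mathcal A_0=\da\mathcal K_0$. Hence $\mathcal K_0\subseteq B$ for every $B\in\mathcal A_0$, and every $\Box V$ with $\mathcal K_0\subseteq V$ meets $\mathcal A_0$. Choosing for each $i$ a $B\in\mathcal A_0\cap\mathcal K_i$, saturatedness of $\mathcal K_i$ and $\mathcal K_0\subseteq B$ give $\mathcal K_0\in\mathcal K_i$; therefore $\mathcal K_0\in\bigcap_i\mathcal K_i\subseteq\mathcal U$.

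The decisive step is to show that nevertheless $\mathcal K_0\in C$. For each $i$, $\mathcal A_0\cap\mathcal K_i$ is a nonempty compact subset of $\Sigma Q$, hence of $P_S(X)$, so by Lemma \ref{K union} (applied to its saturation, which has the same union) $D_i:=\bigcup(\mathcal A_0\cap\mathcal K_i)\in Q$. Filteredness of $\{\mathcal K_i\}$ makes $\{D_i\}$ directed in $Q$, and since $D_i\sqsubseteq B$ for $B\in\mathcal A_0\cap\mathcal K_i\subseteq\mathcal A_0$ and $\mathcal A_0$ is a lower set, each $D_i\in\mathcal A_0$; as $\mathcal A_0$ is Scott closed, $\bigvee_i D_i=\bigcap_i D_i\in\mathcal A_0\subseteq C$. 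I would then identify this join with $\mathcal K_0$: clearly $\mathcal K_0\subseteq D_i$ for all $i$ (by Lemma \ref{Kmeet}, $\bigvee_iD_i=\bigcap_iD_i$), while minimality of $\mathcal A_0$ yields, for each open $V\supseteq\mathcal K_0$, some $i$ with $\mathcal A_0\cap\mathcal K_i\subseteq\Box V$ (otherwise $\mathcal A_0\setminus\Box V$ is a strictly smaller member of $M(\{\mathcal K_i\})$), so $D_i\subseteq V$ and thus $\bigcap_i D_i\subseteq\bigcap\{V:\mathcal K_0\subseteq V\in\mathcal O(X)\}=\mathcal K_0$. Hence $\mathcal K_0=\bigcap_i D_i\in C$, contradicting $\mathcal K_0\in\mathcal U$.

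I expect the main obstacle to be precisely this last identification. Well-filteredness of $P_S(X)$ only controls the \emph{coarse} closure of $\mathcal A_0$ and naturally produces points by saturation (upper sets), which tend to escape the lower set $C$; the extra strength of Scott openness of $\mathcal U$ has to be channelled through the minimality of $\mathcal A_0$ together with the union map of Lemma \ref{K union} to realize $\mathcal K_0$ as a directed join of elements of $\mathcal A_0$ and thereby pull it back into $C$. Everything else (compactness transfer between the two topologies, directedness of $\{D_i\}$, the lower-set computations) is routine once this mechanism is in place.
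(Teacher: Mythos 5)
Your proof is correct, and its engine coincides with the paper's own: apply the topological Rudin Lemma (Lemma \ref{t Rudin}) inside $\Sigma~\!\!\mk(X)$ to the filtered family viewed as an irreducible subset of $P_S(\Sigma~\!\!\mk(X))$, obtain a minimal Scott-closed $\mathcal A_0$ in the complement of $\mathcal U$ meeting every $\mathcal K_i$, form $D_i=\bigcup(\mathcal A_0\cap\mathcal K_i)$ (this is exactly the paper's $K_d$, justified the same way via saturation and Lemma \ref{K union}), note that each $D_i$ lies in the lower set $\mathcal A_0$ and that $\{D_i\}$ is filtered, keep $\bigcap_i D_i=\bigvee_i D_i$ inside $\mathcal A_0$ by Scott-closedness and Lemma \ref{Kmeet}, and use minimality of $\mathcal A_0$ to pin that intersection down. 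Where you genuinely deviate is in how the generic point is produced and exploited: you apply Proposition \ref{rudinwf} to the identity map $\Sigma~\!\!\mk(X)\longrightarrow P_S(X)$ --- hence you also need Theorem \ref{Smythwf}, the well-filteredness of the Smyth power space --- to get $\mathcal K_0$ with $\cl_{\mathcal O(P_S(X))}\mathcal A_0=\da\mathcal K_0$, which yields $\mathcal K_0\in\bigcap_i\mathcal K_i$ almost for free; your minimality argument (with the Scott-open sets $\Box V$) is then only needed to force $\bigcap_i D_i\subseteq\mathcal K_0$. The paper instead takes $K=\bigcap_d K_d$ as the candidate point from the start (the well-filteredness of $X$ makes it compact), runs the complementary minimality argument with the closed sets $\Diamond\overline{\{k\}}$ to prove $\mathcal A=\da_{\mk(X)}K$ outright, and reads $K\in\bigcap_d\mathcal K_d$ off from that identity. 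The two routes manufacture the same point (by uniqueness of generic points in a $T_0$ space) and reach the same contradiction; yours is marginally shorter because Proposition \ref{rudinwf} absorbs part of the work, while the paper's is more economical in its prerequisites, using only Corollary \ref{wf space Vietoris less Scott} and the well-filteredness of $X$ itself rather than that of $P_S(X)$.
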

\begin{proof} By Corollary \ref{wf space Vietoris less Scott}, $\mk (X)$ is a dcpo and $\mathcal O(P_S(X))\subseteq \sigma (\mk (X))$ (i.e., $\Box U\in \sigma (\mk (X))$ for all $U\in O(X)$). Suppose that $\{\mathcal K_d : d\in D\}\subseteq \mk(\Sigma~\!\! \mk (X))$ is filtered, $\mathcal U\in \sigma (\mk (X))$ and $\bigcap\limits_{d\in D} \mathcal K_d \subseteq \mathcal U$. If $\mathcal K_d\not\subseteq \mathcal U$ for each $d\in D$, that is, $\mathcal K_d\bigcap (\mk (X)\setminus \mathcal U)\neq\emptyset$, then $\{\mathcal K_d : d\in D\}\in \ir (P_S(\mk(\Sigma~\!\! \mk (X))))$ and hence by Lemma \ref{t Rudin} $\mk (X)\setminus \mathcal U$ contains a minimal irreducible closed subset $\mathcal A$ that still meets all
members $\mathcal K_d$. For each $d\in D$, let $K_d=\bigcup \ua_{\mathsf{K}(X)} (\mathcal K_d\cap \mathcal A)$.

{\bf Claim 1:} For each $d\in D$, $K_d\in \mk (X)$ and $K_d\in \mathcal A$.

By $\mathcal K_d\in \mk(\Sigma~\!\! \mk (X))$ and $\mathcal A\in \mathcal C(\Sigma~\!\! \mk (X))$, we have that $\ua_{\mathsf{K}(X)} (\mathcal K_d\cap \mathcal A)\in \mk(\Sigma~\!\! \mk (X))$, and hence $\ua_{\mathsf{K}(X)} (\mathcal K_d\cap \mathcal A)\in \mk (P_s(X))$ by $\mathcal O(P_S(X))\subseteq \sigma (\mk (X))$. By Lemma \ref{K union}, $K_d=\bigcup \ua_{\mathsf{K}(X)} (\mathcal K_d\cap \mathcal A)=\bigcup (\mathcal K_d\cap \mathcal A)\in \mk (X)$. Since $\mathcal A=\da_{\mk (X)}\mathcal A$ and $\mathcal K_d\cap \mathcal A\neq\emptyset$, we have $K_d\in\mathcal A$.

{\bf Claim 2:} $\{K_d : d\in D\}\subseteq \mk (X)$ is filtered (by Claim 1 and the filteredness of $\{\mathcal K_d : d\in D\}$).

{\bf Claim 3:} $K=\bigcap_{d\in D}K_d\in \mk (X)$ and $K\in \mathcal A$.

By the well-filteredness of $X$, $K=\bigcap_{d\in D}K_d\in \mk (X)$. By Claim 1, Claim 2 and Lemma \ref{Kmeet}, $K=\bigvee_{\mk (X)} \{K_d : d\in D\}\in \mathcal A$ since $\mathcal A\in \mathcal C(\sigma (\mk (X)))$.

{\bf Claim 4:} For each $k\in K$, $\mathcal A\subseteq \Diamond_{\mk (X)}\overline{\{k\}}$.

For each $d\in D$, we have $k\in K\subseteq K_d=\bigcup (\mathcal K_d\cap \mathcal A)$, whence there is $G_d\in \mathcal K_d\cap \mathcal A$ such that $k\in G_d$, and consequently, $G_d\in \mathcal K_d\cap \mathcal A\cap \Diamond_{\mk (K)}\overline{\{k\}}$. Therefore, $\mathcal A\cap \Diamond_{\mk (K)}\overline{\{k\}}\in M(\{\mathcal K_d : d\in D\})$. By the minimality of $\mathcal A$ and $\Diamond_{\mk (K)}\overline{\{k\}}\in \mathcal C(P_S(X))\subseteq \mathcal C(\Sigma~\!\!\mk (X))$, we have $\mathcal A=\Diamond_{\mk (K)}\overline{\{k\}}\bigcap\mathcal A$, that is, $A\subseteq \Diamond_{\mk (K)}\overline{\{k\}}$.

{\bf Claim 5:} $A=\da_{\mk (X)}K$.

By Claim 3 and Claim 4, $\da_{\mk (X)}K\subseteq\mathcal A\subseteq \bigcap_{k\in K} \Diamond_{\mk (K)}\overline{\{k\}}$. Clearly,
$$\begin{array}{lll}
	G\in \bigcap_{k\in K} \Diamond_{\mk (X)}\overline{\{k\}} &\Leftrightarrow& \forall k\in K, G \in \Diamond_{\mk (K)}\overline{\{k\}}\\
	&\Leftrightarrow&\forall k\in K, G\cap \overline{\{k\}}\neq\emptyset\\
	&\Leftrightarrow& \forall k\in K, k\in G\\
	&\Leftrightarrow& K\subseteq G.
	\end{array}$$
	This implies that $\bigcap_{k\in K} \Diamond_{\mk (K)}\overline{\{k\}}=\da_{\mk (X)}K$, and hence $A=\da_{\mk (X)}K$.

{\bf Claim 5:} $K\in \bigcap_{d\in D}\mathcal K_d$.

For each $d\in D$, by $\mathcal K_d\bigcap \mathcal A\neq\emptyset$, $\mathcal K_d=\ua_{\mk (X)}\mathcal K_d$ and $A=\da_{\mk (X)}K$, we have $K\in \mathcal K_d$, whence $K\in \bigcap_{d\in D}\mathcal K_d\subseteq \mathcal U$, being a contradiction with $K\in \mathcal A\subseteq \mk (X)\setminus \mathcal U$.

Therefore, there is $d_0\in D$ such that $\mathcal K_{d_0}\subseteq\mathcal U$, proving that $\Sigma~\!\! \mk (X)$ is well-filtered.

\end{proof}

Example \ref{Scott sober not implies X is wf} shows that unlike Smyth power spaces (see Theorem \ref{Smythwf}), the converse of Theorem \ref{wf imply Scott wf} does not hold.

From Theorem \ref{SoberLC=CoreCNew} and Theorem \ref{wf imply Scott wf} we deduce the following result.

\begin{corollary}\label{wf Scott LC=core compact} For a well-filtered space $X$, the following two conditions are equivalent:
\begin{enumerate}[\rm (1)]
\item $\Sigma~\!\!\mk (X)$ is core-compact.
\item  $\Sigma~\!\!\mk (X)$ is locally compact.
\end{enumerate}
\end{corollary}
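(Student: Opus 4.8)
The plan is to reduce the statement to the general equivalence between core-compactness and local compactness for well-filtered spaces, applied to the space $\Sigma~\!\!\mk(X)$ itself rather than to $X$. The essential input is Theorem \ref{wf imply Scott wf}, which tells us that whenever $X$ is well-filtered, the Scott power space $\Sigma~\!\!\mk(X)$ is again well-filtered. Once this is in hand, $\Sigma~\!\!\mk(X)$ is a well-filtered $T_0$ space in its own right, and the two conditions in the statement are simply the two faces of the local-compactness/core-compactness dichotomy recorded in Theorem \ref{SoberLC=CoreCNew}.

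First I would dispose of the trivial implication $(2)\Rightarrow(1)$: every locally compact space is core-compact (Remark \ref{core-compact is not locally compact}), so this direction needs no well-filteredness at all. For the converse $(1)\Rightarrow(2)$, I would set $Y=\Sigma~\!\!\mk(X)$ and note that $Y$ is well-filtered by Theorem \ref{wf imply Scott wf}. Assuming $Y$ is core-compact, the pair $(Y$ core-compact, $Y$ well-filtered$)$ is precisely condition (4) of Theorem \ref{SoberLC=CoreCNew}; invoking the equivalence of condition (4) with condition (2) of that theorem, $Y$ is locally compact and well-filtered, and in particular locally compact. This is exactly $(1)\Rightarrow(2)$ for $\Sigma~\!\!\mk(X)$.

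The only step carrying real mathematical weight is the appeal to Theorem \ref{wf imply Scott wf}; without the preservation of well-filteredness under the Scott-power-space construction, the dichotomy of Theorem \ref{SoberLC=CoreCNew} would not be available for $\Sigma~\!\!\mk(X)$, and the two conditions could in principle diverge. I do not anticipate any genuine obstacle in the argument itself, since it is a purely formal combination of two already-established results. The only points demanding care are recognizing that Theorem \ref{SoberLC=CoreCNew} is to be applied to the power space, not to $X$, and recording that the forward implication is the completely general fact that local compactness entails core-compactness.
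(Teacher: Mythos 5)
Your proposal is correct and is essentially the paper's own argument: the paper deduces this corollary exactly by applying Theorem \ref{wf imply Scott wf} to conclude that $\Sigma~\!\!\mk(X)$ is itself well-filtered, and then invoking the equivalence of the ``core-compact $+$ well-filtered'' and ``locally compact $+$ well-filtered'' conditions in Theorem \ref{SoberLC=CoreCNew}, with the reverse implication being the general fact that local compactness implies core-compactness. There is nothing to add.
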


By Theorem \ref{soberequiv}, Theorem \ref{SoberLC=CoreCNew} and Theorem \ref{wf imply Scott wf}, we have the following two corollaries.

\begin{corollary}\label{wf Scott power space sober equi} For a well-filtered space $X$, the following three conditions are equivalent:
\begin{enumerate}[\rm (1)]
\item $\Sigma~\!\!\mk (X)$ is sober.
\item $\Sigma~\!\!\mk (X)$ is Rudin.
\item $\Sigma~\!\!\mk (X)$ is well-filtered determined.
\end{enumerate}
\end{corollary}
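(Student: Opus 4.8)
The plan is to reduce the entire statement to Theorem \ref{soberequiv} applied to the Scott power space itself. The single substantive ingredient is already in place: since $X$ is well-filtered, Theorem \ref{wf imply Scott wf} guarantees that $\Sigma~\!\!\mk(X)$ is well-filtered. Moreover, by Corollary \ref{wf space Vietoris less Scott}, $\mk(X)$ is a dcpo, so $\Sigma~\!\!\mk(X)$ is genuinely the Scott space of a dcpo and the notions of Rudin space and $\wdd$ space apply to it as to any $T_0$ space.

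With this in hand, I would set $Y=\Sigma~\!\!\mk(X)$ and feed $Y$ into Theorem \ref{soberequiv}. That theorem asserts, for an arbitrary $T_0$ space, the equivalence of (i) sobriety, (ii) being a well-filtered Rudin space, and (iii) being a well-filtered $\wdd$ space, among others. Since we already know $Y$ is well-filtered, the hypothesis ``well-filtered'' is automatically satisfied in (ii) and (iii), so these two conditions reduce respectively to ``$Y$ is Rudin'' and ``$Y$ is $\wdd$''. Thus the chain $(1)\Leftrightarrow(4)\Leftrightarrow(5)$ of Theorem \ref{soberequiv}, read off for $Y$, becomes exactly the desired chain $(1)\Leftrightarrow(2)\Leftrightarrow(3)$ of the corollary.

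There is essentially no obstacle here: the corollary is a direct specialization of Theorem \ref{soberequiv}, and all the genuine work has been absorbed into the proof of Theorem \ref{wf imply Scott wf}. The only point deserving a word of care is bookkeeping, namely checking that ``Rudin'' and ``well-filtered determined'' in the corollary are to be interpreted as properties of the topological space $\Sigma~\!\!\mk(X)$ (equivalently, of the dcpo $\mk(X)$ in the sense of Definition \ref{Rudin dcpo}), so that the three conditions of Theorem \ref{soberequiv} may be quoted verbatim for $Y=\Sigma~\!\!\mk(X)$.
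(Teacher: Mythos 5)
Your proposal is correct and is essentially the paper's own argument: the paper derives this corollary directly from Theorem \ref{soberequiv} together with Theorem \ref{wf imply Scott wf}, exactly as you do, by noting that $\Sigma~\!\!\mk(X)$ is automatically well-filtered so that the clauses ``well-filtered Rudin'' and ``well-filtered $\wdd$'' in Theorem \ref{soberequiv} collapse to ``Rudin'' and ``$\wdd$''. Your side remark that $\mk(X)$ is a dcpo (via Corollary \ref{wf space Vietoris less Scott}) is harmless but not needed, since Theorem \ref{soberequiv} applies to arbitrary $T_0$ spaces.
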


\begin{corollary}\label{K union in Scott power space}  Let $X$ be a well-filtered space.
\begin{enumerate}[\rm (1)]
\item If $\mathcal K\in\mk(\Sigma~\!\!\mk (X))$, then $\bigcup \mathcal K\in\mk(X)$.
\item The mapping $\bigcup : \Sigma~\!\!\mk(\Sigma~\!\!\mk (X)) \longrightarrow \Sigma~\!\!\mk (X)$, $\mathcal K\mapsto \bigcup \mathcal K$, is continuous.
\end{enumerate}
\end{corollary}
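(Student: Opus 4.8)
The plan is to prove both parts of Corollary~\ref{K union in Scott power space} by leveraging the continuity result already available for Smyth power spaces, namely Lemma~\ref{K union}, together with Theorem~\ref{wf imply Scott wf} and Corollary~\ref{wf space Vietoris less Scott}. The key structural observation is that when $X$ is well-filtered, $\mk(X)$ is a dcpo and the upper Vietoris topology is coarser than the Scott topology, i.e. $\mathcal O(P_S(X))\subseteq\sigma(\mk(X))$. This gives us two layers of comparison: one on $\mk(X)$ and, crucially, one on $\mk(\mk(X))$, since $\Sigma\mk(X)$ is itself well-filtered by Theorem~\ref{wf imply Scott wf}.

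For part~(1), I would first argue that every $\mathcal K\in\mk(\Sigma\mk(X))$ is also a member of $\mk(P_S(X))$. Indeed, since $\mathcal O(P_S(X))\subseteq\sigma(\mk(X))$, any open cover of $\mathcal K$ by upper-Vietoris-open sets is in particular a Scott-open cover, so compactness in $\Sigma\mk(X)$ yields compactness in $P_S(X)$; saturation is preserved because the specialization orders of both topologies agree with the Smyth order on $\mk(X)$. Once $\mathcal K\in\mk(P_S(X))$, Lemma~\ref{K union}(1) applies directly to give $\bigcup\mathcal K\in\mk(X)$. This establishes that the union map is at least well-defined as a map $\Sigma\mk(\Sigma\mk(X))\longrightarrow\Sigma\mk(X)$.

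For part~(2), I would verify continuity of $\bigcup:\Sigma\mk(\Sigma\mk(X))\longrightarrow\Sigma\mk(X)$ using the characterization of Scott-continuity in Lemma~\ref{Scott continuous equiv}: it suffices to show that $\bigcup$ preserves directed joins. Since both $\Sigma\mk(X)$ and $\Sigma\mk(\Sigma\mk(X))$ are well-filtered dcpos (by Corollary~\ref{wf space Vietoris less Scott} and two applications of Theorem~\ref{wf imply Scott wf}), directed joins in $\mk(\Sigma\mk(X))$ are computed as filtered intersections, and likewise in $\mk(X)$, via Lemma~\ref{Kmeet}. Given a directed family $\{\mathcal K_d:d\in D\}$ in $\mk(\Sigma\mk(X))$ with join $\mathcal K=\bigcap_{d\in D}\mathcal K_d$, I must show $\bigcup\mathcal K=\bigcap_{d\in D}\bigcup\mathcal K_d$. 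The inclusion $\bigcup\mathcal K\subseteq\bigcap_{d\in D}\bigcup\mathcal K_d$ is immediate by monotonicity. For the reverse, I would take $x\in\bigcup\mathcal K_d$ for every $d$, so for each $d$ there is $K_d\in\mathcal K_d$ with $x\in K_d$; the family $\{K_d\}$ together with the filteredness of the $\mathcal K_d$ should let me locate a single compact saturated set in $\bigcap_d\mathcal K_d=\mathcal K$ containing $x$.

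The main obstacle is precisely this reverse inclusion: extracting, from the pointwise witnesses $K_d$, an honest member of the intersection $\mathcal K$ that still captures $x$. The clean route is to observe that continuity of $\bigcup:P_S(P_S(X))\longrightarrow P_S(X)$ is already handed to us by Lemma~\ref{K union}(2), and that on both source and target the Scott topology refines the upper Vietoris topology while agreeing with it on specialization order. Thus $\bigcup$ is monotone, so it preserves the \emph{order}; and a monotone map between dcpos whose underlying Smyth-level union map is upper-Vietoris-continuous, combined with the fact that directed joins are filtered intersections at every level, forces preservation of directed suprema. Concretely, I would transfer the identity $\bigcup\bigl(\bigcap_d\mathcal K_d\bigr)=\bigcap_d\bigl(\bigcup\mathcal K_d\bigr)$ back to the Smyth-power-space setting where Lemma~\ref{K union}(2) guarantees it, using that each $\mathcal K_d$ and their intersection all lie in $\mk(P_S(X))$ by the argument of part~(1). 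Invoking Lemma~\ref{Scott continuous equiv} then completes the proof.
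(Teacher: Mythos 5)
Your part (1) is correct and is exactly the paper's argument, and the skeleton of your part (2) (reduce continuity to preservation of directed suprema via Lemma \ref{Scott continuous equiv}, compute suprema as filtered intersections via Lemma \ref{Kmeet}) also matches the paper. The gap sits precisely at the step you yourself flag as the main obstacle, the inclusion $\bigcap_{d\in D}\bigcup\mathcal K_d\subseteq\bigcup\bigcap_{d\in D}\mathcal K_d$. Your ``clean route'' asserts that upper Vietoris continuity of $\bigcup: P_S(P_S(X))\longrightarrow P_S(X)$ (Lemma \ref{K union}(2)), together with monotonicity and the fact that directed joins are filtered intersections, ``forces preservation of directed suprema.'' That inference is not valid: topological continuity yields preservation of directed suprema only when directed families converge to their suprema in the \emph{source} topology, i.e.\ only when $\mathcal O(P_S(P_S(X)))\subseteq\sigma(\mk(P_S(X)))$, and this is \emph{not} a consequence of the dcpo structure you cite. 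The paper's Example \ref{K(X) is dcpo not implies X is wf} shows concretely why: $\mk(\Sigma~\!\!\mathbb{J}_\top)$ is a dcpo whose joins are filtered intersections, yet $\Box\{\top\}$ is upper Vietoris open but not Scott open, so the filtered family $\{\ua_{\mathbb{J}_\top}(\mathbb{J}_{max}\setminus F) : F\in (\mathbb{J}_{max})^{(<\omega)}\}$ does not converge to its supremum $\{\top\}$; accordingly its characteristic map into $\Sigma~\!\!2$ is monotone and upper Vietoris continuous between dcpos but does not preserve directed suprema. So ``UV-continuous $+$ dcpo'' does not imply Scott continuity, and Lemma \ref{K union}(2) alone does not ``guarantee'' your identity.

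The route is repairable, but only by adding the ingredient you never invoke: well-filteredness converted into a convergence statement. Since $X$ is well-filtered, $\Sigma~\!\!\mk(X)$ is well-filtered (Theorem \ref{wf imply Scott wf}) and $\mathcal O(P_S(X))\subseteq\sigma(\mk(X))$ (Corollary \ref{wf space Vietoris less Scott}); hence if $\bigcap_{d}\mathcal K_d\subseteq\mathcal V$ with $\mathcal V\in\mathcal O(P_S(X))$, well-filteredness of $\Sigma~\!\!\mk(X)$ applied to the Scott-open set $\mathcal V$ gives $\mathcal K_d\subseteq\mathcal V$ for some $d$ (alternatively, two applications of Theorem \ref{Smythwf} show $P_S(P_S(X))$ is a $d$-space). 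Only with this convergence does continuity of $\bigcup$, monotonicity, and $T_0$-ness of the target yield $\bigcup\bigcap_d\mathcal K_d=\bigvee_d\bigcup\mathcal K_d$, after which Lemma \ref{Kmeet} finishes. You should also note that the ``honest member of $\mathcal K$ capturing $x$'' that you could not locate is simply $\ua x$ itself; this is the paper's proof and needs no topology at all: if $x\in\bigcap_d\bigcup\mathcal K_d$, pick $K_d\in\mathcal K_d$ with $x\in K_d$; then $K_d\sqsubseteq\ua x$ because $K_d$ is saturated in $X$, and since each $\mathcal K_d$ is saturated in $\Sigma~\!\!\mk(X)$, i.e.\ an upper set of $(\mk(X),\sqsubseteq)$, we get $\ua x\in\mathcal K_d$ for every $d$, whence $x\in\ua x\subseteq\bigcup\bigcap_d\mathcal K_d$. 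This two-line order-theoretic observation replaces your entire transfer argument.
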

\begin{proof} (1): By Corollary \ref{wf space Vietoris less Scott}, $\mathcal O(P_S(X))\subseteq \sigma (\mk (X))$. For $\mathcal K\in \mk(\Sigma~\!\!\mk (X))$, we have $\mathcal K\in \mk(P_S(X))$ since $\mathcal O(P_S(X))\subseteq \sigma (\mk (X))$. Then by Lemma \ref{K union}, $\bigcup \mathcal K\in\mk(X)$.

(2): Suppose that $\{\mathcal K_d : d\in D\}\subseteq \mk(\Sigma~\!\!\mk (X))$ is directed (with the Smyth order) for which $\bigvee_{\mk(\Sigma~\!\!\mk (X))}\{\mathcal K_d : d\in D\}$ exists. Then by Lemma \ref{Kmeet}, $\bigvee_{\mk(\Sigma~\!\!\mk (X))}\{\mathcal K_d : d\in D\}=\bigcap_{d\in D}\mathcal K_d$. It follows that $\bigcup \bigvee_{\mk(\Sigma~\!\!\mk (X))}\{\mathcal K_d : d\in D\}=\bigcup \bigcap_{d\in D}\mathcal K_d$ and $\bigvee_{d\in D} \bigcup \mathcal K_d=\bigcap_{d\in D}\bigcup \mathcal K_d=\bigcup_{\varphi\in \prod_{d\in D}\mathcal K_d}\bigcap_{d\in D}\varphi(d)$ by Lemma \ref{Kmeet}. For each $K\in \bigcap_{d\in D}\mathcal K_d$, define $\varphi_{K}\in \prod_{d\in D}\mathcal K_d$ by $\varphi_K(d)\equiv K$ for all $d\in D$. Then $\bigcap_{d\in D}\varphi_K(d)=K$. Hence $\bigcup \bigcap_{d\in D}\mathcal K_d\subseteq \bigcup_{\varphi\in \prod_{d\in D}\mathcal K_d}\bigcap_{d\in D}\varphi(d)$.

Conversely, for each $\varphi\in \prod_{d\in D}\mathcal K_d$, $x\in \bigcap_{d\in D}\varphi(d)$ and $d^{\prime}\in D$, we have that $\ua x\sqsupseteq\varphi (d^{\prime})\in \mathcal K_{d^{\prime}}=\ua_{\mk(\Sigma~\!\!\mk (X))}\mathcal K_d^{\prime}$, and consequently, $\ua x\in \bigcap_{d^{\prime}\in D}\mathcal K_d^{\prime}$ and hence $\ua x\subseteq \bigcup \bigcap_{d\in D}\mathcal K_d$. It follows that $\bigcap_{d\in D}\varphi(d)\subseteq \bigcup \bigcap_{d\in D}\mathcal K_d$. Therefore, $\bigcup_{\varphi\in \prod_{d\in D}\mathcal K_d}\bigcap_{d\in D}\varphi(d)\subseteq  \bigcup \bigcap_{d\in D}\mathcal K_d$.

Thus $\bigcup \bigvee_{\mk(\Sigma~\!\!\mk (X))}\{\mathcal K_d : d\in D\}=\bigvee_{d\in D} \bigcup \mathcal K_d$. By Lemma \ref{Scott continuous equiv}, $\bigcup : \Sigma~\!\!\mk(\Sigma~\!\!\mk (X)) \longrightarrow \Sigma~\!\!\mk (X)$ is continuous.
\end{proof}

\begin{proposition}\label{Scott power WF implies X is also}  Let $X$ be a $T_0$ space. If the upper Vietoris topology is coarser than the Scott topology on $\mathsf{K}(X)$ (that is, $\mathcal O(P_S(X))\subseteq \sigma (\mathsf{K}(X))$), and $\Sigma~\!\! \mathsf{K}(X)$ is well-filtered, then $X$ is well-filtered.
\end{proposition}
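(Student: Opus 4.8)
The plan is to reduce the well-filteredness of $X$ to a single Scott-openness argument inside the poset $(\mk(X),\sqsubseteq)$. First I would extract the dcpo structure for free from the second hypothesis: since $\Sigma\,\mk(X)$ is well-filtered, it is in particular a $d$-space (Remark \ref{sober implies WF implies d-space}), and the specialization order of a Scott space $\Sigma P$ is exactly the order of $P$. Applying this to $P=(\mk(X),\sqsubseteq)$ shows that $(\mk(X),\sqsubseteq)$ is a dcpo. This is the only role played by the well-filteredness of $\Sigma\,\mk(X)$: I do not need its full strength, just the $d$-space consequence that directed joins exist.

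Next I would verify the defining condition of well-filteredness for $X$ directly. Let $\mathcal K=\{K_i:i\in I\}\subseteq\mk(X)$ be a filtered family and $U\in\mathcal O(X)$ with $\bigcap_{i\in I}K_i\subseteq U$. A filtered family of compact saturated sets (downward directed under inclusion) is precisely a directed family in $(\mk(X),\sqsubseteq)$, so by the dcpo property the join $\bigvee_{i\in I}K_i$ exists; by Lemma \ref{Kmeet} this join equals $\bigcap_{i\in I}K_i$, which therefore belongs to $\mk(X)$ and is contained in $U$. Now the first hypothesis $\mathcal O(P_S(X))\subseteq\sigma(\mk(X))$ enters: the basic upper Vietoris open set $\Box U$ is Scott open in $\mk(X)$. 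Since $\bigvee_{i\in I}K_i=\bigcap_{i\in I}K_i\in\Box U$ and $\{K_i:i\in I\}$ is a directed family with this join, Scott-openness of $\Box U$ yields some $i\in I$ with $K_i\in\Box U$, i.e.\ $K_i\subseteq U$. This is exactly the condition that $X$ be well-filtered.

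There is no serious obstacle here; the argument is short, and the one point deserving care is the interplay of the two hypotheses. The dcpo property (supplied by $\Sigma\,\mk(X)$ being well-filtered, hence a $d$-space) is what lets Lemma \ref{Kmeet} rewrite the filtered intersection $\bigcap_i K_i$ as a \emph{directed join} $\bigvee_i K_i$; and the comparison of topologies $\mathcal O(P_S(X))\subseteq\sigma(\mk(X))$ is what makes the witnessing set $\Box U$ \emph{Scott open}, so that a directed join landing in it forces one of the $K_i$ into it. As an alternative route, I could instead show that the same two hypotheses make $P_S(X)$ a $d$-space: its specialization poset is $(\mk(X),\sqsubseteq)$, a dcpo as above (Remark \ref{xi embdding}(1)), and every upper Vietoris open set is Scott open with respect to that specialization order by the first hypothesis, so $\mathcal O(P_S(X))\subseteq\sigma(P_S(X))$; then $X$ is well-filtered by the equivalence in Theorem \ref{Smythwf}. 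I would favour the direct argument, since it needs only Lemma \ref{Kmeet} and the $d$-space-to-dcpo implication.
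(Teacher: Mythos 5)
Your proof is correct, but it runs on a slightly different mechanism than the paper's. The paper never extracts the dcpo structure of $\mathsf{K}(X)$: given the filtered family $\{K_d : d\in D\}$ and $U\in\mathcal O(X)$ with $\bigcap_{d\in D}K_d\subseteq U$, it lifts to the filtered family $\{\ua_{\mathsf{K}(X)}K_d : d\in D\}$ of principal filters, each a compact saturated subset of $\Sigma~\!\!\mathsf{K}(X)$, notes that $\bigcap_{d\in D}\ua_{\mathsf{K}(X)}K_d\subseteq\Box U\in\sigma(\mathsf{K}(X))$, and applies the well-filteredness of $\Sigma~\!\!\mathsf{K}(X)$ wholesale to get $d\in D$ with $\ua_{\mathsf{K}(X)}K_d\subseteq\Box U$, hence $K_d\subseteq U$. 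You instead use only the $d$-space consequence of that hypothesis: $\mathsf{K}(X)$ is a dcpo, Lemma \ref{Kmeet} turns the filtered intersection into a directed join landing in $\Box U$, and Scott-openness of $\Box U$ finishes. The two arguments are morally the same fact --- applying well-filteredness to a family of principal filters is precisely the $d$-space condition in the form of Proposition \ref{d-spacecharac1}(4) --- but your version makes explicit that the full strength of well-filteredness of $\Sigma~\!\!\mathsf{K}(X)$ is never used, which is exactly what the paper records immediately afterwards as conditions (3) and (4) of Corollary \ref{X WF Scott power WF equivalent}; moreover, your alternative route (showing $P_S(X)$ is a $d$-space and invoking Theorem \ref{Smythwf}) coincides with the paper's proof of (4) $\Rightarrow$ (1) in that corollary. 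What the paper's formulation buys is brevity, needing neither Lemma \ref{Kmeet} nor the dcpo structure; what yours buys is sharper bookkeeping of which hypothesis does what.
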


\begin{proof} Suppose that $\{K_d : d\in D\}\subseteq \mathsf{K}(X)$ is filtered and $U\in \mathcal O(X)$ with $\bigcap_{d\in D}K_d\subseteq U$. Then $\{\ua_{\mathsf{K}(X)}K_d : d\in D\}\subseteq \mathsf{K}(\Sigma~\!\! \mathsf{K}(X))$ is filtered, $\Box U\in \mathcal O(P_S(X))\subseteq \sigma (\mathsf{K}(X))$ and $\bigcap_{d\in D}\ua_{\mathsf{K}(X)}K_d\subseteq \Box U$. By the well-filteredness of $\Sigma~\!\! \mathsf{K}(X)$, there is $d\in D$ such that  $\ua_{\mathsf{K}(X)}K_d\subseteq \Box U$, and hence $K_d\subseteq U$. Thus $X$ is well-filtered.
 \end{proof}

Example \ref{Scott sober not implies X is wf} below shows that when $X$ lacks
the condition of $\mathcal O(P_S(X))\subseteq \sigma (\mathsf{K}(X))$, Proposition \ref{Scott power WF implies X is also} may not hold.

\begin{corollary}\label{X WF Scott power WF equivalent}  For a $T_0$ space $X$, the following conditions are equivalent:
\begin{enumerate}[\rm (1)]
\item $X$ is well-filtered.
\item The upper Vietoris topology is coarser than the Scott topology on $\mathsf{K}(X)$, and $\Sigma~\!\! \mathsf{K}(X)$ is well-filtered.
\item The upper Vietoris topology is coarser than the Scott topology on $\mathsf{K}(X)$, and $\Sigma~\!\! \mathsf{K}(X)$ is a $d$-space.
\item $\mathsf{K}(X)$ is a dcpo, and the upper Vietoris topology is coarser than the Scott topology on $\mathsf{K}(X)$.
\end{enumerate}
\end{corollary}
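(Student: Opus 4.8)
The plan is to establish the four conditions are equivalent by proving the cycle $(1)\Rightarrow(2)\Rightarrow(3)\Rightarrow(4)\Rightarrow(1)$, so that almost all of the work is inherited from results already in hand, with only the final closing implication requiring a short direct argument. The bulk of the genuine difficulty is already absorbed into Theorem \ref{wf imply Scott wf}, which supplies the only deep input.

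For $(1)\Rightarrow(2)$ I would simply invoke the two facts proved about well-filtered spaces: Corollary \ref{wf space Vietoris less Scott} gives that the upper Vietoris topology is coarser than the Scott topology on $\mk(X)$, and Theorem \ref{wf imply Scott wf} gives that $\Sigma~\!\!\mk(X)$ is well-filtered. The implication $(2)\Rightarrow(3)$ is immediate from Remark \ref{sober implies WF implies d-space}, since every well-filtered space is a $d$-space; the Vietoris-coarser-than-Scott hypothesis is carried over unchanged. For $(3)\Rightarrow(4)$ the key observation is that the specialization order of $\Sigma~\!\!\mk(X)$ is exactly the Smyth order $\sqsubseteq$ on $\mk(X)$, and that $\mathcal{O}(\Sigma~\!\!\mk(X))=\sigma(\mk(X))$, so the requirement $\mathcal{O}\subseteq\sigma$ in the definition of a $d$-space holds automatically; hence $\Sigma~\!\!\mk(X)$ is a $d$-space precisely when $\mk(X)$ is a dcpo. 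Thus condition $(3)$ already forces $\mk(X)$ to be a dcpo, and the Vietoris condition is simply retained.

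The one genuinely new step is $(4)\Rightarrow(1)$, and this is where I expect the only real (though mild) obstacle. Assume $\mk(X)$ is a dcpo and $\mathcal{O}(P_S(X))\subseteq\sigma(\mk(X))$, and let $\{K_d:d\in D\}\subseteq\mk(X)$ be a filtered family with $\bigcap_{d\in D}K_d\subseteq U$ for some $U\in\mathcal{O}(X)$. A filtered family of compact saturated sets is precisely a directed subset of $(\mk(X),\sqsubseteq)$, so its join exists because $\mk(X)$ is a dcpo; by Lemma \ref{Kmeet} this join equals $\bigcap_{d\in D}K_d$, which therefore lies in $\mk(X)$. Since $\bigcap_{d\in D}K_d\subseteq U$, we have $\bigvee_{d\in D}K_d\in\Box U$, and $\Box U\in\mathcal{O}(P_S(X))\subseteq\sigma(\mk(X))$ is Scott open. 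Applying Scott openness of $\Box U$ to the directed family $\{K_d:d\in D\}$ whose join lies in $\Box U$ yields some $d_0\in D$ with $K_{d_0}\in\Box U$, i.e. $K_{d_0}\subseteq U$. This shows $X$ is well-filtered and closes the cycle. (Alternatively one could deduce $(2)\Rightarrow(1)$ directly from Proposition \ref{Scott power WF implies X is also}, but the direct argument above is shorter and simultaneously disposes of $(4)\Rightarrow(1)$.)
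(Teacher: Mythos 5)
Your proof is correct, and its skeleton is the same as the paper's: the cycle $(1)\Rightarrow(2)\Rightarrow(3)\Rightarrow(4)\Rightarrow(1)$, with $(1)\Rightarrow(2)$ by Corollary \ref{wf space Vietoris less Scott} together with Theorem \ref{wf imply Scott wf}, $(2)\Rightarrow(3)$ by Remark \ref{sober implies WF implies d-space}, and $(3)\Rightarrow(4)$ being the observation that a Scott space $\Sigma~\!\!P$ is a $d$-space precisely when $P$ is a dcpo (the paper just labels this step trivial; you spell it out, correctly). The one place you genuinely diverge is $(4)\Rightarrow(1)$. The paper notes that condition (4) says exactly that $P_S(X)$ is a $d$-space --- by Remark \ref{xi embdding}(1) the specialization order of $P_S(X)$ is the Smyth order, so the dcpo requirement and the inclusion $\mathcal O(P_S(X))\subseteq \sigma(\mathsf{K}(X))$ are the two halves of the $d$-space definition --- and then cites Theorem \ref{Smythwf} to conclude that $X$ is well-filtered. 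You instead verify well-filteredness directly: the filtered family $\{K_d : d\in D\}$ is directed in $(\mathsf{K}(X),\sqsubseteq)$, its join exists by the dcpo hypothesis and equals $\bigcap_{d\in D}K_d$ by Lemma \ref{Kmeet}, and Scott openness of $\Box U$ (which is where the Vietoris-coarser-than-Scott hypothesis enters) produces $d_0$ with $K_{d_0}\subseteq U$. Both arguments are sound. The paper's is a one-line reduction to an externally cited theorem; yours is self-contained and more elementary, in effect re-proving the needed direction of Theorem \ref{Smythwf} in this setting from Lemma \ref{Kmeet} and the definition of Scott openness alone. Your parenthetical alternative, deducing $(2)\Rightarrow(1)$ from Proposition \ref{Scott power WF implies X is also}, is also valid and mirrors a redundancy already present in the paper.
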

\begin{proof} (1) $\Rightarrow$ (2): By Corollary \ref{wf space Vietoris less Scott} and Theorem \ref{wf imply Scott wf}.

(2) $\Rightarrow$ (3): By Remark \ref{sober implies WF implies d-space}.

(3) $\Rightarrow$ (4): Trivial.

(4) $\Rightarrow$ (1): By (4), $P_S(X)$ is a $d$-space, whence $X$ is well-filtered by Theorem \ref{Smythwf}.
\end{proof}

\section{Non-sobriety of Scott power space of a sober space}

 In this section, we investigate the following question:
\vskip 0.1cm
 \textbf{Question 3.} Is the Scott power space $\Sigma~\!\! \mk(X)$ of a sober space $X$ sober?
 \vskip 0.1cm

First, the following example shows that there is a well-filtered space $X$ for which its Scott power space $\Sigma~\!\!\mk (X)$ is a first-countable sober $c$-space, but $X$ is not sober although its Scott power space $\Sigma~\!\! \mk (X)$ is sober by Corollary \ref{wf Scott CI imply sober}. Hence, by Corollary \ref{WFcorcomp-sober} and Corollary \ref{first-countable WF is sober}, $X$ is neither core-compact nor first-countable. So the sobriety of the Scott power space of a $T_0$ space $X$ does not imply the sobriety of $X$ in general.

\begin{example}\label{Scott sober not implies X is sober}
	Let $X$ be an uncountably infinite set and $X_{coc}$ the space equipped with \emph{the co-countable topology} (the empty set and the complements of countable subsets of $X$ are open). Then
\begin{enumerate}[\rm (a)]
    \item $\mathcal C(X_{coc})=\{\emptyset, X\}\bigcup X^{(\leqslant\omega)}$, $X_{coc}$ is $T_1$ and hence a $d$-space, and the specialization order on $X_{coc}$ is the discrete order.
   \item Neither $X_{coc}$ nor $P_S(X_{coc})$ is first-countable.

For a point $x\in X$, suppose that there is a countable base $\{X\setminus C_n : n\in \mathbb{N}, C_n\in X^{(\leqslant\omega)}\}$ at $x$ in $X_{coc}$. Let $C=\bigcup_{n\in \mathbb{N}}C_n$. Then $C\in X^{(\leqslant\omega)}$. Select $t\in X\setminus (C\cup\{x\})$ and let $U=X\setminus \{t\}$. Then $x\in U
\in \mathcal O(X_{coc})$. But $X\setminus C_n\nsubseteq U$  for every $n\in \mathbb{N}$, a contradiction. Thus $X_{coc}$ is not first-countable. Since the first-countability is a hereditary property and $X_{coc}$ is homeomorphic to the subspace $\mathcal S^u(X_{coc})$ of $P_S(X_{coc})$ (see Remark \ref{xi embdding} or Proposition \ref{soberification first-countable implies X is also} below), $P_S(X_{coc})$ is not first-countable.

 \item $\ir_c(X_{coc})=\{\overline{\{x\}} : x\in X\}\cup\{X\}=\{\{x\} : x\in X\}\cup\{X\}$. Therefore, $X_{coc}$ is not sober.

\item $\mk (X_{coc})=X^{(<\omega)}\setminus \{\emptyset\}$ and $\ii~\!K=\emptyset$ for all $K\in \mk (X_{coc})$, and hence $X_{coc}$ is not locally compact.

    Clearly, every finite subset is compact. Conversely, if $C\subseteq X$ is infinite, then $C$ has an infinite countable subset $\{c_n : n\in\mn\}$. Let $C_0=\{c_n : n\in\mn\}$ and $U_m=(X\setminus C_0)\cup \{c_m\}$ for each $m\in \mn$. Then $\{U_n : n\in\mn\}$ is an open cover of $C$, but has no finite subcover. Hence $C$ is not compact. Thus $\mk (X_{coc})=X^{(<\omega)}\setminus \{\emptyset\}$. Clearly, $\ii~\!K=\emptyset$ for all $K\in \mk (X_{coc})$. Hence $X_{coc}$ is not locally compact.

    \item $X_{coc}$ is well-filtered and not core-compact.

    Suppose that $\{F_d : d\in D\}\subseteq \mk (X_{coc})$ is a filtered family and $U\in \mathcal O(X_{coc})$ with $\bigcap_{d\in D}F_d\subseteq U$. As $\{F_d : d\in D\}$ is filtered and all $F_d$ are finite, $\{F_d : d\in D\}$ has a least element $F_{d_0}$, and hence $F_{d_0}=\bigcap_{d\in D}F_d\subseteq U$, proving that $X_{coc}$ is well-filtered. By (d) and Theorem \ref{SoberLC=CoreCNew}, $X_{coc}$ is not core-compact.

\item $\mk (X_{coc})$ is a Noetherian dcpo and hence $\Sigma~\!\!\mk (X_{coc})=(\mk (X_{coc}), \alpha(\mk (X_{coc}))$ is first-countable.

Clearly,  $\mk (X_{coc})=X^{(<\omega)}\setminus \{\emptyset\}$ (with the Smyth order) is a Noetherian dcpo and $\sigma(\mk (X_{coc}))=\alpha(\mk (X_{coc}))$. For any $F\in \mk (X_{coc})=X^{(<\omega)}\setminus \{\emptyset\}$, $\{\ua_{\mk (X_{coc})}F\}$ is a base at $F$ in $\Sigma~\!\! \mk(X_{coc})$. Hence $\Sigma~\!\! \mk(X_{coc})$ is first-countable.

\item The upper Vietoris topology and the Scott topology on $\mk (X_{coc})$ do not agree.

By (e) and Corollary \ref{wf space Vietoris less Scott}, $\mathcal O(P_S(X_{coc}))\subseteq \sigma (\mk (X_{coc})$. For $F\in X^{(<\omega)}\setminus \{\emptyset\}$, $\ua_{\mk (X_{coc})}F\in \alpha(\mk (X_{coc}))=\sigma(\mk (X_{coc}))$ but $\ua_{\mk (X_{coc})}F\not\in \mathcal O(P_S(X_{coc}))$ since there is no $G\in X^{(<\omega)}$ with $F\in \Box (X\setminus G)=(X\setminus G)^{(<\omega)}\setminus \{\emptyset\}\subseteq \ua_{\mk (X_{coc})}F$. Thus $\sigma(\mk (X_{coc}))\nsubseteq\mathcal O(P_S(X_{coc}))$.

\item The Scott power space $\Sigma~\!\! \mk(X_{coc})$ is a sober $c$-space. So it is Rudin and well-filtered determined.

$\mk (X_{coc})=X^{(<\omega)}\setminus \{\emptyset\}$ (with the Smyth order) is a Noetherian dcpo and hence it is an algebraic domain. By Theorem \ref{algebraic is continuous} and Proposition \ref{quasicontinuous domain is sober}, $\Sigma~\!\! \mk(X)$ is a sober $c$-space. Hence by Theorem \ref{soberequiv}, $\Sigma~\!\! \mk(X)$ is Rudin and well-filtered determined.

\item  $X_{coc}$ is neither a Rudin space nor a $\mathsf{WD}$ space.

    By (c)(e) and Theorem \ref{soberequiv}, $X_{coc}$ is neither a Rudin space nor a $\mathsf{WD}$ space.

\item  The Smyth power space $P_S(X_{coc})$ is well-filtered but non-sober. Hence it is neither a Rudin space nor a $\mathsf{WD}$ space.

By (c)(e), Theorem \ref{Schalk-Heckman-Keimel theorem} and Theorem \ref{Smythwf}, $P_S(X_{coc})$ is well-filtered and non-sober. Hence $P_S(X_{coc})$ is neither a Rudin space nor a $\mathsf{WD}$ space by Theorem \ref{soberequiv}.

\item  $P_S(X_{coc})$ is not core-compact.

By (j) and Theorem \ref{soberequiv}, it needs only to show that $P_S(X_{coc})$ is not locally compact. Assume, on the contrary, that $P_S(X_{coc})$ is not locally compact. For $x\in X$ and $U\in \mathcal O(X_{coc})$ with $x\in U$, then by the local compactness of $P_S(X_{coc})$, there is $V\in \mathcal O(X_{coc})$ and $\mathcal K\in \mk (X_{coc})$ such that $\ua x\in \Box V\subseteq \mathcal K\subseteq \Box U$. Let $K=\bigcup \mathcal K$. Then by Lemma \ref{K union}, $K\in \mk (X)$ and $x\in V=\bigcup \Box V\subseteq K\subseteq \bigcup \Box U=U$. It follows that $X_{coc}$ is locally compact, which is in contradiction with (e). Thus $P_S(X_{coc})$ is not core-compact.
\end{enumerate}
\end{example}

The following example shows there is even a second-countable Noetherian $T_0$ space $X$ such that the Scott power space $\Sigma ~\!\!\mk (X)$ is a second-countable sober space but $X$ is not well-filtered (and hence not sober).

\begin{example}\label{Scott sober not implies X is wf}
	Let $P=\mathbb{N}\cup\{\infty\}$ and define an order on $P$ by $x\leq_P y$ iff $x=y$ or $x\in\mathbb{N}$ and $y=\infty$ (see Figure 4).

 \begin{figure}[ht]
	\centering
	\includegraphics[height=3cm,width=4cm]{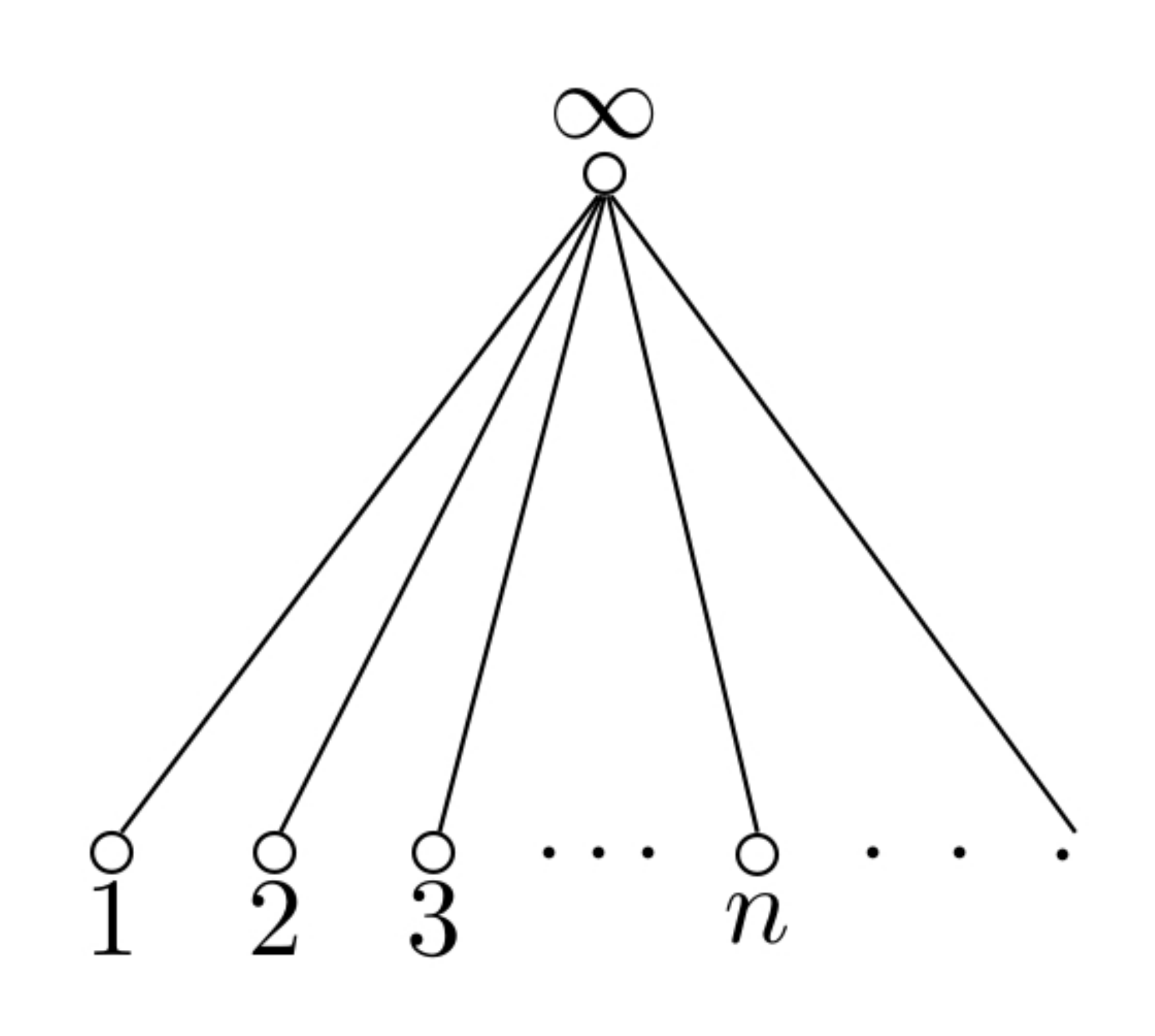}
	\caption{The poset P}
\end{figure}

\noindent Let $\tau=\{(\mathbb{N}\setminus F)\cup \{\infty\} : F\in \mathbb{N}^{(<\omega)}\}\cup\{\emptyset, P\}\cup \{\{\infty\}\}$. It is straightforward to verify that $\tau$ is a $T_0$ topology on $P$ and the specialization order of $(P, \tau)$ agrees with the original order on $P$. Now we have
\begin{enumerate}[\rm (a)]
    \item $\mathcal C((P, \tau))=\mathbb{N}^{(<\omega)}\cup \{\emptyset, P\}\cup \{\mathbb{N}\}$.
    \item $\ir_c((P, \tau))=\{\overline{\{n\}}=\{n\} : n\in \mathbb{N}\}\cup\{\overline{\{\infty\}}=P\}\cup\{\mathbb{N}\}$ and hence $(P, \tau)$ is not sober.
    \item $\mk ((P, \tau))=\{A\cup \{\infty\} : A\subseteq \mathbb{N}\}$.
    \item $(P, \tau)$ is not well-filtered.

Let $\mathcal K=\{(\mathbb{N}\setminus F)\cup\{\infty\} : F\in \mathbb{N}^{(<\omega)}\}$. Then $\mathcal K\subseteq \mk ((P, \tau))$ is a filtered family and $\bigcap \mathcal K=\{\infty\}\in \tau$. But there is no $F\in \mathbb{N}^{(<\omega)}$ with $(\mathbb{N}\setminus F)\cup\{\infty\}=\{\infty\}$. Thus  $(P, \tau)$ is not well-filtered. In fact, $(P, \tau)$ is not weak well-filtered in the sense of \cite{LL-2017}.

    \item $(P, \tau)$ is Noetherian and second-countable and hence it is a Rudin space.

    Since $|\tau|=\omega$, $(P, \tau)$ is second-countable. As every subset of $P$ is compact in $(P, \tau)$, the space $(P, \tau)$ is a Noetherian space (and hence a locally compact space). Hence by Proposition \ref{LCrudin and core-compact is WD} $(P, \tau)$ is a Rudin space.

    \item $\Sigma~\!\!\mk ((P, \tau))$ is a second-countable sober space.

    Clearly, $\mk ((P, \tau))$ is isomorphic with the algebraic lattice $2^{\mathbb{N}}$ (with the order of set inclusion) via the poset isomorphism $\varphi : \mk ((P, \tau))\rightarrow 2^{\mathbb{N}}$ defined by $\varphi (A\cup \{\infty\})=\mathbb{N}\setminus A$ for each $A\in 2^{\mathbb{N}}$ (note that the order on $\mk ((P, \tau))$ is the Smyth order). Hence $\Sigma~\!\!\mk ((P, \tau))\cong \Sigma~\!\!2^{\mathbb{N}}$. Clearly, $2^{\mathbb{N}}$ is an algebraic lattice, whence by Theorem \ref{algebraic is continuous} and Proposition \ref{quasicontinuous domain is sober}, $\Sigma~\!\!2^{\mathbb{N}}$ is sober and hence $\Sigma~\!\!\mk ((P, \tau))$ is sober. Clearly, $\Sigma~\!\!2^{\mathbb{N}}$ is second-countable since $\{\ua_{2^{\mathbb{N}}} F : F\in (2^{\mathbb{N}})^{(<\omega)}\}$ is a countable base of $\Sigma~\!\!2^{\mathbb{N}}$. So $\Sigma~\!\!\mk ((P, \tau))$ is second-countable.

    \item $P_S((P, \tau))$ is second-countable by Proposition \ref{Smyth CII}.

    \item $\sigma(\mathsf{K}((P, \tau))\subseteq \mathcal O(P_S((P, \tau)))$ but $\mathcal O(P_S((P, \tau))\nsubseteq \sigma(\mathsf{K}((P, \tau)))$.

    Since $(P, \tau)$ is locally compact, $\sigma(\mathsf{K}((P, \tau))\subseteq \mathcal O(P_S((P, \tau)))$ by Lemma \ref{local comp imply V topol finer than Scott topol}. Clearly, $\Box \{\infty\}=\{\{\infty\}\}\in \mathcal O(P_S((P, \tau))$. Now we show that $\Box \{\infty\}\not\in \sigma(\mathsf{K}((P, \tau)))$. By Lemma \ref{Kmeet}, $\bigvee \{F\cup\{\infty\} : F\in (\mathbb{N})^{(<\omega)}\setminus \{\emptyset\}\}=\bigcap \{F\cup\{\infty\} : F\in (\mathbb{N})^{(<\omega)}\setminus \{\emptyset\}\}=\{\infty\}\in \Box \{\infty\}$, but there is no $F\in (\mathbb{N})^{(<\omega)}\setminus \{\emptyset\}$ with
    $F\cup\{\infty\}\in \Box \{\infty\}=\{\{\infty\}\}$. Thus $\Box \{\infty\}\not\in \sigma(\mathsf{K}((P, \tau)))$.
\end{enumerate}
\end{example}

In the following we will construct a sober space $X$ for which its Scott power space is non-sober (see Theorem \ref{Scott power space of a sober space is non-sober} below).

Let $\mathcal{L}=\mathbb{N}\times \mathbb{N}\times (\mathbb{N}\cup \{\infty\})$, where $\mathbb{N}$ is the set of natural numbers with the usual order. Define an order $\leq$ on $\mathcal L$ as follows:

$(i_1, j_1, k_1)\leq (i_2, j_2, k_2$ if and only if:

(1) $i_1=i_2, j_1=j_2, k_1\leq k_2\leq \infty$; or

(2) $i_2=i_1+1, k_1\leq j_2, k_2=\infty$.

 $\mathcal L$ is a known dcpo constructed by Jia in \cite[Example 2.6.1]{jia-2018}. It can be easily depicted as in Figure 5 taken from \cite{jia-2018}.

 \begin{figure}[ht]
	\centering
	\includegraphics[height=4cm,width=7cm]{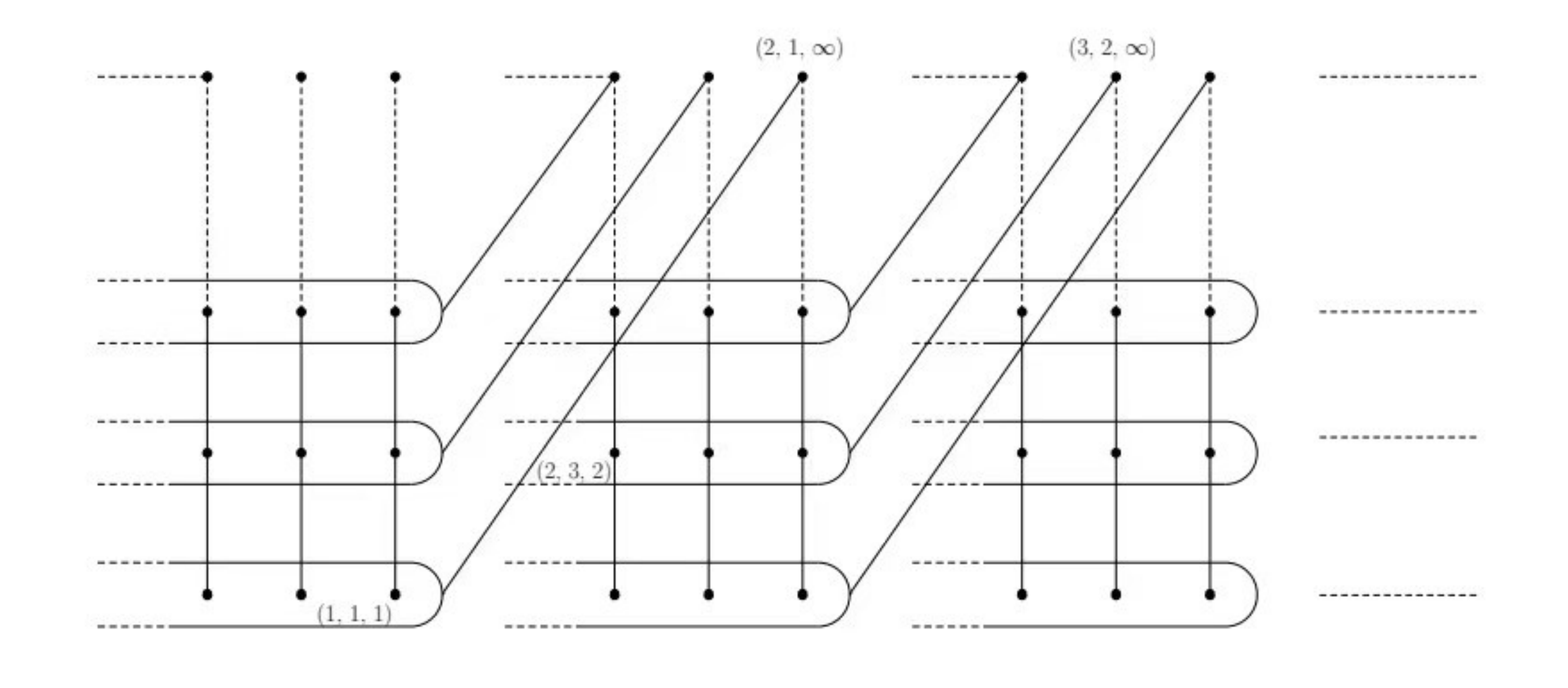}
	\caption{A non-sober well-filtered dcpo $\mathcal L$}
\end{figure}

For each $(n, m)\in \mathbb{N}\times \mathbb{N}$, let

$\mathcal L_n=\{(n, j, l) : j\in \mathbb{N}, l\in\mathbb{N}\cup \{\infty\}\}$,

$\mathcal L_n^{\infty}=\{(n, j, \infty) : j\in \mathbb{N}\}$,

$\mathcal L^{\infty}=\bigcup_{n\in \mathbb{N}}\mathcal L_n^{\infty}=\{(i, j, \infty) : (i, j)\in \mathbb{N}\times\mathbb{N}\}$ (the set of all maximal elements of $\mathcal L$),

$\mathcal L^{<\infty}=\mathcal L\setminus \mathcal L^{\infty}$ (the set of all elements of finite height),

$\mathcal L_{(\leq n)}=\bigcup\limits_{i=1}^{n}\mathcal L_i=\{(i, j, l) : i\leq n, j\in \mathbb{N}, l\in \mathbb{N}\cup\{\infty\}\}$,

$\mathcal L_{(< n+1)}^{\infty}=\mathcal L_{(\leq n)}^{\infty}=\bigcup\limits_{i=1}^{n}\mathcal L_i^{\infty}=\{(i, j, \infty) : i\leq n, j\in \mathbb{N}\}$, and

$\mathcal L_{(n, \geq m)}^{\infty}=\{(n, j, \infty) : j\geq m\}$.

\begin{lemma}\label{key lemma about Jia space} Suppose that $D$ is an infinite directed subset of $\mathcal L$. Then there is a unique $(i, j, \infty)\in \mathcal L^{\infty}$ such that $(i, j, \infty)$ is a largest element of $D$ or the following two conditions are satisfied:
\begin{enumerate}[\rm (i)]
\item $(i, j, \infty)\not\in D$, and
\item for each $d=(i_d, j_d, l_d)\in D$, $i_d=i, j_d=j$ and $l_d<\infty$ (i.e., $l_d\in\mathbb{N}$).
\end{enumerate}
\end{lemma}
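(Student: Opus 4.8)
The plan is to read off the whole statement from two structural features of $\mathcal L$: inside a fixed ``column'' $\{(i,j,k):k\in\mathbb N\cup\{\infty\}\}$ the order is just the chain on the third coordinate, while the only comparabilities \emph{between} distinct columns come from relation (2), which sends a finite-height point $(i,j,k)$ up to the maximal points $(i+1,j',\infty)$ with $j'\ge k$ one level higher. First I would record three elementary facts. \textbf{(F1)} If $m=(i,j,k)$ has finite height ($k<\infty$), then $\da m=\{(i,j,k'):k'\le k\}$ is finite, since condition (2) can only produce upper bounds, never elements below a finite-height point. \textbf{(F2)} The maximal elements of $\mathcal L$ are exactly the points of $\mathcal L^{\infty}$, and a point of $\mathcal L^{\infty}$ has no strict upper bound; in particular two distinct maximal points have no common upper bound at all. \textbf{(F3)} Consequently a directed set meets $\mathcal L^{\infty}$ in at most one point, and if $m\in D\cap\mathcal L^{\infty}$ then $m$ is the largest element of $D$: for any $d\in D$ an upper bound $e\in D$ of $\{d,m\}$ satisfies $e\ge m$, forcing $e=m$ by maximality, whence $m\ge d$.

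Next I would split on whether $D$ meets $\mathcal L^{\infty}$. If $D\cap\mathcal L^{\infty}\neq\emptyset$, then by (F3) the unique such point $m=(i,j,\infty)$ is the largest element of $D$, which is the first alternative; this case genuinely occurs for infinite $D$ because $\da(i,j,\infty)$ is infinite. If instead $D\subseteq\mathcal L^{<\infty}$, then every member of $D$ has finite height, and any upper bound of two members taken inside $D$ again has finite height. The key step here is to confine $D$ to a single column: given $d_1,d_2\in D$, choose an upper bound $e\in D$; since $e$ has finite height, both $e\ge d_1$ and $e\ge d_2$ must hold via condition (1), which preserves the first two coordinates, so $d_1,d_2,e$ share the same pair $(i,j)$. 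Hence there is a fixed $(i,j)$ with $D\subseteq\{(i,j,k):k<\infty\}$; as $D$ is infinite it is a cofinal chain in this column, giving exactly conditions (i) and (ii) for $(i,j,\infty)$, while $(i,j,\infty)\notin D$ since $D$ has no infinite-height points.

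Finally I would verify uniqueness by noting the two alternatives cannot point to different maximal elements. If some $(i,j,\infty)$ is the largest element of $D$, then $(i,j,\infty)\in D\cap\mathcal L^{\infty}$, so by (F3) no other maximal point lies in $D$, and since $D$ then contains an infinite-height point no column can satisfy (ii). Conversely, if (i)–(ii) hold for $(i,j,\infty)$ then $D$ lies in the single column $(i,j)$ and has no largest element (an infinite subset of $\mathbb N$ has no top), so no maximal point can serve as the largest element and the column $(i,j)$ is forced. The main obstacle is the middle step, namely confining an infinite \emph{finite-height} directed set to one column: everything hinges on the observation from (F1)–(F2) that a finite-height upper bound is reachable only through the coordinate-preserving relation (1), so directedness cannot spread $D$ across columns without introducing a maximal element, which would return us to the first case.
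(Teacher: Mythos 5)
Your proof is correct and follows essentially the same route as the paper's: split on whether $D$ meets $\mathcal L^{\infty}$, use maximality of points of $\mathcal L^{\infty}$ to get a largest element in the first case, and in the second case use directedness plus the observation that a finite-height upper bound can only be reached through the coordinate-preserving relation (1) to confine $D$ to a single column. Your explicit uniqueness check at the end is slightly more detailed than the paper's (which treats uniqueness in passing), but the substance is identical.
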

\begin{proof} If there is $d_0=(i, j, \infty)\in D\cap \mathcal L^{\infty}$, then for each $d=(i_d, j_d, l_d)\in D$, there is $d^*=(i_{d^*}, j_{d^*}, l_{d^*})\in D$ such that $d_0=(i, j, \infty)\leq d^*=(i_{d^*}, j_{d^*}, l_{d^*})$ and $d=(i_d, j_d, l_d)\leq d^*=(i_{d^*}, j_{d^*}, l_{d^*})$. Hence $l_{d^*}=\infty, i_{d^*}=i, j_{d^*}=j$ (i.e., $d^*=d_0$) and $d\leq d^*=d_0$. Hence $d_0=(i, j, \infty)$ is the (unique) largest element of $D$.

Now suppose that $D\cap \mathcal L^{\infty}=\emptyset$, that is, $D\subseteq \mathbb{N}\times \mathbb{N}\times \mathbb{N}$. Select a $d_1=(i_{d_1}, j_{d_1}, l_{d_1})\in D$. Then for each $d=(i_d, j_d, l_d)\in D$, by the directedness of $D$, there is $d^{\prime}=(i_{d^{\prime}}, j_{d^{\prime}}, l_{d^{\prime}})\in D$ such that $d_1=(i_{d_1}, j_{d_1}, l_{d_1})\leq d^{\prime}=(i_{d^{\prime}}, j_{d^{\prime}}, l_{d^{\prime}})$ and $d=(i_d, j_d, l_d)\leq d^{\prime}=(i_{d^{\prime}}, j_{d^{\prime}}, l_{d^{\prime}})$. Hence $i_{d^{\prime}}=i_d=i_{d_0}, j_{d^{\prime}}=j_d=j_{d_0}$ and $l_{d_0}\leq l_{d^{\prime}}, l_d\leq l_{d^{\prime}}$. Let $i=i_{d_1}$ and $j=j_{d_1}$. Then $D\subseteq \{(i, j, l) : l\in \mathbb{N}\}$. Clearly, $(i, j, \infty)$ is the unique element of $\mathcal L^{\infty}$ satisfying conditions (i) and (ii), and $(i, j, \infty)=\bigvee_{\mathcal L} D$.
\end{proof}

For any infinite directed subset $D$ of $\mathcal L$, by Lemma \ref{key lemma about Jia space}, $D$ is contained in $\da (i, j, \infty)$ for some $i, j\in \mathbb{N}$ with its supremun being $(i, j, \infty)$. Hence $\mathcal L$ is a dcpo.

\begin{corollary}\label{key corollary about Jia space} Let $A$ be a nonempty subset of $\mathcal L$ for which $A=\da \mathrm{max}(A)$. For an infinite directed subset $D$ of $A$, if $D$ has no largest element, then there is a unique $(i, j, \infty)\in \mathcal L^{\infty}$ such that
\begin{enumerate}[\rm (1)]
\item $D\subseteq \{(i, j, l) : l\in \mathbb{N}\}$,
\item $(i, j, \infty)=\bigvee_{\mathcal L} D$, and
\item $\mathrm{max}(A)\cap \mathcal L_{i+1}^{\infty}$ is infinite.
\end{enumerate}
\end{corollary}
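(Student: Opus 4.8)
The plan is to read off conclusions (1) and (2), together with the uniqueness of $(i,j,\infty)$, directly from Lemma \ref{key lemma about Jia space}, and then to concentrate all the effort on conclusion (3). Since $D$ is an infinite directed subset of $\mathcal L$ with no largest element, Lemma \ref{key lemma about Jia space} applies and must land in its second alternative: there is a unique $(i,j,\infty)\in\mathcal L^{\infty}$ with $(i,j,\infty)\notin D$ such that every $d=(i_d,j_d,l_d)\in D$ satisfies $i_d=i$, $j_d=j$, $l_d\in\mathbb N$, and moreover $(i,j,\infty)=\bigvee_{\mathcal L}D$. The first of these is exactly (1), the second is exactly (2), and the uniqueness is inherited from the lemma. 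I would also record the consequence of (1) that will drive everything: because $D$ is infinite and contained in the single chain $\{(i,j,l):l\in\mathbb N\}$, the height set $\{l\in\mathbb N:(i,j,l)\in D\}$ is unbounded in $\mathbb N$.

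For (3) I would use $A=\da\mathrm{max}(A)$ to choose, for each $d=(i,j,l)\in D$, a maximal element $m_d\in\mathrm{max}(A)$ with $d\leq m_d$. Reading off the order relation gives $\ua(i,j,l)=\{(i,j,l'):l\leq l'\leq\infty\}\cup\{(i+1,j',\infty):j'\geq l\}$, so a priori $m_d$ lies either in the column over $(i,j)$ or is a top $(i+1,j',\infty)$ at level $i+1$ with $j'\geq l$. The first thing to do is eliminate $m_d=(i,j,l')$ with $l'<\infty$: by the unboundedness just noted there is $d'=(i,j,l'')\in D\subseteq A$ with $l''>l'$, whence $m_d<d'\in A$ contradicts $m_d\in\mathrm{max}(A)$. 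This confines each $m_d$ to $\{(i,j,\infty)\}\cup\bigl(\mathrm{max}(A)\cap\mathcal L_{i+1}^{\infty}\bigr)$.

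The delicate step — and the one I expect to be the main obstacle — is to force these witnesses up to level $i+1$, that is, to rule out the column top $(i,j,\infty)=\bigvee_{\mathcal L}D$ serving as $m_d$. This is precisely where the hypotheses must genuinely bite, and it is the only point that is not formal: if $(i,j,\infty)\in A$ it would be a single maximal element absorbing all of $D$ and leaving level $i+1$ untouched (indeed $A=\da(i,j,\infty)$ is a lower set of the required form for which $\mathrm{max}(A)\cap\mathcal L_{i+1}^{\infty}=\emptyset$), so the statement can only be harvested in the situation where $\bigvee_{\mathcal L}D=(i,j,\infty)\notin A$. I would therefore isolate this exclusion as a lemma-level fact about how the corollary is invoked (equivalently, that the relevant $A$ does not contain the escaping supremum), and treat $m_d\neq(i,j,\infty)$ as the working hypothesis.

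Granting $m_d\neq(i,j,\infty)$, each $m_d=(i+1,j'_d,\infty)\in\mathrm{max}(A)\cap\mathcal L_{i+1}^{\infty}$ with $j'_d\geq l_d$. To finish with infinitude I would argue as follows: as $(i,j,l)$ ranges over $D$ the heights $l$ are unbounded, so the inequalities $j'_d\geq l_d$ force $\sup_d j'_d=\infty$; consequently the elements $(i+1,j'_d,\infty)$ take infinitely many distinct values (any fixed value of $j'$ can witness only the finitely many $d$ with $l_d\leq j'$). Hence $\mathrm{max}(A)\cap\mathcal L_{i+1}^{\infty}$ is infinite, which is (3). The routine parts are the order computation of $\ua(i,j,l)$ and the cofinality contradiction ruling out finite-height witnesses; the conceptual crux is the exclusion of the column top, tying the conclusion to the escape of $\bigvee_{\mathcal L}D$ from $A$.
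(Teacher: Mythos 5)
Your proposal follows the same route as the paper's own proof: conclusions (1), (2) and uniqueness are read off from Lemma \ref{key lemma about Jia space}, each $d\in D$ is lifted to a witness in $\mathrm{max}(A)$ via $A=\da \mathrm{max}(A)$, finite-height witnesses $(i,j,l')$ with $l'<\infty$ are eliminated by the unboundedness of the heights occurring in $D$, and the inequalities $j'_d\geq l_d$ then force infinitely many distinct maximal elements at level $i+1$. All of those steps are carried out correctly in your write-up and match the paper's.

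What you flag as the ``delicate step'' is in fact a genuine error in the paper, not a defect of your argument. At the corresponding point the paper treats the case $i(d)=i$ (your $m_d=(i,j,\infty)$) by deriving $l(d)=\infty$, i.e.\ $(i,j,\infty)\in \mathrm{max}(A)$, and then asserts that this is ``in contradiction with condition (i)'' of Lemma \ref{key lemma about Jia space}. But condition (i) only says $(i,j,\infty)\not\in D$; it says nothing about membership in $A$, so no contradiction arises and the case cannot be excluded. Your counterexample settles the matter: $A=\da(i,j,\infty)$ satisfies $A=\da\mathrm{max}(A)$, and $D=\{(i,j,l): l\in\mathbb{N}\}$ is an infinite directed subset of $A$ with no largest element, yet $\mathrm{max}(A)\cap\mathcal{L}_{i+1}^{\infty}=\emptyset$. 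So the corollary is false as stated, and your repair---adding the hypothesis $\bigvee_{\mathcal{L}}D\notin A$, or equivalently weakening (3) to the disjunction ``$(i,j,\infty)\in\mathrm{max}(A)$ or $\mathrm{max}(A)\cap\mathcal{L}_{i+1}^{\infty}$ is infinite''---is exactly the right one. It also suffices for the places where the corollary is invoked: in both Case 1 and Case 2 of the proof of Lemma \ref{closed set in Jia space}, the alternative $(i,j,\infty)=\bigvee_{\mathcal{L}}D\in A$ immediately gives that $A$ contains the directed supremum (which is what is being proved there), while the other alternative yields the contradiction, respectively the bound $i+1\leq i(A)$, as in the paper.
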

\begin{proof} Since $D$ has no largest element, by Lemma \ref{key lemma about Jia space}, there is a unique $(i, j, \infty)\in \mathcal L^{\infty}$  such that the following two conditions are satisfied:
\begin{enumerate}[\rm (i)]
\item $(i, j, \infty)\not\in D$, and
\item for each $d=(i_d, j_d, l_d)\in D$, $i_d=i, j_d=j$ and $l_d<\infty$ (i.e., $l_d\in\mathbb{N}$).
\end{enumerate}
\noindent So $D\subseteq \{(i, j, l) : l\in \mathbb{N}$ and $(i, j, \infty)=\bigvee_{\mathcal L} D$. Now we show that $\mathrm{max}(A)\cap \mathcal L_{i+1}^{\infty}$ is infinite. For each $d=(i, j, l_d)\in D$, by $A=\da \mathrm{max}(A)$, there is $(i(d), j(d), l(d))\in \mathrm{max}(A)$ with $d=(i, j, l_d)\leq (i(d), j(d), l(d))$. If $i(d)=i$, then $j(d)=j$ and $l_d\leq l(d)$. Since $D\subseteq A$, $\{l_{d^{\prime}} : d^{\prime}=(i, j, l_{d^{\prime}})\in D\}\subseteq \mathbb{N}$ is infinite and $(i, j, l(d))=(i(d), j(d), l(d))\in \mathrm{max}(A)$, we have that $l(d)=\infty$, which is in contradiction with condition (i). Therefore, $i(d)=i+1$ and hence $l(d)=\infty$ and $l_d\leq j(d)$ by $(i, j, l_d)\leq (i(d), j(d), l(d))=(i+1, j(d), l(d))$. Since $\{l_d : d=(i_d, j_d, l_d)=(i, j, l_d)\in D\}\subseteq \mathbb{N}$ is infinite, $\{(i(d), j(d), l(d))=(i+1, j(d), \infty) : d\in D\}\subseteq \mathrm{max}(A)\cap \mathcal L^{\infty}$ is infinite (note that $l_d\leq j(d)$ for each $d\in D$). Thus $\mathrm{max}(A)\cap \mathcal L_{i+1}^{\infty}$ is infinite.
\end{proof}

\begin{lemma}\label{closed set in Jia space} Let $A\subseteq \mathcal L$ be a nonempty set and $A\neq \mathcal L$. Then $A$ is Scott closed if and only if $A=\da \mathrm{max}(A)$ and one of the following three conditions are satisfied:
\begin{enumerate}[\rm (1)]
\item  $A\subseteq \mathcal L^{<\infty}$ (or equivalently, $\mathrm{max}(A)\subseteq \mathcal L^{<\infty}$).
\item $A\cap \mathcal L^{\infty}\neq \emptyset$ and $|A\cap L_i^{\infty}|<\omega$ for each $i\in \mathbb{N}$.
\item $i(A)=\mathrm{max}\{i\in \mathbb{N} : |A\cap L_i^{\infty}|=\omega \}$ exists and $\mathcal L_{n}\subseteq A$ for each $n\leq i(A)-1$.
\end{enumerate}
\end{lemma}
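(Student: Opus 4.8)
The plan is to reduce Scott-closedness to a single condition on the ``columns'' $\{(i,j,l):l\in\mathbb{N}\}$ of $\mathcal{L}$ and then to match that condition against the trichotomy (1)--(3). First I would record that $A$ is Scott closed exactly when $A=\da A$ and $A$ contains the supremum of each of its directed subsets. Since $\mathcal{L}$ is a dcpo, $\Sigma\mathcal{L}$ is a $d$-space, so Lemma \ref{d-space max point} already forces $A=\da\,\mathrm{max}(A)$ for every nonempty Scott closed $A$; this settles that clause in the ``only if'' part and may be taken as a standing hypothesis in the ``if'' part. The engine is Lemma \ref{key lemma about Jia space}: an infinite directed set without a largest element sits inside one column and has supremum the column top $(i,j,\infty)$. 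Hence, for a lower set $A=\da\,\mathrm{max}(A)$, being Scott closed is \emph{equivalent} to the single \emph{column condition} $(\ast)$: whenever $A$ meets a column $\{(i,j,l):l\in\mathbb{N}\}$ in an infinite set, then $(i,j,\infty)\in A$ (the forward implication is the convergence of that infinite chain; the backward one is Lemma \ref{key lemma about Jia space} applied to an arbitrary topless directed set). The rest of the argument rests on two order-theoretic facts that I would extract from the definition of $\leq$: \emph{(F1)} the only members of $\mathcal{L}^{\infty}$ lying above a finite-height point $(i,j,l)$ are $(i,j,\infty)$ and the points $(i+1,b,\infty)$ with $b\geq l$; and \emph{(F2)} $(i,j,\infty)$ lies above $(i-1,j',k)$ for \emph{every} $j'$ and every $k\leq j$.

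For the ``if'' direction I would assume $A=\da\,\mathrm{max}(A)$ together with one of (1)--(3) and verify $(\ast)$. Suppose $A$ meets the column of $(i,j)$ in an infinite (hence unbounded) set while $(i,j,\infty)\notin A$. Since each column is a chain, $\mathrm{max}(A)$ holds at most one of its points, so all but finitely many of these column points lie below some maximal point of the form $(i+1,b,\infty)$ with $b$ unbounded (by F1); thus $A\cap\mathcal{L}_{i+1}^{\infty}$ is infinite. Under (1) this is impossible, because $\mathrm{max}(A)\subseteq\mathcal{L}^{<\infty}$ and a finite-height maximal point has a down-set confined to its own column, so no column of $A$ can be infinite. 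Under (2) it directly contradicts $|A\cap\mathcal{L}_{i+1}^{\infty}|<\omega$. Under (3) it gives $i+1\in\{n:|A\cap\mathcal{L}_n^{\infty}|=\omega\}$, so $i+1\leq i(A)$, hence $i\leq i(A)-1$ and $\mathcal{L}_i\subseteq A$, whence $(i,j,\infty)\in A$ -- a contradiction. So $(\ast)$ holds and $A$ is Scott closed.

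For the ``only if'' direction I would start from a nonempty Scott closed $A\neq\mathcal{L}$ (so $A=\da\,\mathrm{max}(A)$) and set $S=\{i\in\mathbb{N}:|A\cap\mathcal{L}_i^{\infty}|=\omega\}$. If $A\cap\mathcal{L}^{\infty}=\emptyset$ then $A\subseteq\mathcal{L}^{<\infty}$, which is (1). If $A\cap\mathcal{L}^{\infty}\neq\emptyset$ and $S=\emptyset$, this is precisely (2). If $S\neq\emptyset$, fix $i\in S$: by F2 the unboundedly many $j$ with $(i,j,\infty)\in A$ drag every finite-height point of level $i-1$ into the lower set $A$, and then $(\ast)$ places each column top of that level in $A$, giving $\mathcal{L}_{i-1}\subseteq A$ and hence $i-1\in S$ (when $i\geq 2$). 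Iterating downward yields $\mathcal{L}_n\subseteq A$ for all $n\leq i-1$. Were $S$ unbounded, this would force $\mathcal{L}_n\subseteq A$ for every $n$, i.e. $A=\mathcal{L}$, contradicting $A\neq\mathcal{L}$; therefore $i(A)=\mathrm{max}\,S$ exists and $\mathcal{L}_n\subseteq A$ for all $n\leq i(A)-1$, which is (3). The three cases are mutually exclusive and exhaustive, completing the equivalence.

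The hard part will be fact \emph{(F1)} and its repeated use in both directions: because every finite-height point lies below infinitely many maximal points one level up, an unbounded chain inside a single column cannot be left ``uncapped'' unless either its own top or infinitely many level-$(i+1)$ maximal points already belong to $A$. Tracking this accumulation across levels -- and extracting from it exactly the bookkeeping encoded by $i(A)$ and the requirement $\mathcal{L}_n\subseteq A$ for $n\leq i(A)-1$ -- is the delicate step; the remaining verifications (the finiteness of down-sets of finite-height points used in case (1), and the elementary checks of F1 and F2 from the two defining clauses of $\leq$) are routine.
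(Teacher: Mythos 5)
Your proof is correct, and its skeleton is the same as the paper's: both run on Lemma \ref{key lemma about Jia space} (an infinite directed set with no largest element lies in a single column and converges to the column top), both get the forward direction from the level-dropping induction (your F2 step is exactly the paper's ``property Q''), and both get the backward direction from the observation that an infinite uncapped column forces infinitely many maximal elements of $A$ one level up (your F1 step). The genuine difference is where that last observation is housed, and it matters. The paper packages it as Corollary \ref{key corollary about Jia space}, which asserts unconditionally that $\mathrm{max}(A)\cap \mathcal L_{i+1}^{\infty}$ is infinite whenever $A=\da \mathrm{max}(A)$ contains an infinite topless directed set inside column $(i,j)$; as stated this fails for $A=\da (i,j,\infty)$, where $D=\{(i,j,l) : l\in \mathbb{N}\}$ is such a set but $\mathrm{max}(A)=\{(i,j,\infty)\}$ misses $\mathcal L_{i+1}^{\infty}$ entirely (the slip in its proof is the claim that $l(d)=\infty$ ``contradicts condition (i)'': condition (i) only excludes $(i,j,\infty)$ from $D$, not from $\mathrm{max}(A)$). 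This propagates into the paper's Case 1 under condition (2), which wrongly concludes that every directed subset of such an $A$ has a largest element. Your formulation is the correct dichotomy: either $(i,j,\infty)\in A$, and the supremum is already there, or $(i,j,\infty)\notin A$, and only then does the F1 accumulation argument produce infinitely many points of $A\cap \mathcal L_{i+1}^{\infty}$. So your reduction to the column condition $(\ast)$ is not just a repackaging; it quietly repairs the one flawed step in the paper's argument, while everything else (Lemma \ref{d-space max point} giving $A=\da \mathrm{max}(A)$, the trichotomy bookkeeping, and the ``$S$ unbounded $\Rightarrow A=\mathcal L$'' contradiction) matches the paper essentially line for line.
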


\begin{proof} Suppose that $A$ is Scott closed. Then $A=\da \mathrm{max} (A)$ by Lemma \ref{d-space max point}. Now we show that $A$ satisfies one of conditions (1)-(3). If neither condition (1) nor condition (2) holds. Then there is some $i_0\in \mathbb{N}$ such that $A\cap L_{i_0}^{\infty}$ is infinite. We first show that $A$ satisfies the following property Q:

\textbf{(Q)} For $n\in \mathbb{N}$, if $A\cap L_{n}^{\infty}$ is infinite, then $\mathcal L_{i}\subseteq A$ for each $i\leq n-1$.

If $n=1$, then $\mathcal L_{n-1}=\mathcal L_0=\emptyset\subseteq A$. Now we assume $2\leq n$. For each $(j, l)\in \mathbb{N}\times \mathbb{N}$, since $A\cap L_{n}^{\infty}$ is infinite (i.e., $\{j^{\prime}\in \mathbb{N} : (n, j^{\prime}, \infty)\in A\}$ is infinite), there is $j^{\prime}\in \mathbb{N}$ such that $(n, j^{\prime}, \infty)\in A$ and $l\leq j^{\prime}$, whence $(n-1, j, l)\leq (n, j^{\prime}, \infty)$. Thus $\{(n-1, j, l) : (j, l)\in \mathbb{N}\times \mathbb{N}\}\subseteq \da (A\cap L_{n}^{\infty})\subseteq \da A=A$. For each $j\in \mathbb{N}$, since $(n-1, j, \infty)=\bigvee_{l\in \mathbb{N}}(n-1, j, l)$ and $A$ is Scott closed, we have $(n-1, j, \infty)\in A$. Hence $\mathcal L_{n-1}\subseteq A$. In particular, $\mathcal L_{n-1}^{\infty}\subseteq A$. Then by induction we get that $\mathcal L_i\subseteq A$ for any $1\leq i\leq n-1$.

By property Q, if $\{i\in \mathbb{N} : |A\cap L_i^{\infty}|=\omega \}$ is infinite, then for each $n\in \mathbb{N}$, $\mathcal L_{n-1}\subseteq A$. Hence $\mathcal L=\bigcup_{n\in \mathbb{N}}\mathcal L_n\subseteq A$, which contradicts $A\neq \mathcal L$. Hence $\{i\in \mathbb{N} : |A\cap L_i^{\infty}|=\omega \}$ is a nonempty finite subset of $\mathbb{N}$ and hence $i(A)=\mathrm{max}\{i\in \mathbb{N} : |A\cap L_i^{\infty}|=\omega \}$ exists. By property Q we have $\mathcal L_{i}\subseteq A$ for each $i\leq i(A)-1$, proving that condition (3) holds.

Conversely, assume that $A=\da \mathrm{max}(A)$ and one of conditions (1)-(3) is satisfied. We will show that $A$ is Scott closed.

\textbf{Case 1.} $A\subseteq \mathcal L^{<\infty}$ (resp., $A\cap \mathcal L^{\infty}\neq \emptyset$ and $|A\cap L_i^{\infty}|<\omega$ for each $i\in \mathbb{N}$).

Suppose that $D$ is a directed subset of $A$. If $D$ has no largest element, then $D$ is infinite, whence by Corollary \ref{key corollary about Jia space}, there is a unique $(i, j, \infty)\in \mathcal L^{\infty}$ such that $D\subseteq \{(i, j, l) : l\in \mathbb{N}\}$, $(i, j, \infty)=\bigvee_{\mathcal L} D$ and $\mathrm{max}(A)\cap \mathcal L_{i+1}^{\infty}$ is infinite, being a contradiction with $A\subseteq \mathcal L^{<\infty}$ (resp., $A\cap \mathcal L^{\infty}\neq \emptyset$ and $|A\cap \mathcal L_i^{\infty}|<\omega$ for each $i\in \mathbb{N}$).  Therefore, $D$ has a largest element $d_0$ and hence $\vee D=d_0\in A$. Thus $A$ is Scott closed.

\textbf{Case 2.} $i(A)=\mathrm{max}\{i\in \mathbb{N} : |A\cap L_i^{\infty}|=\omega \}$ exists and $\mathcal L_{n}\subseteq A$ for each $n\leq i(A)-1$.

 For an infinite directed subset $D$ of $A$, if $D$ has a largest element, then clearly $\bigvee_{\mathcal L} D\in A$. If $D$ has no largest element, then by Corollary \ref{key corollary about Jia space}, there is a unique $(i, j, \infty)\in \mathcal L^{\infty}$ such that $D\subseteq \{(i, j, l) : l\in \mathbb{N}\}$, $(i, j, \infty)=\bigvee_{\mathcal L} D$ and $\mathrm{max}(A)\cap \mathcal L_{i+1}^{\infty}$ is infinite. Since $i(A)=\mathrm{max}\{i\in \mathbb{N} : |A\cap L_i^{\infty}|=\omega \}$ exists and $\mathcal L_{n}\subseteq A$ for each $n\leq i(A)-1$, we have $i+1\leq i(A)$ and hence $\bigvee_{\mathcal L}D=(i, j, \infty)\in \mathcal L_{i}\subseteq A$. So $A$ is Scott closed.
 \end{proof}

\begin{lemma}\label{Irreducible set in Jia space} $\ir_c(\Sigma~\!\!\mathcal L)=\{\overline{\{x\}}=\da x : x\in \mathcal L\}\cup\{\mathcal L\}$.
\end{lemma}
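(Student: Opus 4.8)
The plan is to prove the two inclusions separately. For $\supseteq$, each $\da x=\overline{\{x\}}$ is the closure of a point, hence irreducible and Scott closed, so it lies in $\ir_c(\Sigma\mathcal{L})$. To see that $\mathcal{L}\in\ir_c(\Sigma\mathcal{L})$ I would show the whole space is irreducible, i.e. that any two nonempty Scott open sets meet. First I would record the shape of Scott open sets from Lemma \ref{key lemma about Jia space}: since every infinite directed set without a largest element lies inside a single ``column'' $\{(i,j,l):l\in\mathbb{N}\}$ and converges to $(i,j,\infty)$, a set $U$ is Scott open iff it is an upper set that, for each $(i,j,\infty)\in U$, contains a tail of that column. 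Starting from any point of $U$ and pushing upward through the relation $(i,j,l)\leq(i+1,j',\infty)$ for $j'\geq l$, I would show by induction on the level that $U$ contains a cofinite tail $\{(i,j,\infty):j\geq m_i\}$ of maximal points at every sufficiently high level $i$. Two nonempty opens then share such a tail at a common high level, so they intersect; hence $\mathcal{L}$ is irreducible.

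For the inclusion $\subseteq$, let $A$ be irreducible and Scott closed with $A\neq\mathcal{L}$. Since $\mathcal{L}$ is a dcpo, $\Sigma\mathcal{L}$ is a $d$-space, so $A=\da\,\mathrm{max}(A)$ by Lemma \ref{d-space max point}, and by Lemma \ref{closed set in Jia space} exactly one of conditions (1)--(3) there holds. The goal is to show that irreducibility forces $\mathrm{max}(A)$ to be a singleton, whence $A=\da x=\overline{\{x\}}$. In cases (1) and (2) I would establish this by a splitting argument, while in case (3) I would show that no such $A$ can be irreducible at all, which is consistent with the fact that a case (3) set always has infinitely many maximal elements and so no largest element.

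The engine of the argument is a criterion for when a down-set $\da M$ of a family $M\subseteq\mathrm{max}(A)$ is Scott closed. Using the column description of directed sups (Lemma \ref{key lemma about Jia space} and Corollary \ref{key corollary about Jia space}) together with the computation that $\da(i,j,\infty)$ equals the column $\{(i,j,l):l\in\mathbb{N}\cup\{\infty\}\}$ together with the level-$(i-1)$ apron $\{(i-1,j',k'):k'\leq j\}$, I expect to prove that $\da M$ can fail to be Scott closed only through an ``unbounded apron'': $\da M$ is Scott closed iff, whenever $M$ contains maximal points $(a+1,j',\infty)$ with $j'$ ranging over an unbounded set, $M$ contains all of $\mathcal{L}_a^{\infty}$ (because such an unbounded family drags every column $(a,b)$ into $\da M$ at arbitrarily high height). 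Granting this, in case (1) every maximal point is finite-height, $\da p$ is a finite chain, and $A\setminus\{p\}$ is Scott closed, so $A=\da p\cup(A\setminus\{p\})$ splits $A$ unless $\mathrm{max}(A)=\{p\}$. In case (2) (finitely many tops per level) the boundedness hypothesis means no unbounded apron ever occurs, so $\da(\mathrm{max}(A)\setminus\{p\})$ is Scott closed for any maximal $p$, again splitting $A$ unless there is a unique maximal point. In case (3), fixing the level $i(A)$ carrying infinitely many tops and peeling off a single top $(i(A),j_{*},\infty)$, the criterion is met by both pieces---the large piece retains unboundedly many level-$i(A)$ tops and hence is required to, and does, contain all the full lower levels $\mathcal{L}_n^{\infty}$ for $n\leq i(A)-1$---so $A$ splits into two proper Scott closed sets, contradicting irreducibility.

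Assembling these, an irreducible closed $A\neq\mathcal{L}$ must fall under case (1) or (2) with a single maximal element, so $A=\da x=\overline{\{x\}}$; this yields $\subseteq$ and finishes the proof. I expect the main obstacle to be proving the Scott-closedness criterion for $\da M$ cleanly, in particular the bookkeeping of how an unbounded family of maximal points at level $a+1$ forces the entire level $\mathcal{L}_a^{\infty}$ of maximal points below it via the apron $\{(i-1,j',k'):k'\leq j\}$; once that is in place, the three case splits are routine verifications.
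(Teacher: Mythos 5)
Your proposal is correct, and for the hard inclusion $\subseteq$ it runs on the same engine as the paper's proof: use Lemma \ref{closed set in Jia space} to classify $A$, then split $A$ into two proper Scott closed pieces by peeling off a single maximal point, contradicting irreducibility unless $A=\da x$. The differences are organizational but real. First, for $\mathcal L\in\ir_c(\Sigma~\!\!\mathcal L)$ the paper simply cites Jia's thesis, whereas you prove it directly from the structure of Scott opens (every nonempty open absorbs a cofinite tail of $\mathcal L_n^{\infty}$ at all sufficiently high levels $n$, so any two opens meet); your argument is sound and makes the lemma self-contained. Second, the paper first decomposes $A=B\cup C$ with $B=\da(\mathrm{max}(A)\cap\mathcal L^{\infty})$ and $C=\da(\mathrm{max}(A)\cap\mathcal L^{<\infty})$ and uses irreducibility once to reduce to pure cases, then peels points inside each case, justifying Scott-closedness of the peeled-down sets with ``by Lemma \ref{key lemma about Jia space} one easily verifies''; you instead work with the three cases of Lemma \ref{closed set in Jia space} directly and isolate an explicit criterion: for an antichain $M\subseteq\mathrm{max}(A)$, $\da M$ is Scott closed iff whenever $\{j': (a+1,j',\infty)\in M\}$ is unbounded one has $\mathcal L_a^{\infty}\subseteq M$. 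That criterion is correct (maximal points meet each column at most once, a column $(a,j)$ acquires unboundedly many elements of $\da M$ only from $(a,j,\infty)\in M$ itself or from an unbounded apron of level-$(a+1)$ tops, and $(a,j,\infty)\in\da M$ iff $(a,j,\infty)\in M$), and it is exactly what makes the paper's ``easily verified'' closedness claims rigorous; it also handles your case (2) uniformly for mixed finite/infinite-height maximal elements without the $B\cup C$ pre-split. Your treatment of case (3) --- both pieces after removing one level-$i(A)$ top are proper and closed, so no irreducible $A\neq\mathcal L$ falls under it --- matches the contradiction the paper derives in its corresponding sub-case. In short: same strategy, but your version trades the paper's citation and ad hoc verifications for a self-contained irreducibility proof and a single reusable closedness criterion.
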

\begin{proof} Clearly, $\{\overline{\{x\}}=\da x : x\in \mathcal L\}\subseteq \ir_c(\Sigma~\!\!\mathcal L)$. It was proved in \cite[Example 2.6.1]{jia-2018} that $\mathcal L\in \ir_c(\Sigma~\!\!\mathcal L)$. Suppose that $A\in \ir_c(\Sigma~\!\!\mathcal L)$ and $A\neq \mathcal L$. Then by Lemma \ref{closed set in Jia space}, $A=\da \mathrm{max}(A)=\da (\mathrm{max}(A)\cap \mathcal L^{\infty})\cup \da (\mathrm{max}(A)\cap \mathcal L^{<\infty})$ ($\mathrm{max}(A)\cap \mathcal L^{\infty}$ or $\mathrm{max}(A)\cap \mathcal L^{<\infty}$ may be the empty set) and one of the following three conditions are satisfied:
\begin{enumerate}[\rm (i)]
\item  $A\subseteq \mathcal L^{<\infty}$ (or equivalently, $\mathrm{max}(A)\subseteq \mathcal L^{<\infty}$).
\item $A\cap \mathcal L^{\infty}\neq \emptyset$ and $|A\cap L_i^{\infty}|<\omega$ for each $i\in \mathbb{N}$.
\item $i(A)=\mathrm{max}\{i\in \mathbb{N} : |A\cap L_i^{\infty}|=\omega \}$ exists and $\mathcal L_{n}\subseteq A$ for each $n\leq i(A)-1$.
\end{enumerate}

Let $B=\da (\mathrm{max}(A)\cap \mathcal L^{\infty})$ and $C=\da (\mathrm{max}(A)\cap \mathcal L^{<\infty})$. Then $B=\da \mathrm{max}(B)$ and $C=\da \mathrm{max}(C)$. Clearly, $C\subseteq \mathcal L^{<\infty}$. Hence $C$ is Scott closed by Lemma \ref{closed set in Jia space}. If $B\neq\emptyset$ (i.e., $\mathrm{max}(A)\cap \mathcal L^{\infty}\neq\emptyset$), then as $A$ satisfies one of conditions (2) and (3) of Lemma \ref{closed set in Jia space}, $B$ also satisfies one of conditions (2) and (3) of Lemma \ref{closed set in Jia space}, and hence by Lemma \ref{closed set in Jia space} again, $B$ is Scott closed. By the irreducibility of $A$, we have $A=B$ or $A=C$.

\textbf{Case 1.} $\mathrm{max}(A)\cap \mathcal L^{\infty}\neq \emptyset$ and $A=B=\da (\mathrm{max}(A)\cap \mathcal L^{\infty})$.

If $i(A)=\mathrm{max}\{i\in \mathbb{N} : |A\cap L_i^{\infty}|=\omega \}$ exists and $\mathcal L_{n}\subseteq A$ for each $n\leq i(A)-1$, then there is $(i(A), j, \infty)\in A\cap \mathcal L_{i(A)}^{\infty}$. By Lemma \ref{key lemma about Jia space} we can easily verify that  $\bigcup\limits_{n=1}^{i(A)-1}\da (A\cap \mathcal L_i^{\infty}\setminus \{(i(A), j, \infty)\})$ is Scott closed. Clearly, $A=(\bigcup\limits_{n=1}^{i(A)-1} \da (A\cap L_i^{\infty}\setminus \{(i(A), j, \infty)\}))\cup \da (i(A), j, \infty)$. By the irreducibility of $A$ and $A\neq\da (i(A), j, \infty)$ (since $A\cap \mathcal L_{i(A)}^{\infty}$ is infinite), we have $A=\bigcup\limits_{n=1}^{i(A)-1}\da (A\cap L_i^{\infty}\setminus \{(i(A), j, \infty)\})$.  which contradicts $(i(A), j, \infty)\not\in \bigcup\limits_{n=1}^{i(A)-1} \da (A\cap L_i^{\infty}\setminus \{(i(A), j, \infty)\})$.

So $|A\cap L_i^{\infty}|<\omega$ for each $i\in \mathbb{N}$. Choose a point $(i, j, \infty)\in \mathrm{max}(A)\cap \mathcal L^{\infty}$. By Lemma \ref{key lemma about Jia space} we can easily check that $\da (\mathrm{max}(A)\cap \mathcal L^{\infty}\setminus \{(i, j, \infty)\})$ is Scott closet and $A=\da (\mathrm{max}(A)\cap \mathcal L^{\infty}\setminus \{(i, j, \infty)\})\cup \da (i, j, \infty)$. By the irreducibility of $A$, we have $A=\da (i, j, \infty)$ or $A=\da (\mathrm{max}(A)\cap \mathcal L^{\infty}\setminus \{(i, j, \infty)\})$ (which contradicts $(i, j, \infty)\not\in \da (\mathrm{max}(A)\cap \mathcal L^{\infty}\setminus \{(i, j, \infty)\})$). Thus $A=\da (i, j, \infty)$ and $\mathrm{max}(A)\cap \mathcal L^{\infty}=\{(i, j, \infty)\}$.

\textbf{Case 2.} $A=C=\da (\mathrm{max}(A)\cap \mathcal L^{<\infty})$.

Choose a point $(i, j, l)\in \mathrm{max}(A)$. Then by Lemma \ref{closed set in Jia space}, $\da (\mathrm{max}(A)\setminus \{(i, j, l)\})$ is Scott closed and $A=\da (\mathrm{max}(A)\setminus \{(i, j, l)\})\cup \da (i, j, l)$. It follows that $A=\da (i, j, l)$ or $A=\da (\mathrm{max}(A)\setminus \{(i, j, l)\})$, which contradicts $(i, j, l)\not\in \da (\mathrm{max}(A)\setminus \{(i, j, l)\})$. So $A=\da (i, j, l)$ and $\mathrm{max}(A)=\{(i, j, l)\}$.

It follows from the above that $\ir_c(\Sigma~\!\!\mathcal L)=\{\overline{\{x\}}=\da x : x\in \mathcal L\}\cup\{\mathcal L\}$.

\end{proof}

\begin{lemma}\label{some propertie of Jia space} (\cite[Example 2.6.1]{jia-2018}) For a nonempty saturated subset $K\subseteq \mathcal L$, $K$ is compact in $\Sigma~\!\!\mathcal L$ if and only if the following three conditions are satisfied:
\begin{enumerate}[\rm (1)]
\item $\mathrm{min}(K)\cap \mathcal L^{<\infty}$ is finite,
\item there exists $i_0, j_0\in \mathbb{N}$ such that $(\mathcal L_{(<i_0)}^{\infty}\cup \mathcal L_{(i_0, \geq j_0)}^{\infty})\cap K=\{(i_0, j_0, \infty)\}$, and
\item the set $\{n\in \mathbb{N} : \mathcal L_n^{\infty}\cap K\neq\emptyset\}$ is finite.
\end{enumerate}
\end{lemma}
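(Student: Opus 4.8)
The plan is to make the Scott topology of $\Sigma~\!\!\mathcal{L}$ completely explicit and then settle the two implications by tailored open‑cover arguments. First I would use Lemma~\ref{key lemma about Jia space} to record that the only nontrivial directed suprema in $\mathcal{L}$ are the column suprema $(i,j,\infty)=\bigvee_{l\in\mn}(i,j,l)$, and deduce the working characterization of open sets: $U\subseteq\mathcal{L}$ is Scott open if and only if $U=\ua U$ and for every $(i,j)$ with $(i,j,\infty)\in U$ there is some $l\in\mn$ with $(i,j,l)\in U$. The decisive consequence, which I will call \emph{upward leakage}, is that whenever a finite‑height point $(i,j,l)\in U$ one has $\{(i+1,j',\infty):j'\geq l\}\subseteq U$ (because $(i,j,l)\leq(i+1,j',\infty)$ exactly when $l\leq j'$ and $U$ is an upper set), and hence, iterating, $U$ contains a cofinite portion of $\mathcal{L}_{i'}^{\infty}$ for every $i'>i$. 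Since $K$ is a nonempty upper set it always meets $\mathcal{L}^{\infty}$ (any $(i,j,l)\in K$ forces $(i,j,\infty)\in K$), so the lowest infinity level $i_0$ is well defined; and by Lemma~\ref{COMPminimalset} we may write $K=\ua\,\mathrm{min}(K)$ with $\mathrm{min}(K)$ compact.

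For necessity I assume $K$ compact and prove the three conditions contrapositively, each by producing a cover of $K$ with no finite subcover. For condition (3), if $K$ meets infinitely many $\mathcal{L}_n^{\infty}$ I choose witnesses $(n_k,j_k,\infty)\in K$ with $n_k$ strictly increasing and build, for each $k$, a Scott open $U_k$ that is empty below level $n_k$, carries only the tail of the column $(n_k,j_k)$ at level $n_k$, and whose forced upward leakage is pushed into columns high enough to avoid all the other witnesses; then $\{U_k\}$ covers $K$ while each $U_k$ meets $\{(n_k,j_k,\infty)\}$ in only one witness, so no finite subfamily suffices. For condition (1), I use that a finite‑height point has no elements of other levels below it, so it is covered only by an open set that carries its own column down to that height; consequently infinitely many members of $\mathrm{min}(K)\cap\mathcal{L}^{<\infty}$ can be separated into dedicated open sets exactly as above. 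For condition (2), granting (3), I show that unbounded column content of $K$ at the lowest infinity level $i_0$ again forces a column‑by‑column cover, so compactness forces $K\cap\mathcal{L}_{i_0}^{\infty}$ to have a largest column $j_0$ with $(i_0,j_0,\infty)$ the unique point of $K$ in columns $\geq j_0$ and none in lower levels; this is precisely the statement in (2).

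For sufficiency I assume (1), (2), (3) and take an arbitrary Scott open cover $\mathcal{U}$ of $K$. I pick $U_0\in\mathcal{U}$ with $(i_0,j_0,\infty)\in U_0$; then $U_0$ contains $(i_0,j_0,l)$ for some $l$, and by upward leakage $U_0$ absorbs a cofinite part of $\mathcal{L}_{i'}^{\infty}$ for every $i'>i_0$. Hence, using (3) (only finitely many infinity levels) only finitely many infinity points of $K$ above level $i_0$ escape $U_0$; using (2) the level‑$i_0$ infinity points of $K$ lie in columns $\leq j_0$, a finite set; and using (1) together with $K=\ua\,\mathrm{min}(K)$ and Lemma~\ref{d-space max point}, all finite‑height points of $K$ lie in $\ua(\mathrm{min}(K)\cap\mathcal{L}^{<\infty})$, a finite union of column tails (the infinity minimal points, being maximal, generate only infinity points already accounted for). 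Thus $K\setminus U_0$ meets only finitely many points and finitely many column tails, each coverable by a single member of $\mathcal{U}$; adjoining these to $U_0$ yields a finite subcover, so $K$ is compact.

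The hard part will be the open‑set characterization together with its leakage behaviour, and then the necessity covers: the subtlety is to build, for each witness, an open set that is simultaneously \emph{genuinely} Scott open (respecting the column‑accessibility clause) and confined enough that its unavoidable upward leakage never swallows a second witness, which requires a careful diagonal choice of tail heights across levels. The bookkeeping in (2), where the $\geq j_0$ uniqueness must be reconciled with leakage that only ever reaches high columns at higher levels, is the delicate point; once the leakage principle is in hand, the sufficiency direction reduces to the finite remainder count sketched above.
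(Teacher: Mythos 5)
The paper itself does not prove this lemma; it is quoted from Jia's thesis \cite[Example 2.6.1]{jia-2018}, so your attempt has to be judged on its own merits rather than against an in-paper argument. Your groundwork is correct: the explicit description of Scott opens of $\Sigma~\!\!\mathcal L$ (upper sets that contain a tail of every column whose top they contain, which is exactly what Lemma \ref{key lemma about Jia space} gives), the upward-leakage principle, and the observation that $K$ always meets $\mathcal L^{\infty}$. Your sufficiency direction is also essentially complete: the member through $(i_0,j_0,\infty)$ absorbs all but finitely many infinity points of $K$ (cofinitely many per level, finitely many levels by (3)), condition (2) makes the level-$i_0$ infinity points of $K$ finite, and the finite-height part of $K$ is $\ua(\mathrm{min}(K)\cap\mathcal L^{<\infty})$, a finite union of principal up-sets each of which is swallowed by any cover member containing its minimal point. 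One repair is needed there: you justify $K=\ua\,\mathrm{min}(K)$ by Lemma \ref{COMPminimalset}, which presupposes that $K$ is compact — circular in this direction. Instead note that $\mathcal L$ has no infinite descending chains (below a finite-height point lies only a finite piece of its own column, and below $(i,j,\infty)$ only its column and a height-bounded part of level $i-1$), so every upper set is the up-closure of its minimal points.

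The genuine gap is in the necessity direction, in the sentence ``then $\{U_k\}$ covers $K$''. Your $U_k$ are engineered to contain witness $k$ and to dodge the other witnesses, but nothing makes them cover the rest of $K$, and in general they do not. Concretely, take $K=\{(n,1,\infty):n\in\mn\}\cup\{(n,2,\infty):n\in\mn\}$ (an upper set violating (3)) and witnesses $(n_k,j_k,\infty)=(k,2,\infty)$: each $U_k$ meets a level $n>k$ only in columns $\geq 3$ (its leakage threshold must exceed $2$ to dodge the column-$2$ witnesses) together with the column-$(k,2)$ tail at level $k$, so no point $(n,1,\infty)\in K$ lies in any $U_k$, and the family is not a cover; the contradiction with compactness never materializes. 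More generally, points of $K$ below level $n_1$, finite-height points of $K$, and infinity points of $K$ in the dodged column ranges are all unaccounted for, and whether a clever choice of witnesses rescues the construction depends on $K$. The repair needs one further idea, either: (a) supplement the $U_k$ by, for \emph{every} $y\in K$, a Scott open $U_y\ni y$ containing only finitely many witnesses — possible because the forced part $\ua y$ of any neighbourhood of $y$ meets only levels $\mathrm{lev}(y)$ and $\mathrm{lev}(y)+1$, while leakage at higher levels can be pushed past the one witness per level — and then argue that any finite subfamily of $\{U_k\}\cup\{U_y\}$ captures only finitely many witnesses and so misses one; or (b) dispense with constructing opens and cover $K$ by the increasing sequence $W_k=\mathcal L\setminus C_k$, where $C_k$ is the Scott closure of $\da\{(n_m,j_m,\infty):m\geq k\}$; for the witnesses in (1) and (3) this down-set is already Scott closed, while for (2) the closure adds $\mathcal L_{(\leq i_0-1)}$ (cf.\ Lemma \ref{closed set in Jia space}), which is disjoint from $K$, so in each case $\{W_k\}$ is an open cover of $K$ in which $W_k$ misses the $k$-th witness. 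The same covering defect affects your necessity arguments for (1) and (2), which you run ``exactly as above''; they are salvageable by either repair, but as written the key covering claim is unjustified.
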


\begin{corollary}\label{filtered family meet not empty}  For any filtered family $\{K_d : d\in D\}\subseteq \mk (\Sigma~\!\!\mathcal L)$, $\bigcap_{d\in D}K_d\neq\emptyset$.
\end{corollary}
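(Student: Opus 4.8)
My plan is to extract from each member of the family a canonical maximal point and to show that these points stabilize along the family to a single point lying in every member. For $K\in\mk(\Sigma\mathcal{L})$, Lemma~\ref{some propertie of Jia space}(2) supplies a pair $i_0(K),j_0(K)\in\mn$ with $(\mathcal L_{(<i_0(K))}^{\infty}\cup\mathcal L_{(i_0(K),\geq j_0(K))}^{\infty})\cap K=\{(i_0(K),j_0(K),\infty)\}$; equivalently $i_0(K)=\mathrm{min}\{i\in\mn:\mathcal L_i^{\infty}\cap K\neq\emptyset\}$ is the least level carrying a maximal point of $K$, and $j_0(K)=\mathrm{max}\{j\in\mn:(i_0(K),j,\infty)\in K\}$. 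Write $p(K)=(i_0(K),j_0(K),\infty)\in K$. Throughout I will freely pass to subfamilies of the form $\{K_d:K_d\subseteq K_{d_0}\}$: by the filteredness of the family each such subfamily is again filtered, is coinitial (under inclusion) in $\{K_d:d\in D\}$, and hence has the same intersection as the whole family.

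The first, and main, step is to control the level index $i_0$, where two opposite facts cooperate. On the one hand $i_0$ is monotone: if $K'\subseteq K$ then $\{i:\mathcal L_i^{\infty}\cap K'\neq\emptyset\}\subseteq\{i:\mathcal L_i^{\infty}\cap K\neq\emptyset\}$, so $i_0(K')\geq i_0(K)$. On the other hand $i_0$ is bounded along the family: fixing one member $K_{d_1}$, condition~(3) of Lemma~\ref{some propertie of Jia space} says only finitely many levels carry maximal points of $K_{d_1}$, so with $m=\mathrm{max}\{n\in\mn:\mathcal L_n^{\infty}\cap K_{d_1}\neq\emptyset\}$ every $K_d\subseteq K_{d_1}$ has all its maximal points in levels $\leq m$, whence $i_0(K_d)\leq m$. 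I regard this as the crux, because were $i_0$ unbounded the intersection would in fact be empty: no maximal point $(i,j,\infty)$ survives (it needs $i\geq i_0(K_d)$ for all $d$), and no finite-height point $(i,j,l)$ survives either (saturation forces $(i,j,\infty)\in K_d$, hence $i_0(K_d)\leq i$ for all $d$); condition~(3) is exactly what excludes this. Consequently the nonempty set $\{i_0(K_d):K_d\subseteq K_{d_1}\}\subseteq\mn$ is bounded above and has a greatest element $i^{*}$, attained at some $K_{d^{*}}\subseteq K_{d_1}$; by monotonicity every $K_d\subseteq K_{d^{*}}$ satisfies $i^{*}\leq i_0(K_d)\leq i^{*}$, i.e. $i_0(K_d)=i^{*}$.

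Next I stabilize the second coordinate on the coinitial subfamily $\{K_d\subseteq K_{d^{*}}\}$, all of whose members have $i_0=i^{*}$. On this subfamily $j_0$ is antitone: $K'\subseteq K$ gives $\{j:(i^{*},j,\infty)\in K'\}\subseteq\{j:(i^{*},j,\infty)\in K\}$, hence $j_0(K')\leq j_0(K)$. Since $\{j_0(K_d):K_d\subseteq K_{d^{*}}\}$ is a nonempty subset of $\mn$, it has a least element $j^{*}$, attained at some $K_{d^{**}}\subseteq K_{d^{*}}$; for every $K_d\subseteq K_{d^{**}}$ antitonicity gives $j_0(K_d)\leq j^{*}$ while minimality gives $j_0(K_d)\geq j^{*}$, so $j_0(K_d)=j^{*}$. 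Thus each member of the coinitial subfamily $\{K_d\subseteq K_{d^{**}}\}$ satisfies $p(K_d)=(i^{*},j^{*},\infty)$ and therefore contains the point $(i^{*},j^{*},\infty)$. As this subfamily has the same intersection as $\{K_d:d\in D\}$, I conclude $(i^{*},j^{*},\infty)\in\bigcap_{d\in D}K_d$, so the intersection is nonempty.

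Finally, it is worth recording a shortcut: since $\mathcal L$ is a well-filtered dcpo, $\Sigma\mathcal L$ is well-filtered, and applying the definition of well-filteredness with the open set $U=\emptyset$ to the filtered family $\{K_d\}$ at once yields $\bigcap_{d\in D}K_d\neq\emptyset$ (otherwise some $K_d\subseteq\emptyset$, contradicting $K_d\in\mk(\Sigma\mathcal L)$). The combinatorial route above is longer but keeps the statement self-contained and, through the boundedness step, makes explicit why no filtered family can run off to infinity.
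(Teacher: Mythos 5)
Your main argument is correct and is essentially the paper's own proof: both rest on Lemma~\ref{some propertie of Jia space}, observe that the first coordinate $i_0$ is monotone and bounded (via condition (3)) along the shrinking family while the second coordinate $j_0$ is antitone, and stabilize first $i$ and then $j$ on a coinitial subfamily whose common point $(i^*,j^*,\infty)$ lies in the whole intersection; the paper phrases the stabilization via cofinal fibers over finitely many values, whereas you use attainment of the extremal value plus monotonicity, a purely cosmetic difference.

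One caution about your closing ``shortcut'': in this paper the well-filteredness of $\Sigma~\!\!\mathcal L$ (Proposition~\ref{Jia space WF non-sober}) is itself given a short proof \emph{using} Corollary~\ref{filtered family meet not empty} together with the topological Rudin Lemma and Lemma~\ref{Irreducible set in Jia space}, so deriving the corollary from well-filteredness is circular within the paper's own development and is legitimate only if you import Jia's independent proof of well-filteredness from \cite{jia-2018}.
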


\begin{proof} We can assume that $D$ is directed and $K_{d_2}\subseteq K_{d_1}$ iff $d_1\leq d_2$ (indeed, $D$ can be defined an order by $d_1\leq d_2$ iff $K_{d_2}\subseteq K_{d_1}$). For each $d\in D$, by Lemma \ref{some propertie of Jia space}, there exist $i_d, j_d\in \mathbb{N}$ such that $(\mathcal L_{(<i_d)}^{\infty}\cup \mathcal L_{(i_d, \geq j_d)}^{\infty})\cap K_d=\{(i_d, j_d, \infty)\}$ and the set $\mathbb{N}_d=\{n\in \mathbb{N} : \mathcal L_n^{\infty}\cap K_d\neq\emptyset\}$ is finite. Select a $d_0\in D$. Then for each $d\in D$ with $d_0\leq d$ (whence $K_d\subseteq K_{d_0}$), we have
that $i_d\in \mathbb{N}_d\subseteq \mathbb{N}_{d_0}$ and $i_{d_0}\leq i_d$ (otherwise, $i_{d_0}> i_d$ would imply that $(i_d, j_d, \infty)\in (\mathcal L_{(<i_d)}^{\infty}\cup \mathcal L_{(i_d, \geq j_d)}^{\infty})\cap K_d\subseteq L_{(<i_{d_0})}^{\infty}\cap K_{d_0}$, which contradicts $L_{(<i_{d_0})}^{\infty}\cap K_{d_0}=\emptyset$). Let $D_{d_0}=\{d\in D : d_0\leq d\}$ and $D_i=\{d\in D_0 : i_d=i\}$ for each $i\in \mathbb{N}_{d_0}$. Since $\mathbb{N}_{d_0}$ is finite, $D_{d_0}$ is directed and $D_{d_0}=\bigcup_{i\in\mathbb{N}_{d_0}} D_i$, there is $i_0\in \mathbb{N}_{d_0}$ such that $D_{i_0}$ is a cofinal subset of $D_{d_0}$ and hence a cofinal subset of $D$, more precisely, for each $d\in D$, there is $d^*\in D$ such that $d^*\in \ua d_0\cap \ua d$ and $i_{d^*}=i_0$.

Clearly, $D_{i_0}$ is also directed. Select a $d_1\in D_{i_0}$. Then $D_{d_1}=\{d\in D_{i_0} : d_1\leq d\}$ is a directed and cofinal subset of $D_{i_0}$ and hence a directed and cofinal subset of $D$. For each $d\in D_{d_1}$ (note that $K_d\subseteq K_{d_1}$), we have
that $i_{d_1}=i_d=i_0$, $(i_d=i_0, j_d, \infty)\in K_d\subseteq K_{d_1}$ and $\mathcal L_{(i_0, \geq j_{d_1})}^{\infty}\cap K_{d_1}=\{(i_0, j_{d_1}, \infty)\}$. It follows that $j_d\leq j_{d_1}$. For each $1\leq j\leq j_{d_1}$, let $\widetilde{D}_{j}=\{d\in D_{d_1} :  j_d=j\}$. Since $\{1, 2, ..., j_{d_1}\}$ is finite, $D_{d_1}$ is directed and $D_{d_1}=\bigcup_{i\in\mathbb{N}_{d_0}} \widetilde{D}_{j}$, there is $1\leq j_0\leq j_{d_1}$ such that $\widetilde{D}_{j_0}$ is a cofinal subset of $D_{d_1}$ and hence a cofinal subset of $D$, more precisely, for each $d\in D$, there is $d^{\prime}\in D$ such that $d^{\prime}\in \ua d_0\cap \ua d_1\cap \ua d$, $i_{d^{\prime}}=i_0$ and $j_{d^{\prime}}=j_0$. It follows that $(i_0, j_0, \infty)\in \bigcap_{d\in D}K_d$.

\end{proof}

\begin{proposition}\label{Jia space WF non-sober} (\cite[Example 2.6.1]{jia-2018})  $\Sigma~\!\!\mathcal L$ is well-filtered but non-sober.
\end{proposition}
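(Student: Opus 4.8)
The plan is to treat the two assertions separately, leaning on the structural results already assembled for $\mathcal L$. For non-sobriety I would invoke Lemma \ref{Irreducible set in Jia space}: $\mathcal L$ is itself irreducible and closed in $\Sigma~\!\!\mathcal L$, so if $\Sigma~\!\!\mathcal L$ were sober we would have $\mathcal L=\overline{\{x\}}=\da x$ for some $x\in\mathcal L$, making $x$ a greatest element. But every point of $\mathcal L^{\infty}$ is maximal and $\mathcal L^{\infty}$ contains more than one element (distinct maximal points are incomparable), so $\mathcal L$ has no greatest element. Thus $\mathcal L$ is an irreducible closed set with no generic point, and $\Sigma~\!\!\mathcal L$ is not sober.

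For well-filteredness I would argue by contradiction via the topological Rudin Lemma. Suppose $\{K_d:d\in D\}\subseteq\mk(\Sigma~\!\!\mathcal L)$ is filtered, $U$ is Scott open, $\bigcap_{d\in D}K_d\subseteq U$, and yet $K_d\not\subseteq U$ for every $d$. By Corollary \ref{filtered family meet not empty} the intersection $\bigcap_{d\in D}K_d$ is nonempty, so $U\neq\emptyset$ and hence $C:=\mathcal L\setminus U$ is a \emph{proper} closed subset of $\Sigma~\!\!\mathcal L$. Being filtered in the Smyth order, $\{K_d:d\in D\}$ is directed, hence irreducible, as a subset of $P_S(\Sigma~\!\!\mathcal L)$, and the assumption $K_d\not\subseteq U$ says precisely that $C$ meets every member $K_d$. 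Lemma \ref{t Rudin} then produces a minimal irreducible closed set $A\subseteq C$ still meeting all the $K_d$.

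Now Lemma \ref{Irreducible set in Jia space} pins down $A$: it is either $\da x$ for some $x\in\mathcal L$ or $A=\mathcal L$. Since $A\subseteq C\subsetneq\mathcal L$, the second option is excluded, so $A=\da x$. That $A$ meets $K_d$ means $\da x\cap K_d\neq\emptyset$, and since each $K_d$ is saturated (an upper set) this forces $x\in K_d$, for every $d\in D$. Hence $x\in\bigcap_{d\in D}K_d\subseteq U$, contradicting $x\in A\subseteq\mathcal L\setminus U$. The contradiction shows $K_d\subseteq U$ for some $d$, so $\Sigma~\!\!\mathcal L$ is well-filtered.

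The heart of the argument — and the point where the earlier, more delicate combinatorial work repays itself — is the interplay between Lemma \ref{Irreducible set in Jia space} and Corollary \ref{filtered family meet not empty}. The very same reasoning applied to Johnstone's dcpo $\mathbb J$ stalls: there $\ir_c$ has an identical shape, but a filtered family with \emph{empty} intersection exists, which would allow the Rudin-minimal set $A$ to be all of $\mathbb J$ and block the contradiction. It is exactly the nonemptiness of filtered intersections in $\mk(\Sigma~\!\!\mathcal L)$ that guarantees $U\neq\emptyset$, hence $A\neq\mathcal L$, and this single step is what separates the well-filtered $\mathcal L$ from the non-well-filtered $\mathbb J$. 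I therefore expect no genuine obstacle in the final proof itself; the real difficulty has been front-loaded into establishing Corollary \ref{filtered family meet not empty} (the nonempty-intersection property), which is where the special geometry of $\mathcal L$ is essential.
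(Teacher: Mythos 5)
Your proof is correct and follows essentially the same route as the paper: non-sobriety via the irreducible closed set $\mathcal L$ having no largest element (Lemma \ref{Irreducible set in Jia space}), and well-filteredness by combining the topological Rudin Lemma with Corollary \ref{filtered family meet not empty} to force the minimal irreducible closed set to be a point closure $\da x$, yielding the same contradiction $x\in\bigcap_{d\in D}K_d\subseteq U$ while $x\in\mathcal L\setminus U$.
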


 Indeed, $\mathcal L$ is an irreducible closed subset of $\Sigma~\!\!\mathcal L$ but has no largest element, so $\Sigma~\!\!\mathcal L$ is non-sober.

Using Topological Rudin Lemma, Lemma \ref{Irreducible set in Jia space} and Corollary \ref{filtered family meet not empty}, we can give a short proof of the well-filteredness of $\Sigma~\!\!\mathcal L$. Suppose that $\{K_d : d\in D\}\subseteq \mk (\Sigma~\!\!\mathcal L)$ is a filtered family and $U\in \sigma (\mathcal L)$ with $\bigcap_{d\in D}K_d\subseteq U$. Assume, on the contrary, that $K_d\nsubseteq U$ for each $d\in D$ (whence $U\neq \mathcal L$). Then by Lemma \ref{t Rudin}, $\mathcal L\setminus U$ contains a minimal irreducible closed subset $A$ that still meets all
members $K_d$. By Corollary \ref{filtered family meet not empty}, $\bigcap_{d\in D}K_d\neq\emptyset$, whence $U\neq\emptyset$ and $A\neq \mathcal L$. It follows from Lemma \ref{Irreducible set in Jia space} that $A=\overline{\{x\}}$ for some $x\in \mathcal L$. Then $x\in \bigcap_{d\in D}K_d\subseteq U$, which contradicts $x\in A\subseteq \mathcal L \setminus U$. Thus $\Sigma~\!\!\mathcal L$ is well-filtered.

\begin{definition}\label{X plus top 1} Let $X$ be a $T_0$ space for which $X$ is irreducible (i.e., $X\in\ir_c(X)$). Choose a point $\top$ such that $\top\not\in X$. Then $(\mathcal C(X)\setminus \{X\})\cup \{X\cup\{\top\}\}$ (as the set of all closed sets) is a topology on $X\cup\{\top\}$. The resulting space is denoted by $X_{\top}$. Define a mapping $\zeta_{X}: X\rightarrow  X_{\top}$ by  $\zeta_{X}(x)=x$ for each $x\in X$. Clearly, $\eta_{X}$ is a topological embedding.
\end{definition}

As $X$ is $T_0$, $X_{\top}$ is also $T_0$ and $\overline{\{\top\}}=X\cup\{\top\}$ in $X_{\top}$. Hence $\top$ is a largest element of $X_{\top}$ and for $x, y\in X$, $x\leq_X$ iff $x\leq y$ in $X_{\top}$. It is worthy noting that the set $\{\top\}$ is not open in $X_{\top}$.

\begin{remark}\label{X plus top flat} If $X$ is not irreducible, then there exist $A, B\in \mathcal C(X)\setminus \{X\}$ such that $X=A\cup B$, whence $(\mathcal C(X)\setminus \{X\})\cup \{X\cup\{\top\}\}$ is not a topology on $X\cup\{\top\}$.
\end{remark}

\begin{lemma}\label{compact set in space plus a top} Let $X$ be a $T_0$ space for which $X$ is irreducible. Then $\mathsf{K}(X_{\top})=\{G\cup\{\top\} : G\in \mathsf{K}(X)\}\cup\{\{\top\}\}$.
\end{lemma}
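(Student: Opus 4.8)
The plan is to reduce everything to the tight correspondence between the nonempty open sets of $X_{\top}$ and those of $X$. First I would record the topology of $X_{\top}$ explicitly: since the closed sets are $(\mathcal C(X)\setminus\{X\})\cup\{X\cup\{\top\}\}$, the nonempty open sets of $X_{\top}$ are exactly the sets $U\cup\{\top\}$ with $U\in\mathcal O(X)\setminus\{\emptyset\}$, the assignment $U\mapsto U\cup\{\top\}$ being a bijection between $\mathcal O(X)\setminus\{\emptyset\}$ and $\mathcal O(X_{\top})\setminus\{\emptyset\}$. In particular every nonempty open subset of $X_{\top}$ contains $\top$, and $\top$ is the largest element of the specialization order, so $\ua\top=\{\top\}$. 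These two observations are the only facts about $X_{\top}$ I will use; the hypothesis that $X$ is irreducible enters only through Remark \ref{X plus top flat}, guaranteeing that $X_{\top}$ is a genuine $T_0$ space, and plays no further role in the argument.

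For the inclusion $\{G\cup\{\top\}:G\in\mk(X)\}\cup\{\{\top\}\}\subseteq\mk(X_{\top})$, I would first check $\{\top\}=\ua\top\in\mk(X_{\top})$: it is saturated as an up-set, nonempty, and compact because any single nonempty open set of $X_{\top}$ already contains $\top$. For $G\in\mk(X)$, the set $G\cup\{\top\}$ is an up-set of $X_{\top}$ (an element above a point of $G$ lies in $\ua_X G=G$ if it is in $X$, and equals $\top$ otherwise), hence saturated and nonempty; for compactness I would take a cover of $G\cup\{\top\}$ by nonempty opens $U_i\cup\{\top\}$, note that $\{U_i\}$ covers $G$ in $X$, extract a finite subcover by compactness of $G$, and add $\top$ back, which all the corresponding $U_i\cup\{\top\}$ already contain.

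For the reverse inclusion, let $K\in\mk(X_{\top})$. Since $K$ is nonempty and $\top$ dominates every point, saturation forces $\top\in K$. Put $G=K\cap X=K\setminus\{\top\}$. If $G=\emptyset$ then $K=\{\top\}$; otherwise I claim $G\in\mk(X)$ and $K=G\cup\{\top\}$. The equality is immediate, and $G$ is an up-set of $X$ by the same argument as above. Compactness of $G$ transfers back along the bijection: given an open cover $\{U_i\}$ of $G$ in $X$, which I may take to consist of nonempty sets since $G\neq\emptyset$, the family $\{U_i\cup\{\top\}\}$ is an open cover of $K$ in $X_{\top}$, so compactness of $K$ yields a finite subfamily covering $K$, and intersecting it with $X$ gives a finite subcover of $G$.

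The only delicate point, and the step I would treat most carefully, is the bookkeeping of the empty open set across the correspondence $U\mapsto U\cup\{\top\}$: because $\{\top\}$ itself is not open in $X_{\top}$, one must discard empty members of a cover of $X$ before lifting it, and conversely restrict a finite $X_{\top}$-subcover back to $X$, using $\top\notin X$ and $G\neq\emptyset$ to guarantee that nothing essential is lost. Once this is handled, both inclusions close and the characterization $\mk(X_{\top})=\{G\cup\{\top\}:G\in\mk(X)\}\cup\{\{\top\}\}$ follows.
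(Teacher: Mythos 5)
Your proof is correct and takes essentially the same route as the paper's: both rest on the observation that $U\mapsto U\cup\{\top\}$ is a bijection between $\mathcal O(X)\setminus\{\emptyset\}$ and $\mathcal O(X_{\top})\setminus\{\emptyset\}$, and transfer open covers and finite subcovers across this correspondence in both directions. If anything, your write-up is slightly more careful than the paper's, which leaves implicit the checks that every nonempty saturated subset of $X_{\top}$ contains $\top$ and that $\{\top\}=\ua\top$ is itself compact saturated.
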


\begin{proof} Clearly, $\mathcal O(X_{\top})=\{U\cup\{\top\} : U\in \sigma(\mathcal L)\setminus \{\emptyset\}\}\cup\{\emptyset\}$.

First, if $K\in \mathsf{K}(X_{\top})\setminus \{\{\top\}\}$, then $G=K\setminus \{\top\}$ is a nonempty saturated subset of $X$. Now we verify that $G$ is a compact subset of $X$. Suppose that $\{U_i : i\in I\}\subseteq \mathcal O(X)\setminus \{\emptyset\}$ is an open cover of $G$. Then $\{U_i\cup\{\top\} : i\in I\}\subseteq \mathcal O(X_{\top})$ is an open cover
of $K=G\cup \{\top\}$. By the compactness of $K$, there is $I_0\in I^{(<\omega)}$ such that $K\subseteq \bigcup_{i\in I_0}U_i\cup\{\top\}$, whence $G=K\setminus \{\top\}\subseteq \bigcup_{i\in I_0}U_i$, proving that $G\in \mathsf{K}(\Sigma~\!\!\mathcal L)$.

Conversely, assume that $G\in \mathsf{K}(X)$ and $\{W_j : j\in J\}\subseteq \mathcal O((\Sigma~\!\!\mathcal L)_{\top})\setminus \{\emptyset\}$ is an open cover of $K=G\cup \{\top\}$. Then $K$ is saturated and for each $j\in J$, there is $V_j\in \mathcal O(X)$ such that $W_j=V_j\cup\{\top\}$. Hence $\{V_j : j\in J\}\subseteq \mathcal O(X)$ is an open cover of $G$. By the compactness of $G$, there is $J_0\in J^{(<\omega)}$ such that $G\subseteq \bigcup_{j\in J_0}V_j$, whence $K=G\cup\{\top\}\subseteq \bigcup_{j\in J_0}W_j$. So $K\in \mathsf{K}(X_{\top})$.

Thus $\mathsf{K}(X_{\top})=\{G\cup\{\top\} : G\in \mathsf{K}(X)\}\cup\{\{\top\}\}$.
\end{proof}

\begin{lemma} \label{key lemma}
Suppose that $X$ is a non-sober $T_0$ space for which $\ir_c(X)=\{\overline{\{x\}} : x\in X\}\cup\{X\}$. Then $\langle X_{\top},\zeta_{X} \rangle$ is a sobrification of $X$.
\end{lemma}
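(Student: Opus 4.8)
The plan is to exhibit an explicit homeomorphism $h\colon X^s \to X_\top$ satisfying $h\circ\eta_X=\zeta_X$, and then appeal to the uniqueness (up to homeomorphism over $X$) of the sobrification: since $\langle X^s,\eta_X\rangle$ is a sobrification of $X$ by Remark \ref{eta continuous}, transporting it along such an $h$ shows that $\langle X_\top,\zeta_X\rangle$ is one as well. By hypothesis $X^s=\ir_c(X)=\{\overline{\{x\}} : x\in X\}\cup\{X\}$. The first observation I would make is that, because $X$ is $T_0$ and non-sober while its only irreducible closed sets are the point closures $\overline{\{x\}}$ (each having the unique generic point $x$) together with $X$, the set $X$ itself must fail to have a generic point; hence $X\neq\overline{\{x\}}$ for every $x\in X$, so the listed irreducible closed sets are pairwise distinct. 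This is the only place where non-sobriety genuinely enters.

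Next I would define $h(\overline{\{x\}})=x$ for $x\in X$ and $h(X)=\top$. This is well defined and bijective: injectivity on the point closures is just the $T_0$ separation, and the distinctness established above guarantees that $\top$ is the image of the single extra point $X$ and of no $\overline{\{x\}}$. By construction $h(\eta_X(x))=x=\zeta_X(x)$, so $h\circ\eta_X=\zeta_X$.

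The heart of the argument is the comparison of topologies. Recall that the opens of $X^s$ are exactly the sets $\Diamond U=\{A\in\ir_c(X):A\cap U\neq\emptyset\}$ with $U\in\mathcal O(X)$; these already form a topology since irreducibility yields $\Diamond U\cap\Diamond V=\Diamond(U\cap V)$. For $U\neq\emptyset$ I would compute $\Diamond U=\{\overline{\{x\}}:x\in U\}\cup\{X\}$, using that $\overline{\{x\}}\cap U\neq\emptyset$ iff $x\in U$ (open sets are upper sets in the specialization order) and that $X\cap U=U\neq\emptyset$ forces $X\in\Diamond U$. Applying $h$ gives $h(\Diamond U)=U\cup\{\top\}$, while $h(\Diamond\emptyset)=\emptyset$. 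Since $\mathcal O(X_\top)=\{U\cup\{\top\}:U\in\mathcal O(X)\setminus\{\emptyset\}\}\cup\{\emptyset\}$ (as noted in the proof of Lemma \ref{compact set in space plus a top}), the bijection $h$ carries $\mathcal O(X^s)$ bijectively onto $\mathcal O(X_\top)$, whence $h$ is a homeomorphism and the conclusion follows.

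I expect the main (though still routine) obstacle to be exactly this topology bookkeeping: confirming that the lower Vietoris opens $\Diamond U$ match precisely the opens $U\cup\{\top\}$ of $X_\top$, with the nonemptiness of $U$ being what puts the extra point $X$ (equivalently $\top$) into $\Diamond U$. Everything else — well-definedness, bijectivity, and the identity $h\circ\eta_X=\zeta_X$ — is immediate once the distinctness of $\top$ from all point closures has been recorded.
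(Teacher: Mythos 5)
Your proof is correct and follows essentially the same route as the paper: both define the bijection sending $\overline{\{x\}}\mapsto x$ and $X\mapsto\top$ (using non-sobriety to ensure $X$ is not a point closure), verify it is a homeomorphism $X^s\to X_\top$ commuting with the canonical embeddings, and transport the sobrification structure along it. The only cosmetic difference is that you check the topology comparison on the open sets $\Diamond U$ while the paper checks it on the closed sets $\Box_{\ir_c(X)}C$, which is the same bookkeeping in dual form.
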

\begin{proof} Since $X$ is non-sober and $\ir_c(X)=\{\overline{\{x\}} : x\in X\}\bigcup\{X\}$, $X\neq\overline{\{x\}}$ for every $x\in X$. It is well-known that the space $X^s$ with the canonical mapping $\eta_{X}: X\longrightarrow X^s$, $\eta_{X}(x)=x$, is a sobrification of $X$ (see, for example, \cite[Exercise V-4.9]{redbook}). For $C\in \mathcal C(X)$, we have

$$\Box_{\ir_c(X)} C=\{A\in \ir_c(X) : A\subseteq  C\}=
\begin{cases}
\{\overline{\{c\}} : c\in C\},& C\neq X,\\
\{\overline{\{x\}} : x\in X\}\cup\{X\},& C=X.
	\end{cases}$$

\noindent Define a mapping $f : X^s \rightarrow X_{\top}$ by
$$f (A)=
\begin{cases}
	x & A=\overline{\{x\}}, x\in X,\\
\top & A=X.
	\end{cases}$$

\noindent For each $C\in (\mathcal C(X)\setminus \{X\})\bigcup \{X_{\top}\}$ and $B\in \mathcal C(X)$, we have $f^{-1}(C)=\Box_{\ir_c(X)} C$ and

$$f(\Box_{\ir_c(X)} B)=
\begin{cases}
	B& B\neq X,\\
X\cup\{\top\}& B=X.
	\end{cases}$$
\noindent Thus $f$ is a homeomorphism.

\begin{equation*}
\centerline{
\xymatrix{ X \ar[dr]_{\zeta_{X}} \ar[r]^-{\eta_X}&  X^s\ar@{.>}[d]^{f} & \\
  & X_{\top}  & &
   }}
\end{equation*}

\noindent So $\langle X_{\top},\zeta_{X}=f\circ\eta_X\rangle$ is a sobrification of $X$.
\end{proof}

The following corollary is straightforward from Lemma \ref{Irreducible set in Jia space}, Proposition \ref{Jia space WF non-sober} and Lemma \ref{key lemma}.

\begin{corollary}\label{sobrification of Jia space}  $\langle (\Sigma~\!\!\mathcal L)_{\top},\zeta_{\mathcal L} \rangle$ is a sobrification of $\Sigma~\!\!\mathcal L$, where $\zeta_{\mathcal L}: \Sigma~\!\!\mathcal L\rightarrow  (\Sigma~\!\!\mathcal L)_{\top}$ is defined by  $\zeta_{\mathcal L}(x)=x$ for each $x\in \mathcal L$.
\end{corollary}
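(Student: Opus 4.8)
The plan is to recognize this corollary as a direct instance of the general machinery built in Lemma~\ref{key lemma}, specialized to the single space $X = \Sigma~\!\!\mathcal L$. Lemma~\ref{key lemma} asserts that whenever $X$ is a non-sober $T_0$ space whose irreducible closed sets are exactly the point-closures together with the whole space, the pair $\langle X_{\top}, \zeta_X\rangle$ is a sobrification of $X$. So the entire task reduces to checking that $\Sigma~\!\!\mathcal L$ satisfies these hypotheses; no new construction is required.

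First I would verify the three standing assumptions of Lemma~\ref{key lemma}. That $\Sigma~\!\!\mathcal L$ is a $T_0$ space is automatic, since the Scott topology on any poset separates points (the specialization order of $\Sigma~\!\!\mathcal L$ recovers the original order on $\mathcal L$, so distinct points have distinct closures $\da x$). That $\Sigma~\!\!\mathcal L$ is \emph{non-sober} is exactly the content of Proposition~\ref{Jia space WF non-sober}. Finally, Lemma~\ref{Irreducible set in Jia space} computes $\ir_c(\Sigma~\!\!\mathcal L) = \{\overline{\{x\}} = \da x : x \in \mathcal L\} \cup \{\mathcal L\}$, which is precisely the required shape $\{\overline{\{x\}} : x \in X\} \cup \{X\}$ upon taking $X = \Sigma~\!\!\mathcal L$ (here the underlying set $\mathcal L$ plays the role of the whole space $X$, so the exceptional irreducible closed set $\mathcal L$ is just $X$ itself).

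With these verifications in place, Lemma~\ref{key lemma} applies verbatim and yields that $\langle (\Sigma~\!\!\mathcal L)_{\top}, \zeta_{\mathcal L}\rangle$ is a sobrification of $\Sigma~\!\!\mathcal L$, with $\zeta_{\mathcal L}$ the inclusion $x \mapsto x$. There is essentially no obstacle here, as all the substantive work was carried out in the three cited results. The only point demanding a moment of care is that $X_{\top}$ be well-defined in the first place: by Definition~\ref{X plus top 1} and Remark~\ref{X plus top flat}, the construction requires $X$ itself to be irreducible, and this is guaranteed precisely because $\mathcal L \in \ir_c(\Sigma~\!\!\mathcal L)$ by Lemma~\ref{Irreducible set in Jia space}. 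Thus the hypotheses of Lemma~\ref{key lemma} are genuinely met, and the corollary follows immediately.
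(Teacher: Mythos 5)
Your proof is correct and matches the paper's approach exactly: the paper derives this corollary directly from Lemma~\ref{Irreducible set in Jia space}, Proposition~\ref{Jia space WF non-sober} and Lemma~\ref{key lemma}, just as you do. Your added remark that irreducibility of $\mathcal L$ guarantees the construction $(\Sigma~\!\!\mathcal L)_{\top}$ is well-defined is a sensible extra check, fully consistent with Definition~\ref{X plus top 1} and Remark~\ref{X plus top flat}.
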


Note that although the set $\{\top\}$ is open in $\Sigma~\!\!(\mathcal L)_{\top}$ (or equivalently, $\top$ is a compact element of the dcpo $\mathcal L\cup\{\top\}$), it is not open in $(\Sigma~\!\!\mathcal L)_{\top}$.

By Lemma \ref{Irreducible set in Jia space} and Lemma \ref{compact set in space plus a top}, we get the following.

\begin{corollary}\label{compact sets in Jia space plus a top} $\mathsf{K}((\Sigma~\!\!\mathcal L)_{\top})=\{G\cup\{\top\} : G\in \mathsf{K}(\Sigma~\!\!\mathcal L))\}\cup\{\{\top\}\}$.
\end{corollary}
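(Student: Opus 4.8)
The plan is to recognize this corollary as a direct specialization of Lemma \ref{compact set in space plus a top}: once we know that $\Sigma~\!\!\mathcal L$ satisfies the standing hypothesis of that lemma, namely that the ambient space is irreducible as a subset of itself, the conclusion is obtained simply by substituting $X=\Sigma~\!\!\mathcal L$. So the only genuine task is to certify the irreducibility hypothesis; everything else is already packaged in the two cited results.

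First I would verify that $\Sigma~\!\!\mathcal L$ is an irreducible space, i.e.\ that $\mathcal L\in\ir_c(\Sigma~\!\!\mathcal L)$. This is exactly one of the cases recorded in Lemma \ref{Irreducible set in Jia space}, which asserts $\ir_c(\Sigma~\!\!\mathcal L)=\{\overline{\{x\}}=\da x : x\in\mathcal L\}\cup\{\mathcal L\}$; in particular the whole space $\mathcal L$ is listed as an irreducible closed subset of $\Sigma~\!\!\mathcal L$. Hence $X=\Sigma~\!\!\mathcal L$ meets the condition ``$X$ is irreducible'' required in Definition \ref{X plus top 1} and in Lemma \ref{compact set in space plus a top}, so the space $(\Sigma~\!\!\mathcal L)_{\top}$ is well defined and the lemma applies to it.

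With irreducibility in hand, I would apply Lemma \ref{compact set in space plus a top} with $X=\Sigma~\!\!\mathcal L$, which yields immediately
\[
\mathsf{K}((\Sigma~\!\!\mathcal L)_{\top})=\{G\cup\{\top\} : G\in\mathsf{K}(\Sigma~\!\!\mathcal L)\}\cup\{\{\top\}\},
\]
as claimed. I do not anticipate any obstacle here: the substantive content---the characterization of compact saturated sets after adjoining a top point, obtained by transferring open covers between $X$ and $X_{\top}$ via $U\mapsto U\cup\{\top\}$---is precisely what Lemma \ref{compact set in space plus a top} already establishes in full generality, and the irreducibility needed to invoke it is supplied verbatim by Lemma \ref{Irreducible set in Jia space}. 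The corollary is therefore a one-line consequence of combining the two.
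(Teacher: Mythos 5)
Your proposal is correct and matches the paper's own proof exactly: the paper derives this corollary precisely ``by Lemma \ref{Irreducible set in Jia space} and Lemma \ref{compact set in space plus a top}'', i.e.\ by using the irreducibility of $\mathcal L$ in $\Sigma~\!\!\mathcal L$ to justify the construction of $(\Sigma~\!\!\mathcal L)_{\top}$ and then specializing the general description of $\mathsf{K}(X_{\top})$ to $X=\Sigma~\!\!\mathcal L$. Nothing further is needed.
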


\begin{lemma}\label{top is open in Jia space plus a top} $\{\top\}$ is a compact element in the dcpo $\mathsf{K}((\Sigma~\!\!\mathcal L)_{\top})$. Hence $\{\top\}$ is open in the Scott space $\Sigma~\!\!\mathsf{K}((\Sigma~\!\!\mathcal L)_{\top})$.
\end{lemma}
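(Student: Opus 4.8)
The plan is to first identify $\{\top\}$ as the largest element of the poset $\mathsf{K}((\Sigma\mathcal L)_{\top})$ under the Smyth order. Since $\top$ is the top element of $(\Sigma\mathcal L)_{\top}$, it lies in every nonempty saturated subset, so $\{\top\}\subseteq K$, that is $K\sqsubseteq \{\top\}$, for every $K\in \mathsf{K}((\Sigma\mathcal L)_{\top})$. Because $(\Sigma\mathcal L)_{\top}$ is a sobrification of $\Sigma\mathcal L$ (Corollary \ref{sobrification of Jia space}), it is sober, hence well-filtered, so by Corollary \ref{wf space Vietoris less Scott} the poset $\mathsf{K}((\Sigma\mathcal L)_{\top})$ is a dcpo in which, by Lemma \ref{Kmeet}, directed joins are computed as filtered intersections. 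To establish that $\{\top\}$ is compact, i.e.\ $\{\top\}\ll\{\top\}$, it suffices to show that whenever a family $\{K_d : d\in D\}\subseteq \mathsf{K}((\Sigma\mathcal L)_{\top})$ is directed in the Smyth order and $\bigvee_{d\in D}K_d=\{\top\}$ (equivalently, $\bigcap_{d\in D}K_d=\{\top\}$), some $K_d$ already equals $\{\top\}$.

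For the key step I would argue by contradiction using Corollary \ref{filtered family meet not empty}. Suppose no $K_d$ equals $\{\top\}$. By Corollary \ref{compact sets in Jia space plus a top}, each $K_d=G_d\cup\{\top\}$ with $G_d:=K_d\setminus\{\top\}\in \mathsf{K}(\Sigma\mathcal L)$ nonempty. Directedness of $\{K_d\}$ in the Smyth order (that is, filteredness under inclusion) is inherited by $\{G_d : d\in D\}$, since $K_{d_3}\subseteq K_{d_1}\cap K_{d_2}$ gives $G_{d_3}\subseteq G_{d_1}\cap G_{d_2}$; hence $\{G_d : d\in D\}\subseteq \mathsf{K}(\Sigma\mathcal L)$ is filtered. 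By Corollary \ref{filtered family meet not empty}, $\bigcap_{d\in D}G_d\neq\emptyset$, and therefore $\bigcap_{d\in D}K_d=(\bigcap_{d\in D}G_d)\cup\{\top\}\supsetneq\{\top\}$, contradicting $\bigcap_{d\in D}K_d=\{\top\}$. Thus some $K_d=\{\top\}$, proving $\{\top\}\ll\{\top\}$.

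Finally, to deduce openness I would invoke the standard fact that the principal up-set of a compact element of a dcpo is Scott open: $\ua\{\top\}$ is an upper set, and if $D$ is directed with $\bigvee D\in \ua\{\top\}$ then $\{\top\}\ll\{\top\}$ forces $D\cap\ua\{\top\}\neq\emptyset$. Since $\{\top\}$ is the largest element, $\ua_{\mathsf{K}((\Sigma\mathcal L)_{\top})}\{\top\}=\{\{\top\}\}$, so this singleton is Scott open; i.e.\ the point $\{\top\}$ is isolated in $\Sigma\mathsf{K}((\Sigma\mathcal L)_{\top})$.

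The main obstacle is the nonemptiness of the filtered intersection $\bigcap_{d\in D}G_d$, but this is exactly the content of Corollary \ref{filtered family meet not empty}, already established for $\Sigma\mathcal L$; once that is in hand the remainder is routine bookkeeping with the Smyth order and the description of $\mathsf{K}((\Sigma\mathcal L)_{\top})$ given in Corollary \ref{compact sets in Jia space plus a top}.
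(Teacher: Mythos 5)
Your proposal is correct and takes essentially the same approach as the paper: both proofs reduce the compactness of the element $\{\top\}$ to the nonemptiness of filtered intersections in $\mathsf{K}(\Sigma~\!\!\mathcal L)$ (Corollary \ref{filtered family meet not empty}), using the description of $\mathsf{K}((\Sigma~\!\!\mathcal L)_{\top})$ from Corollary \ref{compact sets in Jia space plus a top} and Lemma \ref{Kmeet} to identify the directed join with the filtered intersection. The only cosmetic difference is that you obtain the dcpo property of $\mathsf{K}((\Sigma~\!\!\mathcal L)_{\top})$ from sobriety of $(\Sigma~\!\!\mathcal L)_{\top}$ via Corollary \ref{wf space Vietoris less Scott}, whereas the paper transfers it from the well-filteredness of $\Sigma~\!\!\mathcal L$.
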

\begin{proof} By Proposition \ref{Jia space WF non-sober}, $\Sigma~\!\!\mathcal L$ is well-filtered and hence $\mathsf{K}((\Sigma~\!\!\mathcal L)$ (with the Smyth order) is a dcpo. Hence by Corollary \ref{compact sets in Jia space plus a top} $\mathsf{K}((\Sigma~\!\!\mathcal L)_{\top})$ is a dcpo. Now we show that $\{\top\}\in K(\mathsf{K}((\Sigma~\!\!\mathcal L)_{\top}))$. Suppose that $\{K_d : d\in D\}$ is a directed subset of $\mathsf{K}((\Sigma~\!\!\mathcal L)_{\top})$ and $\{\top\}\sqsubseteq \bigvee_{d\in D}K_d$. Then by Lemma \ref{Kmeet}, $\bigvee_{d\in D}K_d=\bigcap_{d\in D}K_d$ and hence $\bigcap_{d\in D}K_d\subseteq \{\top\}$. It follows that $\bigcap_{d\in D}(K_d\setminus \{\top\})=\emptyset$. By Corollary \ref{filtered family meet not empty} and Corollary \ref{compact sets in Jia space plus a top}, there is $d\in D$ such that $K_d\setminus \{\top\}=\emptyset$, that is, $K_d=\{\top\}$. Thus $\{\top\}\in \sigma(\mathsf{K}((\Sigma~\!\!\mathcal L)_{\top}))$.
\end{proof}

\begin{theorem}\label{Scott power space of a sober space is non-sober} The Scott power space $\Sigma~\!\!\mathsf{K}((\Sigma~\!\!\mathcal L)_{\top})$ of the sober space $(\Sigma~\!\!\mathcal L)_{\top}$ is non-sober.
\end{theorem}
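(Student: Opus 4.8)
The plan is to produce a single irreducible closed subset of $\Sigma~\!\!\mk((\Sigma~\!\!\mathcal L)_{\top})$ that has no greatest element. Write $Y=(\Sigma~\!\!\mathcal L)_{\top}$; by Corollary \ref{sobrification of Jia space} this is a sobrification of $\Sigma~\!\!\mathcal L$ and hence sober, so it is well-filtered and by Theorem \ref{wf imply Scott wf} the space $\Sigma~\!\!\mk(Y)$ is well-filtered, in particular a $d$-space. Consequently every point closure in $\Sigma~\!\!\mk(Y)$ has the form $\overline{\{K\}}=\da_{\mk(Y)}K$ and thus possesses a greatest element; so any irreducible closed set \emph{without} a greatest element is automatically not the closure of a point, which is all we need for non-sobriety. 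Recall from Corollary \ref{compact sets in Jia space plus a top} that the members of $\mk(Y)$ other than $\{\top\}$ are exactly the sets $G\cup\{\top\}$ with $G\in\mk(\Sigma~\!\!\mathcal L)$. The candidate witness is $\mathcal A=\overline{\varphi(\mathcal L)}$, the closure in $\Sigma~\!\!\mk(Y)$ of the image of the map $\varphi\colon\Sigma~\!\!\mathcal L\longrightarrow\Sigma~\!\!\mk(Y)$ defined by $\varphi(x)=\ua x\cup\{\top\}$ (here $\ua x\in\mk(\Sigma~\!\!\mathcal L)$, so $\ua x\cup\{\top\}\in\mk(Y)$ by Lemma \ref{compact set in space plus a top}).

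First I would verify that $\varphi$ is continuous. It is clearly monotone for the Smyth order, and for a directed $D\subseteq\mathcal L$ the family $\{\ua d\cup\{\top\} : d\in D\}$ is directed in $\mk(Y)$, so by Lemma \ref{Kmeet} its supremum equals $\bigcap_{d\in D}(\ua d\cup\{\top\})=(\bigcap_{d\in D}\ua d)\cup\{\top\}$. Since $\bigcap_{d\in D}\ua d$ is precisely the set of upper bounds of $D$, it equals $\ua(\vee D)$, and therefore $\bigvee_{d\in D}\varphi(d)=\ua(\vee D)\cup\{\top\}=\varphi(\vee D)$. Thus $\varphi$ preserves directed suprema and is continuous by Lemma \ref{Scott continuous equiv}. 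As $\mathcal L\in\ir_c(\Sigma~\!\!\mathcal L)$ by Lemma \ref{Irreducible set in Jia space}, Lemma \ref{irrimage} yields $\varphi(\mathcal L)\in\ir(\Sigma~\!\!\mk(Y))$, whence $\mathcal A=\overline{\varphi(\mathcal L)}\in\ir_c(\Sigma~\!\!\mk(Y))$. I expect this step to be the crux: it is exactly where the Scott topology (strictly finer than the upper Vietoris topology here) is indispensable, for in $P_S(Y)$ the closure of $\varphi(\mathcal L)$ would be all of $\mk(Y)$, whereas the finer Scott topology keeps $\mathcal A$ away from the point $\{\top\}$.

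It then remains to show $\mathcal A$ has no greatest element. By Lemma \ref{top is open in Jia space plus a top} the singleton $\{\{\top\}\}$ is open in $\Sigma~\!\!\mk(Y)$, and it is disjoint from $\varphi(\mathcal L)$ because each $\ua x\cup\{\top\}$ contains $x\neq\top$; hence $\{\top\}\notin\mathcal A$ and every member of $\mathcal A$ has the form $G\cup\{\top\}$ with $G\neq\emptyset$. Suppose toward a contradiction that $\mathcal A=\overline{\{K_0\}}=\da_{\mk(Y)}K_0$ for some $K_0\in\mathcal A$. Then $\varphi(x)\in\mathcal A=\da K_0$ for every $x\in\mathcal L$, i.e. $K_0\subseteq\ua x\cup\{\top\}$ for all $x$, so $K_0\subseteq\bigcap_{x\in\mathcal L}(\ua x\cup\{\top\})=(\bigcap_{x\in\mathcal L}\ua x)\cup\{\top\}=\{\top\}$, the last equality holding because $\mathcal L$ has no greatest element and so $\bigcap_{x\in\mathcal L}\ua x=\emptyset$. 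This forces $K_0=\{\top\}\notin\mathcal A$, a contradiction. Hence $\mathcal A$ is an irreducible closed set that is not the closure of any point, and $\Sigma~\!\!\mk((\Sigma~\!\!\mathcal L)_{\top})$ is non-sober. It is worth noting that the obstruction is inherited directly from the absence of a greatest element of $\mathcal L$, the same feature responsible for the non-sobriety of $\Sigma~\!\!\mathcal L$ itself.
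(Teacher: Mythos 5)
Your proof is correct, and it reaches the paper's conclusion by a genuinely different route on the key step. In fact your witness set coincides with the paper's: the paper takes $\mathcal A=\{G\cup\{\top\} : G\in \mathsf{K}(\Sigma~\!\!\mathcal L)\}$, which is exactly $\overline{\varphi(\mathcal L)}$ (every $G\cup\{\top\}$ lies below $\varphi(x)=\ua x\cup\{\top\}$ for any $x\in G$, and Scott closed sets are lower sets), and both arguments use Lemma \ref{top is open in Jia space plus a top} to exclude $\{\top\}$ and finish by the absence of a greatest element. The difference is how irreducibility is established, which is the bulk of the paper's proof: the paper proves it by hand (its Claim 2), an induction along the ladder structure of $\mathcal L$ — descend from $\ua(i_1,j_1,\infty)\cup\{\top\}$ to a finite stage $\ua(i_1,j_1,l_1)$, climb to $\ua(i_1+1,l_1,\infty)\cup\{\top\}$, and iterate until the two given Scott opens share a point. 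You instead transport the known irreducibility of $\mathcal L$ itself (Lemma \ref{Irreducible set in Jia space}) through the map $\varphi(x)=\ua x\cup\{\top\}$, whose Scott continuity you verify correctly via Lemma \ref{Kmeet} and Lemma \ref{Scott continuous equiv}, and then invoke Lemma \ref{irrimage}. This is shorter and more conceptual — it is essentially the ``property S'' mechanism the paper only introduces later (Remark \ref{xi continuous implies property S}(4) and Proposition \ref{Scott RD is RD} in Section 9), and your aside correctly isolates why the Scott topology is essential: in $P_S((\Sigma~\!\!\mathcal L)_{\top})$ the closure of $\varphi(\mathcal L)$ would be all of $\mathsf{K}((\Sigma~\!\!\mathcal L)_{\top})$ by Lemma \ref{closure in Smyth power space}, so the argument would collapse there. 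What the paper's hands-on computation buys in exchange is explicit structural information about the counterexample — it identifies $\mathcal A$ outright as the complement of the open point $\{\{\top\}\}$ and exhibits its maximal elements $\ua(i,j,\infty)\cup\{\top\}$ — whereas your approach leaves the precise extent of $\overline{\varphi(\mathcal L)}$ implicit (harmlessly so, since nothing in the argument needs it).
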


\begin{proof} For simplicity, let $\mathcal A=\{G\cup\{\top\} : G\in \mathsf{K}(\Sigma~\!\!\mathcal L)\}$.

{\bf Claim 1:} $\mathcal A$ is a closed subset of $\Sigma~\!\!\mathsf{K}((\Sigma~\!\!\mathcal L)_{\top})$.

By Corollary \ref{compact sets in Jia space plus a top} and Lemma \ref{top is open in Jia space plus a top}, $\mathcal A$ is Scott closed.

{\bf Claim 2:} $\mathcal A$ is irreducible.

Suppose that $\mathcal U, \mathcal V\in \sigma(\mathsf{K}((\Sigma~\!\!\mathcal L)_{\top}))$ and $\mathcal A\bigcap \mathcal U\neq \emptyset \neq \mathcal A\bigcap \mathcal V$. Then by Corollary \ref{compact sets in Jia space plus a top}, there are some $G_1, G_2\in \mathsf{K}(\Sigma~\!\!\mathcal L))$ such that $G_1\cup\{\top\}\in \mathcal A\bigcap \mathcal U$ and $G_2\cup\{\top\}\in \mathcal A\bigcap \mathcal V$, and hence $(i_1, j_1, \infty)\in G_1$ and $(i_2, j_2, \infty)\in G_2$ for some $(i_1, j_1), (i_2, j_2)\in \mathbb{N}\times \mathbb{N}$. Hence by Corollary \ref{compact sets in Jia space plus a top},$\ua (i_1, j_1, \infty)\cup \{\top\}\in \mathcal A\bigcap \mathcal U$ and $\ua (i_2, j_2, \infty)\cup \{\top\}\in \mathcal A\bigcap \mathcal V$ (note that $\mathcal U, \mathcal V$ are upper sets and $G_1\cup\{\top\}\sqsubseteq \ua (i_1, j_1, \infty)\cup \{\top\}, G_2\cup\{\top\}\sqsubseteq \ua (i_2, j_2, \infty)\cup \{\top\}$). Without loss of generality, we assume $i_1\leq i_2$. Since $\bigvee_{l\in \mathbb{N}}(\ua (i_1, j_1, \infty)\cup \{\top\})=\bigcap_{l\in \mathbb{N}}(\ua (i_1, j_1, \infty)\cup \{\top\})=\ua (i_1, j_1, \infty)\cup \{\top\}\in \mathcal U\in \sigma(\mathsf{K}((\Sigma~\!\!\mathcal L)_{\top}))$, we have some $l_1\in\mathbb{N}$ such that $\ua (i_1, j_1, l_1)\in \mathcal U$. So by Corollary \ref{compact sets in Jia space plus a top}, $\ua (i_1+1, l_1, \infty)\cup\{\top\}\in \mathcal U$ since $(i_1, j_1, l_1)\leq (i_1+1, l_1, \infty)$ and $\mathcal U$ is an upper set. Then by induction we have $\ua (i_2, j^{\prime}, \infty)\cup\{\top\}\in \mathcal U$ for some $j^{\prime}\in \mathbb{N}$. Again, since $\bigvee_{l\in \mathbb{N}}(\ua (i_2, j^{\prime}, \infty)\cup \{\top\})=\bigcap_{l\in \mathbb{N}}(\ua (i_2, j^{\prime}, \infty)\cup \{\top\})=\ua (i_2, j^{\prime}, \infty)\cup \{\top\}\in \mathcal U\in \sigma(\mathsf{K}((\Sigma~\!\!\mathcal L)_{\top}))$ and $\bigvee_{l\in \mathbb{N}}(\ua (i_2, j_2, \infty)\cup \{\top\})=\bigcap_{l\in \mathbb{N}}(\ua (i_2, j_2, \infty)\cup \{\top\})=\ua (i_2, j_2, \infty)\cup \{\top\}\in \mathcal V\in \sigma(\mathsf{K}((\Sigma~\!\!\mathcal L)_{\top}))$, we have some $k_1, k_2\in\mathbb{N}$ such that $\ua (i_2, j^{\prime}, k_1)\in \mathcal U$ and $\ua (i_2, j_2, k_2)\in \mathcal V$. Take $m=\mathrm{max}\{k_1, k_2\}$. Then $\ua (i_2, m, \infty)\cup\{\top\}\in \mathcal A\bigcap \mathcal U\bigcap V$. Thus $\mathcal A\in \ir_c(\Sigma~\!\!\mathsf{K}((\Sigma~\!\!\mathcal L)_{\top}))$.

{\bf Claim 1:} $\mathcal A$ has no largest element.

Clearly, $\{\ua (i, j, \infty)\cup \{\top\} : i, j\in\mathbb{N}\}$ is the set of all maximal elements of $\mathcal A$ and hence $\mathcal A$ has no largest element.

By Claims 1-3, the Scott power space $\Sigma~\!\!\mathsf{K}((\Sigma~\!\!\mathcal L)_{\top})$ is non-sober.
\end{proof}

We know that every $T_2$ space is sober and hence its Scott power space is well-filtered by Theorem \ref{wf imply Scott wf}. In the next section we will show that Soctt power spaces of locally compact (especially, compact) $T_2$ spaces are sober (see Corollary \ref{Scott power space of locally compact T2 space is sober} below).

By Theorem \ref{Schalk-Heckman-Keimel theorem}, Corollary \ref{wf space Vietoris less Scott}, Theorem \ref{Scott power space of a sober space is non-sober} and Corollary \ref{Scott power space of locally compact T2 space is sober} below, we naturally pose the following question.

\begin{question}\label{T2 Scott power space sober?}  For a $T_2$ space $X$, is the Scott power space $\Sigma~\!\!\mk (X)$ sober?
\end{question}

\section{Local compactness, first-countability and sobriety of Scott power spaces}

In this section, we investigate the conditions under which the Scott power space of a sober space is still sober. We will see that Question 3 is related to the investigation of conditions under which the upper Vietoris topology coincides with the Scott topology on $\mk (X)$, and further it is closely related to the local compactness and first-countability of $X$.

First, by Corollary \ref{WFcorcomp-sober}, Corollary \ref{first-countable WF is sober} and Theorem \ref{wf imply Scott wf}, we get the following.

\begin{corollary}\label{wf Scott CI imply sober} If $X$ is a well-filtered space for which the Scott power space $\Sigma~\!\!\mk (X)$ is first-countable or core-compact (especially, locally compact), then $\Sigma~\!\!\mk (X)$ is sober.
\end{corollary}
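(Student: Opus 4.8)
The plan is to reduce the corollary directly to the two sufficiency criteria for sobriety established earlier, once well-filteredness of the Scott power space is in hand. First I would invoke Theorem \ref{wf imply Scott wf}: since $X$ is well-filtered, $\Sigma\mk(X)$ is well-filtered. Along the way I would record (via Corollary \ref{wf space Vietoris less Scott}) that $\mk(X)$ is a dcpo, so that $\Sigma\mk(X)$, being the Scott space of a poset, is automatically a $T_0$ space; indeed, for any poset $P$ the set $\da y$ is Scott closed (a set of elements below a fixed $y$ is closed under existing directed suprema), so the specialization order of $\sigma(P)$ coincides with the order of $P$ and $\Sigma P$ is $T_0$. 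This $T_0$ point is what is needed to apply the first-countable criterion.

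Next I would split according to which hypothesis is assumed. If $\Sigma\mk(X)$ is core-compact (in particular, if it is locally compact, which implies core-compactness by Remark \ref{core-compact is not locally compact}), then $\Sigma\mk(X)$ is a core-compact well-filtered space, and Corollary \ref{WFcorcomp-sober} immediately yields that it is sober. If instead $\Sigma\mk(X)$ is first-countable, then it is a first-countable well-filtered $T_0$ space, and Corollary \ref{first-countable WF is sober} gives sobriety. Either way $\Sigma\mk(X)$ is sober, which is the assertion.

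The genuinely technical ingredient has already been carried out in Theorem \ref{wf imply Scott wf}, whose proof proceeds via the topological Rudin Lemma; the present corollary is then a bookkeeping combination of that theorem with the two known implications ``core-compact well-filtered $\Rightarrow$ sober'' and ``first-countable well-filtered $\Rightarrow$ sober''. Consequently I do not expect a substantive obstacle: the only matters requiring care are confirming the $T_0$ hypothesis for the first-countable case and recording the passage from local compactness to core-compactness, so that both parenthetical strengthenings (``especially, locally compact'') are subsumed under the single core-compact argument.
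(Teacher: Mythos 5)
Your proposal is correct and follows exactly the paper's own argument: Theorem \ref{wf imply Scott wf} gives well-filteredness of $\Sigma~\!\!\mk(X)$, after which Corollary \ref{WFcorcomp-sober} handles the core-compact (and hence locally compact) case and Corollary \ref{first-countable WF is sober} handles the first-countable case. The extra care you take with the $T_0$ condition and with reducing local compactness to core-compactness is sound but routine; the paper leaves these points implicit.
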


For the local compactness of Smyth power spaces, we have the following.

\begin{lemma}\label{Smyth power space locall compact} (\cite[Theorem 3.1]{LJ-2020})	For a $T_0$ space, the following conditions are equivalent:
\begin{enumerate}[\rm (1)]
        \item $X$ is locally compact.
        \item $P_S(X)$ is core-compact.
        \item $P_S(X)$ is locally compact.
        \item $P_S(X)$ is locally hypercompact.
        \item $P_S(X)$ is a $c$-space.

\end{enumerate}
\end{lemma}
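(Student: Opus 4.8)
The plan is to prove the equivalences by a cycle in which the ``downward'' implications are soft general-topology facts and only one direction carries real content. Note first that $(5)\Rightarrow(4)\Rightarrow(3)$ hold for every space: a $c$-space is locally hypercompact because each $\ua u=\ua\{u\}$ lies in $\mathbf{Fin}X$, and a locally hypercompact space is locally compact because every $\ua F$ with $F$ finite is compact saturated. Moreover $(3)\Rightarrow(2)$ is Remark~\ref{core-compact is not locally compact}. Thus it suffices to supply the constructive links $(1)\Rightarrow(5)$ and $(3)\Rightarrow(1)$, which already close the cycle on $(1),(3),(4),(5)$, and then to treat the genuinely hard implication $(2)\Rightarrow(1)$ collapsing core-compactness of $P_S(X)$ back to local compactness of $X$.

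For $(1)\Rightarrow(5)$ I would mimic the first step of the proof of Lemma~\ref{local comp imply V topol finer than Scott topol}. Given $K\in P_S(X)$ and a basic neighbourhood $\Box U$ with $K\subseteq U$, local compactness of $X$ yields for each $x\in K$ a $K_x\in\mk(X)$ with $x\in\ii K_x\subseteq K_x\subseteq U$; by compactness of $K$ finitely many interiors cover $K$, and $Q:=\bigcup_{i=1}^n K_{x_i}\in\mk(X)$ satisfies $K\subseteq\ii Q\subseteq Q\subseteq U$. Writing $\ua_{P_S(X)}Q=\{L\in\mk(X):L\subseteq Q\}$ for the principal up-set of the point $Q$, one checks $K\in\Box(\ii Q)\subseteq\ua_{P_S(X)}Q\subseteq\Box U$, with $\Box(\ii Q)$ open; hence $K\in\ii\,\ua_{P_S(X)}Q\subseteq\ua_{P_S(X)}Q\subseteq\Box U$, exhibiting $P_S(X)$ as a $c$-space.

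For $(3)\Rightarrow(1)$ the key tool is the union map of Lemma~\ref{K union}. Assume $P_S(X)$ is locally compact and take $x\in U\in\mathcal O(X)$, so $\ua x\in\Box U$. Local compactness gives $\mathcal K\in\mk(P_S(X))$ and an open $\Box V$ with $\ua x\in\Box V\subseteq\mathcal K\subseteq\Box U$ (so $x\in V$). Put $K=\bigcup\mathcal K$; by Lemma~\ref{K union}(1), $K\in\mk(X)$. Since $\bigcup\Box V=V$ and $\bigcup\Box U=U$, the inclusions $\Box V\subseteq\mathcal K\subseteq\Box U$ give $V\subseteq K\subseteq U$, whence $x\in V\subseteq\ii K\subseteq K\subseteq U$ with $K$ compact saturated; thus $X$ is locally compact. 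This argument is exactly the one used in Example~\ref{Scott sober not implies X is sober}(k).

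The main obstacle is $(2)\Rightarrow(1)$. Here I would use that $\mathcal O(P_S(X))$ is a continuous lattice: for $\ua x\in\Box U$ there is an open $\mathcal W$ with $\ua x\in\mathcal W\ll\Box U$, and refining to a base element a $\Box V\subseteq\mathcal W$ with $x\in V$. The difficulty is that the way-below relation tested only against covers of $\Box U$ by sets $\Box W$ ($W$ open) recovers nothing more than $V\ll U$ in $\mathcal O(X)$, i.e.\ core-compactness again; indeed a contrapositive cover of $\Box U$ by $\{\Box W: W\subseteq U,\ x\in W\ \text{excluded appropriately}\}$ only forces $x\in W_{k_0}$ for a single member of a finite subcover, never a compact neighbourhood. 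So the crux is to exploit the \emph{two-level} structure of $P_S(X)$ rather than generic frame way-below: one must manufacture from core-compactness an honest compact saturated family $\mathcal K\in\mk(P_S(X))$ sitting inside $\Box U$ and meeting $\Box V$, after which $\bigcup\mathcal K\in\mk(X)$ (Lemma~\ref{K union}) delivers the required compact saturated neighbourhood of $x$ in $U$ as in the $(3)\Rightarrow(1)$ step. Producing that compact family is precisely where the real work lies and is the content of \cite[Theorem~3.1]{LJ-2020}; I expect this to be the hard part of any self-contained argument.
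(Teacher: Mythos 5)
Your reductions among (1), (3), (4), (5) are correct and are essentially the classical (Schalk-type) arguments: $(5)\Rightarrow(4)\Rightarrow(3)$ are soft, $(3)\Rightarrow(2)$ is Remark~\ref{core-compact is not locally compact}, your $(1)\Rightarrow(5)$ reuses the compact-neighbourhood construction from the paper's proof of Lemma~\ref{local comp imply V topol finer than Scott topol}, and your $(3)\Rightarrow(1)$ via $\bigcup\mathcal K\in\mk(X)$ (Lemma~\ref{K union}) is exactly the argument the paper itself runs in Example~\ref{Scott sober not implies X is sober}(k). For calibration: the paper offers no proof of this lemma at all --- it is quoted verbatim from \cite[Theorem 3.1]{LJ-2020} --- so there is no internal proof to compare against; what you have verified is the part of the statement that predates that reference.

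As a proof of the stated five-way equivalence, however, your proposal has a genuine gap: the implication $(2)\Rightarrow(1)$ is never established. Your diagnosis of the difficulty is accurate --- testing $\Box V\ll\Box U$ only against directed covers of $\Box U$ by sets of the form $\Box W$ (where compactness of each $K\subseteq U$ places $K$ in some $\Box W_i$) recovers merely $V\ll U$, i.e.\ core-compactness of $X$, and by the Hofmann--Lawson example of Remark~\ref{core-compact is not locally compact} core-compactness of $X$ is strictly weaker than local compactness, so this route cannot close the loop. You also correctly identify what is needed: manufacturing from core-compactness of $P_S(X)$ an actual member of $\mk(P_S(X))$ interpolating between a basic open and $\Box U$, after which Lemma~\ref{K union} finishes as in $(3)\Rightarrow(1)$. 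But identifying the obstruction is not overcoming it; you explicitly defer this step to \cite[Theorem 3.1]{LJ-2020}, and that step is precisely the non-classical content of the lemma. What you have actually proved is that (1), (3), (4), (5) are pairwise equivalent and that each implies (2); the converse implication from (2) remains open in your argument, so the proposal is incomplete rather than wrong.
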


The following corollary follows directly from Proposition \ref{erne1} and Lemma \ref{Smyth power space locall compact}.

\begin{corollary}\label{LC imply Smyth power space is DC}
	For a locally compact $T_0$ space $X$, the Smyth power space $P_S(X)$ is a $\mathsf{DC}$ space.
\end{corollary}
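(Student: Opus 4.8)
The plan is to obtain the conclusion by simply chaining the two cited results, since local hypercompactness is exactly the bridge between them. First I would invoke Lemma~\ref{Smyth power space locall compact}: because $X$ is locally compact, the equivalence of conditions (1) and (4) there tells us that the Smyth power space $P_S(X)$ is locally hypercompact. This is the only input we need from the local compactness hypothesis.

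Next I would apply Proposition~\ref{erne1} to the space $P_S(X)$ itself. To do so one should first note that $P_S(X)$ is genuinely $T_0$: by Remark~\ref{xi embdding}(1) its specialization order is the Smyth order, which is antisymmetric on nonempty compact saturated sets (if $K_1\sqsubseteq K_2$ and $K_2\sqsubseteq K_1$ then $K_1\subseteq K_2\subseteq K_1$, so $K_1=K_2$). Hence $P_S(X)$ is a locally hypercompact $T_0$ space, so Proposition~\ref{erne1} applies verbatim and gives that for every $\mathcal A\in\ir(P_S(X))$ there is a directed $\mathcal D\subseteq\da\mathcal A$ with $\overline{\mathcal A}=\overline{\mathcal D}$. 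In particular $\ir_c(P_S(X))=\mathcal D_c(P_S(X))$, which by Definition~\ref{DCspace} is precisely the assertion that $P_S(X)$ is a $\mathsf{DC}$ space.

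Since the whole argument is a direct composition of two quoted facts, I do not expect any substantive obstacle. The only points deserving a moment's care are verifying that the hypothesis of Proposition~\ref{erne1}, namely local hypercompactness, is exactly what Lemma~\ref{Smyth power space locall compact} supplies, and that $P_S(X)$ satisfies the $T_0$ requirement of that proposition; both are immediate, so the corollary follows at once.
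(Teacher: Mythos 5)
Your proposal is correct and is exactly the paper's argument: the paper derives this corollary directly from Lemma~\ref{Smyth power space locall compact} (local compactness of $X$ gives local hypercompactness of $P_S(X)$) combined with Proposition~\ref{erne1} (locally hypercompact $T_0$ spaces are $\mathsf{DC}$). Your additional check that $P_S(X)$ is $T_0$ via antisymmetry of the Smyth order is a harmless, correct verification of a hypothesis the paper leaves implicit.
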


 Concerning the Scott power space of a locally compact $T_0$ space, we have the following question.

\begin{question}\label{LC imply Scott power space is WD} For a locally compact $T_0$ space $X$, is the Scott power space $\Sigma~\!\!\mk (X)$ a Rudin space or a $\mathsf{WD}$ space?
\end{question}

\begin{proposition}\label{Scott power space of locally compact sober space is sober}  Let $X$ be a locally compact sober space. Then
\begin{enumerate}[\rm (1)]
\item the Scott power space of $X$ and the Symth power space of $X$ coincide, that is, $\Sigma~\!\!\mathsf{K}(X)=P_S(X)$.
\item $\mk (X)$ is a continuous domain.
\item $\Sigma~\!\!\mk (X)$ is a sober $c$-space.
\end{enumerate}
\end{proposition}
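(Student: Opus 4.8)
The plan is to observe that all three assertions are essentially assembled from results already established earlier in the paper, so the work lies in invoking them in the right order rather than in any new construction. For part (1) I would argue directly: since $X$ is locally compact and sober, Corollary~\ref{LC sober domain V=S} tells us that the upper Vietoris topology and the Scott topology on $\mk(X)$ coincide. By definition $\Sigma\,\mk(X)=(\mk(X),\sigma(\mk(X)))$ and $P_S(X)=(\mk(X),\mathcal O(P_S(X)))$ carries exactly the upper Vietoris topology, so the equality of the two topologies yields $\Sigma\,\mk(X)=P_S(X)$ as topological spaces. No further argument is needed here.

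For part (2) I would first note that a sober space is well-filtered (Remark~\ref{sober implies WF implies d-space}), so by Corollary~\ref{wf space Vietoris less Scott} the poset $\mk(X)$ (with the Smyth order) is a dcpo; this is what allows me to speak of continuity of $\mk(X)$ at all. Next, since $X$ is locally compact, Lemma~\ref{Smyth power space locall compact} gives that $P_S(X)$ is a $c$-space. Combining this with part~(1), $\Sigma\,\mk(X)=P_S(X)$ is a $c$-space. Finally, since $\mk(X)$ is a dcpo whose Scott space is a $c$-space, Theorem~\ref{algebraic is continuous}(3) forces $\mk(X)$ to be a continuous domain.

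For part (3) I would feed part~(2) into the sobriety machinery. By part~(2), $\mk(X)$ is continuous, hence quasicontinuous by Theorem~\ref{algebraic is continuous}(2); then Proposition~\ref{quasicontinuous domain is sober} shows that $\Sigma\,\mk(X)$ is sober. That $\Sigma\,\mk(X)$ is a $c$-space is already recorded in the proof of part~(2) (either via Theorem~\ref{algebraic is continuous}(3) applied to the continuous dcpo $\mk(X)$, or simply because $\Sigma\,\mk(X)=P_S(X)$ is a $c$-space by Lemma~\ref{Smyth power space locall compact}). Putting these together gives that $\Sigma\,\mk(X)$ is a sober $c$-space.

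I do not expect a genuine obstacle in this proposition: the substantive facts — the coincidence of the two topologies for locally compact sober spaces, the $c$-space characterization of local compactness of $P_S(X)$, and the continuity/sobriety bridge via the Scott topology — have all been proved earlier, and the proof is a matter of chaining them in the correct logical order. The only point requiring a little care is to make sure $\mk(X)$ is known to be a dcpo \emph{before} applying the continuity criterion of Theorem~\ref{algebraic is continuous}(3), which is exactly why I would invoke well-filteredness (via sobriety) and Corollary~\ref{wf space Vietoris less Scott} as the first step of part~(2).
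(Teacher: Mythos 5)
Your proof is correct, and parts (1) and (3) follow the paper's own route (the paper proves (1) by combining Corollary \ref{wf space Vietoris less Scott} with Lemma \ref{local comp imply V topol finer than Scott topol}, which is exactly the content of the Corollary \ref{LC sober domain V=S} you invoke). The genuine divergence is in part (2). The paper obtains continuity of $\mk(X)$ by citing an external result, \cite[Proposition I-1.24.2]{redbook}, which asserts that for a locally compact sober space $X$ the poset $\mk(X)$ is a continuous semilattice; it then feeds this into Theorem \ref{algebraic is continuous} and Proposition \ref{quasicontinuous domain is sober} to get the sober $c$-space conclusion. You instead derive continuity internally: sober $\Rightarrow$ well-filtered $\Rightarrow$ $\mk(X)$ is a dcpo (Corollary \ref{wf space Vietoris less Scott}); local compactness of $X$ makes $P_S(X)$ a $c$-space (Lemma \ref{Smyth power space locall compact}); part (1) transfers this to $\Sigma\,\mk(X)$; and then the \emph{iff} in Theorem \ref{algebraic is continuous}(3) runs in the direction $c$-space $\Rightarrow$ continuous. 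So where the paper goes ``continuity (external citation) $\Rightarrow$ $c$-space,'' you go ``$c$-space (from the Smyth power space) $\Rightarrow$ continuity,'' exploiting that Theorem \ref{algebraic is continuous}(3) is an equivalence. Your version has the merit of being self-contained within the paper's stated results and of making explicit the dcpo hypothesis needed before Theorem \ref{algebraic is continuous}(3) can be applied; the paper's citation buys the slightly stronger conclusion that $\mk(X)$ is a continuous \emph{semilattice} (i.e., also closed under binary infima), which the paper reuses later (e.g., in Corollary \ref{quasicontinuous domain Scott power space sober c-space} and Corollary \ref{metric space Scott power space first-countable}), though only ``continuous domain'' is asserted in the proposition itself.
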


\begin{proof} By Corollary \ref{wf space Vietoris less Scott} and Lemma \ref{local comp imply V topol finer than Scott topol}, $\Sigma~\!\!\mathsf{K}(X)=P_S(X)$. By \cite[Proposition I-1.24.2]{redbook}, $\mathsf{K}(X)$ is a continuous semilattice, and hence by Theorem \ref{algebraic is continuous} and Proposition \ref{quasicontinuous domain is sober}, $\Sigma~\!\!\mk (X)$ is a sober $c$-space.
\end{proof}

\begin{corollary}\label{Scott power space of locally compact T2 space is sober}  If $X$ is a locally compact $T_2$ (especially, a compact $T_2$) space, then
\begin{enumerate}[\rm (1)]
\item $\Sigma~\!\!\mathsf{K}(X)=P_S(X)$.
\item $\mk (X)$ is a continuous domain.
\item $\Sigma~\!\!\mk (X)$ is a sober $c$-space.
\end{enumerate}
\end{corollary}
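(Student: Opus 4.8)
The plan is to reduce the corollary directly to Proposition~\ref{Scott power space of locally compact sober space is sober} by observing that the two hypotheses---local compactness and $T_2$---already supply exactly the hypotheses of that proposition, namely local compactness together with sobriety. First I would invoke the classical fact, already recorded in the paper (following \cite[Proposition 8.2.12]{Jean-2013}), that every $T_2$ space is sober. If one wants to see it in one line: suppose $F\in\ir_c(X)$ contained two distinct points $x\neq y$; by the Hausdorff property choose disjoint open $U\ni x$ and $V\ni y$, so that $F=(F\setminus U)\cup(F\setminus V)$ expresses $F$ as a union of two proper closed subsets, contradicting irreducibility. Hence each irreducible closed set is the closure of a single point, and uniqueness of the generic point follows since $T_2$ implies $T_0$. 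Therefore a locally compact $T_2$ space is in particular a locally compact sober space.

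With this identification in hand, the three conclusions are immediate: applying Proposition~\ref{Scott power space of locally compact sober space is sober} verbatim to $X$ gives the coincidence $\Sigma~\!\!\mathsf{K}(X)=P_S(X)$ of the Scott and Smyth power spaces, the continuity of the domain $\mathsf{K}(X)$ (via the continuous-semilattice structure of $\mathsf{K}(X)$ used in that proof), and the fact that $\Sigma~\!\!\mk(X)$ is a sober $c$-space.

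It remains only to justify the parenthetical strengthening ``especially, a compact $T_2$ space.'' For this I would note that a compact Hausdorff space is automatically locally compact: a compact $T_2$ space is regular, so for each point $x$ and open neighborhood $U$ there is an open $V$ with $x\in V\subseteq\overline{V}\subseteq U$, and $\overline{V}$ is compact (being closed in a compact space) and saturated (as $X$ is $T_1$, hence $\overline{V}=\mathord{\uparrow}\overline{V}$), giving a compact saturated neighborhood inside $U$. Thus the compact $T_2$ case is a special instance of the locally compact $T_2$ case already treated, and no separate argument is needed.

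I do not expect any genuine obstacle in this corollary: its entire content is the specialization of the preceding proposition through the two standard implications $T_2\Rightarrow\text{sober}$ and $\text{compact }T_2\Rightarrow\text{locally compact}$, so the proof is a short deduction rather than a construction.
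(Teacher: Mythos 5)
Your proposal is correct and is exactly the paper's (implicit) argument: the corollary is stated immediately after Proposition~\ref{Scott power space of locally compact sober space is sober} with no separate proof, being the specialization of that proposition via the standard facts that Hausdorff spaces are sober (recorded in Section~4 of the paper) and that compact $T_2$ spaces are locally compact. Your added one-line verifications of these two facts are sound and merely make explicit what the paper leaves to the reader.
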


By Theorem \ref{algebraic is continuous}, Proposition \ref{quasicontinuous domain is sober} and Proposition \ref{Scott power space of locally compact sober space is sober}, we have the following corollary.

\begin{corollary}\label{quasicontinuous domain Scott power space sober c-space} Let $P$ be a quasicontinuous domain. Then
 \begin{enumerate}[\rm (1)]
 \item the upper Vietoris topology agrees with the Scott topology on $\mk (\Sigma~\!\!P)$.
 \item $\mk (\Sigma~\!\!P)$ is a continuous semilattice.
 \item the Scott power space $\Sigma~\!\!\mk(\Sigma~\!\!P)$ is a sober $c$-space.
 \end{enumerate}
\end{corollary}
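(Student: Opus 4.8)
The plan is to reduce the statement to Proposition~\ref{Scott power space of locally compact sober space is sober} by showing that $\Sigma~\!\!P$ is a \emph{locally compact sober} space whenever $P$ is a quasicontinuous domain. Once this reduction is in place, all three assertions (1)--(3) follow immediately by applying that proposition with $X=\Sigma~\!\!P$.

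First I would establish sobriety: since $P$ is a quasicontinuous domain, Proposition~\ref{quasicontinuous domain is sober} gives at once that $\Sigma~\!\!P$ is sober. Next I would establish local compactness. By Theorem~\ref{algebraic is continuous}(4), the quasicontinuity of $P$ is equivalent to $\Sigma~\!\!P$ being locally hypercompact. The remaining observation is that local hypercompactness upgrades to local compactness: given $x\in P$ and an open neighborhood $U$ of $x$, there is $\ua F\in\mathbf{Fin}~\!P$ with $x\in\ii\,\ua F\subseteq\ua F\subseteq U$, and $\ua F=\bigcup_{u\in F}\ua u$ is a finite union of supercompact (hence compact) saturated sets, so $\ua F\in\mk(\Sigma~\!\!P)$ is a compact saturated set with $x\in\ii\,\ua F\subseteq\ua F\subseteq U$. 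Thus $\Sigma~\!\!P$ is locally compact.

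Finally I would apply Proposition~\ref{Scott power space of locally compact sober space is sober} to the locally compact sober space $X=\Sigma~\!\!P$. Part (1) of that proposition yields $\Sigma~\!\!\mk(\Sigma~\!\!P)=P_S(\Sigma~\!\!P)$, that is, the upper Vietoris topology and the Scott topology on $\mk(\Sigma~\!\!P)$ coincide, which is assertion (1); part (2) yields that $\mk(\Sigma~\!\!P)$ is a continuous semilattice, which is assertion (2); and part (3) yields that $\Sigma~\!\!\mk(\Sigma~\!\!P)$ is a sober $c$-space, which is assertion (3). The main (and essentially only) obstacle is the passage from locally hypercompact to locally compact, which hinges on recognizing that finitely generated upper sets $\ua F$ are compact saturated; beyond this point the argument is a direct chain of the cited results.
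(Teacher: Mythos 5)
Your proof is correct and follows essentially the same route as the paper: the paper derives the corollary by combining Theorem~\ref{algebraic is continuous} (quasicontinuity of $P$ gives local hypercompactness, hence local compactness, of $\Sigma~\!\!P$), Proposition~\ref{quasicontinuous domain is sober} (sobriety of $\Sigma~\!\!P$), and Proposition~\ref{Scott power space of locally compact sober space is sober}, exactly as you do. Your explicit verification that locally hypercompact implies locally compact (each $\ua F$ being a finite union of supercompact, hence compact, saturated sets) is the step the paper leaves implicit, and it is correctly argued.
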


Now we discuss the first-countability of the Scott power spaces. First, for the Smyth power spaces and sobrifications of $T_0$ spaces, we have the following conclusion.

\begin{proposition}\label{Smyth CII} (\cite{Brecht, XSXZ-2021, XY20})  For a $T_0$ space, the following conditions are equivalent:
\begin{enumerate}[\rm (1)]
\item $X$ is second-countable.
\item $P_S(X)$ is second-countable.
\item $X^s$ is second-countable.
\end{enumerate}
\end{proposition}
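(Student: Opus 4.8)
The plan is to dispose of the two implications (2) $\Rightarrow$ (1) and (3) $\Rightarrow$ (1) simultaneously by invoking the hereditariness of second-countability, and then to establish the two substantial implications (1) $\Rightarrow$ (2) and (1) $\Rightarrow$ (3) by writing down explicit countable bases. For the former, recall that $\xi_X : X \longrightarrow P_S(X)$ is a topological embedding (Remark \ref{xi embdding}) and that $\eta_X : X \longrightarrow X^s$ is a topological embedding (Remark \ref{eta continuous}); thus $X$ is homeomorphic to a subspace of each of $P_S(X)$ and $X^s$. Since every subspace of a second-countable space is second-countable, both (2) $\Rightarrow$ (1) and (3) $\Rightarrow$ (1) follow at once.

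For (1) $\Rightarrow$ (2) I would fix a countable base $\mathcal B$ of $X$ and let $\mathcal B^{\vee}$ denote the (still countable) family of all finite unions of members of $\mathcal B$. The claim is that $\{\Box V : V \in \mathcal B^{\vee}\}$ is a base of $P_S(X)$. Since $\{\Box U : U \in \mathcal O(X)\}$ is a base, it suffices to show that for every $U \in \mathcal O(X)$ and every $K \in \Box U$ there is $V \in \mathcal B^{\vee}$ with $K \in \Box V \subseteq \Box U$. This is exactly where compactness enters: from $K \subseteq U$ and the fact that $\mathcal B$ is a base, each $x \in K$ lies in some $B_x \in \mathcal B$ with $B_x \subseteq U$, and compactness of $K$ extracts a finite subcover $K \subseteq B_{x_1} \cup \cdots \cup B_{x_n} =: V$, where $V \in \mathcal B^{\vee}$ and $V \subseteq U$, so that $K \in \Box V \subseteq \Box U$.

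For (1) $\Rightarrow$ (3) I would show that $\{\Diamond B : B \in \mathcal B\}$ is already a base of $X^s$. Every open set of $X^s$ has the form $\Diamond U$ with $U \in \mathcal O(X)$, so it suffices to write each such $\Diamond U$ as a union of sets $\Diamond B$ with $B \in \mathcal B$. Given $A \in \Diamond U$, pick $x \in A \cap U$ and then $B \in \mathcal B$ with $x \in B \subseteq U$; then $A \in \Diamond B \subseteq \Diamond U$, and taking the union over all such $A$ shows $\Diamond U = \bigcup\{\Diamond B : B \in \mathcal B,\ \Diamond B \subseteq \Diamond U\}$. Hence $\{\Diamond B : B \in \mathcal B\}$ is a countable base of $X^s$, completing (1) $\Rightarrow$ (3).

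The hereditariness arguments are routine, and the passage in (1) $\Rightarrow$ (3) is immediate because points detect membership in $\Diamond U$. The only step requiring a genuine idea is (1) $\Rightarrow$ (2), where one cannot use a single base element but must pass to a finite union; this is precisely where the compactness of the members of $\mathsf{K}(X)$ is indispensable, and I expect it to be the main (if mild) obstacle.
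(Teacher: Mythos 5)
Your proof is correct. Note that the paper does not actually prove this proposition --- it is quoted with citations to \cite{Brecht, XSXZ-2021, XY20} --- and your argument is precisely the standard one from those sources: the downward implications (2)~$\Rightarrow$~(1) and (3)~$\Rightarrow$~(1) follow from the embeddings $\xi_X$ and $\eta_X$ together with hereditariness of second-countability, while the upward implications use the countable bases $\{\Box V : V \mbox{ a finite union of members of } \mathcal B\}$ for $P_S(X)$ (compactness supplying the finite subcover) and $\{\Diamond B : B\in \mathcal B\}$ for $X^s$.
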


Since first-countability is a hereditary property, by Remark \ref{eta continuous} and Remark \ref{xi embdding}, we get the following result.

\begin{proposition}\label{soberification first-countable implies X is also}  Let $X$ be a $T_0$ space. If $X^s$ is first-countable or $P_S(X)$ is first-countable, then $X$ is first-countable.
\end{proposition}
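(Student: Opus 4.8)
The plan is to reduce everything to the fact that first-countability is inherited by subspaces, using the two embeddings that have already been recorded. Recall that a property $\mathcal{P}$ of topological spaces is called \emph{hereditary} if every subspace of a space with $\mathcal{P}$ again has $\mathcal{P}$; it is standard (and easy to check from the definition of a neighborhood base) that first-countability is hereditary, since restricting a countable base at a point to the relative topology of a subspace yields a countable base at that point in the subspace. I would state this heredity explicitly at the outset, as it is the only nontrivial ingredient.

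First I would handle the case in which $X^s$ is first-countable. By Remark \ref{eta continuous}, the canonical map $\eta_{X}: X\longrightarrow X^s$ is a (dense) topological embedding, so $X$ is homeomorphic to the subspace $\eta_X(X)$ of $X^s$. Since $X^s$ is first-countable and first-countability is hereditary, the subspace $\eta_X(X)$ is first-countable, and hence so is $X$, being homeomorphic to it.

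Next I would treat the case in which $P_S(X)$ is first-countable, which is entirely parallel. By Remark \ref{xi embdding}, the canonical map $\xi_X: X\longrightarrow P_S(X)$, $x\mapsto\ua x$, is a topological embedding, and indeed $X$ is homeomorphic to the subspace $\mathcal S^u(X)$ of $P_S(X)$. Applying the heredity of first-countability to the first-countable space $P_S(X)$ and its subspace $\mathcal S^u(X)$ gives that $\mathcal S^u(X)$, and therefore $X$, is first-countable. Combining the two cases yields the stated implication.

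I do not expect any genuine obstacle here: the content of the proposition is simply that the two canonical maps embed $X$ as a subspace of $X^s$ and of $P_S(X)$ respectively, so the only thing to be careful about is invoking the heredity of first-countability correctly and citing the correct embedding result in each case. The argument is a direct corollary of Remark \ref{eta continuous} and Remark \ref{xi embdding} together with this standard heredity fact.
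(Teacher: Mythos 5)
Your proof is correct and coincides with the paper's own argument: the paper derives the proposition exactly from the heredity of first-countability together with Remark \ref{eta continuous} (the embedding $\eta_X: X\longrightarrow X^s$) and Remark \ref{xi embdding} (the embedding $\xi_X: X\longrightarrow P_S(X)$ onto the subspace $\mathcal S^u(X)$). No gaps; the only difference is that you spell out the heredity argument explicitly, which the paper leaves implicit.
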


Example \ref{Scott sober not implies X is sober} shows that unlike the Smyth power space, the first-countability of the Scott power space of a $T_0$ space $X$ does not imply the first-countability of $X$ in general.

The converse of Proposition \ref{soberification first-countable implies X is also} does not hold in general, as shown in Example \ref{first-countable omega WF is not sober} and the following example. It also shows that even for a compact Hausdorff first-countable space $X$, the Scott power space of $X$ and the Smyth power space of $X$ may not be first-countable.

 There is even a $T_0$ space $X$ for which the Scott power space $\Sigma~\!\!\mathsf{K}(X)$ is second-countable but $X$ is not fist-countable (see Example \ref{Scott power space CII but X not CI} below). So for Scott power spaces, the analogous results to Proposition \ref{Smyth CII} and Proposition \ref{soberification first-countable implies X is also} do not hold.

\begin{example}\label{X CI but Smyth not}  Consider in the plane $\mathbb{R}^2$ two concentric circles $C_i=\{(x,y)\in \mathbb{R}^2 :
x^2+y^2=i\}$, where $i=1, 2$, and their union $X=C_1\cup C_2$; the projection of $C_1$ onto $C_2$ from
the point $(0,0)$ is denoted by $p$. On the set $X$ we generate a topology by defining
a neighbourhood system $\{B(z): z\in X\}$ as follows: $B(z)=\{{z}\}$ for $z\in C_2$ and
$B(z)=\{U_j(z): j\in \mathbb{N}\}$ for $z\in C_1$, where $U_j=V_j\bigcup p(V_j\setminus \{z\})$ and $V_j$ is the arc of $C_1$ with center at $z$ and of length $1/j$. The space $X$ is called the \emph{Alexandroff double circle} (cf. \cite{Engelking}). The following conclusions about $X$ are known (see, for example, \cite[Example 3.1.26]{Engelking}).
\begin{enumerate}[\rm (a)]
\item  $X$ is Hausdorff and first-countable.
 \item  $X$ is compact and locally compact.
 \item  $X$ is not separable, and hence not second-countable.
 \item $C_1$ is a compact subspace of $X$.
 \item $C_2$ is a discrete subspace of $X$.
\end{enumerate}

\noindent There is no countable base at $C_1$ in $P_S(X)$. Thus $P_S(X)$ is not first-countable. For details, see \cite[Example 4.4]{XY20}. By Corollary \ref{Scott power space of locally compact T2 space is sober}, $\Sigma~\!\!\mathsf{K}(X)=P_S(X)$, whence the Scott power space of $X$ is not first-countable.
\end{example}

\begin{proposition}\label{min Compact countable is Smth CI} (\cite[Proposition 4.5]{XY20}) Let $X$ be a first-countable $T_0$ space. If $\mathrm{min}(K)$ is countable for any $K\in \mk (X)$, then
$P_S(X)$ is first-countable.
\end{proposition}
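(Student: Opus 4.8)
The plan is to prove first-countability of $P_S(X)$ locally, that is, to exhibit an explicit countable neighbourhood base at each $K \in \mk(X)$. Fix $K \in \mk(X)$. By Lemma \ref{COMPminimalset} we have $K = \ua \mathrm{min}(K)$ and $\mathrm{min}(K)$ is compact, while by hypothesis $\mathrm{min}(K)$ is countable; so I would enumerate it as $\mathrm{min}(K) = \{x_i : i \in I\}$ with $I \subseteq \mn$. Since $X$ is first-countable, for each $i \in I$ I would fix a countable open neighbourhood base $\{B_{i,n} : n \in \mn\}$ at $x_i$ (passing to interiors if necessary, the members may be taken open). As every open set is an upper set, each $B_{i,n}$ satisfies $\ua x_i \subseteq B_{i,n}$.

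The candidate countable family is built from \emph{finite} unions of these basic neighbourhoods. I would set
$$\mathcal B = \left\{ \bigcup_{i \in F} B_{i,n_i} : F \in I^{(<\omega)},\ (n_i)_{i \in F} \in \mn^{F} \right\}$$
and let $\mathcal B_K = \{V \in \mathcal B : K \subseteq V\}$. Since $I$ is countable and, for each finite $F$, the set $\mn^{F}$ of index choices is countable, both $\mathcal B$ and $\mathcal B_K$ are countable. Each $V \in \mathcal B_K$ is open with $K \subseteq V$, so $\Box V$ is an open neighbourhood of $K$ in $P_S(X)$, and I claim that $\{\Box V : V \in \mathcal B_K\}$ is a neighbourhood base at $K$.

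The crux is the cofinality argument, and it is precisely here that compactness of $\mathrm{min}(K)$ is essential. Given an open neighbourhood $\mathcal W$ of $K$ in $P_S(X)$, since $\{\Box U : U \in \mathcal O(X)\}$ is a base there is $U \in \mathcal O(X)$ with $K \in \Box U \subseteq \mathcal W$, i.e. $K \subseteq U$. For each $i \in I$ we have $x_i \in \mathrm{min}(K) \subseteq U$, so some basic neighbourhood satisfies $x_i \in B_{i,n_i} \subseteq U$. The family $\{B_{i,n_i} : i \in I\}$ then covers the compact set $\mathrm{min}(K)$, so it admits a finite subcover indexed by some $F \in I^{(<\omega)}$. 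Putting $V = \bigcup_{i \in F} B_{i,n_i}$ yields $V \subseteq U$ and $\mathrm{min}(K) \subseteq V$; since $V$ is an upper set, $K = \ua \mathrm{min}(K) \subseteq V$, whence $V \in \mathcal B_K$ and $K \in \Box V \subseteq \Box U \subseteq \mathcal W$. This establishes that $\{\Box V : V \in \mathcal B_K\}$ is a countable neighbourhood base at $K$, and as $K$ is arbitrary, $P_S(X)$ is first-countable. The single delicate point — and the main obstacle — is that the choice $i \mapsto n_i$ depends on $U$ and a priori ranges over the uncountable set $\mn^{I}$; the compactness of $\mathrm{min}(K)$ is exactly what collapses this to a finite union lying in the fixed countable family $\mathcal B$, so that $\mathcal B_K$ genuinely works as a countable base.
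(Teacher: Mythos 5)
Your proof is correct. The paper itself gives no proof of this proposition---it is quoted from \cite[Proposition 4.5]{XY20}---and your argument (enumerate the countable set $\mathrm{min}(K)$, which is compact and satisfies $K=\ua \mathrm{min}(K)$ by Lemma \ref{COMPminimalset}, fix countable open bases at its points, and use compactness to collapse the a priori uncountable choice of indices to finite unions drawn from a fixed countable family $\mathcal B$) is exactly the standard proof of the cited result, so there is nothing to add.
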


\begin{proposition}\label{metric space is Smth CI} For a metric space $(X, d)$, $P_S((X, d))$ is first-countable.
\end{proposition}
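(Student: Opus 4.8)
The plan is to produce, at each point of $P_S((X,d))$, an explicit countable neighborhood base built from metric $r$-balls. Since a metric space is $T_1$, every subset is saturated, so $\mk(X)$ is simply the collection of nonempty compact subsets of $X$. Fix $K\in \mk(X)$. For each $n\in \mn$ the set $B(K,1/n)=\bigcup_{a\in K}B(a,1/n)$ is open and contains $K$, so $\Box B(K,1/n)$ is a basic open neighborhood of $K$ in $P_S((X,d))$. I claim that $\{\Box B(K,1/n) : n\in \mn\}$ is a neighborhood base at $K$, which immediately yields first-countability.

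To verify the claim, let $\mathcal W$ be any open neighborhood of $K$ in $P_S((X,d))$. Because $\{\Box U : U\in \mathcal O(X)\}$ is a base for the upper Vietoris topology, there is $U\in \mathcal O(X)$ with $K\in \Box U\subseteq \mathcal W$; in particular $K\subseteq U$. Now apply Lemma \ref{metric space compact sets} to the compact set $K$ and the open set $U$: there is $r>0$ with $K\subseteq B(K,r)\subseteq U$. Choosing $n\in \mn$ with $1/n\leq r$ gives $B(K,1/n)\subseteq B(K,r)\subseteq U$, and hence, by the evident monotonicity of $\Box$ (if $U\subseteq V$ then $\Box U\subseteq \Box V$), we obtain $K\in \Box B(K,1/n)\subseteq \Box U\subseteq \mathcal W$. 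Thus every neighborhood of $K$ contains one of the countably many sets $\Box B(K,1/n)$, so $P_S((X,d))$ is first-countable.

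There is essentially no hard step: the single nontrivial ingredient is Lemma \ref{metric space compact sets}, which is exactly the metric approximation of a compact set by its $r$-balls and which lets us replace an arbitrary open witness $U$ by one of the canonical balls $B(K,1/n)$; everything else is the monotonicity of $\Box$ together with the fact that $K\subseteq B(K,r)$ for every $r>0$. The point worth stressing is that Proposition \ref{min Compact countable is Smth CI} does \emph{not} cover this case: in the $T_1$ space $(X,d)$ the specialization order is trivial, so $\mathrm{min}(K)=K$, which may well be uncountable (for instance $K=[0,1]$). It is precisely the metric structure, through Lemma \ref{metric space compact sets}, that still supplies a canonical countable base at every $K$, which is why the metric case deserves its own argument.
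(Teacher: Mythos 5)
Your proof is correct and follows essentially the same route as the paper: the paper also takes the balls $B(K,1/n)$ and invokes the compact-set lemma (Lemma \ref{metric space compact sets}) to conclude that $\{\Box B(K,1/n) : n\in\mathbb{N}\}$ is a countable neighborhood base at each $K$ in $P_S((X,d))$. You merely spell out the details (monotonicity of $\Box$, choice of $1/n\leq r$) that the paper's two-line proof leaves implicit.
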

\begin{proof} For $K\in \mk ((X, d))$, let $\mathcal B_K=\{B(K, 1/n) : n\in \mathbb{N}\}$. Then by Proposition \ref{metric space compact sets}, $\mathcal B_K=\{B(K, 1/n) : n\in \mathbb{N}\}$ is a countable base at $K$ in $P_S((X, d))$. Thus $P_S((X, d))$ is first-countable.
\end{proof}

For a countable $T_0$ space $X$, it is easy to see that $X$ is second-countable iff $X$ is first-countable. Indeed, let $X=\{x_n : n\in \mn\}$. If $X$ is first-countable, then for each $n\in \mn$, there is a countable base $\mathcal B_n$ at $x_n$. Let $\mathcal B=\bigcup_{n\in \mn}\mathcal B_n$. Then $\mathcal B$ is a countable base of $X$. Thus $X$ is second-countable. Therefore, by Proposition \ref{Smyth CII} and Proposition \ref{soberification first-countable implies X is also}, we have the following.

\begin{corollary}\label{countable first-countable is second-countable} For a countable $T_0$ space $X$, the following conditions are equivalent:
\begin{enumerate}[\rm (1)]
\item $X$ is first-countable.
\item $X$ is second-countable.
\item $X^s$ is first-countable.
\item $X^s$ is second-countable.
\item $P_S(X)$ is first-countable.
\item $P_S(X)$ is second-countable.

\end{enumerate}
\end{corollary}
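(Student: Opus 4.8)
The plan is to derive the six-fold equivalence purely by chaining together the elementary observation just recorded (for a \emph{countable} $T_0$ space, first-countability and second-countability coincide), Proposition \ref{Smyth CII}, and Proposition \ref{soberification first-countable implies X is also}, together with the trivial fact that second-countability always implies first-countability. No genuinely new argument is needed; the only care required is in the \emph{ordering} of the implications, since $X^s$ and $P_S(X)$ are in general uncountable, so the ``first-countable $\Leftrightarrow$ second-countable'' shortcut is available for $X$ itself but not for its sobrification or its Smyth power space.

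First I would record the two directions for $X$ alone: the nontrivial direction $(1)\Rightarrow(2)$ is exactly the observation preceding the corollary (collect a countable base at each of the countably many points and take the union), while $(2)\Rightarrow(1)$ is immediate since any base is a local base. Next, Proposition \ref{Smyth CII} supplies the entire block of second-countability equivalences $(2)\Leftrightarrow(4)\Leftrightarrow(6)$ in one stroke. It then remains only to connect first-countability of $X^s$ and of $P_S(X)$ back into this block.

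For the sobrification I would close the cycle $(1)\Rightarrow(2)\Rightarrow(4)\Rightarrow(3)\Rightarrow(1)$: here $(2)\Rightarrow(4)$ is Proposition \ref{Smyth CII}, $(4)\Rightarrow(3)$ is the trivial implication second-countable $\Rightarrow$ first-countable applied to $X^s$, and $(3)\Rightarrow(1)$ is Proposition \ref{soberification first-countable implies X is also}. For the Smyth power space the analogous cycle $(1)\Rightarrow(2)\Rightarrow(6)\Rightarrow(5)\Rightarrow(1)$ works identically, using $(2)\Rightarrow(6)$ from Proposition \ref{Smyth CII}, $(6)\Rightarrow(5)$ trivially, and $(5)\Rightarrow(1)$ again from Proposition \ref{soberification first-countable implies X is also}. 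Sharing the common arc $(1)\Leftrightarrow(2)$, these two cycles together force all six statements to be equivalent.

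The only point that might be mistaken for an obstacle is the temptation to argue $(3)\Rightarrow(4)$ or $(5)\Rightarrow(6)$ directly by the countable-space shortcut; this is precisely what fails, because neither $X^s$ nor $P_S(X)$ need be countable even when $X$ is. Routing instead back through $X$ (via Proposition \ref{soberification first-countable implies X is also}) and out again to second-countability (via Proposition \ref{Smyth CII}) sidesteps this gap entirely, which is the reason the implications must be assembled as the two cycles above rather than as six naive pairwise reversals.
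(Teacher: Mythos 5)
Your proposal is correct and is essentially the paper's own argument: the paper proves the corollary by exactly the same three ingredients — the elementary union-of-local-bases observation giving $(1)\Leftrightarrow(2)$ for countable spaces, Proposition \ref{Smyth CII} for the second-countability block $(2)\Leftrightarrow(4)\Leftrightarrow(6)$, and Proposition \ref{soberification first-countable implies X is also} to route first-countability of $X^s$ and $P_S(X)$ back to $X$ — assembled into the same two cycles you describe. Your explicit warning that the countable-space shortcut cannot be applied to $X^s$ or $P_S(X)$ directly is a helpful clarification of why the implications must be chained in this order, but it does not change the substance.
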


It is worth noting that the Scott topology on a countable complete lattice may not be first-countable, see \cite[Example 4.8]{xu-shen-xi-zhao2}.

By Proposition \ref{sober equiv using omega RD and omega WD}, Theorem \ref{sobrifcaltion first-countable is Rudin}, Proposition \ref{Smyth CII} and Corollary \ref{countable first-countable is second-countable}, we deduce the following two results.

\begin{corollary}\label{second-countable is omega Rudin}(\cite[Corollary 5.7 and Corollary 5.8]{XSXZ-2021}) Every second-countable (especially, countable first-countable) $T_0$ space is an $\omega$-Rudin space.
\end{corollary}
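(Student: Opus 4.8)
The plan is to deduce this directly from Theorem~\ref{sobrifcaltion first-countable is Rudin}(1) by transferring second-countability from $X$ to its sobrification. First I would let $X$ be a second-countable $T_0$ space. By Proposition~\ref{Smyth CII}, the second-countability of $X$ is equivalent to that of $X^s$, so $X^s$ is second-countable and hence first-countable. Now Theorem~\ref{sobrifcaltion first-countable is Rudin}(1) states precisely that if $X^s$ is first-countable then $X$ is an $\omega$-Rudin space; applying it settles the second-countable case.

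For the parenthetical assertion about countable first-countable spaces, I would invoke Corollary~\ref{countable first-countable is second-countable}: for a countable $T_0$ space, first-countability and second-countability coincide, so a countable first-countable $X$ is in fact second-countable, and the argument of the previous paragraph applies verbatim.

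The only subtlety---and it is not really an obstacle---is that Theorem~\ref{sobrifcaltion first-countable is Rudin}(1) hypothesizes first-countability of the sobrification $X^s$, not of $X$ itself. This is exactly why Proposition~\ref{Smyth CII} is the crucial intermediate step: it lets me lift the countability assumption from $X$ up to $X^s$, after which no further work is needed. I expect the whole argument to be a short chaining of the cited results rather than to contain any genuinely new combinatorial or topological content.
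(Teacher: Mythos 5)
Your proposal is correct and follows exactly the route the paper intends: the paper deduces this corollary by chaining Proposition~\ref{Smyth CII} (to transfer second-countability from $X$ to $X^s$, hence first-countability of $X^s$), Theorem~\ref{sobrifcaltion first-countable is Rudin}(1), and Corollary~\ref{countable first-countable is second-countable} for the countable first-countable case. No gap; nothing to add.
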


\begin{corollary}\label{secound-countable omega WF is sober} Every second-countable (especially, countable first-countable) $\omega$-well-filtered space is sober.
\end{corollary}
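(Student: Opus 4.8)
The plan is to reduce the statement to the sobriety criterion of Proposition \ref{sober equiv using omega RD and omega WD}, which says that an $\omega$-well-filtered space is sober precisely when it is also an $\omega$-Rudin space. Since $\omega$-well-filteredness is already part of the hypothesis, the entire task amounts to establishing that a second-countable $T_0$ space is an $\omega$-Rudin space; the sobriety then follows automatically.

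First I would treat the second-countable case. By Proposition \ref{Smyth CII}, the second-countability of $X$ is equivalent to that of its sobrification $X^s$; in particular $X^s$ is second-countable and hence first-countable. Theorem \ref{sobrifcaltion first-countable is Rudin}(1) then applies and yields that $X$ is an $\omega$-Rudin space. Combining this with the assumed $\omega$-well-filteredness, the implication (2)$\Rightarrow$(1) of Proposition \ref{sober equiv using omega RD and omega WD} gives that $X$ is sober, which is exactly what we want.

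For the parenthetical strengthening, suppose instead that $X$ is countable and first-countable. By Corollary \ref{countable first-countable is second-countable}, a countable first-countable $T_0$ space is second-countable, so this case is immediately subsumed by the one just treated, and the same chain of implications delivers sobriety.

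The argument is a direct chaining of the cited results, so I do not anticipate a genuine obstacle. The only points requiring care are two. The first is the passage from second-countability of $X^s$ to its first-countability, which is simply the standard fact that every second-countable space is first-countable. The second, and more conceptually important, is the observation that it is the first-countability of the \emph{sobrification} $X^s$, rather than of $X$ itself, that Theorem \ref{sobrifcaltion first-countable is Rudin}(1) requires; this is precisely why Proposition \ref{Smyth CII} is invoked to transfer the countability hypothesis from $X$ to $X^s$, and it explains why second-countability (which is preserved under sobrification) is the natural assumption here rather than mere first-countability of $X$.
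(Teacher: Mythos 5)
Your proposal is correct and follows exactly the paper's own route: the paper deduces this corollary (together with Corollary \ref{second-countable is omega Rudin}) precisely from Proposition \ref{sober equiv using omega RD and omega WD}, Theorem \ref{sobrifcaltion first-countable is Rudin}, Proposition \ref{Smyth CII} and Corollary \ref{countable first-countable is second-countable}, chained in the same way you chain them. Your closing remark about why the first-countability must be transferred to the sobrification $X^s$ via second-countability is exactly the point the paper itself emphasizes after Corollary \ref{sobrification first-countable and omega-WF is sober}.
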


For a $T_0$ space $X$ with a first-countable Smyth power space, we have a similar result to Lemma \ref{local comp imply V topol finer than Scott topol}.

\begin{lemma}\label{Smyth first-countable imply V topol finer than Scott topol} Let $X$ be a $T_0$ space for which the Smyth power space $P_S(X)$ is first-countable. Then the Scott topology is coarser than the upper Vietoris topology on $\mathsf{K}(X)$.
\end{lemma}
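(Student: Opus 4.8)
The goal is to show $\sigma(\mathsf{K}(X))\subseteq\mathcal O(P_S(X))$, i.e. that every Scott open $\mathcal U\subseteq\mathsf{K}(X)$ is a neighbourhood in $P_S(X)$ of each of its points. Fix $K\in\mathcal U$. The plan is to use the first-countability of $P_S(X)$ exactly where Lemma \ref{local comp imply V topol finer than Scott topol} used local compactness. First I would extract a countable neighbourhood base of $K$ consisting of basic open sets $\Box U_m$ (for each abstract base member $W_m$ choose open $U_m$ with $K\in\Box U_m\subseteq W_m$), and then set $V_n=U_1\cap\cdots\cap U_n$, so that $\{\Box V_n : n\in\mathbb N\}$ is a descending neighbourhood base of $K$ with $K\subseteq V_{n+1}\subseteq V_n$. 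A short preliminary computation, using that $K$ is saturated and that $\uparrow y\in\mathsf{K}(X)$ for every $y$, shows $\bigcap_{n}V_n=K$: if $x\in\bigcap_n V_n$ but $x\notin\,\uparrow K=K$, pick an open $U\supseteq K$ with $x\notin U$; since $\{\Box V_n\}$ is a base, $\Box V_n\subseteq\Box U$ for some $n$, and testing $\Box V_n\subseteq\Box U$ on the sets $\uparrow y$ ($y\in V_n$) gives $V_n\subseteq U$, contradicting $x\in V_n$.

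For the main argument I would argue by contradiction: suppose $\Box V_n\not\subseteq\mathcal U$ for every $n$ and choose $K_n\in\Box V_n\setminus\mathcal U$, so $K_n\in\mathsf{K}(X)$, $K_n\subseteq V_n$ and $K_n\notin\mathcal U$. The key step is to build the directed family
$$\mathcal D=\Big\{\,K\cup\textstyle\bigcup_{i\in S}K_i : S\in\mathbb N^{(<\omega)},\ S\neq\emptyset\,\Big\}\subseteq\mathsf{K}(X),$$
each member being compact saturated (a finite union of such) and containing $K$. Since the union indexed by $S\cup T$ lies above, in the Smyth order, those indexed by $S$ and $T$, $\mathcal D$ is directed. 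Because $\mathcal U$ is an upper set and every member of $\mathcal D$ contains some $K_i\notin\mathcal U$ (so sits Smyth-below $K_i$), no member of $\mathcal D$ lies in $\mathcal U$. Finally $\bigcap\mathcal D=\bigcap_n(K\cup K_n)\subseteq\bigcap_n V_n=K$, hence $\bigcap\mathcal D=K\in\mathsf{K}(X)$; by Lemma \ref{Kmeet} the join $\bigvee\mathcal D$ exists and equals $K$. As $K\in\mathcal U$ and $\mathcal U$ is Scott open, $\mathcal D\cap\mathcal U\neq\emptyset$, contradicting $\mathcal D\cap\mathcal U=\emptyset$. Thus some $\Box V_n\subseteq\mathcal U$, and $K\in\Box V_n\subseteq\mathcal U$ exhibits $\mathcal U$ as open in $P_S(X)$.

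The main obstacle is producing a directed family witnessing $K$ as a Smyth-join while staying inside $\mathsf{K}(X)\setminus\mathcal U$. The naive idea of turning the convergent sequence $(K_n)$ (note $K_n\to K$ in $P_S(X)$) into a descending chain fails, since one cannot force the chosen $K_{n+1}\in\Box V_{n+1}\setminus\mathcal U$ to lie inside $K\cup K_n$, and arbitrary finite intersections of compact saturated sets need not be compact. The finite-union device circumvents this: finite unions are automatically compact saturated and automatically directed under the Smyth order, they remain outside the upper set $\mathcal U$, and their total intersection collapses to $K$ by the neighbourhood-base computation. This is the analogue, for first-countable $P_S(X)$, of the filtered family $\{G : K\subseteq\mathrm{int}\,G\}$ used in Lemma \ref{local comp imply V topol finer than Scott topol}.
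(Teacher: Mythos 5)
Your set-up is sound and matches the intended argument: extracting a countable descending neighbourhood base $\{\Box V_n : n\in\mathbb N\}$ at $K$ in $P_S(X)$ with $K\subseteq V_{n+1}\subseteq V_n$, proving $\bigcap_{n}V_n=K$ by testing the inclusions $\Box V_n\subseteq\Box U$ on the supercompact sets $\ua y$, and choosing $K_n\in\Box V_n\setminus\mathcal U$ for a contradiction. The gap is the sentence ``since the union indexed by $S\cup T$ lies above, in the Smyth order, those indexed by $S$ and $T$, $\mathcal D$ is directed'': you have the Smyth order backwards. Since $A_{S\cup T}:=K\cup\bigcup_{i\in S\cup T}K_i$ is a \emph{superset} of $A_S$ and of $A_T$, it satisfies $A_{S\cup T}\sqsubseteq A_S$ and $A_{S\cup T}\sqsubseteq A_T$, i.e.\ it is a common \emph{lower} bound; so $\mathcal D$ is filtered, not directed, in $(\mathsf{K}(X),\sqsubseteq)$. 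An upper bound of $A_S$ and $A_T$ inside $\mathcal D$ would have to be a member contained in $A_S\cap A_T$, and in general no member qualifies, because every member contains some whole $K_i$. This is fatal: Scott openness of $\mathcal U$ yields $\mathcal D\cap\mathcal U\neq\emptyset$ only for \emph{directed} families with supremum in $\mathcal U$. Lemma \ref{Kmeet} does give $\bigvee\mathcal D=\bigcap\mathcal D=K$ for your (arbitrary) family, but nothing follows from that. Concretely, in $X=\{1,2,3\}$ with the discrete topology, $\mathcal U=\{\{1\}\}$ is Scott open in $\mathsf{K}(X)$, and the family $\{\{1,2\},\{1,3\},\{1,2,3\}\}$ --- exactly of your finite-union form with $K=\{1\}$, $K_1=\{2\}$, $K_2=\{3\}$ --- has supremum $\{1\}\in\mathcal U$ while missing $\mathcal U$ entirely; so the deduction step itself is invalid, not merely unjustified.

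Note that the paper gives no inline proof of this lemma but defers to the proof of Theorem 5.7 of Xu and Yang, and the argument there keeps your $V_n$ and $K_n$ but replaces finite unions by the \emph{tails} $F_n=K\cup\bigcup_{i\geq n}K_i$. These decrease under inclusion, hence form an ascending chain in $(\mathsf{K}(X),\sqsubseteq)$, which is directed. The price is that $F_n$ is an infinite union, so its compactness must be proved, and this is where first-countability enters a second time: if $\{W_j : j\in J\}$ is an open cover of $F_n$, finitely many $W_j$ cover $K$; their union $W$ is open with $K\in\Box W$, so $\Box V_m\subseteq\Box W$ for some $m$, whence $V_m\subseteq W$ by your $\ua y$ test, and then $K_i\subseteq V_i\subseteq V_m\subseteq W$ for all $i\geq m$, leaving only the finitely many compact sets $K_i$ with $n\leq i<m$ to be covered by finitely many further $W_j$; saturation is clear since a union of upper sets is upper. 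Your own computation then gives $\bigcap_n F_n=K$, Lemma \ref{Kmeet} gives $\bigvee_n F_n=K\in\mathcal U$, Scott openness yields $F_{n_0}\in\mathcal U$ for some $n_0$, and since $F_{n_0}\sqsubseteq K_{n_0}$ and $\mathcal U$ is an upper set, $K_{n_0}\in\mathcal U$, the desired contradiction. Incidentally, this shows your closing diagnosis to be off: it is intersections of the $K_n$ that fail to stay compact, but a descending chain can be arranged, namely by tails of unions.
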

\begin{proof} See the proof of \cite[Theorem 5.7]{XY20}.
\end{proof}

The following conclusion is straightforward from Theorem \ref{Smythwf}, Corollary \ref{wf space Vietoris less Scott}, Corollary \ref{first-countable WF is sober} and Lemma \ref{Smyth first-countable imply V topol finer than Scott topol}.

\begin{corollary}\label{wf Smyth CI S=V} (\cite[Theorem 5.7]{XY20}) Let $X$ be a well-filtered space for which the Smyth power space $P_S(X)$ is first-countable. Then
\begin{enumerate}[\rm (1)]
\item the upper Vietoris topology agrees with the Scott topology on $\mk (X)$.
\item the Scott power space $\Sigma~\!\!\mk (X)$ is a first-countable sober space.
\end{enumerate}
\end{corollary}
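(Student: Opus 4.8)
The plan is to establish part (1) first by sandwiching the two topologies on $\mathsf{K}(X)$ between each other, and then to read off part (2) as an immediate consequence of the resulting identification $\Sigma\,\mathsf{K}(X) = P_S(X)$.

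For part (1), I would combine two one-sided comparisons. On the one hand, since $X$ is well-filtered, Corollary \ref{wf space Vietoris less Scott} guarantees that $\mathsf{K}(X)$ (with the Smyth order) is a dcpo and that the upper Vietoris topology is coarser than the Scott topology, i.e. $\mathcal O(P_S(X)) \subseteq \sigma(\mathsf{K}(X))$. On the other hand, since $P_S(X)$ is first-countable by hypothesis, Lemma \ref{Smyth first-countable imply V topol finer than Scott topol} supplies the reverse inclusion $\sigma(\mathsf{K}(X)) \subseteq \mathcal O(P_S(X))$. The two inclusions together give $\sigma(\mathsf{K}(X)) = \mathcal O(P_S(X))$, so the upper Vietoris and Scott topologies agree on $\mathsf{K}(X)$; equivalently, $\Sigma\,\mathsf{K}(X) = P_S(X)$ as topological spaces.

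For part (2), with the identification $\Sigma\,\mathsf{K}(X) = P_S(X)$ in hand, the first-countability of $\Sigma\,\mathsf{K}(X)$ is just the hypothesized first-countability of $P_S(X)$. For sobriety, I would note that since $X$ is well-filtered, Theorem \ref{Smythwf} makes $P_S(X)$ well-filtered, so $\Sigma\,\mathsf{K}(X)$ is a first-countable well-filtered $T_0$ space (it is $T_0$ as the Scott space of a poset); Corollary \ref{first-countable WF is sober} then forces it to be sober.

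Since all the substantive work has been isolated into the cited results — in particular Lemma \ref{Smyth first-countable imply V topol finer than Scott topol}, which is exactly where first-countability of $P_S(X)$ is exploited to dominate the Scott topology by the upper Vietoris topology — there is no genuine obstacle left; the argument is purely a matter of assembling the two inclusions and invoking the well-filtered-plus-first-countable sobriety criterion. The only point meriting attention is to keep the two hypotheses correctly matched to their roles, namely well-filteredness of $X$ for the inclusion $\mathcal O(P_S(X)) \subseteq \sigma(\mathsf{K}(X))$ and first-countability of $P_S(X)$ for the reverse inclusion, both of which are available by assumption.
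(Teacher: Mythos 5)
Your proposal is correct and follows exactly the route the paper intends: Corollary \ref{wf space Vietoris less Scott} for $\mathcal O(P_S(X))\subseteq\sigma(\mathsf{K}(X))$, Lemma \ref{Smyth first-countable imply V topol finer than Scott topol} for the reverse inclusion, and then Theorem \ref{Smythwf} plus Corollary \ref{first-countable WF is sober} applied to $\Sigma~\!\!\mathsf{K}(X)=P_S(X)$ to get first-countable sobriety. Nothing is missing; this matches the paper's (uncited but implicit) assembly of the same four results.
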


By Proposition \ref{min Compact countable is Smth CI} and Corollary \ref{wf Smyth CI S=V}, we obtain the following.

\begin{corollary}\label{countable countable CI S=V} (\cite[Corollary 5.10]{XY20}) Let $X$ be a first-countable well-filtered space $X$ in which
all compact subsets are countable (especially, $|X|\leq\omega$). Then
\begin{enumerate}[\rm (1)]
\item the upper Vietoris topology agrees with the Scott topology on $\mk (X)$.
\item the Scott power space $\Sigma~\!\!\mk (X)$ is a first-countable sober space.
\end{enumerate}
\end{corollary}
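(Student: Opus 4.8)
The plan is to reduce this corollary to the two results cited immediately before it, namely Proposition \ref{min Compact countable is Smth CI} and Corollary \ref{wf Smyth CI S=V}, by first checking that the countability hypothesis on compact subsets forces $\mathrm{min}(K)$ to be countable for every $K \in \mk(X)$.

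First I would fix an arbitrary $K \in \mk(X)$. By Lemma \ref{COMPminimalset}, $K = \ua \mathrm{min}(K)$ and $\mathrm{min}(K)$ is compact; in particular $\mathrm{min}(K)$ is a compact subset of $X$. Since every compact subset of $X$ is countable by hypothesis, $\mathrm{min}(K)$ is countable. As $K$ was arbitrary, $\mathrm{min}(K)$ is countable for every $K \in \mk(X)$.

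Next, because $X$ is first-countable and $\mathrm{min}(K)$ is countable for all $K \in \mk(X)$, Proposition \ref{min Compact countable is Smth CI} yields that the Smyth power space $P_S(X)$ is first-countable. Now $X$ is well-filtered and $P_S(X)$ is first-countable, so Corollary \ref{wf Smyth CI S=V} applies directly and delivers both conclusions at once: the upper Vietoris topology agrees with the Scott topology on $\mk(X)$, and the Scott power space $\Sigma~\!\!\mk(X)$ is a first-countable sober space.

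Finally, for the parenthetical special case $|X| \leq \omega$, I would only note that every subset of a countable set is countable, so in particular all compact subsets of $X$ are countable and the main hypothesis is satisfied automatically. Since everything here is a short chaining of the cited results, there is no genuine obstacle; the only point that requires care is the first step, where one must invoke Lemma \ref{COMPminimalset} to pass from ``compact subsets are countable'' to ``$\mathrm{min}(K)$ is countable'', which is precisely the form of hypothesis demanded by Proposition \ref{min Compact countable is Smth CI}.
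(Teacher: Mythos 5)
Your proof is correct and follows exactly the paper's route: the paper obtains this corollary precisely by chaining Proposition \ref{min Compact countable is Smth CI} with Corollary \ref{wf Smyth CI S=V}, just as you do. The only difference is that you make explicit the bridging step that $\mathrm{min}(K)$ is countable (via Lemma \ref{COMPminimalset}), which the paper leaves implicit; note it also follows even more directly from $\mathrm{min}(K)\subseteq K$, since $K$ itself is a compact subset of $X$ and hence countable by hypothesis.
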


Let $X_{coc}$ be the space in Example \ref{Scott sober not implies X is sober}. Then $X_{coc}$ is well-filtered and not first-countable, and the Scott power space $\Sigma~\!\!\mk (X)$ is a first-countable sober $c$-space, but $\sigma (\mk (X_{coc})\nsubseteq \mathcal O(P_S(X_{coc}))$.

By Example \ref{Scott sober not implies X is sober}, Lemma \ref{Smyth first-countable imply V topol finer than Scott topol} and Corollary \ref{wf Smyth CI S=V}, we naturally pose the following four questions.

\begin{question}\label{first-countable imply V topol finer than Scott topol} For a first-countable $T_0$ space $X$, is the Scott topology coarser than the upper Vietoris topology on $\mathsf{K}(X)$?
\end{question}

\begin{question}\label{wf CI S=V} For a first-countable well-filtered (or equivalently, a first-countable sober) space $X$, does the upper Vietoris topology and the Scott topology on $\mk (X)$ coincide?
\end{question}

\begin{question}\label{T2 CI S=V} For a first-countable $T_2$ space $X$, does the upper Vietoris topology and the Scott topology on $\mk (X)$ coincide?
\end{question}

\begin{question}\label{wf CI Scott power space is sober} Is the Scott power space of a first-countable well-filtered (or equivalently, a first-countable sober) space sober?
\end{question}

Since every metric space is $T_2$ (and hence sober), by Proposition \ref{Scott power space of locally compact sober space is sober}, Proposition \ref{metric space is Smth CI} and Corollary \ref{wf Smyth CI S=V}, we get the following conclusion.

\begin{corollary}\label{metric space Scott power space first-countable} Let $(X, d)$ be a metric space. Then
 \begin{enumerate}[\rm (1)]
 \item the upper Vietoris topology agrees with the Scott topology on $\mk ((X, d))$.
 \item the Scott power space $\Sigma~\!\!\mk((X, d))$ is a first-countable sober space.
 \end{enumerate}

\noindent If, in addition, $(X, d)$ is locally compact (especially, compact), then
\vskip 0.2cm
\noindent \emph{(3)} $\mk ((X, d))$ is a continuous semilattice.

\noindent \emph{(4)}  the Scott power space $\Sigma~\!\!\mk((X, d))$ is a $c$-space.

\end{corollary}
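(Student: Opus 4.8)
The plan is to deduce all four assertions directly from results already established, exploiting the fact that a metric space is simultaneously sober and has a first-countable Smyth power space. Nothing new needs to be constructed; the work is entirely a matter of checking that the hypotheses of the relevant cited statements are met.

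First I would record the two structural facts about $(X,d)$ that drive everything. Every metric space is Hausdorff, hence sober, and therefore well-filtered by Remark \ref{sober implies WF implies d-space}. Moreover, by Proposition \ref{metric space is Smth CI}, the Smyth power space $P_S((X,d))$ is first-countable. These are precisely the hypotheses of Corollary \ref{wf Smyth CI S=V}.

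For parts (1) and (2), I would simply invoke Corollary \ref{wf Smyth CI S=V}: as $(X,d)$ is well-filtered with first-countable Smyth power space, the upper Vietoris topology coincides with the Scott topology on $\mk((X,d))$, and the Scott power space $\Sigma~\!\!\mk((X,d))$ is a first-countable sober space. For parts (3) and (4), adding the hypothesis that $(X,d)$ is locally compact makes it a locally compact sober space, so Proposition \ref{Scott power space of locally compact sober space is sober} applies verbatim: its proof (via \cite[Proposition I-1.24.2]{redbook}) yields that $\mk((X,d))$ is a continuous semilattice, and its part (3) yields that $\Sigma~\!\!\mk((X,d))$ is a sober $c$-space, and in particular a $c$-space.

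Since each step is a direct citation, there is no genuine obstacle. The only points requiring a moment's attention are that one must pass through sobriety, not merely well-filteredness, in order to reach the conclusions of Proposition \ref{Scott power space of locally compact sober space is sober}, and that the continuous-\emph{semilattice} refinement asserted in (3) comes from the \emph{proof} of that proposition rather than its bare statement; both are immediate once one recalls that metric spaces are $T_2$ and hence sober.
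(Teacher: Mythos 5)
Your proposal is correct and follows essentially the same route as the paper, which likewise combines the sobriety of metric spaces (being $T_2$) with Proposition \ref{metric space is Smth CI}, Corollary \ref{wf Smyth CI S=V}, and Proposition \ref{Scott power space of locally compact sober space is sober}. Your remark that the continuous-semilattice claim in (3) comes from the proof of that proposition (via \cite[Proposition I-1.24.2]{redbook}) rather than its bare statement is exactly the right reading of the paper's citation.
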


The following two conclusions follow directly from Proposition \ref{Smyth CII}, Lemma \ref{Smyth first-countable imply V topol finer than Scott topol} and Corollary \ref{wf Smyth CI S=V}.

\begin{corollary}\label{second-countable imply V topol finer than Scott topol} Let $X$ be a second-countable $T_0$ space. Then the Scott topology is coarser than the upper Vietoris topology on $\mathsf{K}(X)$.
\end{corollary}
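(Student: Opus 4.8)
The plan is to deduce the statement by chaining the two cited results, using only the elementary fact that every second-countable space is first-countable. First I would apply Proposition~\ref{Smyth CII}: since $X$ is assumed second-countable, condition~(1) of that proposition holds, so condition~(2) yields that the Smyth power space $P_S(X)$ is second-countable as well. A fortiori $P_S(X)$ is first-countable, since a countable base for the whole space restricts to a countable neighbourhood base at each point.

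With first-countability of $P_S(X)$ in hand, the conclusion is immediate from Lemma~\ref{Smyth first-countable imply V topol finer than Scott topol}. That lemma states exactly that if $P_S(X)$ is first-countable, then the Scott topology is coarser than the upper Vietoris topology on $\mathsf{K}(X)$, i.e.\ $\sigma(\mathsf{K}(X)) \subseteq \mathcal O(P_S(X))$. Combining this with the previous step gives the corollary at once.

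There is no substantive obstacle here; the result is a purely formal consequence of Proposition~\ref{Smyth CII} together with the first-countable comparison Lemma~\ref{Smyth first-countable imply V topol finer than Scott topol}. The only point worth emphasizing is that well-filteredness of $X$ plays no role in this direction: whereas Corollary~\ref{wf Smyth CI S=V} additionally assumes $X$ well-filtered in order to obtain the reverse inclusion and hence the coincidence of the two topologies, the present statement asserts only the one-sided containment $\sigma(\mathsf{K}(X)) \subseteq \mathcal O(P_S(X))$, which holds for every second-countable $T_0$ space, irrespective of any separation or filteredness hypotheses.
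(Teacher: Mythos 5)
Your proof is correct and matches the paper's own derivation: the paper states that this corollary ``follows directly from Proposition~\ref{Smyth CII}, Lemma~\ref{Smyth first-countable imply V topol finer than Scott topol} and Corollary~\ref{wf Smyth CI S=V}'', and your chain (second-countability of $X$ gives second-countability, hence first-countability, of $P_S(X)$ via Proposition~\ref{Smyth CII}, then Lemma~\ref{Smyth first-countable imply V topol finer than Scott topol} gives $\sigma(\mathsf{K}(X))\subseteq\mathcal{O}(P_S(X))$) is exactly that argument, with Corollary~\ref{wf Smyth CI S=V} correctly recognized as needed only for the companion well-filtered statement. Your closing remark that well-filteredness is irrelevant to this one-sided containment is also accurate.
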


\begin{corollary}\label{second-countable wf imply V topol=Scott topol} Let $X$ be a second-countable well-filtered space (or equivalently, a second-countable sober space). Then
\begin{enumerate}[\rm (1)]
\item the Scott topology agrees with the upper Vietoris topology on $\mathsf{K}(X)$.
 \item the Scott power space of $X$ is a second-countable sober space.
 \end{enumerate}
\end{corollary}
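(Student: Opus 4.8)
The plan is to derive the statement directly from the cited results, handling the two parts in turn and routing everything through the Smyth power space, which is where second-countability is easiest to control.

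First I would settle the parenthetical equivalence in the hypothesis. Since $X$ is second-countable it is in particular first-countable, so by Corollary \ref{first-countable WF is sober} a first-countable well-filtered space is sober; conversely, sobriety implies well-filteredness by Remark \ref{sober implies WF implies d-space}. Hence, under second-countability, ``well-filtered'' and ``sober'' are interchangeable, and for the rest of the argument I may freely assume $X$ is well-filtered.

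The main transfer step comes next. By Proposition \ref{Smyth CII}, second-countability of $X$ yields second-countability of $P_S(X)$, and in particular $P_S(X)$ is first-countable. Since $X$ is well-filtered and $P_S(X)$ is first-countable, Corollary \ref{wf Smyth CI S=V} applies directly: it gives part (1), namely that the Scott topology and the upper Vietoris topology on $\mathsf{K}(X)$ coincide, and it simultaneously tells us that $\Sigma\mathsf{K}(X)$ is first-countable and sober. This already secures the sobriety half of part (2).

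For the remaining second-countability in part (2), I would exploit the coincidence of topologies established in part (1): it says precisely that $\Sigma\mathsf{K}(X)$ and $P_S(X)$ are one and the same topological space. Since $P_S(X)$ is second-countable (again by Proposition \ref{Smyth CII}), so is $\Sigma\mathsf{K}(X)$, and combining this with the sobriety obtained above completes part (2). There is no genuine obstacle here, as the statement is essentially a repackaging of earlier results; the only point demanding a little care is not to rest content with the ``first-countable sober'' conclusion furnished by Corollary \ref{wf Smyth CI S=V}, but to upgrade first-countability to second-countability by way of the identification $\Sigma\mathsf{K}(X)=P_S(X)$ together with Proposition \ref{Smyth CII}.
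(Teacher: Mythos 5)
Your proposal is correct and follows essentially the same route as the paper, which derives the corollary from Proposition \ref{Smyth CII} (transferring second-countability to $P_S(X)$) and Corollary \ref{wf Smyth CI S=V} (giving the coincidence of topologies and first-countable sobriety of $\Sigma\mathsf{K}(X)$). Your explicit final step — upgrading first-countability of $\Sigma\mathsf{K}(X)$ to second-countability via the identification $\Sigma\mathsf{K}(X)=P_S(X)$ and Proposition \ref{Smyth CII} — is precisely the intended reading of the paper's ``follows directly'' citation.
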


The following example shows that there is a countable Hausdorff space $X$ for which the Scott power space $\Sigma ~\!\!\mk (X)$ is second-countable but $X$ is not first-countable (and hence $P_S(X)$ is not first-countable).

\begin{example}\label{Scott power space CII but X not CI} Let $p$ be a point in $\beta (\mathbb{N})\setminus \mathbb{N}$, where $\beta (\mathbb{N})$ is the Stone-C\v ech compactification of the discrete space of natural numbers, and consider on $X=\mathbb{N}\cup\{p\}$ the
induced topology (cf. \cite[Example II-1.25]{redbook}). Then
\begin{enumerate}[\rm (a)]
\item $|X|=\omega$ and $X$ is a non-discrete Hausdorff space and hence a sober space.
\item $\mk (X)=X^{(<\omega)}\setminus \{\emptyset\}$ and $\mathrm{int}K=\emptyset$ for each $K\in \mk (X)$. So $X$ is not locally compact.
\item $\mk (X)$ is a Noetherian poset and $|\mk (X)|=\omega$. Hence the Scott power space $\Sigma~\!\!\mk (X)$ is a second-countable sober $c$-space.

Clearly, $\mk (X)=X^{(<\omega)}\setminus \{\emptyset\}$ (with the Smyth order) is Noetherian (and hence algebraic) and $|\mk (X)|=\omega$ since $|X|=\omega$. Therefore, $\sigma(\mk (X))=\alpha(\mk (X))$ and $\{\uparrow_{\mk (X)} F : F\in X^{(<\omega)}\setminus \{\emptyset\}\}$ is a countable base of $\Sigma \mk (X)$. By Theorem \ref{algebraic is continuous} and Proposition\ref{quasicontinuous domain is sober}, $\Sigma \mk (X)$ is a sober $c$-space.

 \item the upper Vietoris topology and the Scott topology on $\mk (X)$ does not coincide, or more precisely, $\sigma (\mk (X))\nsubseteq \mathcal O(P_S(X))$.

     By Corollary \ref{wf space Vietoris less Scott}, $\mathcal O(P_S(X))\subseteq \sigma (\mk (X))$. Clearly, for any $F\in X^{(<\omega)}\setminus \{\emptyset\}$, $\uparrow_{\mk (X)} F\in \sigma(\mk (X))$ but $\uparrow_{\mk (X)} F\not\in \mathcal O(P_S(X))$.

 \item  Neither $X$ nor $P_S(X)$ is first-countable.

     By (d), Proposition \ref{min Compact countable is Smth CI} and Lemma \ref{Smyth first-countable imply V topol finer than Scott topol}, neither $P_S(X)$ nor $X$ is first-countable (cf. \cite[Corollary 3.6.17]{Engelking}).
     \end{enumerate}
\end{example}

The above example also shows that if the Smyth power space is replaced with the Scott power space in the conditions of Lemma \ref{Smyth first-countable imply V topol finer than Scott topol} and Corollary \ref{wf Smyth CI S=V}, the analogous results to Lemma \ref{Smyth first-countable imply V topol finer than Scott topol} and Corollary \ref{wf Smyth CI S=V} do not hold.

By Proposition \ref{Smyth CII}, Lemma \ref{Smyth first-countable imply V topol finer than Scott topol}, Corollary \ref{wf Smyth CI S=V} and Corollary \ref{second-countable wf imply V topol=Scott topol}, we raise the following question.

\begin{question}\label{X CII implies Scott power space CII} For a second-countable $T_0$ space $X$, is the Scott power space of $X$ second-countable?
\end{question}

\section{Rudin property and well-filtered determinedness of Smyth power sapces and Scott power spaces}

Firstly, we discuss the Rudin property and well-filtered determinedness of Smyth power spaces. The following result was proved in \cite{XSXZ-2020}.

\begin{proposition}\label{SmythWD} (\cite[Theorem 7.21]{XSXZ-2020}) Let $X$ be a $T_0$ space. If $P_S(X)$ is well-filtered determined, then $X$ is well-filtered determined.
\end{proposition}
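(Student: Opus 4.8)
The plan is to deduce the well-filtered determinedness of $X$ from that of $P_S(X)$ by transporting each irreducible closed set of $X$ up to $P_S(X)$ along the canonical embedding $\xi_X$, using the functoriality of $P_S$ and the equivalence of Theorem \ref{Smythwf}. Since $\wdd(X)\subseteq\ir_c(X)$ always holds (Proposition \ref{SDRWIsetrelation}), it suffices to fix an arbitrary $A\in\ir_c(X)$ and prove it is a $\wdd$ set. First I would note that $\xi_X(A)$ is irreducible in $P_S(X)$ by Lemma \ref{irrimage}, so its closure $\overline{\xi_X(A)}=\Diamond A$ (Lemma \ref{closure in Smyth power space}, using $A=\overline{A}$) is an irreducible closed subset of $P_S(X)$; as $P_S(X)$ is a $\wdd$ space, $\Diamond A$ is therefore a $\wdd$ set of $P_S(X)$.

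Next, given any continuous $f:X\to Y$ with $Y$ well-filtered, I would form $P_S(f):P_S(X)\to P_S(Y)$ (Proposition \ref{PS functor}) and record that $P_S(Y)$ is well-filtered by Theorem \ref{Smythwf}. Applying the $\wdd$ property of $\Diamond A$ to this map yields a unique $\mathcal Q\in\mk(Y)$ with $\overline{P_S(f)(\Diamond A)}=\overline{\{\mathcal Q\}}$. The content of this step is to identify the left-hand side: using the naturality $P_S(f)\circ\xi_X=\xi_Y\circ f$ (which holds because $\ua f(\ua a)=\ua f(a)$) one gets $\xi_Y(f(A))\subseteq P_S(f)(\Diamond A)$, and a short monotonicity argument shows every $\ua f(K)$ with $K\cap A\neq\emptyset$ already lies in $\overline{\xi_Y(f(A))}$ (pick $a\in K\cap A$; then $\ua f(K)\sqsubseteq\ua f(a)=\xi_Y(f(a))$). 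Hence $\overline{P_S(f)(\Diamond A)}=\overline{\xi_Y(f(A))}=\Diamond\overline{f(A)}$ by Lemma \ref{closure in Smyth power space} applied in $Y$, so that $\overline{\{\mathcal Q\}}=\Diamond\overline{f(A)}$.

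The decisive and most delicate step is to extract an actual point of $Y$ from $\mathcal Q$. Writing the specialization order of $P_S(Y)$ as the Smyth order (Remark \ref{xi embdding}), the identity $\overline{\{\mathcal Q\}}=\da_{P_S(Y)}\mathcal Q=\{L\in\mk(Y):\mathcal Q\subseteq L\}$ together with $\overline{\{\mathcal Q\}}=\Diamond\overline{f(A)}=\{L\in\mk(Y):L\cap\overline{f(A)}\neq\emptyset\}$ gives two facts: taking $L=\ua b$ for each $b\in\overline{f(A)}$ forces $\mathcal Q\subseteq\ua b$, while taking $L=\mathcal Q$ forces $\mathcal Q\cap\overline{f(A)}\neq\emptyset$. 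Choosing $q\in\mathcal Q\cap\overline{f(A)}$, the first fact gives $q\geq b$ for all $b\in\overline{f(A)}$, so $q$ is a largest element of $\overline{f(A)}$ and $\overline{f(A)}=\da q=\overline{\{q\}}$.

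Setting $y_A=q$ and invoking the $T_0$ separation of $Y$ for uniqueness shows $A$ is a $\wdd$ set, and since $A\in\ir_c(X)$ was arbitrary, $X$ is a $\wdd$ space. I expect the main obstacle to be the closure computation $\overline{P_S(f)(\Diamond A)}=\Diamond\overline{f(A)}$ together with the point-extraction, since these are exactly where the specific geometry of the Smyth order and of the embedding $\xi$ must be handled with care; everything else is bookkeeping with the cited results, and it is worth emphasizing that well-filteredness of $Y$ enters only through the well-filteredness of $P_S(Y)$, which is what makes the $\wdd$ hypothesis on $P_S(X)$ usable.
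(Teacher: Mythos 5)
Your proof is correct, and every ingredient you invoke is available in the paper: Lemma \ref{irrimage} and Lemma \ref{closure in Smyth power space} for $\overline{\xi_X(A)}=\Diamond A\in\ir_c(P_S(X))$, Proposition \ref{PS functor} for $P_S(f)$, Theorem \ref{Smythwf} for the well-filteredness of $P_S(Y)$, and Remark \ref{xi embdding} for the identification of the specialization order of $P_S(Y)$ with the Smyth order. Note, however, that the paper itself offers no proof of this proposition -- it is quoted from \cite[Theorem 7.21]{XSXZ-2020} -- so the natural comparison is with the paper's own analogous argument for Scott power spaces, Proposition \ref{Scott WD implies X is WD}. There the same transport scheme (push $A$ up to $\Diamond A$, apply the $\wdd$ property to $P_S^\sigma(f)$, obtain $Q\in\mk(Y)$) is used, but the point is extracted from $Q$ by first proving that $Q$ is \emph{supercompact} and then invoking \cite[Fact 2.2]{Klause-Heckmann} to write $Q=\ua y_Q$. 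Your extraction is genuinely different and more elementary: from $\overline{\{\mathcal Q\}}=\da_{P_S(Y)}\mathcal Q=\Diamond\overline{f(A)}$ you read off, by testing against $L=\ua b$ and $L=\mathcal Q$, that any $q\in\mathcal Q\cap\overline{f(A)}$ is a largest element of $\overline{f(A)}$, so $\overline{f(A)}=\overline{\{q\}}$ directly, with no supercompactness detour and no appeal to external facts. Your naturality computation $P_S(f)\circ\xi_X=\xi_Y\circ f$ and the monotonicity step $\ua f(K)\sqsubseteq\ua f(a)$ for $a\in K\cap A$, giving $\overline{P_S(f)(\Diamond A)}=\overline{\xi_Y(f(A))}=\Diamond\overline{f(A)}$, are also correct and replace the closure bookkeeping done via Remark \ref{closure A = closre B} in the paper's Scott-topology version. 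The trade-off: the supercompactness route generalizes to the Scott power space setting (where $\sigma(\mk(X))$ may be strictly finer than the upper Vietoris topology, so one cannot work purely inside $P_S(Y)$), whereas your argument exploits that here both source and target carry the upper Vietoris topology, which is exactly why it comes out shorter.
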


By Theorem \ref{Smythwf} and Theorem \ref{soberequiv}, we have the following.

\begin{proposition}\label{wf Smyth RD or WD = X Rudin or WD} Let $X$ be a well-filtered space. Then the following conditions are equivalent:
\begin{enumerate}[\rm (1)]
\item $X$ is a Rudin space.
\item $X$ is a $\mathsf{WD}$ space.
\item $P_S(X)$ is a Rudin space.
\item $P_S(X)$ is a $\mathsf{WD}$ space.
\end{enumerate}
\end{proposition}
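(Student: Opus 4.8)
The plan is to show that each of the four conditions is equivalent to the sobriety of $X$ (equivalently, of $P_S(X)$), thereby collapsing all four into a single statement. The key observation is that $X$ is assumed well-filtered, so by Theorem \ref{Smythwf} its Smyth power space $P_S(X)$ is well-filtered as well. This places both $X$ and $P_S(X)$ within the scope of the characterization of sobriety among well-filtered spaces.

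First I would handle the equivalence $(1)\Leftrightarrow(2)$. Since $X$ is well-filtered, Theorem \ref{soberequiv} tells us that $X$ is sober iff $X$ is a well-filtered Rudin space iff $X$ is a well-filtered $\mathsf{WD}$ space; because well-filteredness is already part of the hypothesis, these collapse to the statement that $X$ is sober $\Leftrightarrow$ $X$ is Rudin $\Leftrightarrow$ $X$ is $\mathsf{WD}$. Hence $(1)$ and $(2)$ are each equivalent to the sobriety of $X$.

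Next I would treat $(3)\Leftrightarrow(4)$ in exactly the same way, now applied to $P_S(X)$. As noted, $P_S(X)$ is well-filtered, so Theorem \ref{soberequiv} (used for the space $P_S(X)$) yields that $P_S(X)$ is sober $\Leftrightarrow$ $P_S(X)$ is Rudin $\Leftrightarrow$ $P_S(X)$ is $\mathsf{WD}$; thus $(3)$ and $(4)$ are each equivalent to the sobriety of $P_S(X)$.

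Finally, to tie the two halves together, I would invoke the Heckmann--Keimel--Schalk Theorem (Theorem \ref{Schalk-Heckman-Keimel theorem}), which gives that $X$ is sober iff $P_S(X)$ is sober. Assembling the three equivalences then produces
\[
(1)\Leftrightarrow(2)\Leftrightarrow X\text{ sober}\Leftrightarrow P_S(X)\text{ sober}\Leftrightarrow(3)\Leftrightarrow(4),
\]
which completes the argument. I do not anticipate a genuine obstacle: the proof is a clean assembly of three previously established results, and the only point requiring care is to strip the now-redundant ``well-filtered'' hypothesis correctly from the relevant clauses of Theorem \ref{soberequiv} in each of the two applications.
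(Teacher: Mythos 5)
Your proposal is correct and takes essentially the same route as the paper, whose entire proof is the citation ``By Theorem \ref{Smythwf} and Theorem \ref{soberequiv}'': you unpack exactly that argument, collapsing (1)--(2) and (3)--(4) to sobriety of $X$ and of $P_S(X)$ respectively. If anything, you are more careful than the paper's one-line citation, since you make explicit the Heckmann--Keimel--Schalk bridge (Theorem \ref{Schalk-Heckman-Keimel theorem}, $X$ sober $\Leftrightarrow$ $P_S(X)$ sober) that the paper leaves implicit but which is genuinely needed to tie the two halves together.
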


It is still not known whether the converse of Proposition \ref{SmythWD} holds (that is, whether the Smyth power space $P_S(X)$ of a well-filtered determined $T_0$ space $X$ is well-filtered determined) (see \cite[Question 8.6]{XSXZ-2020}).

\begin{theorem}\label{Smyth RD}  Let $X$ be a $T_0$ space. If $P_S(X)$ is a Rudin space, then $X$ is a Rudin space.
\end{theorem}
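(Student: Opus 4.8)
The plan is to transfer the Rudin property across the canonical embedding $\xi_X : X \longrightarrow P_S(X)$ and the union map $\bigcup : P_S(P_S(X)) \longrightarrow P_S(X)$. Fix an arbitrary $A\in\ir_c(X)$; I must produce a filtered family in $\mk(X)$ for which $A$ is a minimal closed set meeting all its members. First I would lift $A$ to $P_S(X)$. Since $\xi_X$ is continuous and $A$ is irreducible, $\xi_X(A)\in\ir(P_S(X))$ by Lemma \ref{irrimage}, and by Lemma \ref{closure in Smyth power space} its closure is $\cl_{\mathcal O(P_S(X))}\xi_X(A)=\Diamond\overline{A}=\Diamond A$ (as $A$ is closed). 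Hence $\Diamond A\in\ir_c(P_S(X))$. Because $P_S(X)$ is assumed to be a Rudin space, $\Diamond A$ has the Rudin property, so there is a filtered family $\mathbb{K}\subseteq\mk(P_S(X))$ with $\Diamond A\in m(\mathbb{K})$.

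Next I would push $\mathbb{K}$ back down to $X$. For each $\mathcal{K}\in\mathbb{K}$ we have $\mathcal{K}\in\mk(P_S(X))$, so $\bigcup\mathcal{K}\in\mk(X)$ by Lemma \ref{K union}(1); set $\mathbb{K}_\cup=\{\bigcup\mathcal{K} : \mathcal{K}\in\mathbb{K}\}\subseteq\mk(X)$. Since $\bigcup$ is continuous (Lemma \ref{K union}(2)) it is monotone for the specialization (Smyth) orders, so the filteredness of $\mathbb{K}$ passes to $\mathbb{K}_\cup$: given $\mathcal K_1,\mathcal K_2\in\mathbb K$ with common upper bound $\mathcal K_3\in\mathbb K$, monotonicity gives $\bigcup\mathcal K_1,\bigcup\mathcal K_2\sqsubseteq\bigcup\mathcal K_3$. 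Membership $A\in M(\mathbb{K}_\cup)$ is then immediate: for each $\mathcal{K}\in\mathbb{K}$, $\Diamond A\cap\mathcal{K}\neq\emptyset$ yields some $Q\in\mathcal{K}$ with $Q\cap A\neq\emptyset$, and since $Q\subseteq\bigcup\mathcal{K}$ we get $A\cap\bigcup\mathcal{K}\neq\emptyset$.

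The main obstacle is the minimality of $A$ in $M(\mathbb{K}_\cup)$, which I would reduce to the minimality of $\Diamond A$ in $M(\mathbb{K})$. Suppose $B$ is a closed subset of $X$ with $B\subseteq A$ and $B\in M(\mathbb{K}_\cup)$. For each $\mathcal{K}\in\mathbb{K}$, pick $b\in B\cap\bigcup\mathcal{K}$; then $b\in Q$ for some $Q\in\mathcal{K}$, whence $Q\in\Diamond B\cap\mathcal{K}$, so $\Diamond B\in M(\mathbb{K})$. As $\Diamond B$ is closed in $P_S(X)$ and $\Diamond B\subseteq\Diamond A$, the minimality of $\Diamond A$ forces $\Diamond B=\Diamond A$; and since $x\in C\Leftrightarrow\ua x\in\Diamond C$ for any closed $C$, the operator $\Diamond$ is injective on closed sets, giving $B=A$. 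Thus $A\in m(\mathbb{K}_\cup)$, so $A$ has the Rudin property in $X$. As $A\in\ir_c(X)$ was arbitrary, $X$ is a Rudin space. The delicate points to watch are precisely this two-way correspondence between ``$B$ meets $\bigcup\mathcal{K}$'' and ``$\Diamond B$ meets $\mathcal{K}$'', together with the injectivity of $\Diamond$, which is what makes the minimality transfer faithfully.
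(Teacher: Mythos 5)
Your proposal is correct and follows essentially the same route as the paper's proof: lift $A$ to $\Diamond A=\overline{\xi_X(A)}\in\ir_c(P_S(X))$, apply the Rudin property of $P_S(X)$, push the witnessing filtered family down via $\bigcup:\mk(P_S(X))\to\mk(X)$, and transfer minimality through the equivalence between ``$B$ meets $\bigcup\mathcal K$'' and ``$\Diamond B$ meets $\mathcal K$''. The only difference is cosmetic: the paper argues minimality contrapositively (a proper closed $B\subsetneq A$ gives $\Diamond B\subsetneq\Diamond A$, hence $\Diamond B\cap\mathcal K_d=\emptyset$ for some $d$, so $B\notin M$), while you argue it directly via the injectivity of $\Diamond$ on closed sets.
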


\begin{proof} Let $A\in\ir_c(X)$. Then by Lemma \ref{closure in Smyth power space}, $\overline{\xi_X(A)}=\Diamond A\in\ir_c(P_S(X))$, where $\xi_X : X \longrightarrow P_S(X)$ is the canonical embedding (see Remark \ref{xi embdding}). Since $P_S(X)$ is a Rudin space, there is a filtered family $\{\mathcal{K}_d : d\in D\}\subseteq \mathbf{K}(P_S(X))$ such that $\Diamond A\in m(\{\mathcal{K}_d : d\in D\})$. For each $d\in D$, let $K_d=\bigcup \mathcal{K}_d$. Then by Lemma \ref{K union}, $\{K_d : d\in D\}\subseteq \mathbf{K}(X)$ is filtered. Clearly, $A\in M(\{K_d : d\in D\})$. For any proper closed subset $B$ of $A$, we have that $\Diamond B\in\mathcal C(P_S(X))$ and $\Diamond B$ is a proper closed subset of $\Diamond A$ (for any $a\in A\setminus B$, $\ua a\in \Diamond A\setminus \Diamond B$). By the minimality of $\Diamond A$, there is a $d\in D$ such that $\Diamond B\cap \mathcal{K}_d=\emptyset$, and consequently, $B\cap K_d=\emptyset$. Thus $B\notin M(\{K_d : d\in D\})$, and hence $A\in m(\{K_d : d\in D\})$.
\end{proof}

\begin{question}\label{Smyth RD question} Is the Smyth power space $P_S(X)$ of a Rudin space $X$ still a Rudin space?
\end{question}

Now we discuss the Rudin property and well-filtered determinedness of Scott power spaces.

First, even for a sober space $X$ (whence it is both a Rudin space and a $\mathsf{WD}$ space by Theorem \ref{soberequiv}), its Scott power space may not be a $\mathsf{WD}$ space (and hence not a $\mathsf{WD}$ space). Indeed, let $(\Sigma~\!\!\mathcal L)_{\top}$ be as in Theorem \ref{Scott power space of a sober space is non-sober}. Then $(\Sigma~\!\!\mathcal L)_{\top}$ is a sober space. By Theorem \ref{wf imply Scott wf} and Theorem \ref{Scott power space of a sober space is non-sober}, the Scott power space $\Sigma~\!\!\mathsf{K}((\Sigma~\!\!\mathcal L)_{\top})$ is well-filtered but non-sober. Hence by Theorem \ref{soberequiv}, $\Sigma~\!\!\mathsf{K}((\Sigma~\!\!\mathcal L)_{\top})$ is neither a Rudin space nor a $\mathsf{WD}$ space.

Conversely, Example \ref{Scott sober not implies X is sober} shows that there is a well-filtered space $X$ such that
 \begin{enumerate}[\rm (a)]
 \item the Scott power space $\Sigma~\!\!\mk (X)$ is a first-countable sober $c$-space and hence $\Sigma~\!\!\mk (X)$ is both Rudin and $\mathsf{WD}$.
 \item $X$ is neither a Rudin space nor a $\mathsf{WD}$ space.
 \item the Smyth power space $P_S(X)$ is neither a Rudin space nor a $\mathsf{WD}$ space.
 \end{enumerate}

 Then we investigate some sufficient conditions under which the well-filtered determinedness (resp. the Rudin property) of Scott power space of a $T_0$ space $X$ implies that of $X$.

\begin{definition}\label{property S} A $T_0$ space $X$ is said to have \emph{property S} if for each $A\in \ir_c(X)$, $\{\ua a : a\in A\}\in \ir (\Sigma~\!\! \mk (X))$ or $\Diamond A\in \ir_c(\Sigma~\!\! \mk (X))$. A poset $P$ is said to have property S if $\Sigma~\!\! P$ has property S.
\end{definition}

\begin{remark}\label{xi continuous implies property S} Let $X$ be a $T_0$ space and $A\in \ir_c(X)$.
\begin{enumerate}[\rm (1)]
\item Since $\xi_X : X \longrightarrow P_S(\mk (X)), x\mapsto \ua x$, is continuous, $\{\ua a : a\in A\}\in \ir (P_S(X))$ and $\cl_{\mathcal O(P_S(X))}\{\ua a : a\in A\}=\Diamond A\in \ir_c(P_S(X))$ by Lemma \ref{irrimage} and Lemma \ref{closure in Smyth power space}.

\item If $\xi_X^\sigma : X \longrightarrow \Sigma~\!\! \mk (X), x\mapsto \ua x$, is continuous, then by Lemma \ref{irrimage}, $X$ has property S.

\item If $\sigma (\mk (X))\subseteq \mathcal O(P_S(X))$, then $\xi_X^\sigma : X \longrightarrow \Sigma~\!\! \mk (X)$ is continuous by Remark \ref{xi embdding}, and hence $X$ has property S.

\item For a poset $P$, by Lemma \ref{Scott continuous equiv} and Lemma \ref{Kmeet}, the mapping $\xi_P^\sigma : \Sigma~\!\! P  \longrightarrow \Sigma~\!\! \mk (\Sigma~\!\! P), x\mapsto \ua x$, is continuous. Therefore, $P$ has property S.
    \end{enumerate}
 \end{remark}

\begin{proposition}\label{Scott RD is RD}  Suppose that a $T_0$ space $X$ has property S and $\mathcal O(P_S(X))\subseteq\sigma (\mk (X))$. If  $\Sigma~\!\!\mathsf{K}(X)$ is a Rudin space, then $X$ is a Rudin space.
\end{proposition}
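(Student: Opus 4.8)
The plan is to adapt the argument of Theorem~\ref{Smyth RD} to the Scott power space, with property S supplying the irreducibility that comes for free in the Smyth case. Fix $A\in\ir_c(X)$; the goal is to produce a filtered family in $\mk(X)$ witnessing $A\in\mathsf{RD}(X)$. First I would record two facts used throughout. Since $\Diamond A=\mk(X)\setminus\Box(X\setminus A)$ is upper Vietoris closed and $\mathcal O(P_S(X))\subseteq\sigma(\mk(X))$, the set $\Diamond A$ (and likewise $\Diamond B$ for any closed $B\subseteq X$) is Scott closed; and for $a\in X$ and a closed set $B$ one has $\ua a\cap B\neq\emptyset$ iff $a\in B$, because $B$ is a lower set in the specialization order. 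Using property S, I then obtain an irreducible closed subset $\mathcal A$ of $\Sigma\mk(X)$ with $\{\ua a:a\in A\}\subseteq\mathcal A\subseteq\Diamond A$: if $\{\ua a:a\in A\}\in\ir(\Sigma\mk(X))$, take $\mathcal A$ to be its Scott-closure, which sits inside $\Diamond A$ by the first recorded fact; if instead $\Diamond A\in\ir_c(\Sigma\mk(X))$, take $\mathcal A=\Diamond A$.

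Next I would feed $\mathcal A$ into the hypothesis that $\Sigma\mk(X)$ is a Rudin space, yielding a filtered family $\{\mathcal K_d:d\in D\}\subseteq\mk(\Sigma\mk(X))$ with $\mathcal A\in m(\{\mathcal K_d:d\in D\})$. For each $d$, the set $\mathcal K_d\cap\mathcal A$ is a nonempty closed subset of the compact set $\mathcal K_d$ in $\Sigma\mk(X)$, hence compact; its saturation $\ua_{\mk(X)}(\mathcal K_d\cap\mathcal A)$ lies in $\mk(\Sigma\mk(X))$, and by $\mathcal O(P_S(X))\subseteq\sigma(\mk(X))$ it also lies in $\mk(P_S(X))$. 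Lemma~\ref{K union} then makes $K_d:=\bigcup(\mathcal K_d\cap\mathcal A)\in\mk(X)$, where I use $\bigcup\ua_{\mk(X)}(\mathcal K_d\cap\mathcal A)=\bigcup(\mathcal K_d\cap\mathcal A)$. Since both $\mathcal K\mapsto\ua_{\mk(X)}(\mathcal K\cap\mathcal A)$ and $\bigcup$ are monotone for the Smyth order, the family $\{K_d:d\in D\}$ is filtered in $\mk(X)$.

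Finally I would verify $A\in m(\{K_d:d\in D\})$. For membership in $M$: each $\mathcal A\cap\mathcal K_d\neq\emptyset$ contains some $K\subseteq K_d$ with $K\in\mathcal A\subseteq\Diamond A$, so $\emptyset\neq K\cap A\subseteq K_d\cap A$. For minimality, let $B\subsetneq A$ be properly closed in $X$. The second recorded fact gives $A\not\subseteq B$, hence $\{\ua a:a\in A\}\not\subseteq\Diamond B$, so $\mathcal A\not\subseteq\Diamond B$ and therefore $\mathcal A\cap\Diamond B$ is a proper Scott-closed subset of $\mathcal A$. Minimality of $\mathcal A$ then forces some $d$ with $\mathcal A\cap\Diamond B\cap\mathcal K_d=\emptyset$; consequently every $K\in\mathcal K_d\cap\mathcal A$ misses $B$, whence $K_d\cap B=\emptyset$ and $B\notin M(\{K_d:d\in D\})$. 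Thus $\overline A=A\in m(\{K_d:d\in D\})$, so $A\in\mathsf{RD}(X)$, and as $A$ was arbitrary, $X$ is a Rudin space.

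The step I expect to be the main obstacle is the passage from $\mathcal A$ back down to $\mk(X)$ in the case $\mathcal A\subsetneq\Diamond A$ (the first alternative of property S). There the naive union $\bigcup\mathcal K_d$ used in Theorem~\ref{Smyth RD} is too large: the minimality of $\mathcal A$ only controls those members of $\mathcal K_d$ that lie in $\mathcal A$, so one must truncate to $\mathcal K_d\cap\mathcal A$ before taking unions. Justifying that this truncated family still consists of compact saturated sets of $X$ and remains filtered is exactly where the refinement hypothesis $\mathcal O(P_S(X))\subseteq\sigma(\mk(X))$, together with Lemma~\ref{K union}, is indispensable.
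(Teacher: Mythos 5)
Your proof is correct, but it diverges from the paper's argument in one technical choice, and one of your side remarks is mistaken. The paper also splits along the two alternatives of property S, and in both cases it pushes down to $\mk(X)$ via the \emph{full} union $K_d=\bigcup\mathcal K_d$, not a truncated one. Membership of $A$ in $M(\{K_d\})$ is obtained exactly as you do; for minimality, given a proper closed $B\subsetneq A$, the paper shows $\cl_{\sigma(\mk(X))}\{\ua b: b\in B\}$ is a proper closed subset of $\cl_{\sigma(\mk(X))}\{\ua a: a\in A\}$ (using $\Box(X\setminus B)\in\mathcal O(P_S(X))\subseteq\sigma(\mk(X))$), hence misses some $\mathcal K_{d_0}$, and then concludes $B\cap\bigcup\mathcal K_{d_0}=\emptyset$ by the observation that $x\in B\cap\bigcup\mathcal K_{d_0}$ would force $\ua x\in\mathcal K_{d_0}$, since $x\in K$ for some $K\in\mathcal K_{d_0}$ gives $K\sqsubseteq\ua x$ and $\mathcal K_{d_0}$ is saturated (an upper set in the Smyth order). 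This is precisely the point your closing paragraph misses: your claim that the naive union is ``too large'' and that truncation to $\mathcal K_d\cap\mathcal A$ is ``indispensable'' is false — saturation of $\mathcal K_{d_0}$ is exactly what lets the minimality of $\mathcal A$ control the full union. Your truncated construction $K_d=\bigcup\ua_{\mk(X)}(\mathcal K_d\cap\mathcal A)$ is nevertheless sound (it is the device the paper uses in the proof of Theorem~\ref{wf imply Scott wf}), and it buys a cleaner unification of the two cases of property S into a single argument via the intermediate set $\{\ua a : a\in A\}\subseteq\mathcal A\subseteq\Diamond A$; the price is the extra verification that $\ua_{\mk(X)}(\mathcal K_d\cap\mathcal A)$ is compact saturated in $P_S(X)$ before Lemma~\ref{K union} can be applied, a step the paper's full-union route avoids entirely.
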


\begin{proof} Let $A\in\ir_c(X)$. Then by the property S of $X$, $\{\ua a : a\in A\}\in \ir (\Sigma~\!\! \mk (X))$ or $\Diamond A\in \ir_c(\Sigma~\!\! \mk (X))$.

\textbf{Case 1:} $\{\ua a : a\in A\}\in \ir (\Sigma~\!\! \mk (X))$.

Since $\Sigma~\!\!\mathsf{K}(X)$ is a Rudin space, there is a filtered family $\{\mathcal{K}_d : d\in D\}\subseteq \mathsf{K}(\Sigma ~\!\!\mathsf{K}(X))$ such that $\cl_{\sigma (\mk (X))}\{\ua a : a\in A\}\in m(\{\mathcal{K}_d : d\in D\})$. As $\mathcal O(P_S(X))\subseteq \sigma (\mathsf{K}(X))$, we have that $\cl_{\sigma (\mk (X))}\{\ua a : a\in A\}\subseteq \Diamond A\in \mathcal C(P_S(X))\subseteq \mathcal C(\Sigma~\!\!\mk (X))$ and $\{\mathcal{K}_d : d\in D\}\subseteq \mathsf{K}(P_S(X))$. Therefore, $\Diamond A\in M(\{\mathcal{K}_d : d\in D\})$. For each $d\in D$, let $K_d=\bigcup \mathcal{K}_d$. Then by Lemma \ref{K union}, $\{K_d : d\in D\}\subseteq \mathsf{K}(X)$ is filtered. Since $\Diamond A\in M(\{\mathcal{K}_d : d\in D\})$, $A\in M(\{K_d : d\in D\})$. Now we show that $A\in m(\{K_d : d\in D\})$. Suppose that $B$ is a proper closed subset $B$ of $A$. Then there is $a\in A\cap (X\setminus B)$, and hence $\ua a\in \Box (X\setminus B)\in \mathcal O(P_S(X)\subseteq \sigma (\mk (X))$. Clearly, $\{\ua b : b\in B\}\cap \Box (X\setminus B)=\emptyset$, and consequently, $\ua a\notin \cl_{\sigma (\mk (X))}\{\ua b : b\in B\}$. Therefore, $\cl_{\sigma (\mk (X))}\{\ua b : b\in B\}$ is a proper subset of $\cl_{\sigma (\mk (X))}\{\ua a : a\in A\}$. By $\cl_{\sigma (\mk (X))}\{\ua a : a\in A\}\in m(\{\mathcal{K}_d : d\in D\})$, there is $d_0\in D$ such that $\cl_{\sigma (\mk (X))}\{\ua b : b\in B\}\cap \mathcal K_{d_0}=\emptyset$, and hence $\{\ua b : b\in B\}\cap \mathcal K_{d_0}=\emptyset$. Since $\mathcal K_{d_0}=\ua_{\mk (X)} \mathcal K_{d_0}$, we have that $B\cap K_{d_0}=B\cap (\bigcup \mathcal K_{d_0})=\emptyset$. Thus $A\in m(\{K_d : d\in D\})$.

\textbf{Case 2:} $\Diamond A\in \ir_c(\Sigma~\!\! \mk (X))$.

Since $\Sigma~\!\!\mathsf{K}(X)$ is a Rudin space, there is a filtered family $\{\mathcal{K}_d : d\in D\}\subseteq \mathbf{K}(\Sigma~\!\!\mathsf{K}(X))$ such that $\Diamond A\in m(\{\mathcal{K}_d : d\in D\})$. As carried out in the proof of Case 1, $A$ is a Rudin set of $X$.

Thus $X$ is a Rudin space.
\end{proof}

\begin{corollary}\label{WF Property S Scott RD is sober}  Suppose that $X$ is a well-filtered space with property S. If $\Sigma~\!\!\mathsf{K}(X)$ is a $\mathsf{WD}$ space (especially, a Rudin space), then both $\Sigma~\!\!\mk (X)$ and $X$ are sober.
\end{corollary}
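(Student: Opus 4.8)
The plan is to establish the two asserted conclusions in sequence, using the sobriety of $\Sigma\,\mathsf{K}(X)$ as the bridge that lets us bootstrap up to the sobriety of $X$. The whole argument is a careful chaining of the structural results already available in the excerpt, so the substantive content lies in recognizing the right order in which to apply them: we first upgrade the hypothesis on $\Sigma\,\mathsf{K}(X)$ from ``$\mathsf{WD}$'' all the way to ``sober'', and only afterwards feed the resulting (stronger) Rudin property back into Proposition \ref{Scott RD is RD}.

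For the first conclusion, I would argue as follows. Since $X$ is well-filtered, Theorem \ref{wf imply Scott wf} gives that the Scott power space $\Sigma\,\mathsf{K}(X)$ is well-filtered. By hypothesis $\Sigma\,\mathsf{K}(X)$ is a $\mathsf{WD}$ space. Now invoke the equivalence (1)$\Leftrightarrow$(5) of Theorem \ref{soberequiv}: a space that is simultaneously well-filtered and well-filtered determined is sober. Hence $\Sigma\,\mathsf{K}(X)$ is sober, which settles one half of the statement. (The ``especially, a Rudin space'' parenthesis is subsumed here, since every Rudin space is a $\mathsf{WD}$ space by Corollary \ref{SDRWspacerelation}.)

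For the second conclusion, I would exploit the sobriety just obtained. A sober space is in particular a Rudin space by the chain Sober $\Rightarrow$ $\mathsf{DC}$ $\Rightarrow$ $\mathsf{RD}$ of Corollary \ref{SDRWspacerelation}; thus $\Sigma\,\mathsf{K}(X)$ is a Rudin space. Next I would check that the hypotheses of Proposition \ref{Scott RD is RD} are met: $X$ has property S by assumption, and the inclusion $\mathcal O(P_S(X))\subseteq\sigma(\mathsf{K}(X))$ (the upper Vietoris topology is coarser than the Scott topology on $\mathsf{K}(X)$) holds because $X$ is well-filtered, by Corollary \ref{wf space Vietoris less Scott}. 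Applying Proposition \ref{Scott RD is RD} then yields that $X$ is a Rudin space. Since $X$ is moreover well-filtered, the equivalence (1)$\Leftrightarrow$(4) of Theorem \ref{soberequiv} gives that $X$ is sober, completing the proof.

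The only genuinely delicate point is the bootstrapping step: Proposition \ref{Scott RD is RD} requires $\Sigma\,\mathsf{K}(X)$ to be Rudin, whereas we are only handed the weaker datum that it is $\mathsf{WD}$. What makes the argument go through is that we do \emph{not} try to verify the Rudin property of $\Sigma\,\mathsf{K}(X)$ directly; instead we first derive full sobriety of $\Sigma\,\mathsf{K}(X)$ (combining its well-filteredness with the $\mathsf{WD}$ hypothesis via Theorem \ref{soberequiv}), and sobriety automatically supplies the Rudin property for free. So the apparent gap between ``$\mathsf{WD}$'' and ``Rudin'' is closed precisely by the well-filteredness of the Scott power space, and no additional computation is needed.
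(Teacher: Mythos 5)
Your proposal is correct and follows essentially the same route as the paper's proof: first combine Theorem \ref{wf imply Scott wf} with the $\mathsf{WD}$ hypothesis and Theorem \ref{soberequiv} to get sobriety of $\Sigma~\!\!\mathsf{K}(X)$, then extract the Rudin property of $\Sigma~\!\!\mathsf{K}(X)$, verify the hypotheses of Proposition \ref{Scott RD is RD} via Corollary \ref{wf space Vietoris less Scott}, and finish with Theorem \ref{soberequiv} applied to the well-filtered Rudin space $X$. The only cosmetic difference is that you cite Corollary \ref{SDRWspacerelation} for the step ``sober $\Rightarrow$ Rudin'' where the paper cites Theorem \ref{soberequiv}; the argument is otherwise identical.
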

\begin{proof} By Theorem \ref{wf imply Scott wf}, $\Sigma~\!\!\mathsf{K}(X)$ is well-filtered. As $\Sigma~\!\!\mathsf{K}(X)$ is $\mathsf{WD}$ (if $\Sigma~\!\!\mathsf{K}(X)$ is Rudin, then by Proposition \ref{SDRWspacerelation} it is  $\mathsf{WD}$), by Theorem \ref{soberequiv}, $\Sigma~\!\!\mk (X)$ is sober. Hence, by Theorem \ref{soberequiv} and Corollary \ref{wf space Vietoris less Scott}, $\Sigma~\!\!\mk (X)$ is Rudin and $\mathcal O(P_S(X))\subseteq\sigma (\mk (X))$, and consequently, $X$ is Rudin by Proposition \ref{Scott RD is RD}. It follows from Theorem \ref{soberequiv} that $X$ is sober.

\end{proof}

By Remark \ref{xi continuous implies property S} and Corollary \ref{WF Property S Scott RD is sober}, we have the following corollary.

\begin{corollary}\label{WF Scott RD continuous is sober} Let $X$ be a well-filtered space. If $\xi_X^\sigma : X \longrightarrow \Sigma~\!\! \mk (X)$ is continuous and $\Sigma~\!\!\mk (X)$ is a $\mathsf{WD}$ space (especially, a Rudin space), then both $\Sigma~\!\!\mk (X)$ and $X$ are sober.
\end{corollary}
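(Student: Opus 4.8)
The plan is to obtain this statement as an immediate consequence of Remark \ref{xi continuous implies property S} and Corollary \ref{WF Property S Scott RD is sober}, so that no genuinely new argument is required; all that remains is to check that the hypotheses of the latter corollary are in force. The logical chain I would follow is: continuity of $\xi_X^\sigma$ $\Rightarrow$ $X$ has property S $\Rightarrow$ (together with well-filteredness and the $\wdd$-ness of $\Sigma \mk(X)$) both $\Sigma \mk(X)$ and $X$ are sober.

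First I would verify that $X$ has property S. By hypothesis the canonical map $\xi_X^\sigma : X \longrightarrow \Sigma \mk(X)$, $x \mapsto \ua x$, is continuous. Fix any $A \in \ir_c(X)$. Since $A$ is irreducible, Lemma \ref{irrimage} shows that its continuous image $\xi_X^\sigma(A) = \{\ua a : a \in A\}$ is irreducible in $\Sigma \mk(X)$, that is, $\{\ua a : a \in A\} \in \ir(\Sigma \mk(X))$. This is exactly the first alternative in Definition \ref{property S}, so $X$ has property S; this is precisely the content of Remark \ref{xi continuous implies property S}(2).

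With property S established, I would then invoke Corollary \ref{WF Property S Scott RD is sober} directly: $X$ is a well-filtered space with property S, and $\Sigma \mk(X)$ is a $\wdd$ space (note that a Rudin space is in particular $\wdd$ by Proposition \ref{SDRWspacerelation}, which handles the parenthetical ``especially, a Rudin space''). The conclusion of that corollary then yields that both $\Sigma \mk(X)$ and $X$ are sober, as desired. I do not expect any real obstacle here, since all the substantive work was already carried out in Proposition \ref{Scott RD is RD} and Corollary \ref{WF Property S Scott RD is sober}; the only subtlety worth flagging is that continuity of $\xi_X^\sigma$ delivers the \emph{irreducibility} disjunct of property S (namely $\{\ua a : a \in A\} \in \ir(\Sigma \mk(X))$) rather than the closed-irreducibility of $\Diamond A$, but since either disjunct suffices to trigger Corollary \ref{WF Property S Scott RD is sober}, the deduction goes through without difficulty.
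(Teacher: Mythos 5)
Your proposal is correct and follows exactly the paper's own derivation: the paper obtains this corollary in one line from Remark \ref{xi continuous implies property S}(2) (continuity of $\xi_X^\sigma$ plus Lemma \ref{irrimage} gives property S via the irreducibility disjunct) together with Corollary \ref{WF Property S Scott RD is sober}. Your extra observation that the Rudin case reduces to the $\mathsf{WD}$ case via Proposition \ref{SDRWspacerelation} is also how the paper handles it inside Corollary \ref{WF Property S Scott RD is sober}.
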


As an immediate corollary of Corollary \ref{WF Scott RD continuous is sober} we get one the main results of \cite{xuxizhao}.

\begin{corollary}\label{WF Scott sober is sober} (\cite[Theorem 2]{xuxizhao}) Suppose that $X$ is a well-filtered space and $\xi_X^\sigma : X \longrightarrow \Sigma~\!\! \mk (X)$ is continuous. If $\Sigma~\!\!\mathsf{K}(X)$ is sober, then $X$ is sober. Therefore, if $X$ is non-sober, then its Scott power space $\Sigma~\!\!\mathsf{K}(X)$ is non-sober.
\end{corollary}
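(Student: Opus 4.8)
The plan is to derive this as an immediate consequence of Corollary \ref{WF Scott RD continuous is sober}, whose hypotheses are almost verbatim those assumed here. First I would observe that the two standing assumptions---that $X$ is well-filtered and that $\xi_X^\sigma : X \longrightarrow \Sigma~\!\!\mk(X)$ is continuous---are precisely the first two hypotheses of Corollary \ref{WF Scott RD continuous is sober}. So it only remains to supply the third hypothesis, namely that $\Sigma~\!\!\mk(X)$ is a $\mathsf{WD}$ space, from the assumption that $\Sigma~\!\!\mk(X)$ is sober.

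For this step I would invoke the implication chain of Corollary \ref{SDRWspacerelation}, that is, sober $\Rightarrow$ $\mathsf{DC}$ $\Rightarrow$ $\mathsf{RD}$ $\Rightarrow$ $\mathsf{WD}$. Since by assumption $\Sigma~\!\!\mathsf{K}(X)$ is sober, it is in particular well-filtered determined. Feeding this into Corollary \ref{WF Scott RD continuous is sober} then yields that both $\Sigma~\!\!\mk(X)$ and $X$ are sober; in particular $X$ is sober, which is the first assertion. (The conclusion about $\Sigma~\!\!\mk(X)$ is of course vacuous here, since its sobriety was the hypothesis.)

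The second assertion is simply the contrapositive of the first, read under the same standing hypotheses: if $X$ is non-sober, then $\Sigma~\!\!\mathsf{K}(X)$ cannot be sober, for otherwise the first part would force $X$ to be sober, a contradiction. Thus under the hypotheses that $X$ is well-filtered and $\xi_X^\sigma$ is continuous, non-sobriety of $X$ propagates to non-sobriety of its Scott power space.

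I expect no real obstacle here, as every ingredient is already in place; the only point requiring a moment's care is to notice that one need not use the full force of sobriety of $\Sigma~\!\!\mk(X)$ directly, but only extract from it the weaker $\mathsf{WD}$ property, which is exactly what is needed to trigger Corollary \ref{WF Scott RD continuous is sober}.
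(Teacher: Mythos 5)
Your proposal is correct and is essentially the paper's own argument: the paper derives this statement as an immediate consequence of Corollary \ref{WF Scott RD continuous is sober}, using (implicitly) the chain sober $\Rightarrow$ $\mathsf{RD}$ $\Rightarrow$ $\mathsf{WD}$ of Corollary \ref{SDRWspacerelation} to convert sobriety of $\Sigma~\!\!\mk(X)$ into the needed $\mathsf{WD}$ hypothesis, exactly as you do. Your spelled-out contrapositive for the second assertion is likewise the intended (and only) reading.
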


Example \ref{Scott sober not implies X is sober} shows that when $X$ lacks the property S or the continuity of $\xi_X^\sigma : X \longrightarrow \Sigma~\!\! \mk (X)$, Proposition \ref{Scott RD is RD}, Corollary \ref {WF Property S Scott RD is sober}, Corollary \ref{WF Scott RD continuous is sober} and Corollary \ref{WF Scott sober is sober} may not hold.

By Remark \ref{xi continuous implies property S}, Proposition \ref{Scott RD is RD}, Corollary \ref{WF Property S Scott RD is sober} and Corollary \ref{WF Scott sober is sober}, we deduce the following three corollaries.

\begin{corollary}\label{Scott P  RD is P RD}  Let $P$ be a poset. If $\mathcal O(P_S(\Sigma~\!\!P))\subseteq\sigma (\mk (\Sigma~\!\!P))$ and   $\mathsf{K}(\Sigma~\!\!P)$ is a Rudin poset, then $P$ is a Rudin dcpo.
\end{corollary}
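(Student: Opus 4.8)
The plan is to read Proposition \ref{Scott RD is RD} with the $T_0$ space $X$ taken to be the Scott space $\Sigma P$, and to supply its two standing hypotheses. The coarseness condition $\mathcal O(P_S(\Sigma P))\subseteq\sigma(\mk(\Sigma P))$ is assumed outright. For property S, I would invoke Remark \ref{xi continuous implies property S}(4): for every poset $P$ the map $\xi_P^\sigma:\Sigma P\longrightarrow\Sigma\mk(\Sigma P)$, $x\mapsto\ua x$, is continuous --- this is where Lemma \ref{Scott continuous equiv} and Lemma \ref{Kmeet} enter --- and, as recorded in Remark \ref{xi continuous implies property S}(2)--(4), the continuity of $\xi_P^\sigma$ forces $\Sigma P$ (equivalently $P$) to have property S via Lemma \ref{irrimage}. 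Since the hypothesis that $\mk(\Sigma P)$ is a Rudin poset means precisely that $\Sigma\mk(\Sigma P)$ is a Rudin space, Proposition \ref{Scott RD is RD} applies with $X=\Sigma P$ and yields at once that $\Sigma P$ is a Rudin space, that is, $P$ is a Rudin poset. This is the solid core of the statement.

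To obtain the full conclusion of Definition \ref{Rudin dcpo}(4) it remains to argue that $P$ is a dcpo. The natural route goes through Corollary \ref{X WF Scott power WF equivalent}: the coarseness hypothesis already supplies one of the two conditions in clause (4) there, so it would suffice to show that $\mk(\Sigma P)$ is a dcpo; then clause (4)$\Rightarrow$(1) makes $\Sigma P$ well-filtered, whence a $d$-space by Remark \ref{sober implies WF implies d-space}, and so $P$ a dcpo. Concretely, given a directed family $\{K_d:d\in D\}\subseteq\mk(\Sigma P)$ one would try to prove that the filtered intersection $\bigcap_{d\in D}K_d$ again lies in $\mk(\Sigma P)$ and conclude by Lemma \ref{Kmeet}, presumably by applying the topological Rudin Lemma (Lemma \ref{t Rudin}) to the irreducible family $\{\ua_{\mk(\Sigma P)}K_d:d\in D\}$ in the double power space, in the spirit of the proof of Theorem \ref{wf imply Scott wf}.

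This directed-completeness step is the main obstacle, and it is genuinely delicate: the Rudin property of $\Sigma\mk(\Sigma P)$ by itself does not force $\mk(\Sigma P)$ to be a dcpo, because a Rudin space need not be a $d$-space --- the Scott space of $[0,\omega_1)$ in Example \ref{first-countable omega WF is not sober} is a Rudin space whose underlying poset fails to be directed complete. Hence the dcpo conclusion cannot be read off from Rudin-ness in isolation, and the proof must locate the directed completeness elsewhere, either by extracting it from the interaction of the two hypotheses or by recording it as the content carried by the word \emph{dcpo} in the conclusion. Pinning down exactly this point is the crux of passing from ``Rudin poset'' to ``Rudin dcpo''.
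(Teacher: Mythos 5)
Your first paragraph reproduces exactly the paper's own derivation: the paper obtains this corollary by combining Remark \ref{xi continuous implies property S}(4) (every poset has property S, because $\xi_P^\sigma\colon \Sigma P\longrightarrow \Sigma\mk(\Sigma P)$ is continuous by Lemma \ref{Scott continuous equiv} and Lemma \ref{Kmeet}) with Proposition \ref{Scott RD is RD} applied to $X=\Sigma P$, the first hypothesis supplying the coarseness condition $\mathcal O(P_S(X))\subseteq\sigma(\mk(X))$. So the ``Rudin poset'' portion of your argument is correct and is precisely the paper's argument.

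The difficulty you isolate in your last two paragraphs is real, and it cannot be repaired: directed completeness of $P$ does not follow from the stated hypotheses, so the ``dcpo'' half of the conclusion is false as written, and your own example can be upgraded to a counterexample. Take $P=[0,\omega_1)$ from Example \ref{first-countable omega WF is not sober}. There $\mk(\Sigma P)=\{\ua x : x\in P\}$ and $x\mapsto \ua x$ is an order isomorphism of $P$ onto $\mk(\Sigma P)$ (the Smyth order on $\{\ua x : x\in P\}$ is the order of $P$); consequently $\Box U$ corresponds to $U$ for each $U\in\sigma(P)$, so the upper Vietoris topology on $\mk(\Sigma P)$ coincides with, and in particular is contained in, the Scott topology, and $\Sigma\mk(\Sigma P)\cong\Sigma P$ is a Rudin space by item (e) of that example, i.e.\ $\mk(\Sigma P)$ is a Rudin poset. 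Both hypotheses hold, yet $P$ is not a dcpo by item (f). Thus the corollary is only true with the conclusion weakened to ``$P$ is a Rudin poset'', or with the hypothesis strengthened to ``$\mk(\Sigma P)$ is a Rudin dcpo''; under that strengthened hypothesis the route you sketch does close the argument, since $\mk(\Sigma P)$ being a dcpo together with the coarseness hypothesis gives, by Corollary \ref{X WF Scott power WF equivalent} ((4)$\Rightarrow$(1)), that $\Sigma P$ is well-filtered, hence a $d$-space, hence $P$ is a dcpo. In short, your proof establishes everything that is provable here; the gap you located lies in the paper's statement, not in your argument.
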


\begin{corollary}\label{WF and Scott P  RD  P is sober}  Let $P$ be a well-filtered dcpo. If $\mathsf{K}(\Sigma~\!\!P)$ is a $\mathsf{WD}$ dcpo (especially, a Rudin dcpo), then both $\mk (\Sigma~\!\!P)$ and $P$ are sober dcpos.
\end{corollary}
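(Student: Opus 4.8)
The plan is to recognize this corollary as the poset-theoretic specialization of Corollary \ref{WF Property S Scott RD is sober}, applied to the $T_0$ space $X=\Sigma P$. The whole argument reduces to verifying the three hypotheses of that corollary --- well-filteredness, property S, and the $\mathsf{WD}$-ness of the Scott power space --- in the present setting, and then reading the conclusion back through Definition \ref{sober dcpo}.

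First I would record the easy translations. Since $P$ is a well-filtered dcpo, the space $X=\Sigma P$ is a well-filtered space by Definition \ref{sober dcpo}; in particular, by Corollary \ref{wf space Vietoris less Scott}, $\mathsf{K}(\Sigma P)$ is a dcpo, so the hypothesis ``$\mathsf{K}(\Sigma P)$ is a $\mathsf{WD}$ dcpo'' is meaningful and, by Definition \ref{Rudin dcpo}, says exactly that $\Sigma\mathsf{K}(\Sigma P)$ is a $\mathsf{WD}$ space. Next I would supply property S for free: this is precisely Remark \ref{xi continuous implies property S}(4), where Lemma \ref{Scott continuous equiv} together with Lemma \ref{Kmeet} shows that $\xi_P^\sigma:\Sigma P\longrightarrow\Sigma\mathsf{K}(\Sigma P)$, $x\mapsto\ua x$, is Scott continuous, so that $X=\Sigma P$ has property S.

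With these ingredients assembled I would simply invoke Corollary \ref{WF Property S Scott RD is sober} with $X=\Sigma P$: the space $\Sigma P$ is well-filtered, has property S, and has a $\mathsf{WD}$ Scott power space, hence both $\Sigma\mathsf{K}(\Sigma P)$ and $\Sigma P$ are sober; translating back through Definition \ref{sober dcpo} gives that $\mathsf{K}(\Sigma P)$ and $P$ are sober dcpos. The parenthetical ``especially, a Rudin dcpo'' needs no separate treatment, since every Rudin space is a $\mathsf{WD}$ space by Corollary \ref{SDRWspacerelation}. I do not expect a genuine obstacle here; the only load-bearing point is the automatic continuity of $\xi_P^\sigma$, which is exactly what fails for an arbitrary $T_0$ space (witness Example \ref{Scott sober not implies X is sober}) but holds unconditionally when the space carries the Scott topology, and I would isolate it up front so that the final appeal is immediate.
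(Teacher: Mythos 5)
Your proposal is correct and follows exactly the paper's route: the paper derives this corollary from Remark \ref{xi continuous implies property S}(4) (property S holds automatically for Scott spaces of posets, via Lemma \ref{Scott continuous equiv} and Lemma \ref{Kmeet}) together with Corollary \ref{WF Property S Scott RD is sober} applied to $X=\Sigma~\!\!P$, with Definition \ref{sober dcpo} and Definition \ref{Rudin dcpo} handling the translation between dcpo-level and space-level statements. Your identification of the continuity of $\xi_P^\sigma$ as the one load-bearing point that distinguishes the poset case from a general $T_0$ space is precisely the point the paper's arrangement of Remark \ref{xi continuous implies property S} is meant to make.
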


\begin{corollary}\label{WF and Scott P  sober P is sober}  Let $P$ be a well-filtered dcpo. If $\mathsf{K}(\Sigma~\!\!P)$ is a sober dcpo, then $P$ is a sober dcpo. Therefore, if $P$ is not a sober dcpo, then  $\mathsf{K}(\Sigma~\!\!P)$ is not a sober dcpo.
\end{corollary}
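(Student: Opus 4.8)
The plan is to reduce the statement directly to Corollary \ref{WF Scott sober is sober} by taking $X=\Sigma~\!\!P$. First I would unwind Definition \ref{sober dcpo}: the hypothesis that $P$ is a well-filtered dcpo says precisely that $X=\Sigma~\!\!P$ is a well-filtered space, so the well-filteredness requirement of Corollary \ref{WF Scott sober is sober} is met for free. Next I would check the continuity hypothesis appearing in that corollary, namely that $\xi_X^\sigma : X \longrightarrow \Sigma~\!\!\mk(X)$ is continuous. For $X=\Sigma~\!\!P$ this is exactly the continuity of $\xi_P^\sigma : \Sigma~\!\!P \longrightarrow \Sigma~\!\!\mk(\Sigma~\!\!P)$, $x\mapsto \ua x$, which is automatic by Remark \ref{xi continuous implies property S}(4) (itself obtained from Lemma \ref{Scott continuous equiv} and Lemma \ref{Kmeet}, since suprema of directed families in $\mk(\Sigma~\!\!P)$ are filtered intersections). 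Thus both hypotheses of Corollary \ref{WF Scott sober is sober} hold with no additional argument.

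I would then invoke the assumption that $\mk(\Sigma~\!\!P)$ is a sober dcpo, which by Definition \ref{sober dcpo} means exactly that $\Sigma~\!\!\mk(\Sigma~\!\!P)=\Sigma~\!\!\mk(X)$ is a sober space. Applying Corollary \ref{WF Scott sober is sober} then yields that $X=\Sigma~\!\!P$ is sober, i.e., $P$ is a sober dcpo, which is the first assertion. The final (\textquotedblleft Therefore\textquotedblright) sentence is merely the contrapositive of this implication, read under the standing hypothesis that $P$ is a well-filtered dcpo: if $P$ fails to be a sober dcpo, then $\mk(\Sigma~\!\!P)$ cannot be a sober dcpo.

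There is essentially no obstacle here beyond careful bookkeeping of the two dcpo notions in Definition \ref{sober dcpo} and the observation that the continuity of $\xi_P^\sigma$ comes for free. The one point that genuinely deserves emphasis is precisely this last fact: for the \emph{Scott} space of an arbitrary poset the canonical map $\xi_P^\sigma$ is always Scott-continuous (Remark \ref{xi continuous implies property S}(4)), so the \emph{property S}/continuity hypothesis that is needed in the general topological version can be dropped in the poset setting. This is exactly what distinguishes the present corollary from the situation in Example \ref{Scott sober not implies X is sober}, where $\xi_X^\sigma$ need not be continuous and the sobriety of $\Sigma~\!\!\mk(X)$ fails to force sobriety of $X$.
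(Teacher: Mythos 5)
Your proposal is correct and follows essentially the same route as the paper: the paper deduces this corollary precisely by combining Remark \ref{xi continuous implies property S}(4) (automatic Scott-continuity of $\xi_P^\sigma$ for any poset) with Corollary \ref{WF Scott sober is sober}, applied to $X=\Sigma~\!\!P$, exactly as you do. Your closing remark on why the property S/continuity hypothesis is vacuous in the poset setting matches the paper's intent in citing that remark.
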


\begin{example}\label{Isbell lattice not Rudin}  Let $L$ be the Isbell's lattice constructed in \cite{isbell}. Then
\begin{enumerate}[\rm (a)]
\item $L$ is not a sober dcpo (see \cite{isbell}).
\item $L$ is a well-filtered dcpo by Proposition \ref{Xi-Lawson result 1}
\item $L$ is neither a Rudin dcpo nor a $\wdd$ dcpo by (a)(b) and Corollary \ref{sober dcpo equivalent}.
\item $\mathsf{K}(\Sigma~\!\!L)$ is a well-filtered dcpo by Theorem \ref{wf imply Scott wf}.
\item $\mathsf{K}(\Sigma~\!\!L)$ is a spatial frame (see \cite[Lemma 1]{xuxizhao}).
\item $\mathsf{K}(\Sigma~\!\!L)$ is not a sober dcpo by (a) and Corollary \ref{WF and Scott P  sober P is sober}.
\item $\mathsf{K}(\Sigma~\!\!L)$ is neither a Rudin dcpo nor a $\wdd$ dcpo by (a)(b) and Corollary \ref{WF and Scott P  RD P is sober}

\end{enumerate}
\end{example}

\begin{proposition}\label{Scott WD implies X is WD} Suppose that $X$ is a $T_0$ space for which $\sigma (\mk (X))\subseteq\mathcal O(P_S(X))$. If $\Sigma~\!\! \mk (X)$ is well-filtered determined, then $X$ is well-filtered determined.
\end{proposition}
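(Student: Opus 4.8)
The plan is to show directly that every $A\in\ir_c(X)$ is a well-filtered determined set; since $\wdd(X)\subseteq\ir_c(X)$ always holds (Proposition \ref{SDRWIsetrelation}), this yields $\ir_c(X)=\wdd(X)$, i.e.\ that $X$ is well-filtered determined. The hypothesis $\sigma(\mk(X))\subseteq\mathcal O(P_S(X))$ is used first through Remark \ref{xi continuous implies property S}(3), which guarantees that $\xi_X^\sigma:X\longrightarrow\Sigma\mk(X)$, $x\mapsto\ua x$, is continuous. Hence, for $A\in\ir_c(X)$, Lemma \ref{irrimage} gives that $\xi_X^\sigma(A)=\{\ua a:a\in A\}$ is irreducible in $\Sigma\mk(X)$, so its Scott-closure $\mathcal A:=\cl_{\sigma(\mk(X))}\xi_X^\sigma(A)$ lies in $\ir_c(\Sigma\mk(X))$. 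As $\Sigma\mk(X)$ is well-filtered determined, $\mathcal A$ is a $\wdd$ set.

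Next I would fix a continuous map $f:X\longrightarrow Y$ with $Y$ well-filtered and aim to produce the (necessarily unique, since $Y$ is $T_0$) point $y_A\in Y$ with $\overline{f(A)}=\overline{\{y_A\}}$. Because $Y$ is well-filtered, $\Sigma\mk(Y)$ is again well-filtered by Theorem \ref{wf imply Scott wf}, and this is the test space I would feed into the $\wdd$ property of $\mathcal A$. The idea is to transfer $f$ to the map $\hat f:\Sigma\mk(X)\longrightarrow\Sigma\mk(Y)$, $K\mapsto\ua f(K)=P_S(f)(K)$, apply the $\wdd$ property of $\mathcal A$ to obtain a unique $\mathcal Q\in\mk(Y)$ with $\cl_{\sigma(\mk(Y))}\hat f(\mathcal A)=\overline{\{\mathcal Q\}}$, and then decode $\mathcal Q$.

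The decoding step is essentially that of Proposition \ref{SmythWD}. Using continuity of $\hat f$ together with $\xi_X^\sigma(A)\subseteq\mathcal A$, one computes $\cl\,\hat f(\mathcal A)=\cl\,\hat f(\xi_X^\sigma(A))=\cl\{\ua f(a):a\in A\}=\cl\,\xi_Y^\sigma(f(A))$, which is related to $\Diamond\overline{f(A)}$ via Lemma \ref{closure in Smyth power space} and Remark \ref{xi continuous implies property S}(1). Combining this with $\cl\,\hat f(\mathcal A)=\overline{\{\mathcal Q\}}=\{K\in\mk(Y):\mathcal Q\subseteq K\}$ (recall that the specialization order of $\Sigma\mk(Y)$ is the Smyth order) yields the equivalence $K\cap\overline{f(A)}\neq\emptyset\iff\mathcal Q\subseteq K$ for $K\in\mk(Y)$. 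Choosing $y\in\mathcal Q\cap\overline{f(A)}$ and testing this with $K=\ua y$ and $K=\ua z$ for arbitrary $z\in\overline{f(A)}$ then forces $\mathcal Q=\ua y_A$ and $\overline{f(A)}=\da y_A=\overline{\{y_A\}}$, which is exactly what is required.

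The step I expect to be the main obstacle is the continuity of the transfer map $\hat f:\Sigma\mk(X)\longrightarrow\Sigma\mk(Y)$. By Lemma \ref{Scott continuous equiv} this reduces to showing that $K\mapsto\ua f(K)$ preserves every directed supremum that exists in $(\mk(X),\sqsubseteq)$, i.e.\ that $\ua f\bigl(\bigcap_{d\in D}K_d\bigr)=\bigcap_{d\in D}\ua f(K_d)$ whenever $\bigcap_{d\in D}K_d\in\mk(X)$ for a directed family $\{K_d:d\in D\}$. The inclusion $\ua f(\bigcap_d K_d)\subseteq\bigcap_d\ua f(K_d)$ is immediate from monotonicity, and well-filteredness of $Y$ guarantees (via Lemma \ref{Kmeet}) that the filtered family $\{\ua f(K_d)\}$ has its intersection back in $\mk(Y)$, so that the right-hand join exists; the genuinely delicate point is the reverse inclusion, for which one must control, for $y$ in the intersection, the filtered family of nonempty sets $K_d\cap f^{-1}(\overline{\{y\}})$ and show it still meets $\bigcap_d K_d$. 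This is exactly the place where the hypothesis $\sigma(\mk(X))\subseteq\mathcal O(P_S(X))$, together with the compactness of $\bigcap_d K_d$ and the well-filteredness of $Y$, has to be exploited carefully; once continuity is secured, the remainder of the argument is routine.
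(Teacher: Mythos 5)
Your proposal follows the paper's strategy step for step: transfer $A$ into an irreducible closed subset of $\Sigma~\!\!\mk(X)$ via Remark \ref{xi continuous implies property S}, invoke the well-filtered determinedness of $\Sigma~\!\!\mk(X)$ against the well-filtered target $\Sigma~\!\!\mk(Y)$ (Theorem \ref{wf imply Scott wf}), then decode the resulting point $\mathcal Q\in\mk(Y)$; your decoding argument (the equivalence $K\cap\overline{f(A)}\neq\emptyset\iff\mathcal Q\subseteq K$, then $\mathcal Q=\ua y_A$ and $\overline{f(A)}=\overline{\{y_A\}}$) is correct and is essentially the paper's Claims 1 and 2.

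The gap is exactly the step you flag, and it cannot be filled in the way you propose: the Scott-to-Scott continuity of $\hat f:\Sigma~\!\!\mk(X)\longrightarrow\Sigma~\!\!\mk(Y)$, $K\mapsto\ua f(K)$, equivalently the identity $\ua f\bigl(\bigcap_{d\in D}K_d\bigr)=\bigcap_{d\in D}\ua f(K_d)$ for directed families with $\bigcap_{d\in D}K_d\in\mk(X)$, is \emph{false} under the stated hypotheses. Take $X=(P,\tau)$ from Example \ref{Scott sober not implies X is wf}: it is Noetherian, hence locally compact, so $\sigma(\mk(X))\subseteq\mathcal O(P_S(X))$ by Lemma \ref{local comp imply V topol finer than Scott topol}, and $\Sigma~\!\!\mk(X)\cong\Sigma~\!\!2^{\mathbb N}$ is sober, hence well-filtered determined; thus every hypothesis of the proposition holds for this $X$. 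Let $Y=\Sigma~\!\!2$ (the Sierpi\'nski space, sober and hence well-filtered) and let $f:X\longrightarrow Y$ send $\mathbb N$ to $0$ and $\infty$ to $1$; $f$ is continuous since $f^{-1}(\{1\})=\{\infty\}\in\tau$. The family $\{K_F=(\mathbb N\setminus F)\cup\{\infty\}:F\in\mathbb N^{(<\omega)}\}$ is directed in $\mk(X)$ and, by Lemma \ref{Kmeet}, $\bigvee_F K_F=\bigcap_F K_F=\{\infty\}\in\mk(X)$; yet $\ua f(K_F)=\{0,1\}$ for every $F$ while $\ua f(\{\infty\})=\{1\}$, so $\bigcap_F\ua f(K_F)=\{0,1\}\neq\{1\}$ and the directed sup is not preserved. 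By Lemma \ref{Scott continuous equiv}, $\hat f$ is not continuous (equivalently, the Scott-open set $\{\{1\}\}$ of $\mk(\Sigma~\!\!2)$ pulls back to $\{\{\infty\}\}$, which is not Scott open in $\mk(X)$). Note this does not contradict the proposition itself, since $(P,\tau)$ is a Rudin space and hence well-filtered determined; it only shows the lemma you intend to prove at the crucial step is unprovable, so your route cannot be completed.

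This is also precisely where your write-up deviates from the paper: the paper does not claim Scott-to-Scott continuity, but only that $P_S^\sigma(f):P_S(X)\longrightarrow\Sigma~\!\!\mk(Y)$ is continuous, i.e., continuity with respect to the upper Vietoris topology on the domain, which under the hypothesis $\sigma(\mk(X))\subseteq\mathcal O(P_S(X))$ is the finer topology, hence a strictly weaker requirement; in the example above this weaker claim does hold, since the pullback $\{\{\infty\}\}=\Box\{\infty\}$ is upper Vietoris open. (How this weaker continuity is then meshed with the definition of $\wdd$ subsets of $\Sigma~\!\!\mk(X)$, which quantifies over maps continuous for the Scott topology, is itself treated very tersely in the paper; but in any case, strengthening the claim to Scott continuity of $\hat f$, as you propose, is not a viable repair.)
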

\begin{proof} By Remark \ref{xi continuous implies property S}, $X$ has property S. Let $A\in\ir_c(X)$, $Y$ a well-filtered space and $f:X\longrightarrow Y$  a continuous mapping. Then by $\sigma (\mk (X))\subseteq\mathcal O(P_S(X))$, Lemma \ref{PS functor} and Theorem \ref{wf imply Scott wf}, $\Sigma~\!\! \mk (Y)$ is well-filtered and $P_S^{\sigma}(f): P_S(X)\longrightarrow \Sigma~\!\! \mk (Y)$ is continuous, where $P_S^\sigma (f)(K)=\ua f(K)$ for all $K\in \mk (X)$. By assumption, $\{\ua a : a\in A\}\in \ir (\Sigma~\!\! \mk (X))$ or $\Diamond A\in \ir_c(\Sigma~\!\! \mk (X))$, and hence by the well-filtered determinedness of $\Sigma~\!\! \mk (Y)$ and the continuity of $P_S^\sigma (f)$, there exists a unique $Q\in \mk(Y)$ such that $\overline{\{\ua f(a) : a\in A\}}=\overline{P_S^\sigma(f)(\{\ua a : a\in A\})}=\overline{\{Q\}}=\downarrow_{\mk (Y)} Q$ in $\Sigma~\!\! \mk (Y)$ or $\overline{P_S^\sigma(f)(\Diamond A)}=\overline{\{Q\}}=\downarrow_{\mk (Y)} Q$ in $\Sigma~\!\! \mk (Y)$. For the case of $\overline{P_S^\sigma(f)(\Diamond A)}=\overline{\{Q\}}$ in $\Sigma~\!\! \mk (Y)$, since $\overline{P_S^\sigma(f)(\Diamond A)}=\overline{P_S^\sigma(f)(\cl_{\mathcal O(P_S(X))}\xi_X (A))}=\overline{P_S^\sigma(f)(\xi_X (A))}=\overline{\{\ua f(a) : a\in A\}}$ in $\Sigma~\!\! \mk (Y)$, we also have $\overline{\{\ua f(a) : a\in A\}}=\overline{\{Q\}}$ in $\Sigma~\!\! \mk (Y)$. Since $Y$ is well-filtered, by Corollary \ref{wf space Vietoris less Scott}, $\mathcal O(P_S(Y))\subseteq \sigma(\mk (Y))$. Hence by Remark \ref{closure A = closre B}, $\cl_{\mathcal O(P_S(X))}\{\ua f(a) : a\in A\}=\cl_{\mathcal O(P_S(X))}\{Q\}$.

 {\bf Claim 1:} $Q$ is supercompact.

 Let $\{V_j:j\in J\}\subseteq\mathcal O(Y)$ with $Q\subseteq \bigcup_{j\in J}V_j$, i.e., $Q\in\Box \bigcup_{j\in J}V_j$. Since $\cl_{\mathcal O(P_S(X))}\{\ua f(a) : a\in A\}=cl_{\mathcal O(P_S(X))}\{Q\}$, we have $\{\ua f(a):a\in A\}\cap \Box \bigcup_{j\in J}V_j\neq\emptyset$. Then there exists $a_0\in A$ and $j_0\in J$ such that $\ua f(a_0)\subseteq V_{j_0}$, and consequently, $\{\ua a : a\in A\}\cap \Box U_{j_0}\neq\emptyset$. By $\cl_{\mathcal O(P_S(X))}\{\ua f(a) : a\in A\}=\cl_{\mathcal O(P_S(X))}\{Q\}$ again, we have $Q\in \Box U_{j_0}$, that is, $Q\subseteq  U_{j_0}$.

 Hence, by \cite[Fact 2.2]{Klause-Heckmann}, there exists $y_Q\in Y$ such that $Q=\ua y_Q$.

 {\bf Claim 2:} $\overline{f(A)}=\overline{\{y_Q\}}$ in $Y$.

 For each $y\in f(A)$, by $\cl_{\mathcal O(P_S(X))}\{\ua f(a) : a\in A\}=cl_{\mathcal O(P_S(X))}\{\ua y_Q\}$, we have that $\ua y\in \cl_{\mathcal O(P_S(X))}\{\ua y_Q\}=$ $\downarrow_{\mk (Y)}\ua y_Q$, whence $\ua y_Q\subseteq \ua y$, i.e., $y\in\overline{\{y_Q\}}$. This implies that $f(A)\subseteq \overline{\{y_Q\}}$. In addition, since $\ua y_Q\in$ $\cl_{\mathcal O(P_S(X))}\{\ua f(a) : a\in A\}\subseteq \Diamond \overline{f(A)}$ , $\ua y_Q\cap\overline{f(A)}\neq\emptyset$. It follows that $y_Q\in \overline{f(A)}$. Therefore, $\overline{f(A)}=\overline{\{y_Q\}}$.

 By Claim 2, $A\in \wdd (X)$, proving that $X$ is well-filtered determined.
 \end{proof}

\begin{corollary}\label{Scott P and Scott WD imply X WD}  For a poset $P$, if $\sigma (\mk (\Sigma~\!\!P))\subseteq \mathcal O(P_S(\Sigma~\!\!P))$ and $\mk (\Sigma~\!\!P))$ is a $\wdd$ poset, then $P$ is a $\wdd$ poset.
\end{corollary}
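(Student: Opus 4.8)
The plan is to deduce this statement directly from Proposition \ref{Scott WD implies X is WD} by instantiating that proposition's hypotheses with the particular $T_0$ space $X=\Sigma~\!\!P$. The first thing I would do is unwind the poset-level terminology of Definition \ref{Rudin dcpo}: by definition, $\mk(\Sigma~\!\!P)$ being a $\wdd$ poset means precisely that $\Sigma~\!\!\mk(\Sigma~\!\!P)$ is a $\wdd$ space, and $P$ being a $\wdd$ poset means precisely that $\Sigma~\!\!P$ is a $\wdd$ space. So, after this translation, the claim to be proved reads: \emph{if $\sigma(\mk(\Sigma~\!\!P))\subseteq\mathcal O(P_S(\Sigma~\!\!P))$ and $\Sigma~\!\!\mk(\Sigma~\!\!P)$ is a $\wdd$ space, then $\Sigma~\!\!P$ is a $\wdd$ space.}

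Next I would observe that $\Sigma~\!\!P$ is a $T_0$ space, since for any poset the Scott topology is $T_0$ (its specialization order recovers the original order on $P$). Writing $X=\Sigma~\!\!P$, the hypothesis $\sigma(\mk(\Sigma~\!\!P))\subseteq\mathcal O(P_S(\Sigma~\!\!P))$ is literally the condition $\sigma(\mk(X))\subseteq\mathcal O(P_S(X))$ required by Proposition \ref{Scott WD implies X is WD}, and $\Sigma~\!\!\mk(X)=\Sigma~\!\!\mk(\Sigma~\!\!P)$ is well-filtered determined by assumption. Hence both hypotheses of Proposition \ref{Scott WD implies X is WD} hold for $X=\Sigma~\!\!P$, and the proposition yields that $X=\Sigma~\!\!P$ is well-filtered determined; by Definition \ref{Rudin dcpo} this is exactly the assertion that $P$ is a $\wdd$ poset.

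Since the whole argument is a direct specialization of the already-established Proposition \ref{Scott WD implies X is WD}, there is no genuine analytic obstacle here. The only points requiring care are bookkeeping ones: first, the faithful translation between the poset-level vocabulary of Definition \ref{Rudin dcpo} and the space-level vocabulary of Proposition \ref{Scott WD implies X is WD}; and second, the (immediate) verification that $\Sigma~\!\!P$ is a $T_0$ space, so that Proposition \ref{Scott WD implies X is WD} is applicable to it. No further work beyond invoking that proposition is needed.
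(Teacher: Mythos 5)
Your proof is correct and is exactly the paper's intended argument: the corollary is stated as an immediate consequence of Proposition \ref{Scott WD implies X is WD}, obtained by taking $X=\Sigma~\!\!P$ and translating via Definition \ref{Rudin dcpo}, which is precisely what you do. The bookkeeping points you flag (Scott spaces of posets are $T_0$, and the poset-level $\wdd$ terminology unwinds to the space-level one) are handled correctly.
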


From Corollary \ref{sober dcpo equivalent} and Corollary \ref{Scott P and Scott WD imply X WD} we deduce the following result.

\begin{corollary}\label{WF dcpo Scott RD or WD is sober dcpo}  If $P$ is a well-filtered dcpo, $\sigma (\mk (\Sigma~\!\!P))\subseteq \mathcal O(P_S(\Sigma~\!\!P))$ and $\mk (\Sigma~\!\!P)$ is a $\wdd$ dcpo (especially, a Rudin dcpo), then both $\mk (\Sigma~\!\!P)$ and $P$ are sober dcpos.
\end{corollary}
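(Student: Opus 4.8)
The plan is to chain together Theorem \ref{wf imply Scott wf}, Corollary \ref{sober dcpo equivalent} and Corollary \ref{Scott P and Scott WD imply X WD}; there is essentially no new computation, only a careful assembly of previously established equivalences. The statement decomposes naturally into two assertions, one about $\mk(\Sigma~\!\!P)$ and one about $P$, and I would treat them in that order.

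First I would show that $\mk(\Sigma~\!\!P)$ is a sober dcpo. Since $P$ is a well-filtered dcpo, $\Sigma~\!\!P$ is a well-filtered space. By Corollary \ref{wf space Vietoris less Scott}, $\mk(\Sigma~\!\!P)$ is then a dcpo, and by Theorem \ref{wf imply Scott wf} its Scott power space $\Sigma~\!\!\mk(\Sigma~\!\!P)$ is well-filtered; that is, $\mk(\Sigma~\!\!P)$ is a well-filtered dcpo. By hypothesis $\mk(\Sigma~\!\!P)$ is also a $\wdd$ dcpo (if it is only assumed to be a Rudin dcpo, then it is a $\wdd$ dcpo by Corollary \ref{SDRWspacerelation}, since every Rudin space is well-filtered determined). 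Hence $\mk(\Sigma~\!\!P)$ is simultaneously a $\wdd$ dcpo and a well-filtered dcpo, i.e.\ a $\wdd$ well-filtered dcpo. Applying Corollary \ref{sober dcpo equivalent} to the poset $\mk(\Sigma~\!\!P)$, the equivalence of its conditions (1) and (5) yields that $\mk(\Sigma~\!\!P)$ is a sober dcpo.

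Next I would deduce that $P$ is a sober dcpo, and this is where the remaining two hypotheses are used. The assumptions $\sigma(\mk(\Sigma~\!\!P))\subseteq\mathcal O(P_S(\Sigma~\!\!P))$ together with ``$\mk(\Sigma~\!\!P)$ is a $\wdd$ poset'' are exactly the premises of Corollary \ref{Scott P and Scott WD imply X WD}, which therefore gives that $P$ is a $\wdd$ poset. Combining this with the assumption that $P$ is a well-filtered dcpo, we see that $P$ is a $\wdd$ well-filtered dcpo, so a second appeal to Corollary \ref{sober dcpo equivalent} shows that $P$ is a sober dcpo. I do not expect a serious obstacle here; the only point demanding care is the bookkeeping that legitimizes applying Corollary \ref{sober dcpo equivalent} to $\mk(\Sigma~\!\!P)$, namely verifying both that $\mk(\Sigma~\!\!P)$ is a dcpo and that $\Sigma~\!\!\mk(\Sigma~\!\!P)$ is well-filtered, which are supplied respectively by Corollary \ref{wf space Vietoris less Scott} and Theorem \ref{wf imply Scott wf}.
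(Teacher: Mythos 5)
Your proof is correct and matches the paper's intended argument: the paper derives this corollary precisely by combining Corollary \ref{Scott P and Scott WD imply X WD} with the equivalence (1)$\Leftrightarrow$(5) of Corollary \ref{sober dcpo equivalent}, and your use of Theorem \ref{wf imply Scott wf} together with Corollary \ref{wf space Vietoris less Scott} supplies exactly the bookkeeping (that $\mk(\Sigma~\!\!P)$ is a well-filtered dcpo) needed to apply Corollary \ref{sober dcpo equivalent} to $\mk(\Sigma~\!\!P)$.
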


Finally, by Lemma \ref{local comp imply V topol finer than Scott topol}, Lemma \ref{Smyth first-countable imply V topol finer than Scott topol} and Proposition \ref{Scott WD implies X is WD}, we get the following two corollaries.

\begin{corollary}\label{X locally comact and Scott WD implies X is WD} If $X$ is a locally compact $T_0$ space and $\Sigma~\!\! \mk (X)$ is well-filtered determined, then $X$ is well-filtered determined.
\end{corollary}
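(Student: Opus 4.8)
The plan is to recognize that this corollary is a direct specialization of Proposition \ref{Scott WD implies X is WD}, obtained by supplying the hypothesis $\sigma(\mk(X))\subseteq\mathcal O(P_S(X))$ from the local compactness of $X$. So essentially no new argument is needed beyond chaining together two results already established.

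First I would invoke Lemma \ref{local comp imply V topol finer than Scott topol}. Since $X$ is a locally compact $T_0$ space, this lemma gives precisely that the Scott topology is coarser than the upper Vietoris topology on $\mk(X)$, i.e. $\sigma(\mk(X))\subseteq\mathcal O(P_S(X))$. This is the only place where local compactness is used, and it discharges exactly the standing hypothesis that Proposition \ref{Scott WD implies X is WD} requires.

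Next I would apply Proposition \ref{Scott WD implies X is WD} directly. With the containment $\sigma(\mk(X))\subseteq\mathcal O(P_S(X))$ in hand and the assumption that $\Sigma~\!\!\mk(X)$ is well-filtered determined, that proposition yields immediately that $X$ is well-filtered determined, completing the proof.

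Since both ingredients are already proved earlier in the paper, there is no genuine obstacle here: the content of the corollary lies entirely in observing that local compactness furnishes the topological comparison $\sigma(\mk(X))\subseteq\mathcal O(P_S(X))$ needed to apply the general transfer result. (The companion statement for first-countable Smyth power spaces would be proved identically, replacing Lemma \ref{local comp imply V topol finer than Scott topol} with Lemma \ref{Smyth first-countable imply V topol finer than Scott topol}, which delivers the same containment under that alternative hypothesis.)
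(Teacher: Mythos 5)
Your proof is correct and matches the paper's own argument exactly: the paper derives this corollary by combining Lemma \ref{local comp imply V topol finer than Scott topol} (local compactness gives $\sigma(\mk(X))\subseteq\mathcal O(P_S(X))$) with Proposition \ref{Scott WD implies X is WD}, precisely as you do. Your closing remark about the first-countable companion corollary also coincides with how the paper handles Corollary \ref{Smyth first-countable and Scott WD implies X is WD}.
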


\begin{corollary}\label{Smyth first-countable and Scott WD implies X is WD} Suppose that $X$ is a $T_0$ space for which the Smyth power space $P_S(X)$ is first-countable. If $\Sigma~\!\! \mk (X)$ is well-filtered determined, then $X$ is well-filtered determined.
\end{corollary}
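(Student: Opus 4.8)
The plan is to deduce this statement directly from the two ingredients already in hand: Lemma~\ref{Smyth first-countable imply V topol finer than Scott topol}, which converts first-countability of $P_S(X)$ into a comparison of topologies on $\mk(X)$, and Proposition~\ref{Scott WD implies X is WD}, which draws the well-filtered determinedness conclusion once that comparison holds. So the whole task is to check that the hypothesis of Proposition~\ref{Scott WD implies X is WD} is met, and then invoke it.

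First I would observe that $P_S(X)$ being first-countable is exactly the setting of Lemma~\ref{Smyth first-countable imply V topol finer than Scott topol}. Applying that lemma yields that the Scott topology is coarser than the upper Vietoris topology on $\mk(X)$, i.e. $\sigma(\mk(X))\subseteq\mathcal O(P_S(X))$. This is precisely the containment assumed in Proposition~\ref{Scott WD implies X is WD}.

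With that containment established, the second and final step is a direct appeal: since $\Sigma\,\mk(X)$ is well-filtered determined by hypothesis and $\sigma(\mk(X))\subseteq\mathcal O(P_S(X))$, Proposition~\ref{Scott WD implies X is WD} gives that $X$ is well-filtered determined, which is the desired conclusion. The same skeleton proves the companion Corollary~\ref{X locally comact and Scott WD implies X is WD}, with Lemma~\ref{local comp imply V topol finer than Scott topol} playing the role of Lemma~\ref{Smyth first-countable imply V topol finer than Scott topol} when $X$ is locally compact instead.

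There is no genuine obstacle at this level: the statement is a two-line composition of previously proved results. All the real work is hidden upstream, in the proof of Proposition~\ref{Scott WD implies X is WD} (where one transports an irreducible closed set $A\subseteq X$ to $\mk(X)$ via $\{\ua a : a\in A\}$, uses property~S together with the topology comparison, and extracts a supercompact $Q=\ua y_Q$ whose closure matches $\overline{f(A)}$) and in the proof of Lemma~\ref{Smyth first-countable imply V topol finer than Scott topol}. The only point worth flagging is that the first-countability hypothesis enters solely through Lemma~\ref{Smyth first-countable imply V topol finer than Scott topol}; once the topology comparison $\sigma(\mk(X))\subseteq\mathcal O(P_S(X))$ is secured, first-countability is no longer used.
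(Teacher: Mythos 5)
Your proof is correct and is exactly the paper's own argument: the paper derives this corollary by combining Lemma~\ref{Smyth first-countable imply V topol finer than Scott topol} (first-countability of $P_S(X)$ gives $\sigma(\mk(X))\subseteq\mathcal O(P_S(X))$) with Proposition~\ref{Scott WD implies X is WD}, just as you do. Your remark that first-countability enters only through that lemma, and that the locally compact companion corollary follows by the same skeleton, also matches the paper's presentation.
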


\noindent{\bf References}

\end{document}